\newcolumntype{C}[1]{>{\centering\arraybackslash}m{#1}} 
\definecolor{darkred}{rgb}{0.8,0.0,0.0}
\definecolor{darkblue}{rgb}{0,0,0.6}
\definecolor{darkgreen}{rgb}{0,0.6,0}
\newcommand{\BB}{\mathcal{B}} 
\newcommand{\CC}{\mathcal{C}} 
\newcommand{\PP}{\mathbb{P}} 
\newcommand{\EE}{\mathbb{E}} 
\newcommand{\TT}{\mathbb{T}} 
\newcommand{\dd}{\,\mathrm{d}} 
\newcommand{\RR}{\mathbb{R}} 
\newcommand{\ZZ}{\mathbb{Z}} 
\newcommand{\NN}{\mathbb{N}} 
\newcommand{\HH}{\mathcal{H}} 
\newcommand{\DD}{\mathcal{D}} 
\newcommand{\A}{\mathcal{A}} 
\newcommand{\lng}{\langle} 
\newcommand{\rng}{\rangle} 
\newcommand{\supp}{\mathrm{supp}\,} 
\newcommand{\e}{\mathrm{e}} 
\newcommand{\ii}{\mathrm{i}} 
\newcommand{\al}{\alpha}
\newcommand{\ee}{\varepsilon} 
\newcommand{\bt}{\beta}
\newcommand{\kk}{\kappa}
\newcommand{\VVert}[1]{{\left\vert\kern-0.25ex \left\vert\kern-0.25ex \left \vert #1 
    \right\vert\kern-0.25ex \right\vert\kern-0.25ex \right\vert}} 
\theoremstyle{plain}
\newtheorem{theorem}{Theorem}[section]
\newtheorem{proposition}[theorem]{Proposition}
\newtheorem{corollary}[theorem]{Corollary}
\newtheorem{lemma}[theorem]{Lemma}
\theoremstyle{definition}
\newtheorem{definition}[theorem]{Definition}
\theoremstyle{remark}
\newtheorem{remark}[theorem]{Remark}
\numberwithin{equation}{section}
\title{\textbf{Spectral Gap for the Stochastic Quantization Equation on the 2-dimensional Torus}}
\author{Pavlos Tsatsoulis, Hendrik Weber}
\date{}
\begin{document}

\maketitle

\begin{abstract}
We study the long time behavior of the stochastic quantization equation. Extending recent results by 
Mourrat and Weber \cite{MWe15} we first establish a strong non-linear dissipative bound that gives control of 
moments of solutions at all positive times independent of the initial datum. We then establish that 
solutions give rise to a Markov process whose transition semigroup satisfies the strong Feller property. 
Following arguments by Chouk and Friz \cite{ChF16} we also prove a support theorem 
for the laws of the solutions. Finally all of these results are combined to show that the transition semigroup 
satisfies the Doeblin criterion which implies exponential convergence to equilibrium. 
 
Along the way we give a simple direct proof of the Markov property of solutions and an independent argument for 
the existence of an invariant measure using the Krylov--Bogoliubov existence theorem. Our method makes no use of 
the reversibility of the dynamics or the explicit knowledge of the invariant measure and it is therefore in principle 
applicable to situations where these are not available, e.g. the vector-valued case.

\noindent \textsc{Keywords}:  Singular SPDEs, strong Feller property, support theorem, exponential mixing. 

\noindent \textsc{MSC 2010}:  37A25, 60H15, 81T08. 
\end{abstract}

\setcounter{tocdepth}{1} 
\tableofcontents 


\section{Introduction}

We consider the stochastic quantization equation on the $2$-dimensional torus $\TT^2$ given by 
\begin{equation}\label{eq:RSQEq_intro}
\left\{
\begin{array}{lcll}
\partial_t X & = &\Delta X - X - \sum_{k=0}^n a_k :X^k: + \xi, & \text{in } \mathbb{R}_{+}\times \mathbb{T}^2 \\
X(0,\cdot) & = & x, & \text{on } \mathbb{T}^2
\end{array}
\right.,
\end{equation}
where $n$ is odd, $a_n>0$, $\xi$ is a Gaussian space time white noise and $x$ is a distribution of suitably negative 
regularity. Here $:X^k:$ stands for the $k$-th Wick power of $X$ (see Section 
\ref{s:Preliminaries} for its definition). This  equation was first proposed by Parisi and Wu (see \cite{PW81})
as a natural reversible dynamics for the $\Phi^{n+1}_2$ measure which is given by
\begin{equation}\label{eq:inv_m}
 \nu(\dd X) \propto 
 \exp\left\{- 2 \int_{\TT^2} \sum_{k=0}^n \frac{a_k}{k+1} :X^{k+1}:(z) \dd z\right\}
 \mu(\dd X),
\end{equation}
where $\mu$ is the law of a massive Gaussian free field. 

The interpretation and construction of solutions for \eqref{eq:RSQEq_intro} remained a challenge for many years
with important contributions by Jona--Lasinio and Mitter in \cite{JLM85}, (solution of a modified 
equation via Girsanov's transformation) and Albeverio and R\"ockner in \cite{AR91} (construction 
of solutions using the theory of Dirichlet forms).
In \cite{dPD03} da Prato and Debussche proposed a simple transformation 
of \eqref{eq:RSQEq_intro} which allowed them to prove local in time existence of strong solutions for any initial datum $x$ 
of suitable (negative) regularity and non-explosion for $x$ in a set of measure one with respect to \eqref{eq:inv_m}.
Recently Mourrat and Weber \cite{MWe15} obtained global in time solutions on the  the full space for any initial datum of suitable 
regularity by following a similar strategy. In \cite{RZZ15} R\"ockner et al. identified these solutions with the solutions 
obtained via Dirichlet forms.  

The aim of this paper is to establish exponential convergence to equilibrium for solutions of \eqref{eq:RSQEq_intro}.  
Building on the analysis in \cite{MWe15} and using a simple comparison test for non-linear ordinary differential equations we 
establish a strong dissipative bound for the solutions. We then prove the strong Feller property for the Markov semigroup 
generated by the solution generalizing the method in \cite[Section 4.2]{HSV07}. 
Although for convenience we make (moderate) use of global in time existence which follows from the strong dissipative bounds derived before, 
this part of the analysis could  also be implemented 
using only local existence (see Remark \ref{rem:cemetery}); the linearized dynamics
of Galerkin aproximations are controlled 
by combining  a localization via stopping times and the small-time bounds obtained from the local existence theory. 
We furthermore establish a support
theorem in the spirit of \cite{ChF16}. Finally, we combine all of these ingredients to show that the associated
Markov semigroup satisfies the 
Doeblin criterion which implies exponential convergence to the unique invariant measure uniformly over the state space. 
 
All steps are implemented for general odd $n$ except for the support theorem which we only show
in the case $n=3$. The reason for this restriction is explained in Remark \ref{rem:general_supp_theorem}. We expect
however that a support theorem for \eqref{eq:RSQEq_intro} holds true for all odd $n$ and that such a result could be  combined with  the results of this
paper to generalize Theorem \ref{thm:conv_rate} to the case of an arbitrary odd $n$. 

Along the way we give 
independent proofs of the Markov property for the dynamics as well as existence of the invariant measure.
The Markov property was already established previously in \cite{RZZ15} based on the identification of the 
dynamics with the solutions constructed via Dirichlet form. The same paper \cite{RZZ15} also 
established that \eqref{eq:inv_m} is a reversible (and in particular invariant) measure for the dynamics. 
We stress that our approach completely circumvents the theory of Dirichlet forms and uses neither 
the symmetry of the process nor the explicit form of the invariant measure. We therefore expect that our methods could be applied 
in situations where the reversibility is absent and where there is no explicit representation of the 
invariant measure, for example in situations where $X$ is vector rather than scalar valued. 

Finally, we would like to mention two independent works on a similar subject - one \cite{RZZ16} 
published very recently and one \cite{HM16} about to appear. In \cite{RZZ16} the authors establish that 
\eqref{eq:inv_m} is the unique invariant measure for the dynamics and that the transition
probabilities converge  to this invariant measure. Their method is based on the asymptotic coupling technique from 
\cite{HMS11} and relies on the bounds from \cite{MWe15}.  This analysis does however not include the strong 
Feller property or the support theorem and does not imply exponential convergence to equilibrium. In the 
forthcoming article \cite{HM16} the authors present a general method to establish the strong Feller property, 
for solutions of SPDE solved in the framework of the theory of regularity structures. As an example this 
method is implemented for the dynamic $\Phi^4_3$ model. We expect that their method can  also treat the case 
of \eqref{eq:RSQEq_intro} but at first glance it only implies continuity of the associated Markov semigroup 
with respect to the total variational norm, whereas Theorem \ref{thm:strong_feller} implies H\"older continuity with 
respect to this norm.  

\subsection{Outline}

In Section \ref{s:Preliminaries} we introduce some notation for Wick powers and their approximations. The results in 
this section are essentially contained in \cite{dPD03} and \cite{MWe15} and the purpose of the section is mostly to fix notation. 
In Section \ref{s:Solving_the_Equation} we first briefly sketch the construction of solutions to \eqref{eq:RSQEq_intro}  including a
short time bound and a stability result which are used in 
Section \ref{s:Strong_Feller_Property}. We then prove the 
strong dissipative bound which is independent of the initial condition, improving on the bounds obtained
in \cite{MWe15}. In Section \ref{s:Existence_of_Invariant_Measures} we prove the Markov property for the 
solution using a simple factorization argument as in \cite{dPZ92} and we furthermore prove existence of 
invariant measures based on the bounds obtained in Section \ref{s:Solving_the_Equation}. The strong Feller property for the associated Markov semigroup is shown in Section 
\ref{s:Strong_Feller_Property}. Finally, in Section \ref{s:Exponential_Mixing_of_the_Phi42} we prove a support theorem for \eqref{eq:RSQEq_intro} in the 
case of $n=3$ which we combine with the results of the previous sections  to prove exponential mixing. 

\subsection{Notation}

Let $\TT^d$ be the $d$-dimensional torus of size $1$. We denote by $C^\infty(\RR^d)$ and $C^\infty(\TT^d)$ 
the space of real-valued smooth functions over $\RR^d$ and $\TT^d$ respectively as well as by $\mathscr{S}'(\TT^d)$ 
the dual space of Schwarz distributions acting on $\CC^\infty(\TT^d)$. We furthermore denote by $L^p(\TT^d)$ the space
of $p$-integrable functions on $\TT^d$, endowed with the norm
\begin{equation*}
  \|f\|_{L^p} := \left(\int_{\TT^2} |f(z)|^p \dd z\right)^\frac{1}{p}.
\end{equation*}

Although we only deal with spaces of real-valued functions, we prefer to work with the 
orthonormal basis $\{e_m\}_{m\in\ZZ^2}$ of trigonometric functions
\begin{equation*}
 e_m(z) := \e^{2\pi\ii m\cdot z},
\end{equation*}
for $z\in \TT^d$. Thus some complex-valued functions appear and we write 
$$
\lng f, g \rng = \int_{\TT^d} f(z) \overline{g(z)} \dd z
$$
for their inner product. In this notation, for $f\in L^2(\TT^d)$, the $m$-th Fourier coefficient is given by 
$$
\hat f(m) := \lng f, e_m \rng
$$ 
and since $f$ is real-valued we have the symmetry condition
\begin{equation}\label{eq:symmetry_cond}
 \hat f(-m) =\overline{ \hat f(m) },
\end{equation}
for any $m\in\ZZ^d$. For $f\in \mathscr{S}'(\TT^d)$ we define the $m$-th Fourier coefficient as
\begin{equation*}
 \hat f(m) := \lng f, \cos(2\pi \ii m  \cdot)\rng + \ii \lng f, \sin(2\pi \ii m \cdot)\rng,
\end{equation*}
with the convention that $\lng f, \cdot \rng$ stands for the action of $f$ on $C^\infty(\TT^d)$. 

For $\zeta\in \RR^d$ and $r>0$ we denote by $B(\zeta, r)$ the ball of radius $r$ centered at $\zeta$. We consider the 
annulus $\A=B\left(0, \frac{8}{3}\right)\setminus B\left(0, \frac{3}{4}\right)$ and a dyadic partition 
of unity $\left(\chi_{\kappa}\right)_{\kappa\geq-1}$ such that
\begin{enumerate}[i.]
 \item $\chi_{-1}=\tilde{\chi}$ and $\chi_{\kappa}=\chi(\cdot/2^{\kappa})$, $\kappa\geq 0$, 
 for two radial functions $\tilde{\chi},\chi\in C^\infty(\mathbb{R}^d)$.
 \item $\supp \tilde{\chi}\subset B\left(0, \frac{4}{3}\right)$ and $\supp \chi\subset \A$.
 \item $\tilde{\chi}(\zeta)+\sum_{\kappa=0}^\infty \chi(\zeta/2^{\kappa})=1$, for all $\zeta\in \mathbb{R}^d$.
\end{enumerate}
We furthermore let 
\begin{align*}
\A_{2^\kk} & :=  2^\kk \A, \quad \kk\geq 0.
\end{align*}
Notice that $\supp \chi_\kk \subset A_{2^\kk}$, for every $\kk\geq 0$. We also keep the convention that
$\A_{2^{-1}} = B\left(0, \frac{4}{3}\right)$. The existence of such a dyadic partition of unity is given by \cite[Proposition 2.10]{BCD11}.

For a function $f\in C^\infty(\mathbb{T}^d)$ we define the $\kappa$-th Littlewood-Paley block as 
\begin{equation}\label{eq:LPblocks}
 \delta_{\kappa} f(z) := \sum_{m\in\ZZ^2} \chi_{\kappa}(m)\hat f(m) \e^{2\pi \ii m\cdot z}, \quad \kappa\geq -1.
\end{equation}
Sometimes it is convenient to write \eqref{eq:LPblocks} as $\delta_\kk f = \eta_{\kappa}*f$, $\kappa\geq -1$, where 
\begin{equation*}
 \eta_\kk*f(\cdot)=\int_{\TT^d} \eta_\kk(\cdot-z) f(z) \dd z,
\end{equation*}
and
$$
\eta_{\kappa}(z) :=\sum_{m\in\ZZ^2} \chi_\kk(m) \e^{2\pi \ii m\cdot z}. 
$$
For $\alpha\in \mathbb{R}$ and $p,q\in[1,\infty]$ we define the non-homogeneous periodic Besov norm (see \cite[Section 2.7]{BCD11}), 
\begin{equation}\label{eq:besov_norm}
\|f\|_{\mathcal{B}_{p,q}^\alpha}:=\left\|\left(2^{\alpha \kappa}\|\delta_{\kappa}f\|_{L^p}\right)_{\kappa\geq-1}\right\|_{\ell^q}.
\end{equation}
The Besov space $\mathcal{B}_{p,q}^\alpha$ is defined as the completion of $C^\infty(\mathbb{T}^d)$ with respect to
the norm \eqref{eq:besov_norm}. We are mostly interested in the Besov space $\BB_{\infty,\infty}^\alpha$ which from now on we denote
by $\CC^\alpha$. Note that for $p=q=\infty$ our definition of Besov spaces differs from the standard definition as the set of those distributions for which \eqref{eq:besov_norm} is finite.
Our convention has the advantage that all Besov spaces are separable. Some basic properties of Besov spaces are 
collected in  Appendix \hyperref[Appendix:Besov_Spaces]{A}. 

Throughout the rest of this article for $\al\in(0,1)$ we let 
\begin{equation} \label{eq:Z_space}
 C^{n,-\al}(0;T):= C\left([0,T];\CC^{-\alpha}\right)\times C\left((0,T];\CC^{-\alpha}\right)^{n-1}
\end{equation}
and denote by $\underline{Z}=  \left(Z^{(1)}, Z^{(2)}, \ldots, Z^{(n)}\right)$ a generic $C^{n,-\al}(0;T)$-valued 
vector. For  $\al'>0$  we also define 
\begin{equation*}
 \VVert{\underline{Z}}_{\al;\al';T} := \max_{k=1,2,\ldots,n}
 \left\{\sup_{0\leq t\leq T} t^{(k-1)\al'}\|Z^{(k)}_t\|_{\CC^{-\alpha}}\right\}.
\end{equation*}

From now on we fix $\al_0\in(0,\frac{1}{n})$ (minus the regularity of the initial condition) as well as 
$\bt>0$ (regularity of the remainder) and $\gamma>0$ (blowup of the remainder close to $0$) such that 
\begin{equation}\label{eq:beta_gamma_cond} 
\frac{\bt+\al_0}{2} < \gamma, \quad \frac{\bt}{2} +n \gamma <1. 
\end{equation}

Throughout the whole article $C$ denotes a positive constant which might differ from line to line but we make explicit 
the dependence on different parameters where necessary. Furthermore, through the proofs of our statements, in cases where we do 
not want to keep track of the various constants in the inequalities we use $\lesssim$ instead of $\leq C$.  Finally, we use $a\vee b$ and $a\wedge b$ to 
denote the maximum and the minimum of $a$ and $b$.

\subsection*{Acknowledgements} 

 The authors would like to thank Martin Hairer, Jonathan Mattingly and Philipp Schoenbauer for helpfull discussions.  PT is supported by ESPRC as part of the MASDOC DTC at the University of Warwick, Grant No. EP/HO23364/1.
HW is supported by the Royal Society through the University Research Fellowship UF140187. 

\tikzsetexternalprefix{./macros/stoch_heat_eq/}

\section{Preliminaries}\label{s:Preliminaries}


In this section we present the necessary stochastic tools to handle \eqref{eq:RSQEq_intro}. In Section 
\ref{s:The_Stochastic_Heat_Equation_and_its_Wick_Powers} we introduce the stochastic heat equation along with its Wick powers in terms 
of abstract iterated stochastic integrals in the spirit of \cite[Chapter 1]{Nu06}. In Section \ref{s:Finite_Dimensional_Approximations} we describe how
these iterated stochastic integrals arise as limits of powers of solutions to  finite dimensional 
approximations after renormalization. 

\subsection{The Stochastic Heat Equation and its Wick Powers}\label{s:The_Stochastic_Heat_Equation_and_its_Wick_Powers}

Let $\xi$ be a space-time white noise on $\mathbb{R}\times \TT^2$ (see Appendix \hyperref[Appendix:The_Space_Time_White_Noise]{B}) 
on some probability space $(\Omega,\mathcal{F}, \PP)$, which is fixed from now on. We set 
\begin{equation}\label{eq:sigma_algebra}
\tilde{\mathcal{F}}_t=\sigma\left(\{\xi(\phi): \phi|_{(t,+\infty)\times\TT^2}\equiv 0, \, \phi\in L^2(\RR\times\TT^2)\}\right),
\end{equation}
for $t>-\infty$ and denote by $(\mathcal{F}_t)_{t > -\infty}$ the usual augmentation (as in \cite[Chapter 1.4]{RY99}) of the 
filtration $(\tilde{\mathcal{F}}_t)_{t > -\infty}$.

Consider the stochastic heat equation with zero initial condition at time $s\in(-\infty,\infty)$ 
\begin{equation}\label{eq:stoc_heat_eq_0}
 \left\{
\begin{array}{lcll}
\partial_t \<1_black>_{s,t} & = &\Delta \<1_black>_{s,t} - \<1_black>_{s,t} + \xi, & \text{in } (s,\infty)\times \mathbb{T}^2 \\
\<1_black>_{s,s} & = & 0, & \text{on } \mathbb{T}^2
\end{array}
\right..
\end{equation}

There are several ways to give a meaning to this equation. We simply use Duhamel's principle  (see \cite[Section 2.3]{Ev10})
as a definition and set for every $\phi\in C^\infty(\TT^2)$ and $s<t$ 
\begin{equation}\label{eq:stoc_heat_eq_0_sol} 
 \<1_black>_{s,t}(\phi):=\int_s^t \int_{\TT^2} \left\langle\phi, H(t-s,z-\cdot)\right\rangle\xi(\dd s, \dd z),
\end{equation}
where $H(r,\cdot)$, $r\in \mathbb{R}\setminus \{0\}$, stands for the periodic heat kernel on $L^2(\TT^2)$ given by
\begin{equation}\label{eq:periodic_heat_kernel}
 H(r,z) := \sum_{m\in\ZZ^2} \e^{-(1+4\pi^2|m|^2)r} e_m(z), 
\end{equation} 
for all $z\in\TT^2$. We furthermore let
$$
S(t):=\e^{-t} \e^{t\Delta}
$$ 
be the semigroup associated to the generator $\Delta -1$ in $L^2(\TT^2)$, i.e. the convolution operator with respect to the space variable $z\in\TT^2$ 
with the kernel $H(t,\cdot)$. 

The integral in \eqref{eq:stoc_heat_eq_0_sol} is a stochastic integral (see Appendix \hyperref[Appendix:The_Space_Time_White_Noise]{B} 
for definitions) and for fixed $s<t$, $\<1_black>_{s,t}$ is a family of Gaussian random variables indexed by $C^\infty(\TT^2)$. 

 Since it is more convenient to work with stationary processes we extend definition \eqref{eq:stoc_heat_eq_0_sol} 
for $s=-\infty$. For $\phi\in C^\infty(\TT^2)$, $n\geq 2$ and $t>-\infty$ we also consider the multiple stochastic integral 
(see Appendix \hyperref[Appendix:The_Space_Time_White_Noise]{B}) given by 
\begin{equation}\label{eq:wick}
 \<n_black>_{-\infty,t}(\phi):=
 \int_{\left\{(-\infty,t]\times\TT^2\right\}^n} \left\langle\phi,\prod_{k=1}^n H(t-s_k,z_k-\cdot)\right\rangle 
 \xi\left(\otimes_{k=1}^n \dd s_k, \otimes_{k=1}^n \dd z_k\right).
\end{equation}
We call $\<n_black>_{-\infty,\cdot}$ the $n$-th Wick power of $\<1_black>_{-\infty,\cdot}$ and we recall that for every $n\geq 1$ and $\phi\in C^\infty(\TT^2)$, 
$\<n_black>_{-\infty,\cdot}(\phi)$ is an element in the $n$-th homogeneous Wiener chaos (see Appendix \hyperref[Appendix:The_Space_Time_White_Noise]{B}
for definitions). We furthermore point out that $\<n_black>_{-\infty,\cdot}(\phi)$ is stationary, for every $\phi\in C^\infty(\TT^2)$.

The next theorem collects the optimal regularity properties of the processes $\{\<n_black>_{-\infty,\cdot}\}$, $n\geq 1$ and is very similar 
to the bounds originally derived in  \cite[Lemma 3.2]{dPD03}. The precise statement is a 
consequence of the Kolmogorov-type criterion \cite[Lemma 5.2, Lemma 5.3]{MWe15} and the proof follows similar lines to the one of 
\cite[Theorem 5.4]{MWe15}.  

\begin{theorem}\label{thm:mod_sol} Let $p\geq 2$. For every $n\geq 1$ and $t_0>-\infty$, the process $\<n_black>_{-\infty,t_0+\cdot}$ 
admits a modification $\widetilde{\<n_black>}_{-\infty,t_0+\cdot}$ such that 
$\widetilde{\<n_black>}_{-\infty,t_0+\cdot}\in C\left([0,T];\CC^{-\alpha}\right)$, for every $T>0$ and 
$\al>0$. Furthermore, there exists $\theta\equiv \theta(\al)\in(0,1)>0$ and $C\equiv C(T,\alpha,p)$ such that 
 \begin{equation} \label{eq:cont_modif}
  \EE \sup_{s,t\in [0,T]}
  \frac{\|\widetilde{\<n_black>}_{-\infty,t_0+t}-\widetilde{\<n_black>}_{-\infty,t_0+s}\|_{\CC^{-\alpha}} ^p}{|t-s|^{p \theta}} \leq C.
 \end{equation}
 For notational convenience we always refer to $\widetilde{\<n_black>}_{-\infty,\cdot}$ as $\<n_black>_{-\infty,\cdot}$. 
\end{theorem}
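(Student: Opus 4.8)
The plan is to establish the two claims of Theorem \ref{thm:mod_sol}---existence of a continuous modification valued in $\CC^{-\al}$ and the quantitative H\"older bound \eqref{eq:cont_modif}---simultaneously via a Kolmogorov-type continuity argument adapted to Besov spaces, invoking \cite[Lemma 5.2, Lemma 5.3]{MWe15}. First I would reduce everything to a moment estimate on the Littlewood--Paley blocks. Fix $n\geq 1$ and $t_0$, and for a mollification level $\kappa\geq -1$ consider $\delta_\kappa \<n_black>_{-\infty,t_0+t}(z)$, which for each fixed $(t,z)$ lives in the $n$-th homogeneous Wiener chaos. By hypercontractivity (equivalence of $L^p$ norms on a fixed chaos, see Appendix B), for any $p\geq 2$ one has $\EE |\delta_\kappa \<n_black>_{-\infty,t_0+t}(z)|^p \lesssim \big(\EE |\delta_\kappa \<n_black>_{-\infty,t_0+t}(z)|^2\big)^{p/2}$, so it suffices to compute second moments and increments thereof.

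The core computation is then the second-moment bound. Using the multiple-integral representation \eqref{eq:wick} and It\^o's isometry for multiple stochastic integrals, $\EE |\delta_\kappa \<n_black>_{-\infty,t}(z)|^2$ equals $n!$ times a convolution of $n$ copies of the heat kernel squared against the Littlewood--Paley symbol $\chi_\kappa$; concretely, passing to Fourier variables this becomes (up to constants)
\begin{equation*}
\sum_{m\in\ZZ^2}|\chi_\kappa(m)|^2 \sum_{m_1+\cdots+m_n=m} \prod_{k=1}^n \frac{1}{1+4\pi^2|m_k|^2},
\end{equation*}
and the inner sum over the convolution of the Green's-function symbols is bounded uniformly in $m$ for $d=2$, $n\geq 1$ (this is where the logarithmic divergence of the single kernel in $d=2$ is handled by the Wick renormalization already built into the definition of $\<n_black>$). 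This yields $\EE |\delta_\kappa \<n_black>_{-\infty,t}(z)|^2 \lesssim 2^{2\kappa\al'}$ for any $\al'>0$, hence after hypercontractivity $\EE |\delta_\kappa \<n_black>_{-\infty,t}(z)|^p \lesssim 2^{p\kappa\al'}$. For the temporal increments one splits $H(t-s_k,\cdot)-H(s-s_k,\cdot)$ and uses the standard factor $|t-s|^{\theta}$ gained from the parabolic scaling of the heat kernel (trading spatial regularity for time regularity), giving $\EE |\delta_\kappa(\<n_black>_{-\infty,t}-\<n_black>_{-\infty,s})(z)|^p \lesssim 2^{p\kappa(\al'+2\theta)}|t-s|^{p\theta}$.

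Finally I would feed these block estimates into the Besov--Kolmogorov criterion of \cite[Lemma 5.2, Lemma 5.3]{MWe15}: choosing $p$ large enough and $\al'$ small enough relative to a target $\al>0$ so that the series $\sum_\kappa 2^{-\kappa(\al p - \ldots)}$ converges, one obtains a modification $\widetilde{\<n_black>}_{-\infty,t_0+\cdot}$ in $C([0,T];\CC^{-\al})$ together with the supremum bound \eqref{eq:cont_modif} for some $\theta\in(0,1)$ depending on $\al$ (and with $C$ depending on $T,\al,p$, absorbing the finite-time factor from the Kolmogorov lemma). The routine book-keeping is the choice of exponents $p,\al',\theta$ making all series summable; the only genuinely delicate point---and the one I would present most carefully---is the uniform-in-$m$ bound on the $n$-fold convolution of the heat-kernel symbols in two dimensions, i.e. verifying that the Wick-renormalized $n$-th chaos element genuinely has the stated $\CC^{-\al}$ regularity for every $\al>0$ with no loss as $n$ grows. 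Since this mirrors \cite[Lemma 3.2]{dPD03} and \cite[Theorem 5.4]{MWe15}, I would follow those arguments and only emphasize the points where the stationary ($s=-\infty$) setup and the simultaneous control of all $n$ differ from the references.
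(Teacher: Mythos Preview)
Your strategy is essentially the paper's: reduce to second moments by hypercontractivity, compute the Littlewood--Paley block covariances via It\^o's isometry, bound the resulting $n$-fold Fourier convolution of $(1+|m|^2)^{-1}$, then feed into the Kolmogorov--Besov criterion of \cite{MWe15}.

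There is, however, one genuine slip. You write that the inner sum $\sum_{m_1+\cdots+m_n=m}\prod_k (1+4\pi^2|m_k|^2)^{-1}$ is ``bounded uniformly in $m$'' and that this yields $\EE|\delta_\kappa\<n_black>|^2\lesssim 2^{2\kappa\al'}$ for every $\al'>0$. Uniform boundedness alone is not enough: summing a uniform bound over the $\sim 2^{2\kappa}$ lattice points in the annulus $\A_{2^\kappa}$ gives $2^{2\kappa}$, i.e.\ only $\al'=1$, which would yield regularity no better than $\CC^{-1-}$. What you actually need (and what the paper uses, via the kernel convolution estimate in Appendix~\ref{Appendix:Kernel_Conv}) is the \emph{decay}
\[
\sum_{m_1+\cdots+m_n=m}\prod_{k=1}^n\frac{1}{1+4\pi^2|m_k|^2}\;\lesssim\;\frac{1}{(1+|m|^2)^{1-\ee}}
\]
for every $\ee>0$; combining this with $|m|\sim 2^\kappa$ on the annulus gives $\sum_{m\in\A_{2^\kappa}}(1+|m|^2)^{-(1-\ee)}\lesssim 2^{2\kappa\ee}$, which is the bound you want. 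The same remark applies to the temporal increment estimate, where trading a factor $|t_1-t_2|^\gamma$ per kernel replaces $K^0$ by $K^\gamma(m)=(1+|m|^2)^{-(1-\gamma)}$ and one again needs the decay of $K^\gamma\star^n K^\gamma$ from Corollary~\ref{cor:nested_kernel_conv}. Once you state and use this decay explicitly, the rest of your outline goes through exactly as in the paper.
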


\begin{proof} See Appendix \hyperref[proof:mod_sol]{D}. 
\end{proof}

Notice that for every $t\geq s$ we have that
\begin{equation*}
 \<1_black>_{s,t} = \<1_black>_{-\infty,t} - S(t-s)\<1_black>_{-\infty,s}. 
\end{equation*}
It is then reasonable to define (see also \cite[pp. 34]{MWe15} for equivalent definitions) 
the $n$-th shifted Wick power of $\<1_black>_{s,t}$, $t> s>-\infty$, as
\begin{equation}\label{eq:shift_wick}
 \<n_black>_{s,t} : = \sum_{k=0}^n \binom{n}{k} (-1)^k \Big(S(t-s)\<1_black>_{-\infty,s} \Big)^k \<n-k_black>_{-\infty,t}.
\end{equation}
Here and below we use the convention $\<k_black>_{s,t}\equiv 1$  for $k=0$ and any $-\infty\leq s<t$.
We furthermore point out that the $n$-th shifted Wick power is not an element of the $n$-th homogeneous 
Wiener chaos (see Appendix \hyperref[Appendix:The_Space_Time_White_Noise]{B} for definitions). We refer 
the reader to Proposition \ref{prop:diagram_convergence} below for a natural approximation of the objects 
defined in \eqref{eq:shift_wick}.

At this point we would like to mention that one might work directly with $\<n_black>_{-\infty, \cdot}$ 
instead of introducing \eqref{eq:shift_wick} (see for example \cite{dPD03} and \cite{Ha14}). This alternative approach
has the advantage that the diagrams are stationary in time. However, we prefer to work with \eqref{eq:shift_wick}
(as in \cite{MWe15}) because  when proving the Markov property (see 
Section \ref{s:Markov_Property}) we use heavily  that  $\<n_black>_{s,t} $ is independent of  $\mathcal{F}_s$  
for any $s<t$ (see Proposition \ref{prop:diagram_convergence}). 
A slight disadvantage of our convention is the  logarithmic divergence of $\<n_black>_{s,t} $ as $t \downarrow s$ 
(see \eqref{eq:log_div}).
 
The next proposition uses the regularization property of the heat semigroup (see Proposition \ref{prop:Heat_Smooth})
to show that for every $t>s$ and $n\geq 2$, $\<n_black>_{s,t}$ is a well-defined element in a Besov space of negative
regularity close to $0$. 

\begin{proposition}\label{prop:time_bounds} Let $p\geq 2$ and $T>0$. For every $s_0>-\infty$, $\al\in(0,1)$ and $\al'>0$ 
there exist $\theta\equiv\theta(\al,\al')>0$ and $C\equiv C(T,\al,\al',p,n)$ such that
\begin{equation}\label{eq:log_div}
\EE\sup_{0\leq s\leq t }\left(s^{(n-1)\al' p} \|\<n_black>_{s_0,s_0+s}\|_{\CC^{-\alpha}}^{p}\right)\leq C t^{p\theta} ,
\end{equation}
for every $t\leq T$. 
\end{proposition}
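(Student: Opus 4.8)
The plan is to reduce the bound for the shifted Wick power $\<n_black>_{s_0,s_0+s}$ to the already-established regularity of the stationary objects $\<k_black>_{-\infty,\cdot}$ from Theorem \ref{thm:mod_sol}, paying careful attention to the small-time behaviour. Without loss of generality I would take $s_0=0$ by stationarity. The starting point is the defining formula \eqref{eq:shift_wick}, which expresses $\<n_black>_{0,s}$ as a finite linear combination of products $\bigl(S(s)\<1_black>_{-\infty,0}\bigr)^k \<n-k_black>_{-\infty,s}$ for $k=0,\dots,n$. The idea is to estimate each summand in $\CC^{-\al}$ separately and then to track the power of $s$ that each term contributes.

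For the two factors in a typical summand I would proceed as follows. First, by Theorem \ref{thm:mod_sol} applied with $t_0=0$, the process $\<n-k_black>_{-\infty,s}$ lies in $C([0,T];\CC^{-\al})$ with all moments bounded uniformly in $s\le T$; in fact \eqref{eq:cont_modif} together with $\<n-k_black>_{-\infty,0}\in\CC^{-\al}$ gives $\EE\sup_{0\le s\le t}\|\<n-k_black>_{-\infty,s}\|_{\CC^{-\al}}^p \le C$ (or even an estimate with a small power $t^{p\theta}$ coming from the H\"older modulus, which is what ultimately produces the $t^{p\theta}$ on the right-hand side of \eqref{eq:log_div}). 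Second, for the factor $S(s)\<1_black>_{-\infty,0}$ I would use the smoothing property of the heat semigroup, Proposition \ref{prop:Heat_Smooth}: since $\<1_black>_{-\infty,0}\in\CC^{-\al}$, one has $\|S(s)\<1_black>_{-\infty,0}\|_{\CC^{\al''}} \lesssim s^{-(\al+\al'')/2}\|\<1_black>_{-\infty,0}\|_{\CC^{-\al}}$ for a suitable small $\al''>0$. Raising this to the power $k$ and applying the multiplicative inequality in $\CC^\al$-spaces (the product $\CC^{\al''}\times\CC^{-\al}\to\CC^{-\al}$ is bounded since $\al''-\al<0$ and $\al''>0$, by Proposition on Besov multiplication in Appendix A) yields
\begin{equation*}
 \bigl\|\bigl(S(s)\<1_black>_{-\infty,0}\bigr)^k \<n-k_black>_{-\infty,s}\bigr\|_{\CC^{-\al}}
 \lesssim s^{-k(\al+\al'')/2}\,\|\<1_black>_{-\infty,0}\|_{\CC^{-\al}}^k\,\|\<n-k_black>_{-\infty,s}\|_{\CC^{-\al}}.
\end{equation*}
Summing over $k$, taking the supremum over $0\le s\le t\le T$, multiplying by $s^{(n-1)\al' p}$ and taking expectations, the worst term is $k=n$, which contributes $s^{(n-1)\al' p - n(\al+\al'')p/2}$; choosing $\al''$ small enough (depending on $\al,\al'$) so that $n(\al+\al'')/2 \le (n-1)\al'$ is NOT quite what we want — rather we only need the net exponent of $s$ to be nonnegative, i.e. $(n-1)\al' \ge n(\al+\al'')/2$; since $\al'$ is an arbitrary fixed positive number this can always be arranged by shrinking $\al''$, and what is left over is a strictly positive power of $t$, which we call $p\theta$. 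The moments of $\|\<1_black>_{-\infty,0}\|_{\CC^{-\al}}^{kp}$ are finite (Gaussian hypercontractivity / Theorem \ref{thm:mod_sol} again), and by Cauchy--Schwarz we split $\EE\bigl[\|\<1_black>_{-\infty,0}\|^{kp}\sup_s\|\<n-k_black>_{-\infty,s}\|^p\bigr]$ into a product of finite moments, giving the constant $C(T,\al,\al',p,n)$.

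The main obstacle I anticipate is bookkeeping the exponents: one must verify that for \emph{every} $k\in\{0,\dots,n\}$ the singularity $s^{-k(\al+\al'')/2}$ is milder than the prefactor $s^{(k-1)\al'}$ that one is allowed to insert (the $(n-1)$ in \eqref{eq:log_div} is the borderline $k=n$ case, but one should really track the $k$-dependence and check the constraint holds with room to spare), and that after this cancellation one is left with a genuine positive power $t^{p\theta}$ rather than just a bounded quantity — this positive power comes from the temporal H\"older regularity in \eqref{eq:cont_modif} combined with $\<n_black>_{-\infty,0}$ being deterministic in the sense of having a fixed finite moment, not from the smoothing estimate. A secondary technical point is justifying the multiplicative inequality in the periodic Besov spaces $\CC^\bullet$ with the paper's nonstandard (separable) convention, but this is exactly the content of the Besov-space appendix and poses no real difficulty. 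Finally one should note that the case $n=1$ is trivial (no singularity, $\theta$ coming directly from \eqref{eq:cont_modif}), so the interesting range is $n\ge 2$ as stated.
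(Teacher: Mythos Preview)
Your overall architecture matches the paper's: expand via \eqref{eq:shift_wick}, bound each product $\bigl(S(s)\<1_black>_{-\infty,0}\bigr)^k\<n-k_black>_{-\infty,s}$ in $\CC^{-\al}$ using heat smoothing plus the multiplicative inequality, then take expectations with Cauchy--Schwarz and invoke Theorem~\ref{thm:mod_sol}. However, two connected errors prevent the argument from closing as written. First, the hypothesis of Proposition~\ref{prop:Mult_Ineq_II} is stated backwards: to multiply $\CC^{\al''}$ with $\CC^{-\al}$ you need the \emph{sum} of regularities positive, i.e.\ $\al''>\al$, not $\al''<\al$. Second, once you are forced to take $\al''>\al$, the smoothing cost per factor is bounded below by $s^{-\al}$, and your claim that ``shrinking $\al''$'' always arranges $(n-1)\al'\ge n(\al+\al'')/2$ is false: the proposition allows \emph{any} $\al'>0$, and for $\al'$ small compared to the fixed $\al$ no choice of $\al''>\al$ works.

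The paper fixes both points at once by introducing an auxiliary $\bar\al<\al\wedge\tfrac{2}{3}\al'$ and using that the stationary trees lie in $\CC^{-\bar\al}$ for \emph{every} $\bar\al>0$ (Theorem~\ref{thm:mod_sol}). Measuring $\<1_black>_{-\infty,0}$ and $\<n-k_black>_{-\infty,s}$ in $\CC^{-\bar\al}$ and smoothing to $\CC^{2\bar\al}$ costs only $s^{-\frac{3}{2}\bar\al}$ per factor; the product $\CC^{2\bar\al}\times\CC^{-\bar\al}\to\CC^{-\bar\al}\hookrightarrow\CC^{-\al}$ is then legitimate, and after multiplying by $s^{(n-1)\al'}$ every term carries a net positive exponent, the smallest being $(n-1)(\al'-\tfrac{3}{2}\bar\al)>0$. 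This strict slack is what produces $t^{p\theta}$; it does not come from the temporal H\"older regularity in \eqref{eq:cont_modif}, which is only used to bound $\EE\sup_{s\le t}\|\<n-k_black>_{-\infty,s}\|^{2p}$ by a constant.
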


\begin{proof}  We show \eqref{eq:log_div} for $s_0=0$. 

Let $\bar \al < \al \wedge \frac{2}{3} \al'$ and 
$V(s) = S(s)\left(-\<1_black>_{-\infty,0}\right)$. Using \eqref{eq:alpha_ineq} as well as Propositions 
\ref{prop:Mult_Ineq_II} and \ref{prop:Heat_Smooth} we have that
\begin{align*}
 \|V(s)^n\|_{\CC^{-\al}} & \lesssim \|V(s)\|^{n-1}_{\CC^{2\bar\al}} \|V(s)\|_{\CC^{-\bar \al}} \\
 & \lesssim s^{-(n-1)\frac{3}{2}\bar \al} \|\<1_black>_{-\infty,0}\|_{\CC^{-\bar \al}}^n.
\end{align*}
In a similar way, for $k\notin \{0,n\}$, we have that
\begin{align*}
 \|V(s)^k\<n-k_black>_{-\infty,s}\|_{\CC^{-\al}} \lesssim  s^{-k\frac{3}{2}\bar \al} \|\<1_black>_{-\infty,0}\|_{\CC^{-\bar \al}}^k 
 \|\<n-k_black>_{-\infty,s}\|_{\CC^{-\bar \al}}.
\end{align*}
Thus
\begin{align*}
 \|\<n_black>_{0,s}\|_{\CC^{-\al}}  \lesssim 
 s^{-(n-1) \frac{3}{2} \bar \al} \|\<1_black>_{-\infty,0}\|_{\CC^{-\bar\al}}^{n} 
  + \sum_{k=0}^{n-1} \binom{n}{k} 
 s^{-k \frac{3}{2}\bar \al } \|\<1_black>_{-\infty,0}\|_{\CC^{-\bar\al}}^k \|\<n-k_black>_{-\infty,s}\|_{\CC^{-\bar \al}}.
\end{align*}
%
Hence
\begin{align*}
 \EE\sup_{0\leq s\leq t} s^{(n-1)\al'p} & \|\<n_black>_{0,s}\|_{\CC^{-\al}}^p \lesssim 
  t^{(n-1)(\al'- \frac{3}{2}\bar \al)p} \EE\|\<1_black>_{-\infty,0}\|_{\CC^{-\bar \al}}^{np} \\
 + \sum_{k=0}^{n-1} \binom{n}{k} & t^{((n-1) \al' -k \frac{3}{2} \bar \al)p} 
 \left(\EE \|\<1_black>_{-\infty,0}\|_{\CC^{-\bar \al}}^{2kp}\right)^{\frac{1}{2}}
 \left(\EE\sup_{0\leq s\leq t} \|\<n-k_black>_{-\infty,s}\|_{\CC^{-\bar \al}}^{2p}\right)^{\frac{1}{2}},
\end{align*}
where we use a Cauchy--Schwarz inequality  in the last line. Combining with \eqref{eq:cont_modif} we finally 
obtain \eqref{eq:log_div}.
\end{proof}

\subsection{Finite Dimensional Approximations} \label{s:Finite_Dimensional_Approximations}

Let $\rho_\ee(z) = \sum_{|m|< \frac{1}{\ee}} e_m(z)$ and define a finite dimensional approximation of $\<1_black>_{s,t}$ by
\begin{equation*}
 \<1_black>^\ee_{s,t}(z) := \<1_black>_{s,t}(\rho_\ee(z-\cdot)).
\end{equation*}
We introduce the renormalization constant
\begin{align}
 \Re^\ee & := \|\mathbf{1}_{[0,\infty)}H_\ee\|_{L^2(\RR\times\TT^2)}^2, \label{eq:renorm_constant}
\end{align} 
where $H_\ee(r,z) = (H(r,\cdot)*\rho_\ee)(z)$ noting that $\Re^\ee\sim \log\ee^{-1}$ as $\ee\to 0^+$. For any integer 
$n\geq 1$ and $s\geq -\infty$ we define
\begin{equation*}
 \<n_black>_{s,t}^\ee : = \mathcal{H}_n(\<1_black>^\ee_{s,t}, \Re^\ee),
\end{equation*}
where $\mathcal{H}_n(X, C)$, $X,C\in\RR$, stands for the $n$-th Hermite polynomial given by the recursive formula
\begin{equation}\label{eq:hermite_pol}
\left\{
\begin{aligned}
 \mathcal{H}_{-1}(X,C)  & = 0, \quad \mathcal{H}_0(X,C) = 1 \\
 \mathcal{H}_n(X,C) & = X \mathcal{H}_{n-1}(X,C)-(n-1)C \mathcal{H}_{n-2}(X,C)  
\end{aligned}
\right..
\end{equation}
 The first three Hermite polynomials are given by $\HH_1(X,C) = X$, $\HH_2(X,C) = X^2- C$, $\HH_3(X,C) = X^3 -3 CX$.

\begin{proposition}\label{prop:diagram_convergence} Let $\al,\al'>0$. Then for every $n\geq 1$ and $p\geq 2$ we have that 
\begin{align*}
 \lim_{\ee\to 0^+}\EE\sup_{0\leq t\leq T} \, \|\<n_black>_{-\infty,s+t}^\ee-\<n_black>_{-\infty,s+t}\|_{\CC^{-\al}}^p & = 0, \\
 \lim_{\ee\to 0^+}\EE\sup_{0\leq t\leq T} \, t^{(n-1)\al'p}\|\<n_black>_{s,s+t}^\ee-\<n_black>_{s,s+t}\|_{\CC^{-\al}}^p & = 0, 
\end{align*}
for every $s>-\infty$. In particular, $\<n_black>_{s,s+\cdot}$ is independent of $\mathcal{F}_s$ and for $s_1,s_2\neq-\infty$, 
$\<n_black>_{s_1,s_1+\cdot}\stackrel{\text{law}}{=} \<n_black>_{s_2,s_2+\cdot}$.
\end{proposition}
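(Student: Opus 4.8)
The plan is to establish the two $L^p$-convergence statements first and then extract the qualitative consequences (independence from $\mathcal{F}_s$ and stationarity in the shift parameter) as corollaries. For the first limit, concerning $\<n_black>_{-\infty,s+t}^\ee$, I would work chaos-by-chaos. Since $\<n_black>_{-\infty,t}$ lives in the $n$-th homogeneous Wiener chaos and $\<n_black>_{-\infty,t}^\ee = \mathcal{H}_n(\<1_black>^\ee_{-\infty,t},\Re^\ee)$ is (by the standard Wick/Hermite identity) the projection onto the $n$-th chaos of $(\<1_black>^\ee_{-\infty,t})^n$, the difference $\<n_black>_{-\infty,s+t}^\ee - \<n_black>_{-\infty,s+t}$ is again an element of the $n$-th chaos. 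On a fixed chaos all $L^p$ norms are equivalent, so it suffices to bound the second moment. I would therefore compute $\EE|\widehat{\<n_black>_{-\infty,t}^\ee - \<n_black>_{-\infty,t}}(m)|^2$ via the isometry formula for multiple stochastic integrals: this reduces to an explicit kernel computation involving $H(r,\cdot)$ versus $H_\ee(r,\cdot) = H(r,\cdot)*\rho_\ee$, and one reads off a bound of the form $\ee^{\delta}(1+|m|)^{2\al-\delta'}$ for suitable small exponents, uniformly in $t$ (here stationarity of $\<n_black>_{-\infty,\cdot}$ in time is helpful). Feeding this into the Kolmogorov-type criterion \cite[Lemma 5.2, Lemma 5.3]{MWe15}, exactly as in the proof of Theorem \ref{thm:mod_sol}, upgrades the pointwise-in-time bound to the claimed bound on $\EE\sup_{0\le t\le T}\|\cdot\|_{\CC^{-\al}}^p$, with a prefactor $\ee^{\delta p}$ that vanishes as $\ee\to 0^+$.

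For the second limit, concerning the shifted objects $\<n_black>_{s,s+t}^\ee$, I would use the algebraic relation that $\<n_black>^\ee_{s,t}$ expands in terms of $(\<1_black>^\ee_{s,t})^j$ and $\Re^\ee$ in precisely the way that makes the binomial identity
\begin{equation*}
\<n_black>^\ee_{s,t} = \sum_{k=0}^n \binom{n}{k}(-1)^k\Big(S(t-s)\<1_black>^\ee_{-\infty,s}\Big)^k\,\<n-k_black>^\ee_{-\infty,t}
\end{equation*}
hold at the discrete level, mirroring \eqref{eq:shift_wick}; this follows from the convolution identity $\<1_black>^\ee_{s,t} = \<1_black>^\ee_{-\infty,t} - S(t-s)\<1_black>^\ee_{-\infty,s}$ together with the Wick binomial theorem for Hermite polynomials. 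The difference $\<n_black>^\ee_{s,s+t}-\<n_black>_{s,s+t}$ then splits into $n+1$ terms; for each $k$ one telescopes
\begin{equation*}
A_\ee^k B_\ee - A^k B = (A_\ee^k - A^k) B_\ee + A^k (B_\ee - B),
\end{equation*}
with $A_\ee = S(t)(-\<1_black>^\ee_{-\infty,s})$, $B_\ee = \<n-k_black>^\ee_{-\infty,s+t}$. Each factor is controlled exactly as in the proof of Proposition \ref{prop:time_bounds}: the smoothing bounds of Propositions \ref{prop:Heat_Smooth} and \ref{prop:Mult_Ineq_II} absorb the $s^{-k\frac32\bar\al}$ singularity against the weight $t^{(n-1)\al' p}$ (using $\bar\al < \al\wedge\frac23\al'$), while the $\CC^{-\bar\al}$-norms of the differences $A_\ee - A$ and $B_\ee - B$ go to zero in $L^p$ by the first limit (applied at level $1$ and at level $n-k$). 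A Cauchy--Schwarz step, as in Proposition \ref{prop:time_bounds}, handles the products of norms, and one concludes by dominated convergence / Hölder.

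Finally, the qualitative consequences. Independence of $\<n_black>_{s,s+\cdot}^\ee$ from $\mathcal{F}_s$ is immediate at the approximation level: $\<1_black>^\ee_{s,s+t}$ is built only from the noise on $(s,s+t]\times\TT^2$, hence is $\sigma(\xi|_{(s,\infty)\times\TT^2})$-measurable and independent of $\tilde{\mathcal{F}}_s$, and therefore of its augmentation $\mathcal{F}_s$; the same then holds for every $\<n_black>^\ee_{s,s+t} = \mathcal{H}_n(\<1_black>^\ee_{s,s+t},\Re^\ee)$. Independence is preserved under the $L^p$ (hence in-probability, hence a.s.\ along a subsequence) limit, giving independence of $\<n_black>_{s,s+\cdot}$ from $\mathcal{F}_s$. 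For the equality in law for different shifts $s_1,s_2$, I would note that the approximating processes $\<n_black>^\ee_{s,s+\cdot}$ are explicit measurable functionals of $\xi|_{(s,\infty)\times\TT^2}$ whose joint law is invariant under the time-translation $\xi \mapsto \xi(\cdot + (s_2-s_1),\cdot)$ of white noise; passing to the limit gives $\<n_black>_{s_1,s_1+\cdot}\stackrel{\text{law}}{=}\<n_black>_{s_2,s_2+\cdot}$ as processes in $C^{n,-\al}(0;T)$.

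The main obstacle I expect is the first limit: one needs the explicit second-moment kernel estimate on the Fourier coefficients of $\<n_black>_{-\infty,t}^\ee - \<n_black>_{-\infty,t}$ with a genuine power of $\ee$ gained, uniformly in $t_0$ and $t$, and then the careful invocation of the Besov-valued Kolmogorov criterion of \cite{MWe15} to turn this into the supremum-in-time bound — this is where the real analytic work sits, everything downstream being algebra plus the interpolation/smoothing estimates already used in Proposition \ref{prop:time_bounds}.
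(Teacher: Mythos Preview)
Your proposal is correct and follows essentially the same route as the paper: reduce the shifted case to the stationary one via the Hermite binomial identity \eqref{eq:shift_wick}, then for $\<n_black>_{-\infty,\cdot}^\ee - \<n_black>_{-\infty,\cdot}$ compute second moments (the paper does this at the level of Littlewood--Paley blocks rather than single Fourier modes, using the nested-convolution bounds of Corollary~\ref{cor:nested_kernel_conv} to extract the $\ee$-gain), and finish with Nelson's estimate, the Kolmogorov criterion, and the Besov embedding exactly as in the proof of Theorem~\ref{thm:mod_sol}. Your treatment of the second limit and of the independence/stationarity consequences is more explicit than the paper's, which simply asserts that the shifted case reduces to the stationary one.
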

 
\begin{proof} See Appendix \hyperref[proof:diagram_convergence]{E}.
\end{proof}

 An immediate consequence of the above proposition is the following corollary which we later use in Section \ref{s:Existence_of_Invariant_Measures} to
prove the Markov property. 

\begin{corollary}\label{prop:t+h_expansion} For every $n\geq 1$ and $t,h> 0$ the following identity holds $\PP$-almost
surely,
 \begin{align}\label{eq:0_equal}
  \<n_black>_{0,t+h} = \sum_{k=0}^n \binom{n}{k} \Big (S(h)\<1_black>_{0,t} \Big)^k \, \<n-k_black>_{t,t+h}.
 \end{align}
\end{corollary}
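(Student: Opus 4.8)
The plan is to prove the identity \eqref{eq:0_equal} by establishing it first at the level of the $\varepsilon$-approximations, where everything is an honest pointwise product of smooth functions, and then passing to the limit $\varepsilon \to 0^+$ using Proposition~\ref{prop:diagram_convergence}. The key algebraic fact underlying the identity is the binomial-type addition formula for Hermite polynomials: if $\HH_n$ is defined by \eqref{eq:hermite_pol}, then for $X = Y + W$ one has $\HH_n(Y+W, C) = \sum_{k=0}^n \binom{n}{k} W^k \HH_{n-k}(Y,C)$. I would apply this with $Y = \<1_black>^\ee_{t,t+h}(z)$, $W = S(h)\<1_black>^\ee_{0,t}(z)$, so that $Y + W = \<1_black>^\ee_{0,t+h}(z)$ by the semigroup/Duhamel decomposition of the stochastic heat equation (the analogue of the identity $\<1_black>_{s,t} = \<1_black>_{-\infty,t} - S(t-s)\<1_black>_{-\infty,s}$ displayed in the text, now with finite base point and mollification). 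Note that the renormalization constant $\Re^\ee$ does not depend on the base time, which is exactly why the same $C = \Re^\ee$ appears in all the Hermite polynomials and the addition formula applies cleanly; this is the point where the choice to work with shifted Wick powers pays off.

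First I would record the deterministic decomposition $\<1_black>^\ee_{0,t+h} = \<1_black>^\ee_{t,t+h} + S(h)\<1_black>^\ee_{0,t}$, which holds pointwise in $z$ and is immediate from \eqref{eq:stoc_heat_eq_0_sol} and linearity of the stochastic integral (splitting the time integral $\int_0^{t+h}$ into $\int_0^t + \int_t^{t+h}$ and applying $S(h)$ to the first piece). Then, applying the Hermite addition formula at each point $z \in \TT^2$ and using the definition $\<n_black>^\ee_{s,t} = \HH_n(\<1_black>^\ee_{s,t}, \Re^\ee)$ together with $\<n-k_black>^\ee_{t,t+h} = \HH_{n-k}(\<1_black>^\ee_{t,t+h}, \Re^\ee)$, I obtain
\begin{equation*}
\<n_black>^\ee_{0,t+h} = \sum_{k=0}^n \binom{n}{k} \big(S(h)\<1_black>^\ee_{0,t}\big)^k \, \<n-k_black>^\ee_{t,t+h}
\end{equation*}
as an identity of smooth functions on $\TT^2$, valid for every $\varepsilon > 0$ and $\PP$-almost surely.

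It then remains to pass to the limit. By Proposition~\ref{prop:diagram_convergence}, $\<n_black>^\ee_{0,t+h} \to \<n_black>_{0,t+h}$ and $\<n-k_black>^\ee_{t,t+h} \to \<n-k_black>_{t,t+h}$ in $\CC^{-\al}$ in the relevant $L^p(\Omega)$-sense, and one also controls $\<1_black>^\ee_{0,t} \to \<1_black>_{0,t}$ in $\CC^{-\al}$; the heat semigroup $S(h)$ then smooths this to convergence in $\CC^{\bt}$ for some $\bt > 0$ (Proposition~\ref{prop:Heat_Smooth}), so that the products $\big(S(h)\<1_black>^\ee_{0,t}\big)^k \<n-k_black>^\ee_{t,t+h}$ converge in a space of negative regularity by the multiplicative inequality for Besov spaces (Proposition~\ref{prop:Mult_Ineq_II}, via \eqref{eq:alpha_ineq}). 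Taking a subsequence along which all these convergences hold almost surely and passing to the limit in the $\varepsilon$-identity yields \eqref{eq:0_equal}. The main obstacle is purely bookkeeping: one must fix an ordering of limits and a common subsequence so that all $n+2$ factors converge simultaneously $\PP$-a.s., and one must be slightly careful that the products of distributions are being defined consistently (same paraproduct-based definition at the $\varepsilon$-level and in the limit), but no new estimate beyond those already available is needed.
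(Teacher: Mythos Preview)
Your proposal is correct and follows essentially the same approach as the paper: the paper's proof consists of the single sentence ``It suffices to check \eqref{eq:0_equal} for $\<n_black>_{0,t+h}^\ee$. The result then follows from the previous proposition,'' and you have spelled out precisely the two steps this entails --- the Hermite addition formula applied to the decomposition $\<1_black>^\ee_{0,t+h} = \<1_black>^\ee_{t,t+h} + S(h)\<1_black>^\ee_{0,t}$ at the approximate level, followed by passage to the limit via Proposition~\ref{prop:diagram_convergence}.
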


\begin{proof} 
It suffices to check \eqref{eq:0_equal} for $\<n_black>_{0,t+h}^\ee$. The result then follows from the previous proposition.
\end{proof} 
\tikzsetexternalprefix{./macros/sol_exist/}

\section{Solving the Equation} \label{s:Solving_the_Equation} 

\subsection{Analysis of the problem}

We are interested in solving the following renormalized stochastic partial differential equation,
\begin{equation}\label{eq:RSQEq}
\left\{
\begin{array}{lcll}
\partial_t X & = &\Delta X - X - \sum_{k=0}^n a_k :X^k: + \xi, & \text{in } \mathbb{R}_{+}\times \mathbb{T}^2 \\
X(0,\cdot) & = & x, & \text{on } \mathbb{T}^2
\end{array}
\right.,
\end{equation}
where $:X^k:$ stands for the $k$-th Wick power of $X$ and $x\in \CC^{-\al_0}$. Motivated by the da Prato--Debussche method (see 
\cite{dPD03}) we search for solutions to \eqref{eq:RSQEq} by writing $X=\<1_black>_{0,\cdot}+v$, where $\<1_black>_{0,\cdot}$ is the solution 
to \eqref{eq:stoc_heat_eq_0} and  the remainder $v$ is a mild solution of the following random partial differential equation,
\begin{equation}\label{eq:Remainder_Eq_1}
\left\{
\begin{array}{lcll}
\partial_t v & = &\Delta v - v - \sum_{k=0}^n a_k \sum_{j=0}^k \binom{k}{j} v^{j} \,\<k-j_black>_{0,\cdot} \\
v(0,\cdot) & = & x
\end{array}
\right..
\end{equation}
\begin{remark} In \cite{MWe15} $\<1_black>_{0,\cdot}$ is started from $x$ and consequently there \eqref{eq:Remainder_Eq_1} 
is solved with zero initial condition. Our approach of starting $\<1_black>_{0,\cdot}$ from $0$ and the remainder $v$ from $x$ has the advantage 
that the strong non-linear damping  in \eqref{eq:Remainder_Eq_1} acts directly on the initial condition, yielding  a strong dissipative bound for $v$ 
that is independent of $x$ (see Proposition \ref{prop:A_priori_Bounds}).
\end{remark}

We can rewrite \eqref{eq:Remainder_Eq_1} as
\begin{equation}\label{eq:Remainder_Eq_2}
\left\{
\begin{array}{lcll}
\partial_t v & = &\Delta v - v - \sum_{j=0}^{n} v^j Z^{(n-j)} \\
v(0,\cdot) & = & x
\end{array}
\right.,
\end{equation}
where 
$$
Z^{(n-j)}=\sum_{k=j}^n a_k \binom{k}{j} \<k-j_black>_{0,\cdot},  
$$
for all $0\leq j \leq n-1$ and $Z^{(0)}=a_n$.  

Notice that for every $\al\in(0,1)$, $\underline{Z}\in C^{n,-\al}(0;T)$ (see \eqref{eq:Z_space} for the definition of the space), 
for every $T>0$, and by \eqref{eq:log_div} for every $\al'>0$ there exists $\theta>0$ such that
\begin{equation}\label{eq:log_div_Z}
 \EE\VVert{\underline{Z}}_{\al;\al';t}^p \leq C t^{p\theta}
\end{equation}
for every $t\leq T$, $p\geq 2$. 

We now fix $\al<\al_0$ small enough (the precise value is fixed below in the proof of Theorem 
\ref{thm:Local_Ex_Un}) and $\underline{Z}\in C^{n,-\al}(0;T)$, for every $T>0$, and a norm 
$\VVert{\cdot}_{\al;\al';T}$, for some $\al'>0$ but still sufficiently small. We furthermore let  
\begin{equation}\label{eq:non_lin}
F(v,\underline{Z}) := \sum_{j=0}^n v^j Z^{(n-j)}.
\end{equation} 

\subsection{Mild Solutions} \label{s:Mild_Solutions} 

We are interested in solutions to the PDE problem \eqref{eq:Remainder_Eq_2}.

\begin{definition}\label{def:Mild_Sol_Rem} Let $T>0$ and $x\in \CC^{-\al_0}$. We say that a function $v$ is a 
mild solution of \eqref{eq:Remainder_Eq_2} up to time $T$ if $v\in C((0,T];\CC^{\beta})$ and
\begin{equation}\label{eq:mild_form}
 v_t=S(t)x-\int_0^t S(t-s)F(v_s,\underline{Z}_s) \dd s,
\end{equation}
for every $t\leq T$. 
\end{definition} 

The next theorem implies the existence of local in time solutions to \eqref{eq:Remainder_Eq_2}.  

\begin{theorem} \label{thm:Local_Ex_Un} {\normalfont(\cite[Proposition 4.4]{dPD03}, \cite[Theorem 6.2]{MWe15})} Let $x\in\CC^{-\al_0}$ and $R>0$ such that 
$\|x\|_{\CC^{-\al_0}}\leq R$. Then for every $\bt,\gamma>0$ satisfying \eqref{eq:beta_gamma_cond} 
and $T>0$ there exists 
$T^*\equiv T^*(R,\VVert{\underline{Z}}_{\al;\al';T})\leq T$ such that \eqref{eq:Remainder_Eq_2} 
has a unique mild solution on $[0,T^*]$ and
\begin{equation*}
 \sup_{0\leq s\leq T^*}\, s^{\gamma}\|v_s\|_{\CC^{\beta}} \leq 1.
\end{equation*}
If we furthermore assume that $\VVert{\underline{Z}}_{\al;\al';T}\leq 1$, then there exists $\theta>0$ and a 
constant $C>0$ independent of $R$ such that
\begin{equation}\label{eq:T^*_form}
 T^* = \left(\frac{1}{C(R+1)}\right)^{\frac{1}{\theta}}.
\end{equation} 
\end{theorem}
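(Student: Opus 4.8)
The plan is to prove local existence and uniqueness by a contraction mapping (Banach fixed point) argument applied to the map defined by the right-hand side of the mild formulation \eqref{eq:mild_form}. Fix $R>0$ with $\|x\|_{\CC^{-\al_0}}\leq R$, fix $T>0$, and set $M := \VVert{\underline{Z}}_{\al;\al';T}$. For parameters to be chosen I would work in the complete metric space
\begin{equation*}
 \mathcal{B}_{T^*} := \left\{ v \in C((0,T^*];\CC^\beta) : \sup_{0\leq s\leq T^*} s^\gamma \|v_s\|_{\CC^\beta} \leq 1 \right\},
\end{equation*}
equipped with the metric induced by the norm $\VVert{v}:=\sup_{0\leq s\leq T^*} s^\gamma\|v_s\|_{\CC^\beta}$, and define
\begin{equation*}
 (\mathcal{M}v)_t := S(t)x - \int_0^t S(t-s) F(v_s,\underline{Z}_s)\dd s .
\end{equation*}
The goal is to choose $T^*\leq T$, depending only on $R$ and $M$, so that $\mathcal{M}$ maps $\mathcal{B}_{T^*}$ into itself and is a strict contraction there.

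The core of the argument is a set of quantitative estimates on the Duhamel term. For the linear part, the smoothing property of the heat semigroup (Proposition \ref{prop:Heat_Smooth}) gives $\|S(t)x\|_{\CC^\beta}\lesssim t^{-(\beta+\al_0)/2}\|x\|_{\CC^{-\al_0}}$, and the first condition in \eqref{eq:beta_gamma_cond}, namely $(\beta+\al_0)/2<\gamma$, ensures $\sup_{0\leq t\leq T^*} t^\gamma\|S(t)x\|_{\CC^\beta}\lesssim (T^*)^{\gamma-(\beta+\al_0)/2} R$, which is small for $T^*$ small. For the nonlinear part I would estimate each monomial $v^j Z^{(n-j)}$ in $F$: using the multiplication inequalities (Proposition \ref{prop:Mult_Ineq_II}) together with the paraproduct bound for the product of a $\CC^\beta$ function with a $\CC^{-\al}$ distribution (valid since $\beta-\al>0$ for $\al$ chosen small), one gets $\|v_s^j Z^{(n-j)}_s\|_{\CC^{-\al}} \lesssim \|v_s\|_{\CC^\beta}^j \|Z^{(n-j)}_s\|_{\CC^{-\al}} \lesssim s^{-j\gamma} s^{-(n-j-1)\al'} M$ on $\mathcal{B}_{T^*}$ (using $s^\gamma\|v_s\|_{\CC^\beta}\leq1$ and the definition of $\VVert{\cdot}_{\al;\al';T}$). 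Then $\|S(t-s)F(v_s,\underline{Z}_s)\|_{\CC^\beta}\lesssim (t-s)^{-(\beta+\al)/2}\|F(v_s,\underline{Z}_s)\|_{\CC^{-\al}}$, and I would bound $t^\gamma\int_0^t (t-s)^{-(\beta+\al)/2} s^{-j\gamma-(n-j-1)\al'}\dd s$ by a Beta-function computation; the integral converges provided $(\beta+\al)/2<1$ and $j\gamma+(n-j-1)\al'<1$ for all $0\leq j\leq n$, both of which follow from $\beta/2+n\gamma<1$ in \eqref{eq:beta_gamma_cond} once $\al,\al'$ are taken small enough, and leaves a positive power of $t$, hence of $T^*$. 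Summing over $j=0,\dots,n$ yields $\VVert{\mathcal{M}v}\leq C(T^*)^{\theta_0}R + C(T^*)^{\theta_1}M$ for suitable exponents $\theta_0,\theta_1>0$ and $C$ independent of $R$.

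For the contraction property I would estimate $\mathcal{M}v-\mathcal{M}w$ the same way, using the telescoping identity $v^j - w^j = (v-w)\sum_{i=0}^{j-1} v^i w^{j-1-i}$ so that $\|v_s^j Z^{(n-j)}_s - w_s^j Z^{(n-j)}_s\|_{\CC^{-\al}} \lesssim \|v_s-w_s\|_{\CC^\beta}\, s^{-(j-1)\gamma - (n-j-1)\al'} M$ for $v,w\in\mathcal{B}_{T^*}$; the same Beta-integral bound gives $\VVert{\mathcal{M}v-\mathcal{M}w}\leq C(T^*)^{\theta_1} M \,\VVert{v-w}$. Now choose $T^*\leq T$ small enough (depending only on $R$ and $M$, hence on $R$ and $\VVert{\underline{Z}}_{\al;\al';T}$) that $C(T^*)^{\theta_0}R + C(T^*)^{\theta_1}M \leq 1$ and $C(T^*)^{\theta_1}M \leq \tfrac12$; Banach's fixed point theorem then produces a unique $v\in\mathcal{B}_{T^*}$ with $\mathcal{M}v=v$, which is the asserted mild solution with $\sup_{0\leq s\leq T^*} s^\gamma\|v_s\|_{\CC^\beta}\leq1$. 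Continuity $v\in C((0,T^*];\CC^\beta)$ is checked separately from the mild formula using continuity of $t\mapsto S(t)$ in the relevant topology away from $0$. For the final claim, under the extra hypothesis $M=\VVert{\underline{Z}}_{\al;\al';T}\leq 1$ the smallness condition reduces to $C'(T^*)^{\theta}(R+1)\leq 1$ for $\theta=\theta_0\wedge\theta_1$ and a new constant $C'$ independent of $R$, and solving for the largest admissible $T^*$ gives exactly $T^* = \big(\tfrac{1}{C(R+1)}\big)^{1/\theta}$ as in \eqref{eq:T^*_form} (shrinking, if necessary, so that $T^*\leq T$).

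I expect the main obstacle to be the careful bookkeeping of the exponents: one must simultaneously fix $\al<\al_0$, $\beta$, $\gamma$, and $\al'$ small enough that (i) the products $v^j Z^{(n-j)}$ make sense in $\CC^{-\al}$ (requires $\beta>\al$), (ii) the linear term is subcritical ($(\beta+\al_0)/2<\gamma$, which is the first inequality in \eqref{eq:beta_gamma_cond} with $\al_0$ in place of $\al$, so one needs $\al$ slightly below $\al_0$), and (iii) every Beta integral $\int_0^t (t-s)^{-(\beta+\al)/2} s^{-j\gamma-(n-j-1)\al'}\dd s$ converges with a strictly positive leftover power of $t$, which forces $(\beta+\al)/2 + j\gamma + (n-j-1)\al' < 1$ for all $j\leq n$ — the worst case $j=n$ giving $(\beta+\al)/2 + n\gamma<1$, a small perturbation of the second inequality in \eqref{eq:beta_gamma_cond}. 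All of these are open conditions compatible with \eqref{eq:beta_gamma_cond}, so a valid choice exists; the only real work is verifying they can be met simultaneously and tracking that the constant $C$ in the smallness condition is genuinely independent of $R$ (it depends only on $n$, the $a_k$, and the fixed exponents, with the $R$-dependence entirely explicit through the linear term).
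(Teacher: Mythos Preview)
Your proposal is correct and follows essentially the same contraction-mapping argument as the paper's proof, in the same weighted ball $\mathcal{B}_{T^*}$ with the same use of Proposition~\ref{prop:Heat_Smooth} and Proposition~\ref{prop:Mult_Ineq_II}; the paper simplifies slightly by assuming $\al'<\gamma$ up front so that every monomial is dominated by the worst case $s^{-n\gamma}$, whereas you track each $j$ separately, but the outcome is identical. One small slip: in your contraction estimate the leading term $j=n$ has $Z^{(0)}=a_n$ constant, so the Lipschitz constant is $C(T^*)^{\theta_1}(1+M)$ rather than $C(T^*)^{\theta_1}M$ --- this is harmless for the conclusion.
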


\begin{proof}
This theorem is (essentially) proved in \cite[Theorem 6.2]{MWe15}, but the expression \eqref{eq:T^*_form}
is not made explicit there; we give a sketch. It is sufficient to prove that for $T^*$ as in \eqref{eq:T^*_form}
the  operator 
\begin{equation*}
\mathscr{M}_{T^*} v_t = S(t) x + \int_0^t S(t-s)F(v_s,\underline{Z}_s) \dd s 
\end{equation*}
is a contraction on the set $\mathscr{B}_{T^*}:=\{ \sup_{0\leq s\leq T^*} s^\gamma \|v_s\|_{\CC^\bt}\leq 1 \}$, i.e. we need to show that 
$\mathscr{M}_{T^*}$ maps $\mathscr{B}_{T^*}$ into itself and that for $v, \tilde v \in \mathscr{B}_{T^*}$  we have  $ \sup_{0\leq s\leq T^*} s^\gamma \|\mathscr{M}_{T^*}v_s - \mathscr{M}_{T^*} \tilde v_s \|_{\CC^\bt}\leq (1-\lambda) \sup_{0\leq s\leq T^*} s^\gamma \| v_s -  \tilde v_s \|_{\CC^\bt} $ for some $\lambda >0$.
We only show the first property. First notice that
\begin{align*}
 \| \mathscr{M}_{T^*}v_t\|_{\CC^\bt} \lesssim t^{-\frac{\bt+\al_0}{2}} \|x\|_{\CC^{-\al_0}} 
 +\int_0^t (t-s)^{-\frac{\al +\bt}{2}} s^{- n\gamma} \dd s,
\end{align*}
where we use Proposition \ref{prop:Heat_Smooth} and we furthermore assume that $\al'<\gamma$. Choosing 
$\al>0$ sufficiently small so that $\frac{\al+\bt}{2}+n\gamma<1$ (see also \eqref{eq:beta_gamma_cond}) we have 
that
\begin{align*}
 \|  \mathscr{M}_{T^*}v_t\|_{\CC^\bt} \lesssim t^{-\frac{\bt+\al_0}{2}} \|x\|_{\CC^{-\al_0}} + t^{1-\frac{\al+\bt}{2}-n\gamma} 
\end{align*} 
and multiplying both sides by $t^\gamma$ we obtain that
\begin{align*}
 t^\gamma \| \mathscr{M}_{T^*}v_t\|_{\CC^\bt} & \lesssim t^{\gamma - \frac{\bt+\al_0}{2}} R + t^{1-\frac{\al+\bt}{2}-(n-1)\gamma} \\
 & \lesssim t^\theta (R+1).
\end{align*}
Then, for $T^* \equiv T^*(R)$ as in \eqref{eq:T^*_form} and every $t\leq T^*$ we get that
\begin{equation*}
 \sup_{0\leq s\leq t} s^\gamma \| \mathscr{M}_{T^*}v_s\|_{\CC^{\bt}} \leq 1,
\end{equation*}
which implies that indeed $ \mathscr{M}_{T^*}$  maps $ \mathscr{B}_{T^*}$ into itself.  
\end{proof} 

The next proposition is a stability result which we use later on in Section \ref{s:Strong_Feller_Property}. We first 
introduce some extra notation.  Let $\{\underline{Z}^\ee\}_{\ee\in(0,1)}$ take values in $C^{n,-\al}(0;T)$  such that
\begin{align*}
 \lim_{\ee\to 0^+} \VVert{\underline{Z}^\ee-\underline{Z}}_{\al;\al';T} = 0.
\end{align*}
Furthermore, let $F_\ee = \hat \Pi_\ee F$,  where $\hat \Pi_\ee$ is a linear smooth approximation such that the following properties hold 
for every $\al\in (0,1)$,
\begin{enumerate}[i.]
 \item \label{it:i} $\|\hat \Pi_\ee\|_{\CC^{-\al}\to \CC^{-\al}}\leq C$, for every $\ee\in(0,1)$. 
 \item \label{it:ii} For every $\delta >0$ there exists $\theta\equiv \theta(\delta)$ such that  
 $$
 \|\hat \Pi_\ee x - x\|_{\CC^{-\al-\delta}} \leq C \ee^\theta \|x\|_{\CC^{-\al}}.
 $$
\end{enumerate}
One can check that 
$\hat \Pi_\ee = \sum_{-1\leq \kk < \log_2 \ee^{-1}} \delta_\kk$ is such a linear smooth approximation.

Denote by $v^\ee$ the corresponding mild solution of \eqref{eq:Remainder_Eq_2} with $F$ replaced by $F_\ee$, $\underline{Z}$ by $\underline{Z}^\ee$ and 
initial condition $x^\ee = \hat \Pi_\ee x$ (short time existence of $v^\ee$ is ensured by the same arguments as in the proof of
\cite[Theorem 6.1]{MWe15}). We then have the following proposition.

\begin{proposition} \label{prop:approx_eq} Let $v$ be the unique solution to \eqref{eq:Remainder_Eq_2} 
on a closed interval $[0,T^*]$ (i.e. the solution does not explode at $T^*$).
Then for every $\ee\in (0,1)$ there exists a unique solution $v_\ee$ to the approximate equation up to some (possibly infinite) 
explosion time $T^*_\ee$. Furthermore, there exists $\ee_0>0$ such that for every $\ee<\ee_0$, $T^*_\ee \geq T^*$, and we have
\begin{equation*}
 \lim_{\ee\to 0^+} \sup_{0\leq t\leq T^*_\ee\wedge T^*} t^\gamma \|v_t - v^\ee_t\|_{\CC^{\bt}} = 0.
\end{equation*}
\end{proposition}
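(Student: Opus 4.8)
The plan is to run the standard continuous-dependence argument for the Picard iteration underlying Theorem \ref{thm:Local_Ex_Un}: first a short-time comparison on an interval $[0,\tau_0]$ where the $t^{\gamma}$-weight is needed, and then propagation of the estimate along $[0,T^*]$ by a \emph{finite} iteration, using that $v$ does not explode on the closed interval. The existence and uniqueness of $v^\ee$ up to an explosion time $T^*_\ee$ is local well-posedness for the approximate equation, which follows from the very same fixed-point argument as in Theorem \ref{thm:Local_Ex_Un} applied to $F_\ee=\hat\Pi_\ee F$; note that $\hat\Pi_\ee$ is a Fourier multiplier which is bounded on $\CC^\al$ uniformly in $\ee$ and in $\al$ over the relevant range (this includes property \eqref{it:i}), so $F_\ee$ has the same polynomial structure as $F$ with $\ee$-uniform constants, and consequently the fixed-point constants (local existence time, contraction ratio) for $\mathscr{M}^\ee$ are uniform in $\ee$. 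Since $\VVert{\underline{Z}^\ee-\underline{Z}}_{\al;\al';T}\to0$, we have $R_1:=\sup_{\ee}\VVert{\underline{Z}^\ee}_{\al;\al';T}<\infty$, and by property \eqref{it:i}, $\|x^\ee\|_{\CC^{-\al_0}}=\|\hat\Pi_\ee x\|_{\CC^{-\al_0}}\le C\|x\|_{\CC^{-\al_0}}=:CR$. Hence there is $\tau_0=\tau_0(R,R_1)>0$, independent of $\ee$, such that $v$ and $v^\ee$ both exist on $[0,\tau_0]$ and lie in $\mathscr{B}_{\tau_0}$, and the solution operators $\mathscr{M}$ (exact) and $\mathscr{M}^\ee$ (approximate) are contractions there with a common ratio $1-\lambda$ in the norm $\|w\|_\star:=\sup_{0\le s\le\tau_0}s^\gamma\|w_s\|_{\CC^{\bt}}$.

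On $[0,\tau_0]$ I would use $v=\mathscr{M}v$, $v^\ee=\mathscr{M}^\ee v^\ee$ and write
\[
 \|v-v^\ee\|_\star\le\|\mathscr{M}v-\mathscr{M}v^\ee\|_\star+\|\mathscr{M}v^\ee-\mathscr{M}^\ee v^\ee\|_\star\le(1-\lambda)\|v-v^\ee\|_\star+\rho_\ee,
\]
so that $\|v-v^\ee\|_\star\le\rho_\ee/\lambda$, where $\rho_\ee:=\|\mathscr{M}v^\ee-\mathscr{M}^\ee v^\ee\|_\star$ collects three contributions. First, the initial-datum term $S(t)(x-\hat\Pi_\ee x)$: by Proposition \ref{prop:Heat_Smooth} and property \eqref{it:ii} this is bounded in $\CC^{\bt}$ by $Ct^{-\frac{\bt+\al_0+\delta}{2}}\ee^\theta\|x\|_{\CC^{-\al_0}}$, and after multiplying by $t^\gamma$ it is $o(1)$ provided $\delta$ is small enough that $\gamma>\frac{\bt+\al_0+\delta}{2}$, which is possible by \eqref{eq:beta_gamma_cond}. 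Second, the term $\int_0^t S(t-s)\big(F(v^\ee_s,\underline{Z}_s)-F(v^\ee_s,\underline{Z}^\ee_s)\big)\dd s$: since $v^\ee\in\mathscr{B}_{\tau_0}$, the very estimate used for the contraction property in Theorem \ref{thm:Local_Ex_Un} bounds this (after the $t^\gamma$-weighting) by $C\,\VVert{\underline{Z}-\underline{Z}^\ee}_{\al;\al';T}\to0$. Third, the term $\int_0^tS(t-s)(\mathrm{Id}-\hat\Pi_\ee)F(v^\ee_s,\underline{Z}^\ee_s)\dd s$: using property \eqref{it:ii} on $F(v^\ee_s,\underline{Z}^\ee_s)\in\CC^{-\al}$ together with the smoothing of $S$ and the integrability exponents coming from \eqref{eq:beta_gamma_cond} (in particular $n\gamma<1$ and $\tfrac{\al+\bt}{2}<1$), this is $\le C\ee^{\theta}\to0$. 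Altogether $\sup_{0\le t\le\tau_0}t^\gamma\|v_t-v^\ee_t\|_{\CC^{\bt}}\to0$; in particular $\|v_{\tau_0}-v^\ee_{\tau_0}\|_{\CC^{\bt}}\to0$.

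Finally I would propagate on $[\tau_0,T^*]$, where the $t^{-\gamma}$ singularity is absent. Since $v$ solves the equation on the closed interval $[0,T^*]$, $M:=1+\sup_{\tau_0\le t\le T^*}\|v_t\|_{\CC^{\bt}}<\infty$. By local well-posedness in $C([\cdot,\cdot];\CC^{\bt})$ (same fixed point, no time weight) there is $h=h(M,R_1)>0$, independent of $\ee$, such that from any datum of $\CC^{\bt}$-norm $\le2M$ the solutions of both equations exist for a time $h$, stay within $\CC^{\bt}$-norm $2M$, and the solution maps are contractions there with a common ratio. I would chop $[\tau_0,T^*]$ into finitely many subintervals of length $\le h$ and run the same three-term comparison on each, inductively: at the left endpoint of a subinterval $v^\ee$ is within $o(1)$ of $v$ (hence of norm $\le2M$ for $\ee$ small), so on that subinterval $\sup\|v_t-v^\ee_t\|_{\CC^{\bt}}\le\rho_\ee'/\lambda$ with $\rho_\ee'\to0$ (the three contributions are now the difference of the restarting data, an $\ee$-difference of $F$, and $(\mathrm{Id}-\hat\Pi_\ee)F$, all $o(1)$ by the same estimates), and in particular the right endpoint is again $o(1)$-close; after the fixed finite number of steps the accumulated error is still $o(1)$. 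Because $v^\ee$ stays bounded on each subinterval it does not explode before $T^*$, so $T^*_\ee\ge T^*$ for $\ee<\ee_0$. Combining the two regimes and using $t^\gamma\le(T^*)^\gamma$ on $[\tau_0,T^*]$ yields $\sup_{0\le t\le T^*_\ee\wedge T^*}t^\gamma\|v_t-v^\ee_t\|_{\CC^{\bt}}\to0$.

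The main obstacle is the bookkeeping in the iteration on $[\tau_0,T^*]$: one must ensure the subinterval length $h$ and the contraction ratio $\lambda$ stay bounded away from $0$ along the whole interval — which is exactly what non-explosion of $v$ on the \emph{closed} interval provides — and that the $o(1)$ errors do not degrade through the fixed, finite number of composition steps. The short-time analysis near $0$ requires only the additional care of carrying the $t^\gamma$-weight and choosing the regularity loss $\delta$ compatible with \eqref{eq:beta_gamma_cond}.
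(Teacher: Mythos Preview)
Your argument is correct and rests on the same continuous-dependence machinery as the paper, but the bookkeeping is organised differently. The paper works with the single weighted norm $\sup_{t\le \tau^\ee\wedge T^*} t^\gamma\|\cdot\|_{\CC^\bt}$ throughout and introduces the stopping time $\tau^\ee=\inf\{t\le T^*_\ee: t^\gamma\|v_t-v^\ee_t\|_{\CC^\bt}>1\}$ to keep $v^\ee$ a priori close to $v$ (and hence bounded); it then estimates $v-v^\ee$ directly from the mild formulation, absorbs the $\|v-v^\ee\|$-term by choosing a subinterval length $\tilde T^*(M,N)$ depending only on $M=\sup_{t\le T^*}t^\gamma\|v_t\|_{\CC^\bt}$ and $N=\VVert{\underline{Z}}_{\al;\al';T}$, iterates, and concludes $\tau^\ee\wedge T^*=T^*$ a posteriori. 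You instead use the contraction property of the fixed-point map on $\mathscr{B}_{\tau_0}$ to absorb the $\|v-v^\ee\|$-term on the first interval without any stopping time, and then pass to an unweighted $C([\cdot,\cdot];\CC^\bt)$ fixed point on $[\tau_0,T^*]$ for the propagation. The three-term error decomposition (initial datum, $\underline{Z}$ vs.\ $\underline{Z}^\ee$, and $(\mathrm{Id}-\hat\Pi_\ee)F$) is essentially the same in both proofs, as is the use of property~\ref{it:ii} together with a small regularity loss $\delta$ compatible with \eqref{eq:beta_gamma_cond}. Your two-regime layout makes the finiteness of the iteration and the choice of $\ee_0$ more explicit; the paper's stopping-time device is more compact and avoids the split into weighted/unweighted regimes.
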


\begin{proof}  Let $\delta >0$ such that
$$
  \frac{\al_0+\delta+\bt}{2} + n\gamma <1.
$$
For $\ee\in(0,1)$ we notice that
\begin{equation*}
 v_t-v_t^\ee = S(t)\left(x - x^\ee\right) - \int_0^t S(t-s) \left( F(v_s,\underline{Z}_s) 
 - F_\ee(v_s^\ee,\underline{Z}_s^\ee)\right) \dd s
\end{equation*}
and using \eqref{eq:Heat_Smooth} and property \ref{it:ii} of $\hat \Pi_\ee$ we get 
\begin{equation*}
 \|v_t-v^\ee_t\|_{\CC^{\bt}} \lesssim t^{-\frac{\al_0+\delta+\bt}{2}} \ee^\theta \|x\|_{\CC^{-\al_0}}
 + \int_0^t (t-s)^{-\frac{\al+\delta+\bt}{2}} \|F(v_s,\underline{Z}_s) - 
 F_\ee(v^\ee_s,\underline{Z}_s^\ee)\|_{\CC^{-\al-\delta}} \dd s.
\end{equation*}
Using the triangle inequality as well as the properties \ref{it:i} and \ref{it:ii} of $\hat \Pi_\ee$ we have that
\begin{align*}
 \|F(v_s,\underline{Z}_s) - F_\ee(v^\ee_s,\underline{Z}_s^\ee)\|_{\CC^{-\al-\delta}} & \lesssim 
 \ee^\theta \|F(v_s,\underline{Z}_s)\|_{\CC^{-\al}} + \| F(v_s, \underline{Z}_s) 
 - F(v_s^\ee,\underline{Z}_s)\|_{\CC^{-\al}} \\
 & + \|F(v_s^\ee, \underline{Z}_s) - F(v_s^\ee,\underline{Z}^\ee_s)\|_{\CC^{-\al}}.
\end{align*} 
Let $M= \sup_{t\leq T^*} t^\gamma \|v_t\|_{\CC^\bt}$, $N = \VVert{\underline{Z}}_{\al;\al';T}$ and
$\tau^\ee = \inf\{ t>0, \, t\leq T^*_\ee: t^\gamma \|v_t-v^\ee_t\|_{\CC^{\bt}}>1\}$. Then, for every 
$t\leq \tau^\ee\wedge T^*$, we have the following bounds,
\begin{align*}
 \|F(v_s,\underline{Z}_s)\|_{\CC^{-\al}} & \leq C_1 s^{-n\gamma}, \\
 \|F(v_s, \underline{Z}_s) - F(v_s^\ee,\underline{Z}_s)\|_{\CC^{-\al}} & \leq C_2 s^{-(n-1)\gamma} 
 \sup_{t\leq \tau^\ee\wedge T^*} t^\gamma \|v_t-v_t^\ee\|_{\CC^{\bt}}, \\
 \|F(v_s^\ee, \underline{Z}_s) - F(v_s^\ee,\underline{Z}^\ee_s)\|_{\CC^{-\al}} & \leq C_3 s^{-(n-1)\gamma}
 \VVert{\underline{Z}-\underline{Z}^\ee}_{\al;\al';T},
\end{align*}
where the constants $C_1$, $C_2$ and $C_3$ depend on $M$ and $N$. Thus there exists $C\equiv C(M,N)>0$
such that
\begin{align*} 
 \|v_t-v_t^\ee\|_{\CC^{\bt}} & \leq C \Big( t^{-\frac{\al_0+\delta+\bt}{2}} \ee^\theta \|x\|_{\CC^{-\al_0}} + 
 \ee^\theta t^{1-\frac{\al+\delta+\bt}{2}-n\gamma} \\
 & + \sup_{t\leq \tau^\ee\wedge T^*} t^\gamma \|v_t-v_t^\ee\|_{\CC^{\bt}} t^{1-\frac{\al+\delta+\bt}{2} -(n-1)\gamma} \\
 & + \VVert{\underline{Z}-\underline{Z}^\ee}_{\al;\al';T} t^{1-\frac{\al+\delta+\bt}{2} -(n-1)\gamma}\Big).
\end{align*}
Multiplying by $t^\gamma$ and choosing $\tilde T^*\equiv \tilde T^*(M,N) >0$ sufficiently small we can assure that 
\begin{equation*}
 \sup_{t\leq \tilde T^*} t^\gamma \|v_t-v_t^\ee\|_{\CC^\bt} \leq 
 \ee^\theta \|x\|_{\CC^{-\al_0}} + \VVert{\underline{Z}-\underline{Z}^\ee}_{\al;\al';T} + \ee^\theta.
\end{equation*} 
Iterating the procedure if necessary we find $N^*>0$, independent of $\ee$ since $\tau^\ee\wedge T^*\leq T^*$, 
and $C>0$ such that 
\begin{equation}\label{eq:approx_conv}
 \sup_{t\leq \tau^\ee\wedge T^*} t^\gamma \|v_t-v_t^\ee\|_{\CC^\bt} \leq (N^* C+1) 
 \left(\ee^\theta \|x\|_{\CC^{-\al_0}} + \VVert{\underline{Z}-\underline{Z}^\ee}_{\al;\al';T} + \ee^\theta\right).
\end{equation} 
Let $\ee_0>0$ such that for every $\ee< \ee_0$
\begin{equation*}
 \ee^\theta \|x\|_{\CC^{-\al_0}} + \VVert{\underline{Z}-\underline{Z}^\ee}_{\al;\al';T} + \ee^\theta < 
 \frac{1}{(N^* C+1)}.
\end{equation*}
Then for every $\ee< \ee_0$
\begin{equation*} 
 \sup_{t\leq \tau^\ee\wedge T^*} t^\gamma \|v_t-v_t^\ee\|_{\CC^{\bt}} <1
\end{equation*}
and the definition of $\tau^\ee$ implies that $\tau^\ee\wedge T^* = T^*$, which proves the first claim. For the 
second claim we just let $\ee\to 0^+$ in \eqref{eq:approx_conv}.
\end{proof}
 
\subsection{Weak Solutions} \label{s:Weak_Solutions}

%
%
%
%
%

\begin{proposition} \label{p:p_testing} {\normalfont(\cite[Proposition 6.8]{MWe15})} Let $v\in C\left((0,T];\CC^\beta\right)$ be 
a mild solution to \eqref{eq:Remainder_Eq_2}. Then for all $s_0>0$ and $p\geq 2$ 
\begin{align}
\frac{1}{p}\left( \|v_t\|_{L^p}^p - \|v_{s_0}\|_{L^p}^p\right) & = \label{eq:weak_form}  \\
\int_{s_0}^t \Big(-(p-1) & \langle\nabla v_s, v_s^{p-2}\nabla v_s\rangle -\langle v_s, v_s^{p-1} \rangle 
-\langle F(v_s,\underline{Z}_s), v_s^{p-1} \rangle\Big) \dd s, \nonumber
\end{align}
for all $s_0\leq t\leq T$. In particular, if we differentiate with respect to $t$, 
\begin{equation}\label{eq:weak_form_tested_dif}
 \frac{1}{p} \partial_t\|v_t\|_{L^p}^p= -(p-1) \langle\nabla v_t, v_t^{p-2}\nabla v_t\rangle -\langle v_t, v_t^{p-1} \rangle 
 -\langle F(v_t,\underline{Z}_t), v_t^{p-1} \rangle,
\end{equation} 
for every $t\in(0,T)$. 
\end{proposition}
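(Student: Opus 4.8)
The plan is to obtain the identity \eqref{eq:weak_form} by testing the mild equation \eqref{eq:mild_form} against $v^{p-1}$ and justifying all manipulations by an approximation argument, and then to read off \eqref{eq:weak_form_tested_dif} by differentiating. The key obstacle is that $v$ only lives in $C((0,T];\CC^\bt)$, so the nonlinear term $F(v_s,\underline{Z}_s)$ contains products with the distributions $Z^{(k)}$ of negative regularity and has no pointwise meaning a priori; to make sense of $\langle F(v_s,\underline{Z}_s),v_s^{p-1}\rangle$ and of the integration by parts we need enough spatial regularity. First I would invoke the smoothing in the Duhamel formula: since $F(v_s,\underline{Z}_s)\in\CC^{-\al}$ with $\al$ small and the term $s^{-n\gamma}$ controls the blowup near $0$ (as in the proof of Theorem~\ref{thm:Local_Ex_Un}), the convolution with $S(t-s)$ gives $v\in C((0,T];\CC^{2-\al-\bt'})$ for a suitable exponent, in particular $v_t$ is a genuine function (indeed in $C^1$ in space away from $t=0$) so that $\|v_t\|_{L^p}^p$ and $\langle\nabla v_t,v_t^{p-2}\nabla v_t\rangle$ are well-defined and finite for $t>0$.

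The core computation is the standard energy estimate for the mild solution. Fix $0<s_0\le t\le T$. On $[s_0,T]$ the map $s\mapsto v_s$ is continuous into a space of smooth-enough functions and solves the PDE $\partial_t v=\Delta v-v-F(v,\underline{Z})$ in the mild sense; I would differentiate $\frac1p\|v_s\|_{L^p}^p$ in $s$, using that $\partial_s v_s=\Delta v_s - v_s - F(v_s,\underline{Z}_s)$ holds (the right-hand side being continuous in $\CC^{-\al}$, hence the identity can be integrated), to get
\begin{equation*}
\frac1p\partial_s\|v_s\|_{L^p}^p=\langle \Delta v_s,v_s^{p-1}\rangle-\langle v_s,v_s^{p-1}\rangle-\langle F(v_s,\underline{Z}_s),v_s^{p-1}\rangle,
\end{equation*}
and then integrate by parts in the Laplacian term: $\langle\Delta v_s,v_s^{p-1}\rangle=-(p-1)\langle\nabla v_s,v_s^{p-2}\nabla v_s\rangle$, which is legitimate since $v_s$ is a $C^1$ function on the torus (no boundary terms). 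Integrating from $s_0$ to $t$ yields \eqref{eq:weak_form}, and \eqref{eq:weak_form_tested_dif} follows since the integrand is continuous on $(0,T)$.

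To make the above rigorous rather than formal, I would run it first at the level of the Galerkin/mollified approximations: let $v^\ee$ solve the approximate equation from Proposition~\ref{prop:approx_eq} (with $F$ replaced by $F_\ee=\hat\Pi_\ee F$ and $\underline{Z}$ by $\underline{Z}^\ee$), where everything is smooth and classical so the chain rule and integration by parts are elementary, giving the analogue of \eqref{eq:weak_form} for $v^\ee$. Then I would pass to the limit $\ee\to0^+$ using the convergence $\sup_{t\le T^*} t^\gamma\|v_t-v^\ee_t\|_{\CC^\bt}\to0$ from Proposition~\ref{prop:approx_eq} together with $\VVert{\underline{Z}^\ee-\underline{Z}}_{\al;\al';T}\to0$: the terms $\|v^\ee_s\|_{L^p}^p$, $\langle F(v^\ee_s,\underline{Z}^\ee_s),(v^\ee_s)^{p-1}\rangle$ and, crucially, the gradient term $\langle\nabla v^\ee_s,(v^\ee_s)^{p-2}\nabla v^\ee_s\rangle$ converge for each $s>0$ by continuity of these multilinear expressions in the relevant Besov/Sobolev norms, and the time integrals converge by dominated convergence once one has a bound of the form $\|v^\ee_s\|_{\CC^\bt}\lesssim s^{-\gamma}$ uniform in $\ee$ for small $\ee$ — which is exactly what the set $\mathscr B_{T^*}$ and its iteration in Proposition~\ref{prop:approx_eq} provide. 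The main subtlety to watch is the integrability near $s_0$: because of the factor $s^{-n\gamma}$ coming from $\|F(v_s,\underline{Z}_s)\|_{\CC^{-\al}}$ (and $s^{-(p-1)\gamma}$ from $v_s^{p-1}$), one must check using \eqref{eq:beta_gamma_cond} that the resulting power of $s$ is integrable on $[s_0,t]$; since $s_0>0$ is fixed this is automatic, which is precisely why the statement is made for $s_0>0$ rather than $s_0=0$.
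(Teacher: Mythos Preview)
The paper does not give a self-contained proof of this proposition; it is quoted from \cite[Proposition~6.8]{MWe15}, and the only hint supplied is the remark immediately following the statement: the argument in \cite{MWe15} rests on the fact that $v$ is H\"older continuous as a map $(0,T)\to L^\infty(\TT^2)$ with exponent strictly greater than $\tfrac12$ (\cite[Proposition~6.5]{MWe15}). That threshold is the signature of a Young-type argument: one first shows that the mild solution is a weak solution, then tests against the time-dependent function $v_s^{p-1}$ via a Riemann-sum approximation in $s$, and the second-order Taylor remainder $\sum_i O(\|v_{s_{i+1}}-v_{s_i}\|_{L^\infty}^2)$ vanishes in the limit precisely because the time-H\"older exponent exceeds $\tfrac12$. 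Your route through the spatial approximation of Proposition~\ref{prop:approx_eq} is a genuine alternative and sidesteps the need to establish any time regularity for $v$, trading it for the stability result that the paper has already set up.

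There is, however, a gap in your limit argument. Proposition~\ref{prop:approx_eq} only yields $\sup_{t\le T^*}t^\gamma\|v_t-v^\ee_t\|_{\CC^\bt}\to 0$ with $\bt$ small (in particular $\bt<1$ by \eqref{eq:beta_gamma_cond}), and this does \emph{not} control $\nabla v^\ee$ in any $L^q$ space, so the convergence of $\langle\nabla v^\ee_s,(v^\ee_s)^{p-2}\nabla v^\ee_s\rangle$ to the corresponding quantity for $v$ does not follow from ``continuity in the relevant Besov norms'' as you write. You correctly observe that $v\in C((0,T];\CC^{2-\al-\bt'})$ by bootstrapping the Duhamel formula, but the same bootstrap has to be carried out for the \emph{difference} $v-v^\ee$: restart both equations at some time $\tfrac{s_0}{2}>0$ and use the smoothing of $S(t-s)$ applied to $F_\ee(v^\ee,\underline{Z}^\ee)-F(v,\underline{Z})\in\CC^{-\al}$ to upgrade the convergence to a norm dominating $C^1$ in space on $[s_0,T]$. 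Once that step is added your scheme goes through; without it, lower semicontinuity of the quadratic gradient term would at best give an inequality, not the claimed identity.
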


\begin{remark} The proof of \eqref{eq:weak_form} requires some time regularity on $v$. In 
this particular case one can prove that $v$ is H\"older continuous as a function from $(0,T)$ to $L^\infty(\TT^2)$ 
(see \cite[Proposition 6.5]{MWe15}) for some exponent strictly greater that $\frac{1}{2}$, which is enough to 
obtain \eqref{eq:weak_form}. 
\end{remark}

\subsection{A priori Estimates}\label{s:A_priori_Estimates}

Global existence of \eqref{eq:Remainder_Eq_2} for $x\in\CC^\bt$ was already established in \cite{MWe15} based on 
a priori estimates of the $L^p$ norm of $v$. Here we derive a stronger bound which does not depend on the initial 
condition $x$ and we use later on to prove the main results of Sections \ref{s:Existence_of_Invariant_Measures} and 
\ref{s:Exponential_Mixing_of_the_Phi42}. 

\begin{proposition}\label{prop:A_priori_Bounds} Let $v\in C((0,T];\CC^\beta)$ be a weak solution of \eqref{eq:Remainder_Eq_2} with
initial condition $x\in\CC^{-\al_0}$ and $p\geq 2$ be an even integer. Then for every $0 < t\leq T$ and $\lambda = \frac{p+n-1}{p}$
\begin{equation}\label{eq:Det_A_priori_bound}
 \|v_t\|_{L^p}^p \leq C\left[t^{-\frac{1}{\lambda -1}} 
 \vee  \left(\sum_{j,i} t^{-\al'p_i^j}
 \sup_{0\leq r\leq t}\left( r^{\al'p_i^j}\|Z_s^{(n-j)}\|_{\CC^{-\alpha}}^{p_i^j}\right)\right)^{\frac{1}{\lambda}}\right],
\end{equation}
for some $p_i^j>0$. In particular, the bound is independent from $\|x\|_{\CC^{-\al_0}}$ and the randomness outside of the 
interval $[0,t]$.
\end{proposition}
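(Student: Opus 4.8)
The plan is to test the remainder equation \eqref{eq:Remainder_Eq_2} against $v^{p-1}$ using the weak formulation \eqref{eq:weak_form_tested_dif} from Proposition \ref{p:p_testing}, and then to extract from the leading nonlinear term a strong damping which dominates all the other contributions, producing a closed differential inequality for $y(t) := \|v_t\|_{L^p}^p$ of the form $\dot y \le C(\cdot) - c\, y^{\lambda}$ with $\lambda = \frac{p+n-1}{p}>1$. The crucial point is that since $n$ is odd and $a_n>0$, the top-order term in $F(v,\underline Z)$ is $a_n v^n$, so $-\langle F(v_t,\underline Z_t), v_t^{p-1}\rangle$ contains $-a_n \langle v_t^{n+p-1}\rangle = -a_n\|v_t\|_{L^{n+p-1}}^{n+p-1}$, and by Jensen/H\"older on the torus of volume one this is bounded above by $-a_n \|v_t\|_{L^p}^{n+p-1} = -a_n\, y(t)^{\lambda}$. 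The gradient term $-(p-1)\langle \nabla v_t, v_t^{p-2}\nabla v_t\rangle \le 0$ is simply dropped, and $-\langle v_t, v_t^{p-1}\rangle = -\|v_t\|_{L^p}^p \le 0$ is also dropped (or kept, harmlessly).

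Next I would control the remaining lower-order terms $-\sum_{j=0}^{n-1}\langle v_t^j Z_t^{(n-j)}, v_t^{p-1}\rangle$. Each such term is estimated by first passing the distribution $Z_t^{(n-j)}\in\CC^{-\al}$ against the function $v_t^{j+p-1}$ via the duality/multiplication bound (Besov pairing, as in Appendix A), costing a factor $\|Z_t^{(n-j)}\|_{\CC^{-\al}}$ times a norm of $v_t^{j+p-1}$ in some $\CC^{\al'}$-type space, which one then interpolates between $L^p$-based norms of $v_t$ and a small amount of the gradient energy. The point is that the power of $v_t$ appearing is at most $j+p-1 \le n+p-2 < n+p-1$, strictly below the damping exponent, so by Young's inequality each such term is absorbed as $\tfrac{a_n}{2n}\,y(t)^{\lambda} + C\,\|Z_t^{(n-j)}\|_{\CC^{-\al}}^{q_j}$ for suitable exponents $q_j$; the slight loss of spatial regularity when multiplying by $Z_t^{(n-j)}$ is paid for by the gradient term $-(p-1)\langle\nabla v_t,v_t^{p-2}\nabla v_t\rangle = -\tfrac{4(p-1)}{p^2}\|\nabla(v_t^{p/2})\|_{L^2}^2$, a small fraction of which is kept to perform the interpolation before being thrown away. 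Collecting everything yields
\begin{equation*}
\dot y(t) \le C \sum_{j,i} \|Z_t^{(n-j)}\|_{\CC^{-\al}}^{p_i^j} - c\, y(t)^{\lambda}
\end{equation*}
for appropriate exponents $p_i^j>0$ and constants $c,C>0$ depending on $p,n$ and the $a_k$.

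Finally I would invoke the elementary comparison lemma for scalar ODEs: if $\dot y \le g(t) - c\,y^{\lambda}$ on $(0,T]$ with $\lambda>1$ and $g\ge 0$, then for every $t$
\begin{equation*}
y(t) \le \left(\frac{1}{c(\lambda-1)\,t}\right)^{\frac{1}{\lambda-1}} \vee \left(\frac{2\,\sup_{0\le r\le t} G(r)}{c}\right)^{1/\lambda},
\end{equation*}
with $G$ an appropriate running bound on the inhomogeneity — this is precisely the mechanism by which the $\|x\|_{\CC^{-\al_0}}$-dependence disappears, since the blow-up-as-$t\to 0$ profile $t^{-1/(\lambda-1)}$ dominates any finite initial datum. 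Rewriting $\sup_{0\le r\le t}\|Z_r^{(n-j)}\|_{\CC^{-\al}}^{p_i^j}$ as $t^{-\al' p_i^j}\sup_{0\le r\le t}\big(r^{\al' p_i^j}\|Z_r^{(n-j)}\|_{\CC^{-\al}}^{p_i^j}\big)$ to match the weighted norms used elsewhere gives exactly \eqref{eq:Det_A_priori_bound}.

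The main obstacle I expect is the careful bookkeeping in the interpolation step: one must check that the exponents on $v_t$ in the cross terms are genuinely subcritical relative to $\lambda$ \emph{after} accounting for the $\delta$-loss of regularity incurred when multiplying against $Z_t^{(n-j)}\in\CC^{-\al}$, and that the Young's-inequality exponents $p_i^j$ come out positive and finite; this is where the assumption $n$ odd with $a_n>0$ is genuinely used (it guarantees the sign and the top degree of the damping), and where a naive estimate would fail. A secondary subtlety is justifying the differential inequality rigorously on all of $(0,T]$ given that $v$ is only a mild solution with a possible singularity at $t=0$ — but Proposition \ref{p:p_testing} already supplies \eqref{eq:weak_form_tested_dif} for $t\in(0,T)$, so one works on $[s_0,t]$ and lets $s_0\downarrow 0$, the comparison bound being uniform in $s_0$.
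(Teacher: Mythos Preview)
Your proposal is correct and follows essentially the same approach as the paper: test against $v^{p-1}$ via Proposition~\ref{p:p_testing}, extract the damping $L_s=\|v_s^{p+n-1}\|_{L^1}$ from the top-order term, control each cross term $\langle v_s^{p+j-1}, Z_s^{(n-j)}\rangle$ by the duality pairing $\|v_s^{p+j-1}\|_{\BB_{1,1}^\alpha}\|Z_s^{(n-j)}\|_{\CC^{-\alpha}}$ followed by the gradient estimate \eqref{eq:gradient_estimate}, Cauchy--Schwarz, a Sobolev inequality and Young to absorb into $K_s+\tfrac12 L_s$, and then apply the comparison Lemma~\ref{lem:comp_test}. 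The paper carries out the interpolation bookkeeping explicitly (producing five exponents $\gamma_i^j$ per $j$, under the smallness condition $\alpha<\frac{1}{(p+n-1)(n-1)}$) and applies the comparison lemma on $[t/2,t]$ rather than letting $s_0\downarrow 0$, but these are cosmetic differences from your outline.
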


\begin{proof} Let 
\begin{equation}\label{eq:p_alpha_cond}
 \al <\frac{1}{(p+n-1)(n-1)}
\end{equation}
and recall that $F(v_s,\underline{Z}_s)=\sum_{j=0}^n v_s^j Z_s^{(n-j)}$. Thus
\begin{equation*}
 \langle F(v_s,\underline{Z}_s), v_s^{p-1} \rangle = \sum_{j=0}^n \langle v_s^{p+j-1}, Z_s^{(n-j)}\rangle
 =\|v_s^{p+n-1}\|_{L^1}+ \langle g_s, v_s^{p-1} \rangle,
\end{equation*}
where $g_s=\sum_{j=0}^{n-1} v_s^j Z_s^{(n-j)}$, and we rewrite \eqref{eq:weak_form_tested_dif} as
\begin{align} 
 \frac{1}{p} \partial_s\|v_s\|_{L^p}^p
  &=-\left((p-1)\|v_s^{p-2}|\nabla v_s|^2\|_{L^1} + \|v_s^{p+n-1}\|_{L^1}+\|v_s^p\|_{L^1} \right)
  - \langle g_s, v_s^{p-1} \rangle, \label{eq:rhs}
\end{align}
for all $0< s \leq t$,  where we use that $p$ is an even integer. Let 
\begin{equation}\label{eq:K_L_def}
 K_s:=\|v_s^{p-2}|\nabla v_s|^2\|_{L^1}, \quad L_s:=\|v_s^{p+n-1}\|_{L^1}.
\end{equation}
The idea is to control the terms of $\langle g_s, v_s^{p-1} \rangle$ by $K_s$ and $L_s$. 

We start with the leading term of $\langle g_s, v_s^{p-1}\rangle$, $\langle v_s^{p+n-2}, Z_s^{(1)} \rangle$. By Proposition 
\ref{prop:Inner_Prod}
\begin{equation}\label{eq:bracket_est}
 \langle v_s^{p+n-2}, Z_s^{(1)} \rangle \lesssim \|v_s^{p+n-2}\|_{\BB_{1,1}^\alpha} \|Z_s^{(1)}\|_{\CC^{-\alpha}}.
\end{equation}
Using \eqref{eq:gradient_estimate}
\begin{equation}\label{eq:gradient_est}
 \|v_s^{p+n-2}\|_{\BB_{1,1}^\alpha} \lesssim \|v_s^{p+n-2}\|_{L^1}^{1-\alpha}\|v_s^{p+n-3}|\nabla v_s|\|_{L^1}^\alpha+\|v_s^{p+n-2}\|_{L^1}.
\end{equation}
We handle each term of \eqref{eq:gradient_est} separately. First we notice, using Jensen's inequality, that 
$\|v_s^{p+n-2}\|_{L^1} \lesssim L_s^{\frac{p+n-2}{p+n-1}}$. For the gradient term, using the Cauchy-Schwarz inequality we obtain
\begin{equation}\label{eq:grad_est1}
 \|v_s^{p+n-3}|\nabla v_s|\|_{L^1} \leq \|v_s^{p+2(n-2)}\|_{L^1}^{\frac{1}{2}} K_s^{\frac{1}{2}}.
\end{equation}
Recall the Sobolev inequality 
\begin{equation*}
 \|f\|_{L^q}\lesssim \left(\|f\|_{L^2}^2+\|\nabla f\|_{L^2}^2\right)^{\frac{1}{2}},
\end{equation*}
for every $q<\infty$ (see \cite[Section 6]{NPV11},\cite[Section 5.6]{Ev10} for Sobolev inequalities in the same spirit).
In particular, for $q=\frac{2(p+2(n-2))}{p}$, we have that
\begin{equation*}
 \|v_s^{\frac{p}{2}}\|_{L^q}^{\frac{q}{2}}\lesssim \|v_s^{\frac{p}{2}}\|_{L^2}^{\frac{q}{2}}+\|\nabla (v_s)^{\frac{p}{2}}\|_{L^2}^{\frac{q}{2}},
\end{equation*}
which implies
\begin{equation}\label{eq:grad_est2}
 \|v_s^{p+2(n-2)}\|_{L^1}^{\frac{1}{2}}\lesssim \|v_s^p\|_{L^1}^{\frac{1}{2}+\frac{n-2}{p}}+K_s^{\frac{1}{2}+\frac{n-2}{p}},
\end{equation}
where $\|v_s^p\|_{L^1}^{\frac{1}{2}+\frac{n-2}{p}}\lesssim L_s^{\frac{\frac{p}{2}+n-2}{p+n-1}}$ by Jensen's inequality. Combining
\eqref{eq:gradient_est}, \eqref{eq:grad_est1} and \eqref{eq:grad_est2}
\begin{equation}\label{eq:grad_est3}
 \|v_s^{p+n-2}\|_{\BB_{1,1}^\alpha}\lesssim K_s^{\frac{\alpha}{2}}L_s^{\frac{(p+n-2)-\frac{p}{2}\alpha}{p+n-1}}
 +K_s^{\left(1+\frac{n-2}{p}\right)\alpha}L_s^{\frac{(p+n-2)(1-\alpha)}{p+n-1}}+L_s^{\frac{p+n-2}{p+n-1}}.
\end{equation}
By \eqref{eq:p_alpha_cond} we notice that
\begin{equation*}
 \frac{\alpha}{2}+\frac{(p+n-2)-\frac{p}{2}\alpha}{p+n-1}<1
\end{equation*}
and 
\begin{equation*}
\left(1+\frac{n-2}{p}\right)\alpha+\frac{(p+n-2)(1-\alpha)}{p+n-1}<1,
\end{equation*}
thus we can find $\gamma_1, \gamma_2, \gamma_3, \gamma_4<1$ such that 
\begin{equation*}
\frac{\alpha}{2\gamma_1}+\frac{(p+n-2)-\frac{p}{2}\alpha}{(p+n-1)\gamma_2}=1
\end{equation*}
and
\begin{equation*}
\left(1+\frac{n-2}{p}\right)\frac{\alpha}{\gamma_3}+\frac{(p+n-2)(1-\alpha)}{(p+n-1)\gamma_4}=1.
\end{equation*}
In particular, we choose $\gamma_1=\frac{(p+n-1)\alpha}{2}$, $\gamma_2=\frac{(p+n-2)-\frac{p}{2}\alpha}{p+n-2}$,
$\gamma_3=\frac{(p+n-2)(p+n-1)\alpha}{p}$ and $\gamma_4=(1-\alpha)$.
Applying Young's inequality to \eqref{eq:grad_est3} and combining with \eqref{eq:bracket_est} we obtain that
\begin{equation*}
 \langle v_s^{p+n-2}, Z_s^{(1)} \rangle \lesssim \left(K_s^{\gamma_1}+L_s^{\gamma_2}+K_s^{\gamma_3}+
 L_s^{\gamma_4}+L_s^{\frac{p+n-2}{p+n-1}}\right) \|Z^{(1)}_s\|_{\CC^{-\alpha}},
\end{equation*}
while using the fact that $\sup_{\zeta\geq0}-\zeta+a\zeta^\gamma\lesssim a^{\frac{1}{1-\gamma}}$, $\gamma<1$, we obtain the final bound
\begin{equation}\label{eq:n_final}
 \langle v_s^{p+n-2}, Z_s^{(1)} \rangle \leq \frac{1}{n} \big(K_s+\frac{1}{2}L_s\big)+ C \sum_{i=1}^5 \left(\|Z_s^{(1)}\|_{\CC^{-\al}}^{\frac{1}{1-\gamma_i}}\right),
\end{equation}
where $\gamma_5=\frac{p+n-2}{p+n-1}$ and $C$ a positive universal constant. 

For the remaining terms of $\langle g_s, v_s^{p-1}\rangle$ we need to estimate $\langle v_s^{p+j-1}, Z_s^{(n-j)} \rangle$,
for all $0\leq j\leq n-2$. Proceeding in the same spirit of calculations as above we first obtain that
\begin{equation*}
 \|v_s^{p+j-1}\|_{\BB_{1,1}^\alpha} \lesssim K_s^{\frac{\alpha}{2}}L_s^{\frac{(p+j-1)-\frac{p}{2}\alpha}{p+n-1}}
 +K_s^{\left({1+\frac{j-1}{p}}\right)\alpha}L_s^{\frac{(p+j-1)(1-\alpha)}{p+n-1}}
 +L_s^{\frac{p+j-1}{p+n-1}}.
\end{equation*}
We define the exponents $\gamma_1^j=\frac{(p+n-1)\alpha}{2}$, $\gamma_2^j=\frac{(p+j-1)-\frac{p}{2}\alpha}{p+n-2}$,
$\gamma_3^j=\frac{(p+j-1)(p+j)\alpha}{p}$ and $\gamma_4^j=\frac{(p+j)(1-\alpha)}{p+n-1}$. Note that
\eqref{eq:p_alpha_cond} implies that $\gamma_1^j,\gamma_2^j,\gamma_3^j,\gamma_4^j<1$ and we also have that
\begin{equation*}
\frac{\alpha}{2\gamma_1^j}+\frac{(p+j-1)-\frac{p}{2}\alpha}{(p+n-1)\gamma_2^j}=1
\end{equation*}
and
\begin{equation*}
\left(1+\frac{j-1}{p}\right)\frac{\alpha}{\gamma_3^j}+\frac{(p+j-1)(1-\alpha)}{(p+n-1)\gamma_4^j}=1.
\end{equation*}
Applying once more Young's inequality
\begin{equation*}
 \langle v_s^{p+j-1}, Z_s^{(n-j)} \rangle \lesssim \left( K_s^{\gamma_1^j}+L_s^{\gamma_2^j}+K_s^{\gamma_3^j}
 +L_s^{\gamma_4^j}+L_s^\frac{p+j-1}{p+n-1}\right) \|Z_s^{(n-j)}\|_{\CC^{-\alpha}}.
\end{equation*}
As before (see \eqref{eq:n_final}), we obtain the bound
\begin{equation}\label{eq:j_final}
 \langle v_s^{p+j-1}, Z_s^{(n-j)} \rangle \leq \frac{1}{n}\big( K_s+ \frac{1}{2}L_s\big)
 + C \sum_{i=1}^5 \left(\|Z^{(n-j)}_s\|_{\CC^{-\alpha}}^{\frac{1}{1-\gamma_i^j}}\right),
\end{equation}
for all $0\leq j\leq n-2$, where $\gamma_5^j=\frac{p+j-1}{p+n-1}$. Thus, by \eqref{eq:n_final} and \eqref{eq:j_final},
\begin{equation}\label{eq:final_bound}
 \langle g_s, v_s^{p-1} \rangle \leq \big( K_s+ \frac{1}{2} L_s\big)
 + C \sum_{j=0}^{n-1} \sum_{i=1}^5 \left(\|Z^{(n-j)}_s\|_{\CC^{-\alpha}}^{\frac{1}{1-\gamma_i^j}}\right),
\end{equation}
where $\gamma_i^{n-1}=\gamma_i$, for all $i=\{1,\ldots, 5\}$.

Finally, for $p_i^j = \frac{1}{1-\gamma_i^j}$, combining \eqref{eq:rhs} and \eqref{eq:final_bound} we obtain
\begin{equation*}
 \frac{1}{p}\partial_s\|v_s\|_{L^p}^p + \|v_s\|_{L^p}^p+ (p-2) K_s + \frac{1}{2} L_s
 \leq C \sum_{j,i} \|Z^{(n-j)}_s\|_{\CC^{-\alpha}}^{p_i^j}.
\end{equation*}
Let $t>s$ and notice that by \eqref{eq:log_div_Z}, for $r\in(s,t)$,
\begin{equation*}
 \sum_{j,i} \|Z_r^{(n-j)}\|_{\CC^{-\alpha}}^{p_i^j} \leq 
 \sum_{j,i} r^{-\al'p_i^j} \sup_{s\leq r\leq t}\left(r^{\al'p_i^j}\|Z_r^{(n-j)}\|_{\CC^{-\alpha}}^{p_i^j}\right)  
\end{equation*}
for every $\al'>0$. Thus for $r\in[s,t]$
\begin{equation*}
 \frac{1}{p}\partial_r \|v_{r}\|_{L^p}^p+ \frac{1}{2} L_{r} \leq C 
 \sum_{j,i} s^{-\al'p_i^j} \sup_{s\leq r\leq t}\left( r^{\al'p_i^j}\|Z_r^{(n-j)}\|_{\CC^{-\alpha}}^{p_i^j}\right).
\end{equation*}
By Jensen's inequality, for $\lambda = \frac{p+n-1}{p}$, we get that
\begin{equation*}
 \partial_r \|v_r\|_{L^p}^p+ C_1 \left(\|v_r\|_{L^p}^p\right)^\lambda  \leq C_2
 \sum_{j,i} s^{-\al'p_i^j}\sup_{s\leq r\leq t}\left( r^{\al'p_i^j}\|Z_r^{(n-j)}\|_{\CC^{-\alpha}}^{p_i^j}\right),
\end{equation*}
and if we let $f(r) = \|v_r\|_{L^p}^p$, $r\geq s$, by Lemma \ref{lem:comp_test} 
\begin{align}
 f(r) \leq \frac{f(s)}{\left(1+(r-s)f(s)^{\lambda-1}(\lambda-1)\tilde{C}_1\right)^{\frac{1}{\lambda-1}}} & \label{eq:f(r)_bound} \\
 \vee \Bigg(\frac{2C_2 }{C_1}\sum_{j,i} s^{-\al'p_i^j} 
 \sup_{s\leq r\leq t} & \left( r^{\al'p_i^j}\|Z_r^{(n-j)}\|_{\CC^{-\alpha}}^{p_i^j}\right) \Bigg)^{\frac{1}{\lambda}}, \nonumber
\end{align}
where $\tilde{C}_1 = C_1/2$. In particular for $r=t$ and $s=t/2$ we have the bound
\begin{equation*}
 \|v_{t}\|_{L^p}^p \leq C\left[ t^{-\frac{1}{\lambda -1}} \vee 
 \left(\sum_{j,i} t^{-\al'p_i^j} \sup_{0\leq r\leq t}\left( r^{\al'p_i^j}\|Z_r^{(n-j)}\|_{\CC^{-\alpha}}^{\tilde{\gamma}_i^j}
 \right)\right)^{\frac{1}{\lambda}}\right],
\end{equation*}
which completes the proof.
\end{proof}

\begin{lemma}[Comparison Test]\label{lem:comp_test} Let $\lambda >1$ and $f: [0,T]\to [0,\infty)$ differentiable
such that
\begin{equation*}
 f'(t) + c_1 f(t)^\lambda \leq c_2,
\end{equation*}
for every $t\in [0,T]$. Then for $t>0$
\begin{equation*}
 f(t) \leq \frac{f(0)}{\left(1+tf(0)^{\lambda-1}(\lambda-1)\frac{c_1}{2}\right)^{\frac{1}{\lambda-1}}} 
 \vee \left(\frac{2c_2}{c_1}\right)^{\frac{1}{\lambda}} \leq  
 t^{-\frac{1}{\lambda-1}} \left((\lambda-1) \frac{c_1}{2}\right)^{-\frac{1}{\lambda-1}}
 \vee \left(\frac{2c_2}{c_1}\right)^{\frac{1}{\lambda}}.
\end{equation*}
\end{lemma}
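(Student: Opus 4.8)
The plan is to compare $f$ with the explicit solution of the source-free Bernoulli equation $h' = -\tfrac{c_1}{2} h^\lambda$. The starting point is the observation that the threshold $M := (2c_2/c_1)^{1/\lambda}$ splits the problem into two regimes: where $f(t) \le M$ the asserted bound holds trivially, being the second term of the maximum; where $f(t) > M$ one has $c_1 f(t)^\lambda > c_1 M^\lambda = 2 c_2$, so the hypothesis $f'(t) \le c_2 - c_1 f(t)^\lambda$ self-improves to $f'(t) < \tfrac{c_1}{2} f(t)^\lambda - c_1 f(t)^\lambda = -\tfrac{c_1}{2} f(t)^\lambda$, i.e. to the autonomous inequality with the constant $c_2$ removed.

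First I would record the explicit solution of $h' = -\tfrac{c_1}{2} h^\lambda$ with $h(0) = h_0 > 0$: the substitution $\phi := h^{1-\lambda}$ linearises it to $\phi' = (\lambda-1)\tfrac{c_1}{2}$, so $\phi(s) = h_0^{1-\lambda} + (\lambda-1)\tfrac{c_1}{2} s$ and hence $h(s) = h_0 \bigl(1 + h_0^{\lambda-1}(\lambda-1)\tfrac{c_1}{2} s\bigr)^{-1/(\lambda-1)}$. Discarding the summand $1$ in the parentheses gives the cruder bound $h(s) \le s^{-1/(\lambda-1)} \bigl((\lambda-1)\tfrac{c_1}{2}\bigr)^{-1/(\lambda-1)}$. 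These are precisely the two expressions on the right-hand side of the Lemma, and together with $M$ they account for all the terms appearing there.

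The core of the proof is then a dichotomy. If $f(t) \le M$ we are done; otherwise I claim that in fact $f(s) > M$ for every $s \in [0,t]$. Suppose not, and let $t_*$ be the last time in $[0,t]$ at which $f \le M$; by continuity $f(t_*) \le M$, while $f > M$ on $(t_*, t]$ by maximality of $t_*$. On that interval the self-improved inequality from the first paragraph gives $f' < -\tfrac{c_1}{2} f^\lambda < 0$, so $f$ is strictly decreasing on $[t_*, t]$ and therefore $f(t) < f(t_*) \le M$, contradicting $f(t) > M$. Hence $f > M$ on all of $[0,t]$, the inequality $f' \le -\tfrac{c_1}{2} f^\lambda$ holds throughout $[0,t]$, and applying $\phi = f^{1-\lambda}$ exactly as above (now with $\phi' \ge (\lambda-1)\tfrac{c_1}{2}$, so $\phi(t) \ge \phi(0) + (\lambda-1)\tfrac{c_1}{2} t$) gives $f(t) \le f(0)\bigl(1 + f(0)^{\lambda-1}(\lambda-1)\tfrac{c_1}{2} t\bigr)^{-1/(\lambda-1)}$ and, after dropping $f(0)^{1-\lambda} \ge 0$, also $f(t) \le t^{-1/(\lambda-1)}\bigl((\lambda-1)\tfrac{c_1}{2}\bigr)^{-1/(\lambda-1)}$. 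Combining the two cases yields both inequalities of the statement.

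The argument is essentially routine; the one step deserving attention is the dichotomy — checking that exceeding the threshold $M$ at time $t$ forces $f$ to have been above $M$ on the entire interval $[0,t]$, so that the source-free differential inequality is available on all of $[0,t]$ rather than only on a terminal piece. Using the last-exit time $t_*$ together with the sign of $f'$, as sketched, disposes of this and simultaneously avoids invoking any general ODE comparison theorem (throughout one of course uses $c_1 > 0$ and $c_2 \ge 0$, as the statement implicitly requires).
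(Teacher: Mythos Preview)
Your proof is correct and follows essentially the same approach as the paper: the threshold $M=(2c_2/c_1)^{1/\lambda}$, the last-exit time argument to rule out crossing back above $M$, and the explicit integration of the Bernoulli inequality $f'\le -\tfrac{c_1}{2}f^\lambda$ via $\phi=f^{1-\lambda}$ are all exactly what the paper does. The only difference is organisational --- you branch on whether $f(t)\le M$, the paper branches on whether $f>M$ on all of $[0,t]$ --- and the contradiction step is the same in both.
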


\begin{proof} Let $t>0$. Then one of the following  holds:
\begin{enumerate} [I.]
 \item There exists $s_0\leq t$ such that $f(s_0) 
 \leq \left(\frac{2c_2}{c_1}\right)^{\frac{1}{\lambda}}$.
 \item For every $s\leq t$, $f(s) > \left(\frac{2c_2}{c_1}\right)^{\frac{1}{\lambda}}$.
\end{enumerate}
In the second case, using the assumption we have that for every $s\leq t$ 
$$
f'(s) + \frac{c_1}{2} f(s)^\lambda \leq 0
$$
and solving the above differential inequality on $[0,t]$ implies that
$$
f(t) \leq 
\frac{f(0)}{\left(1+tf(0)^{\lambda-1}(\lambda-1)\frac{c_1}{2}\right)^{\frac{1}{\lambda-1}}}.
$$
In the first case, assume for contradiction that $f(t) > 
\left(\frac{2c_2}{c_1}\right)^{\frac{1}{\lambda}}$ and let 
$$
s^* = \sup\left\{s < t : f(s) \leq \left(\frac{2c_2}{c_1}\right)^{\frac{1}{\lambda}}\right\}.
$$
Then $f(s) > \left(\frac{2c_2}{c_1}\right)^{\frac{1}{\lambda}}$, for every $s\in(s^*,t]$,
while $f(s^*) = \left(\frac{2c_2}{c_1}\right)^{\frac{1}{\lambda}}$ by continuity. 
However, the assumption implies
$$
f'(s) + \frac{c_1}{2} f(s)^\lambda \leq 0
$$
and in particular $f'(s) \leq 0$. But then 
$$
f(t) = f(s^*) + \int_{s^*}^t f'(s) ds \leq \left(\frac{2c_2}{c_1}\right)^{\frac{1}{\lambda}},
$$
which is a contradiction. 
\end{proof} 


The next theorem implies global existence of \eqref{eq:Remainder_Eq_2}. Though it was already established in
\cite{MWe15}, we present it here for completeness. 

\begin{theorem} \label{thm:global_ex}
For every initial condition $x\in \CC^{-\al_0}$ and $\bt>0$ as in \eqref{eq:beta_gamma_cond} there 
exists a unique solution $v\in C((0,\infty); \CC^\beta)$ of \eqref{eq:Remainder_Eq_2}.
\end{theorem}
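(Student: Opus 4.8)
The plan is to combine the local existence theory of Theorem~\ref{thm:Local_Ex_Un} with the a priori bound of Proposition~\ref{prop:A_priori_Bounds} via a standard continuation argument. First I would fix the initial datum $x\in\CC^{-\al_0}$ and work on the (random) event of full measure on which $\underline Z\in C^{n,-\al}(0;T)$ for every $T>0$, using \eqref{eq:log_div_Z}. By Theorem~\ref{thm:Local_Ex_Un} there is a maximal existence time $T_{\max}=T_{\max}(x,\underline Z)\in(0,\infty]$ and a unique mild solution $v\in C((0,T_{\max});\CC^\bt)$; uniqueness on overlapping intervals is immediate from the contraction estimate, so these local solutions patch together consistently. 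It remains to rule out $T_{\max}<\infty$.

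Suppose $T_{\max}<\infty$. The key observation is that the blow-up criterion coming from \eqref{eq:T^*_form} is: if $\|v_t\|_{\CC^{-\al_0}}$ (equivalently, by the smoothing of $v$ into $\CC^\bt$ for positive times, $\|v_t\|_{\CC^\bt}$) stays bounded as $t\uparrow T_{\max}$, then the solution can be restarted from $v_{t_0}$ for $t_0$ close to $T_{\max}$ on a time interval of length bounded below, contradicting maximality. So it suffices to show $\sup_{t\in[T_{\max}/2,\,T_{\max})}\|v_t\|_{\CC^\bt}<\infty$. For this I would first use Proposition~\ref{prop:A_priori_Bounds}: for any even integer $p\geq 2$ and $t\le T_{\max}$,
\begin{equation*}
\|v_t\|_{L^p}^p \leq C\left[t^{-\frac{1}{\lambda-1}} \vee \left(\sum_{j,i} t^{-\al'p_i^j}\sup_{0\leq r\leq t}\bigl(r^{\al'p_i^j}\|Z_r^{(n-j)}\|_{\CC^{-\al}}^{p_i^j}\bigr)\right)^{\frac{1}{\lambda}}\right],
\end{equation*}
and the right-hand side is finite and locally bounded in $t$ on $(0,T_{\max}]$ because $\underline Z$ has the integrability \eqref{eq:log_div_Z} (indeed is a.s. in the relevant space). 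Hence $\sup_{t\in[T_{\max}/2,T_{\max})}\|v_t\|_{L^p}<\infty$ for every even $p$; letting $p\to\infty$ (using that the bound's dependence on $p$ is controlled, or arguing directly at the level of $L^p$ norms for a fixed large $p$) gives an $L^p$ bound uniform near $T_{\max}$.

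The final step is to upgrade this uniform $L^p$ control to uniform $\CC^\bt$ control near $T_{\max}$, which then feeds the restart argument. Here I would use the mild formulation \eqref{eq:mild_form}: writing $v_t = S(t-t_0)v_{t_0}-\int_{t_0}^t S(t-s)F(v_s,\underline Z_s)\,\dd s$ for $t_0$ slightly less than $T_{\max}$, and using the smoothing $\|S(r)\|_{\CC^{-\al_0}\to\CC^\bt}\lesssim r^{-\frac{\bt+\al_0}{2}}$ together with the paraproduct/multiplication estimates bounding $\|v_s^j Z_s^{(n-j)}\|_{\CC^{-\al}}$ by powers of $\|v_s\|_{\CC^\bt}$ and $\|Z_s^{(n-j)}\|_{\CC^{-\al}}$, a short bootstrap (first bounding $\|v_s\|_{L^q}$ for all finite $q$, then $\|v_s\|_{\CC^{-\al_0}}$, then $\|v_s\|_{\CC^\bt}$, exploiting $\frac{\bt+\al_0}{2}+n\gamma<1$ from \eqref{eq:beta_gamma_cond}) yields $\sup_{t\in[t_0,T_{\max})}\|v_t\|_{\CC^\bt}<\infty$. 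Applying Theorem~\ref{thm:Local_Ex_Un} with initial datum $v_{t_0}\in\CC^\bt\subset\CC^{-\al_0}$ and the (finite) control on $\VVert{\underline Z}_{\al;\al';T_{\max}+1}$ gives existence past $T_{\max}$, the desired contradiction. The main obstacle I anticipate is the regularity upgrade from the $L^p$ bound of Proposition~\ref{prop:A_priori_Bounds} to a $\CC^\bt$ bound: the a priori estimate only controls Lebesgue norms, so one has to run the mild-equation bootstrap carefully and make sure the nonlinearity's growth in $\|v\|_{\CC^\bt}$ does not outrun the time-singular smoothing — but since on any compact subinterval of $(0,T_{\max})$ local existence already gives a $\CC^\bt$ solution, the only real work is near the endpoint, where the uniform $L^p$ bound provides the needed footing.
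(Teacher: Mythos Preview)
Your approach is essentially the same as the paper's: combine the a~priori $L^p$ bound of Proposition~\ref{prop:A_priori_Bounds} (which is independent of the initial datum) with the local existence Theorem~\ref{thm:Local_Ex_Un} and iterate. The paper phrases this as a direct forward iteration rather than a contradiction at a maximal time, but the content is identical.

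One simplification you should make: the ``final step'' you describe --- upgrading the uniform $L^p$ bound to a uniform $\CC^\bt$ bound via a mild-formulation bootstrap --- is unnecessary, and the obstacle you anticipate is therefore not a real one. To restart via Theorem~\ref{thm:Local_Ex_Un} you only need the initial condition in $\CC^{-\al_0}$, and for $p$ large enough (namely $p>2/\al_0$) the Besov embedding $L^p\hookrightarrow \BB^0_{p,\infty}\hookrightarrow \CC^{-2/p}\hookrightarrow\CC^{-\al_0}$ (see \eqref{eq:B^0_emb} and Proposition~\ref{prop:Besov_Emb}) converts the a~priori $L^p$ bound directly into a uniform $\CC^{-\al_0}$ bound on $[T_{\max}/2,T_{\max})$. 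This feeds straight into \eqref{eq:T^*_form} to give a restart time bounded below, yielding the contradiction. The $\CC^\bt$ regularity of the extended solution then comes for free from the local theory, not from a separate bootstrap.
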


\begin{proof} Let $T>0$. Using the a priori estimate \eqref{eq:Det_A_priori_bound} which depends only on 
$\VVert{\underline{Z}}_{\al;\al';T}$, by Theorem 
\ref{thm:Local_Ex_Un} there exists $T^*\leq T$ and a unique solution up to time $T^*$ of \eqref{eq:Remainder_Eq_2}. Using again 
\eqref{eq:Det_A_priori_bound} and Theorem \ref{thm:Local_Ex_Un} we construct a solution of \eqref{eq:Remainder_Eq_2} 
on $[T^*, 2T^*\wedge T]$ with initial condition $v_{T^*}$ which satisfies the same a priori bounds depending on 
$\VVert{\underline{Z}}_{\al;\al';T}$. We then proceed similarly until the whole interval $[0,T]$ is covered. To prove uniqueness 
we proceed as in the proof of Theorem \cite[Theorem 6.2]{MWe15}.  
\end{proof}

\begin{corollary}\label{cor:moments_est} For $x\in\CC^{-\al_0}$ let $X(\cdot;x) = \<1_black>_{0,\cdot} + v$, 
where $v$ is the solution to \eqref{eq:Remainder_Eq_1}. Then for every $\al>0$ and $p\geq 2$ 
\begin{equation} \label{eq:time_ind_moments}
 \sup_{x\in\CC^{-\al_0}}\sup_{t\geq 0}\, \left(t^{\frac{p}{n-1}}\wedge 1\right) \EE\|X(t;x)\|_{\CC^{-\al}}^p <\infty.
\end{equation}
\end{corollary}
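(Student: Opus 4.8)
The plan is to decompose $X(t;x) = \<1_black>_{0,t} + v_t$ and estimate the two pieces separately, exploiting that the a priori bound of Proposition \ref{prop:A_priori_Bounds} is uniform in $x$. For the linear piece, $\<1_black>_{0,t}$ is handled directly by Proposition \ref{prop:time_bounds} (the case $n=1$), which gives $\EE\|\<1_black>_{0,t}\|_{\CC^{-\al}}^p \leq C$ uniformly for $t$ in a bounded interval; since $\<1_black>_{0,t}$ does not blow up at $t=0$, this contributes a bounded term to \eqref{eq:time_ind_moments}. For the remainder $v_t$, I would first obtain an $L^p(\TT^2)$ bound and then upgrade it to $\CC^{-\al}$, or rather directly to $\CC^\bt$, by the smoothing of the mild formulation.

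First I would fix an even integer $q \geq p$ large enough (to be chosen in terms of $n$ and $\al$ so that $\CC^{0}\hookrightarrow\CC^{-\al}$ via $L^q\hookrightarrow \CC^{-\al}$ on $\TT^2$ with room to spare) and apply Proposition \ref{prop:A_priori_Bounds} with exponent $q$ to get, for $0<t\leq T$ with $\lambda = \frac{q+n-1}{q}$,
\begin{equation*}
\|v_t\|_{L^q}^q \leq C\left[ t^{-\frac{1}{\lambda-1}} \vee \Big(\sum_{j,i} t^{-\al' p_i^j}\sup_{0\leq r\leq t} r^{\al' p_i^j}\|Z_r^{(n-j)}\|_{\CC^{-\al}}^{p_i^j}\Big)^{\frac{1}{\lambda}}\right].
\end{equation*}
Since $\frac{1}{\lambda-1} = \frac{q}{n-1}$, the deterministic part contributes $\|v_t\|_{L^q}^q \lesssim t^{-q/(n-1)}$, i.e. $(t^{q/(n-1)}\wedge 1)\|v_t\|_{L^q}^q \lesssim 1$ deterministically from that term. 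For the stochastic part I would take $p$-th (more precisely $(p/q)$-th... rather, raise to a suitable power and take) expectations: using $\VVert{\underline{Z}}_{\al;\al';T}$ to dominate each $\sup_r r^{\al' p_i^j}\|Z_r^{(n-j)}\|_{\CC^{-\al}}^{p_i^j}$ by $\VVert{\underline{Z}}_{\al;\al';T}^{p_i^j}$, and then invoking \eqref{eq:log_div_Z} which gives arbitrary moments of $\VVert{\underline{Z}}_{\al;\al';T}$; here one uses that $\underline Z$ is built from stationary Wick powers, so the bound for $[0,T]$ transfers to all of $[0,\infty)$ by covering $[0,\infty)$ with unit intervals and using stationarity of $\<k_black>_{-\infty,\cdot}$ (Proposition \ref{prop:diagram_convergence}). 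This yields $\EE\|v_t\|_{L^q}^q \lesssim t^{-q/(n-1)} \vee (\text{const})$, uniformly in $x$ and in $t\geq 0$, hence $\sup_x \sup_{t\geq 0}(t^{q/(n-1)}\wedge 1)\EE\|v_t\|_{L^q}^q<\infty$.

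Finally I would convert the $L^q$ estimate at time $t$ into the claimed $\CC^{-\al}$ estimate. One clean way: run the mild equation on the short interval $[t/2,t]$ starting from $v_{t/2}\in L^q$, so that for $s\in[t/2,t]$,
\begin{equation*}
v_s = S(s-t/2)v_{t/2} - \int_{t/2}^s S(s-r)F(v_r,\underline Z_r)\,\dd r,
\end{equation*}
and use Proposition \ref{prop:Heat_Smooth} together with the a priori control of $\sup_r r^\gamma\|v_r\|_{\CC^\bt}$ (from Theorem \ref{thm:global_ex} and the construction) to bound $\|v_t\|_{\CC^{-\al}}\lesssim \|v_t\|_{\CC^\bt}$ in terms of $\|v_{t/2}\|_{L^q}$ and $\VVert{\underline Z}_{\al;\al';t}$; then take expectations as before and use Hölder to separate the (all-moments) $\underline Z$-factor from the $v_{t/2}$-factor. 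Since $t/2 \simeq t$, the factor $t^{p/(n-1)}\wedge 1$ absorbs the blow-up. The triangle inequality $\|X(t;x)\|_{\CC^{-\al}}\leq \|\<1_black>_{0,t}\|_{\CC^{-\al}}+\|v_t\|_{\CC^{-\al}}$ then finishes the proof. The main obstacle I anticipate is the bookkeeping in the last step — tracking the precise powers of $t$ coming from the heat-kernel smoothing $(t-r)^{-(\al+\bt)/2}$ against the singularities $r^{-\gamma}$ of $v_r$ and $r^{-\al' p_i^j}$ of $\underline Z_r$, and checking that after raising to the relevant power and integrating, every exponent stays integrable and the combined $t$-power is no worse than $-p/(n-1)$; the integrability is exactly guaranteed by conditions \eqref{eq:beta_gamma_cond} and \eqref{eq:p_alpha_cond}, so this is routine but slightly tedious.
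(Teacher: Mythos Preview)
Your proposal has a genuine gap in the treatment of large times, and in fact the paper's Remark preceding the proof explicitly warns against precisely the argument you attempt. The a priori bound \eqref{eq:Det_A_priori_bound} controls $\|v_t\|_{L^q}^q$ by a quantity involving $\sup_{0\leq r\leq t}\big(r^{\al' p_i^j}\|Z_r^{(n-j)}\|_{\CC^{-\al}}^{p_i^j}\big)$. The expectation of this supremum is finite for each fixed $t<\infty$ but is \emph{not} uniformly bounded as $t\to\infty$; indeed \eqref{eq:log_div_Z} only gives $\EE\VVert{\underline Z}_{\al;\al';t}^p\leq C t^{p\theta}$, which grows. Your suggestion to ``cover $[0,\infty)$ with unit intervals and use stationarity of $\<k_black>_{-\infty,\cdot}$'' does not repair this: first, the vector $\underline Z$ is built from the \emph{shifted} Wick powers $\<k_black>_{0,\cdot}$ rather than the stationary ones $\<k_black>_{-\infty,\cdot}$; second, and more fundamentally, the bound \eqref{eq:Det_A_priori_bound} for $v_t$ irreducibly contains the supremum over the \emph{entire} interval $[0,t]$, so no covering by unit intervals helps unless you first change the equation so that the relevant interval has fixed length.

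The paper's proof supplies exactly this missing idea: for $t>1$ one restarts the equation at time $t-1$ via Lemma \ref{lem:re_expand}, writing $X(t;x)=\<1_black>_{t-1,t}+\tilde v_{t-1,t}$ where $\tilde v_{t-1,\cdot}$ solves \eqref{eq:Remainder_Eq_1} with initial datum $X(t-1;x)$ and driving diagrams $(\<k_black>_{t-1,t-1+\cdot})_{k=1}^n$. Applying Proposition \ref{prop:A_priori_Bounds} to $\tilde v_{t-1,\cdot}$ now gives a bound involving only the supremum over $[t-1,t]$, and by Proposition \ref{prop:diagram_convergence} the law of these diagrams on $[t-1,t]$ equals that on $[0,1]$, yielding a bound independent of $t$. (The initial datum $X(t-1;x)$ is irrelevant because the a priori bound is uniform in the initial condition.) Your ``restart on $[t/2,t]$'' in the last paragraph is in the right spirit but is used only for regularity upgrading, not for replacing the growing noise interval; moreover, that upgrading step is unnecessary, since the simple embedding $L^p\hookrightarrow\CC^{-\al}$ for $p\geq 2/\al$ already converts the $L^p$ bound to a $\CC^{-\al}$ bound.
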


\begin{remark} 
Notice that the bound \eqref{eq:time_ind_moments} does not follow immediately by taking the expectation 
of the a priori bound \eqref{eq:Det_A_priori_bound} on $v_t$. In fact the expectation of the supremum 
$ \sup_{0\leq r\leq t}\left( r^{\al'p_i^j}\|Z_s^{(n-j)}\|_{\CC^{-\alpha}}^{p_i^j}\right)$ on the right hand side of this estimate
is finite for every $t< \infty$ but it is not uniformly bounded in $t$. However, as \eqref{eq:Det_A_priori_bound} does not depend on
the initial condition we can just restart \eqref{eq:RSQEq} at time $t-1$ for $t>1$ and apply Proposition \ref{prop:A_priori_Bounds} for the restarted 
solution to obtain a bound which depends only on the randomness inside the interval $[t-1,t]$. Given that the diagrams have 
the same law on intervals of the same size (see Proposition \ref{prop:diagram_convergence}) we then obtain a bound which 
is independent of $t$.
\end{remark}

\begin{proof} Let $t>1$ and notice that by Lemma \ref{lem:re_expand} (see Section \ref{s:Invariant_Measures} for 
statement and proof) $X(t;x) = \<1_black>_{t-1,t} + \tilde{v}_{t-1,t}$ where $\tilde{v}_{t-1,r}$, $r\geq t-1$, solves 
\eqref{eq:Remainder_Eq_1} with initial condition $X(t-1;x)$ and  
$$
Z^{(n-j)} = \sum_{k=j}^n a_k \binom{k}{j} \<k_black>_{t-1,t-1+\cdot},
$$
for every $0\leq j \leq n-1$. Applying Proposition \ref{prop:A_priori_Bounds} on $\tilde{v}_{t-1,\cdot}$ we then have
\begin{equation}\label{eq:restarted_bound}
 \|\tilde{v}_{t-1,t}\|_{L^p}^p \lesssim 1 \vee \left(\sum_{j,i} \sup_{t-1\leq r\leq t}\left(\big(r-(t-1)\big)^{\al'p_i^j} 
 \|Z_r^{(n-j)}\|_{\CC^{-\alpha}}^{p_i^j}\right)\right)^{\frac{1}{\lambda}},
\end{equation}
for every $p\geq 2$. To prove \eqref{eq:time_ind_moments} we fix $\al>0$ and using the embedding 
$L^p\hookrightarrow \CC^{-\al}$ for $p\geq \frac{2}{\al}$ (see \eqref{eq:B^0_emb} and Proposition \ref{prop:Besov_Emb}) we first notice that for $t > 1$
\begin{align*}
 \EE\|X(t;x)\|_{\CC^{-\al}}^p & \lesssim \EE\|\<1_black>_{t-1,t}\|_{\CC^{-\al}}^p + \EE\|\tilde{v}_{t-1,t}\|_{\CC^{-\al}}^p \\
 & \lesssim \EE\|\<1_black>_{t-1,t}\|_{\CC^{-\al}}^p + \EE\|\tilde{v}_{t-1,t}\|_{L^p}^p. 
\end{align*}
Combining with \eqref{eq:restarted_bound} and given that for every $k\geq 1$ the law of $\<k_black>_{t-1,t+\cdot}$ does not 
depend on $t$ we obtain that
\begin{equation*}
 \sup_{t\geq 1} \EE\|X(t;x)\|_{\CC^{-\al}}^p <\infty.
\end{equation*}
Finally, using \eqref{eq:Det_A_priori_bound} (and by possibly tuning down $\al'$ in the same equation) for $t\leq 1$ we get
\begin{align*}
 \EE \|X(t;x)\|_{\CC^{-\al}}^p & \lesssim \EE\|\<1_black>_{-\infty,t}\|_{\CC^{-\al}}^p + \EE\|v_t\|_{L^p}^p \\
 & \lesssim 1 + t^{-\frac{p}{n-1}},
\end{align*}
which completes the proof.
\end{proof}
\tikzsetexternalprefix{./macros/inv_measure/}

\section{Existence of Invariant Measures}\label{s:Existence_of_Invariant_Measures}

\subsection{Markov Property}\label{s:Markov_Property}

For $x\in \CC^{-\al_0}$ we write $X(\cdot;x) = \<1_black>_{0,\cdot}+ v$ where $v$ is the solution to \eqref{eq:Remainder_Eq_1}
with initial condition $x$. We  introduce a variant of the notation \eqref{eq:non_lin} and set 
\begin{equation} \label{eq:tilde_non_lin}
 \tilde F\left(v,\left(\<k_black>_{0,\cdot}\right)_{k=1}^n\right) = 
 \sum_{k=0}^n a_k \sum_{j=0}^k \binom{k}{j} v^{j} \,\<k-j_black>_{0,\cdot}.
\end{equation}

We denote by $B_b(\CC^{-\al_0})$ and $C_b(\CC^{-\al_0})$ the spaces of bounded and continuous functions 
from $\CC^{-\al_0}$ to $\RR$, both endowed with the norm%
\begin{equation*}
 \|\Phi\|_\infty := \sup_{x\in\CC^{-\al_0}} |\Phi(x)|.
\end{equation*}

For every $\Phi\in B_b(\CC^{-\al_0})$ and $t\in [0,\infty)$ we define the map $P_t:\Phi\mapsto P_t\Phi$ by
\begin{equation}\label{eq:semigroup}
 P_t\Phi(x):= \EE\Phi(X(t;x)),
\end{equation}
for every $x\in \CC^{-\al_0}$.  

In this section we prove that $\{X(t;\cdot): t\geq 0\}$ is a Markov process with transition 
semigroup $\{P_t:t \geq 0\}$ with respect to the filtration $\{\mathcal{F}_t:t\geq 0\}$ defined in 
\eqref{eq:sigma_algebra}.  

We first prove the following lemma. 

\begin{lemma}\label{lem:re_expand} Let $X(\cdot;x)=\<1_black>_{0,\cdot} + v$. Then, for every $h>0$, 
\begin{equation*}
X(t+h;x)= \<1_black>_{t,t+h} + \tilde{v}_{t,t+h},
\end{equation*}
where the remainder $\tilde{v}_{t,t+\cdot}$ solves \eqref{eq:Remainder_Eq_1} driven by the vector 
$\left(\<k_black>_{t,t+\cdot}\right)_{k=1}^n$ and initial condition $X(t;x)$, i.e.
\begin{equation*}
 \tilde{v}_{t,t+h} = S(h)X(t;x) 
 -\int_0^h S(h-r) \tilde F\left(\tilde{v}_{t,t+r}, \left(\<k_black>_{t,t+r}\right)_{k=1}^n\right) \dd r.
\end{equation*}
\end{lemma}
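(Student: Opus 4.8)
The plan is to decompose $X(t+h;x)$ in the natural way suggested by the definition $X(\cdot;x)=\<1_black>_{0,\cdot}+v$, then identify the pieces. First I would write, for fixed $t>0$ and $h\geq 0$,
\begin{equation*}
X(t+h;x) = \<1_black>_{0,t+h} + v_{t+h},
\end{equation*}
and split the stochastic convolution using the semigroup identity $\<1_black>_{0,t+h} = \<1_black>_{t,t+h} + S(h)\<1_black>_{0,t}$, which follows directly from \eqref{eq:stoc_heat_eq_0_sol} (linearity of Duhamel's formula / the stochastic integral). This gives
\begin{equation*}
X(t+h;x) = \<1_black>_{t,t+h} + \Big( S(h)\<1_black>_{0,t} + v_{t+h}\Big),
\end{equation*}
so the natural candidate for the remainder is $\tilde v_{t,t+h} := S(h)\<1_black>_{0,t} + v_{t+h}$. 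Note that at $h=0$ this reads $\tilde v_{t,t} = \<1_black>_{0,t} + v_t = X(t;x)$, which matches the asserted initial condition.

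Next I would verify that $\tilde v_{t,t+\cdot}$ satisfies the claimed mild equation. Starting from the mild formulation \eqref{eq:mild_form} for $v$ (equivalently \eqref{eq:Remainder_Eq_1} with nonlinearity $\tilde F$ as in \eqref{eq:tilde_non_lin}), evaluated at time $t+h$, one uses the semigroup property $S(t+h-s) = S(h)S(t-s)$ to split the Duhamel integral $\int_0^{t+h}$ as $\int_0^{t}$ plus $\int_t^{t+h}$, and applies $S(h)$ to the $[0,t]$-part. The $[0,t]$-contribution to $v_{t+h}$ together with $S(h)S(t)x$ reassembles, via \eqref{eq:mild_form} again, into $S(h)v_t$; hence $S(h)\<1_black>_{0,t}+v_{t+h}$ becomes
\begin{equation*}
S(h)X(t;x) - \int_t^{t+h} S(t+h-r)\,\tilde F\!\left(v_r,\left(\<k_black>_{0,r}\right)_{k=1}^n\right)\dd r .
\end{equation*}
After the substitution $r\mapsto t+r$ in the integral, it remains to rewrite the integrand in terms of $\tilde v_{t,t+r}$ and the shifted diagrams $\<k_black>_{t,t+r}$. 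This is where Corollary \ref{prop:t+h_expansion} enters: the binomial identity \eqref{eq:0_equal} expresses $\<k_black>_{0,t+r}$ in terms of $S(r)\<1_black>_{0,t}$ and $\<j_black>_{t,t+r}$, which upon substituting $v_{t+r} = \tilde v_{t,t+r} - S(r)\<1_black>_{0,t}$ and expanding $(\,\cdot\,)^j$ binomially, exactly collapses $\tilde F\big(v_{t+r},(\<k_black>_{0,t+r})_k\big)$ into $\tilde F\big(\tilde v_{t,t+r},(\<k_black>_{t,t+r})_k\big)$. This is the purely algebraic heart of the argument: the whole point of the convention \eqref{eq:shift_wick} is that these two binomial expansions are compatible.

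The main obstacle is precisely this combinatorial identity — checking that, after all substitutions, the two polynomial expressions in $(\tilde v_{t,t+r}, S(r)\<1_black>_{0,t}, \{\<j_black>_{t,t+r}\})$ agree term by term. I would handle it by noting that it is an identity between polynomials in commuting (pointwise-multiplied) quantities, so it suffices to verify it formally for the corresponding Hermite/Wick polynomials; Corollary \ref{prop:t+h_expansion} already provides the key relation, and the rest is bookkeeping with $\sum_k a_k \sum_j \binom{k}{j}(\cdots)$. A minor technical point worth a remark is regularity: the manipulations above are justified because $v\in C((0,T];\CC^\beta)$ and the diagrams lie in $C^{n,-\al}(0;T)$, so all Duhamel integrals converge and the pointwise products make sense in the relevant Besov spaces (using Proposition \ref{prop:Heat_Smooth} for the $S(h)$ smoothing near the restart time $t$). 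Finally, uniqueness of $\tilde v_{t,t+\cdot}$ as a solution of the restarted equation is inherited from the local/global well-posedness in Theorems \ref{thm:Local_Ex_Un} and \ref{thm:global_ex}, so $\tilde v_{t,t+\cdot}$ is indeed \emph{the} solution driven by $\left(\<k_black>_{t,t+\cdot}\right)_{k=1}^n$ with initial datum $X(t;x)$.
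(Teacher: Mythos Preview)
Your proposal is correct and follows essentially the same approach as the paper: define $\tilde v_{t,t+h}$ as $X(t+h;x)-\<1_black>_{t,t+h}$, derive its mild form by splitting the Duhamel integral for $v_{t+h}$, and then invoke the binomial identity of Corollary~\ref{prop:t+h_expansion} to rewrite $\tilde F\big(v_{t+r},(\<k_black>_{0,t+r})_k\big)$ as $\tilde F\big(\tilde v_{t,t+r},(\<k_black>_{t,t+r})_k\big)$. Your write-up is in fact slightly more explicit than the paper's (you spell out the Duhamel splitting and the initial-condition check), but the key combinatorial step is identical.
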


\begin{proof} Notice that 
for $h>0$
\begin{align*}
 X(t+h;x) & = \<1_black>_{0,t+h} + v_{t+h} \\
 & = \<1_black>_{t,t+h} + \tilde{v}_{t,t+h},
\end{align*}
where
\begin{equation*}
 \tilde{v}_{t,t+h} = S(h)X(t;x) - \int_0^h S(h-r) \tilde F\left(v_{t+r}, \left(\<k_black>_{0,t+r}\right)_{k=1}^n\right)\dd r.
\end{equation*}
By \eqref{eq:0_equal} we have that
\begin{align*}
 \tilde F\left(v_{t+r}, \left(\<k_black>_{0,t+r}\right)_{k=1}^n\right) & = 
 \sum_{k=0}^n a_k \sum_{j=0}^k \binom{k}{j} v_{t+r}^j \<k-j_black>_{0,t+r} \\
 & = \sum_{k=0}^n a_k \sum_{i=0}^k \binom{k}{i} \tilde{v}_{t,t+r}^i \<k-i_black>_{t,t+r}, 
\end{align*}
where we use a binomial expansion of $v_{t+r}^j$ and a change of summation. Hence
\begin{equation*}
\tilde F\left(v_{t+r}, \left(\<k_black>_{0,t+r}\right)_{k=1}^n\right) = 
\tilde F\left(\tilde{v}_{t,t+r}, \left(\<k_black>_{t,t+r}\right)_{k=1}^n\right),
\end{equation*}
which completes the proof.
\end{proof}

The fact that $\{X(t;\cdot):t\geq 0\}$ is a Markov process is an  immediate consequence of the following theorem. 

\begin{theorem}\label{thm:Markov_Prop} Let $X(\cdot;x)$ be as in the lemma above with $x\in\CC^{-\al_0}$. Then for every 
$\Phi\in B_b(\CC^{-\alpha_0})$ and $t\geq 0$
\begin{equation*} 
 \EE(\Phi(X(t+h;x))|\mathcal{F}_t)=P_h\Phi(X(t;x)),
\end{equation*}
for all $h\geq 0$.
 
\end{theorem}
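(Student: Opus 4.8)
The plan is to establish the Markov property by conditioning on $\mathcal{F}_t$ and exploiting the fact, already available from Proposition \ref{prop:diagram_convergence}, that the shifted Wick powers $\<k_black>_{t,t+\cdot}$ are independent of $\mathcal{F}_t$. First I would invoke Lemma \ref{lem:re_expand} to rewrite
\begin{equation*}
X(t+h;x) = \<1_black>_{t,t+h} + \tilde{v}_{t,t+h},
\end{equation*}
where $\tilde v_{t,t+\cdot}$ is the mild solution of \eqref{eq:Remainder_Eq_1} driven by the vector $\big(\<k_black>_{t,t+\cdot}\big)_{k=1}^n$ with initial condition $X(t;x)$. The key structural point is that this whole object is a measurable functional of two ingredients: the $\mathcal{F}_t$-measurable random variable $X(t;x)$, and the process $\big(\<k_black>_{t,t+\cdot}\big)_{k=1}^n$, which is independent of $\mathcal{F}_t$. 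Thus there is a measurable map $\Psi_h$ on $\CC^{-\al_0}\times C^{n,-\al}(t;t+h)$ such that $X(t+h;x) = \Psi_h\big(X(t;x), (\<k_black>_{t,t+\cdot})_{k=1}^n\big)$; measurability of $\Psi_h$ follows from the fact that $\tilde v$ is obtained as a limit of Picard iterates, each of which depends continuously (hence measurably) on the initial datum and the driving vector, together with the global existence in Theorem \ref{thm:global_ex} which guarantees the limit is defined for all $h$.

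Given this representation, the conditional expectation is computed by the standard freezing lemma for conditional expectations: if $\Xi$ is independent of a $\sigma$-algebra $\mathcal{G}$ and $Y$ is $\mathcal{G}$-measurable, then $\EE\big(g(Y,\Xi)\,|\,\mathcal{G}\big) = \big(\EE\, g(y,\Xi)\big)\big|_{y = Y}$ for bounded measurable $g$. Applying this with $\mathcal{G} = \mathcal{F}_t$, $Y = X(t;x)$, $\Xi = \big(\<k_black>_{t,t+\cdot}\big)_{k=1}^n$ and $g = \Phi\circ\Psi_h$ gives
\begin{equation*}
\EE\big(\Phi(X(t+h;x))\,\big|\,\mathcal{F}_t\big) = \EE\,\Phi\!\left(\Psi_h\big(y, (\<k_black>_{t,t+\cdot})_{k=1}^n\big)\right)\Big|_{y=X(t;x)}.
\end{equation*}
It then remains to identify the right-hand side with $P_h\Phi(X(t;x))$. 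For this I would use the second assertion of Proposition \ref{prop:diagram_convergence}, namely that $\big(\<k_black>_{t,t+\cdot}\big)_{k=1}^n \stackrel{\text{law}}{=} \big(\<k_black>_{0,\cdot}\big)_{k=1}^n$; since $\Psi_h$ does not depend on the base point in an essential way (the equation \eqref{eq:Remainder_Eq_1} is autonomous, and $S(h)$ and the nonlinearity $\tilde F$ are the same), $\Psi_h\big(y, (\<k_black>_{t,t+\cdot})_{k=1}^n\big)$ has the same law as $\Psi_h\big(y, (\<k_black>_{0,\cdot})_{k=1}^n\big) = X(h;y)$. Hence $\EE\,\Phi\big(\Psi_h(y,(\<k_black>_{t,t+\cdot})_{k=1}^n)\big) = \EE\,\Phi(X(h;y)) = P_h\Phi(y)$, and substituting $y = X(t;x)$ finishes the proof.

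The main obstacle is the measurability and well-definedness of the solution map $\Psi_h$ jointly in the initial condition and the driving noise vector, uniformly on the full (infinite) time interval. One must check that the Picard iteration in Theorem \ref{thm:Local_Ex_Un}, which is local, can be patched together as in Theorem \ref{thm:global_ex} in a way that is measurable in the pair $\big(y,\underline{Z}\big)$ — the patching times $T^*$ depend on $\VVert{\underline{Z}}_{\al;\al';T}$ and on the size of intermediate data, so one should argue that these are themselves measurable functionals and that the concatenated solution depends measurably on the input. A clean way to bypass delicate arguments is to work first with the Galerkin/regularized approximations $v^\ee$ (for which the solution map is manifestly continuous on $\CC^{-\al_0}\times C^{n,-\al}(0;T)$ by standard ODE theory) and a mollified noise, establish the identity $\EE(\Phi(X^\ee(t+h;x))\,|\,\mathcal{F}_t) = P_h^\ee\Phi(X^\ee(t;x))$ at that level, and then pass to the limit using Proposition \ref{prop:approx_eq} and Proposition \ref{prop:diagram_convergence} together with dominated convergence (for $\Phi$ bounded continuous; the general bounded measurable case then follows by a monotone class argument). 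The conditional-expectation freezing step and the law-equality step are routine once the measurable-dependence issue is settled.
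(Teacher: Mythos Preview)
Your proposal is correct and follows essentially the same route as the paper: invoke Lemma~\ref{lem:re_expand}, apply the freezing lemma (the paper cites \cite[Proposition 1.12]{dPZ92}) using that $X(t;x)$ is $\mathcal{F}_t$-measurable while $\big(\<k_black>_{t,t+\cdot}\big)_{k=1}^n$ is independent of $\mathcal{F}_t$, and then use the equality in law with $\big(\<k_black>_{0,\cdot}\big)_{k=1}^n$ together with uniqueness of solutions to identify the result with $P_h\Phi$. The paper does not spell out the joint measurability of the solution map nor pass through Galerkin approximations for this step; your additional discussion of these points is a reasonable elaboration but not a different strategy.
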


\begin{proof} Let $h\geq 0$ and $\Phi\in B_b(\CC^{-\alpha_0})$ and write 
\begin{equation*}
 \mathscr{T}\left(X(t;x); h;\left(\<k_black>_{t,t+\cdot}\right)_{k=1}^n\right)
\end{equation*}
to denote the solution of \eqref{eq:Remainder_Eq_1} at time $h$, driven by the vector 
$\left(\<k_black>_{t,t+\cdot}\right)_{k=1}^n$ and initial condition $X(t;x)$. By Proposition \ref{prop:t+h_expansion}
and \cite[Proposition 1.12]{dPZ92}
\begin{align*}
\EE(\Phi(X(t+h;x))|\mathcal{F}_t) & = \bar{\Phi}(X(t;x)),
\end{align*}
where for $w\in\CC^{-\al_0}$ 
\begin{equation*}
\bar{\Phi}(w) = \EE\Phi\left(\<1_black>_{t,t+h} 
+ \mathscr{T}\left(w; h; \left(\<k_black>_{t,t+\cdot}\right)_{k=1}^n \right)\right).
\end{equation*}
Here we use the fact that $X(t;x)$ is $\mathcal{F}_t$-measurable and that the vector 
$\left(\<k_black>_{t,t+\cdot}\right)_{k=1}^n$ is $\mathcal{F}_t$-independent (see Proposition 
\ref{prop:diagram_convergence}). Given that 
$\left(\<k_black>_{t,t+\cdot}\right)_{k=1}^n\stackrel{\text{law}}{=}\left(\<k_black>_{0,\cdot}\right)_{k=1}^n$ (see again
Proposition \ref{prop:diagram_convergence}) and the fact that \eqref{eq:Remainder_Eq_1} has a unique solution driven by any 
vector $\underline{Y}\in C^{n,-\al}(0;T)$, for $T>0$, and any initial condition $w\in\CC^{-\al_0}$, we have that
\begin{equation*}
 \bar{\Phi}(w) = P_h\Phi(w),
\end{equation*}
which completes the proof if we set $w=X(t;x)$. 
\end{proof}

The theorem above implies that $\{P_t:t\geq 0\}$ is a semigroup. We finally prove that it is Feller.

\begin{proposition}\label{prop:Feller}
Let $\Phi\in C_b(\CC^{-\al_0})$. Then, for every $t\geq 0$, $P_t\Phi\in C_b(\CC^{-\al_0})$. 
\end{proposition}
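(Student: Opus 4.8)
The plan is to prove the Feller property by a continuity-with-respect-to-initial-data argument for the mild solution $v$ of \eqref{eq:Remainder_Eq_2}, combined with dominated convergence. Fix $t>0$ and $\Phi\in C_b(\CC^{-\al_0})$. Let $x_k\to x$ in $\CC^{-\al_0}$; we want $P_t\Phi(x_k)\to P_t\Phi(x)$, i.e. $\EE\Phi(X(t;x_k))\to\EE\Phi(X(t;x))$. Since $X(t;x) = \<1_black>_{0,t} + v_t$ with $\<1_black>_{0,t}$ independent of the initial datum, it suffices to show that $v_t(x_k)\to v_t(x)$ in $\CC^{-\al_0}$ (indeed in $\CC^\bt$) $\PP$-almost surely along the fixed realization of the driving vector $\underline{Z}$; then $\Phi(X(t;x_k))\to\Phi(X(t;x))$ a.s. by continuity of $\Phi$, and $\EE\Phi(X(t;x_k))\to\EE\Phi(X(t;x))$ by bounded convergence since $\|\Phi\|_\infty<\infty$.

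First I would establish the pathwise continuity statement: for fixed $\underline{Z}\in C^{n,-\al}(0;T)$ with $\VVert{\underline Z}_{\al;\al';T}<\infty$, the solution map $x\mapsto v_\cdot(x)$ is continuous from $\CC^{-\al_0}$ into $C((0,T];\CC^\bt)$ on a neighbourhood of $x$. The natural route is a Gronwall-type estimate on the difference $w = v(x) - v(x')$. Writing the mild formulations and subtracting,
\begin{equation*}
w_t = S(t)(x-x') - \int_0^t S(t-s)\bigl(F(v_s(x),\underline Z_s) - F(v_s(x'),\underline Z_s)\bigr)\dd s,
\end{equation*}
one estimates $\|F(v_s(x),\underline Z_s) - F(v_s(x'),\underline Z_s)\|_{\CC^{-\al}} \lesssim s^{-(n-1)\gamma}\,s^\gamma\|w_s\|_{\CC^\bt}\cdot(\text{const depending on }\sup s^\gamma\|v_s(x)\|_{\CC^\bt},\ \sup s^\gamma\|v_s(x')\|_{\CC^\bt},\ \VVert{\underline Z})$, using the multiplicative structure \eqref{eq:non_lin} and the Besov product estimates, exactly as in the proof of Proposition \ref{prop:approx_eq}. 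Applying the smoothing bound \eqref{eq:Heat_Smooth} and multiplying by $t^\gamma$ gives, on a short interval $[0,\tilde T^*]$ whose length depends only on the a priori size of the two solutions,
\begin{equation*}
\sup_{0\leq s\leq \tilde T^*} s^\gamma\|w_s\|_{\CC^\bt} \leq C\,\|x-x'\|_{\CC^{-\al_0}},
\end{equation*}
and then one iterates over $[0,T^*]$, using Theorem \ref{thm:global_ex} to guarantee that both solutions exist globally and that their sup-norms stay bounded (the a priori bound of Proposition \ref{prop:A_priori_Bounds} is uniform in the initial datum once $s>0$, so the intermediate "sizes" are controlled), obtaining $\sup_{\eta\leq s\leq T} \|w_s\|_{\CC^\bt} \to 0$ as $x'\to x$ for every $\eta\in(0,T)$. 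This is essentially the argument already run in Proposition \ref{prop:approx_eq} with the role of the perturbed nonlinearity/noise replaced by a perturbed initial condition, so it is routine; the only care needed is that the iteration constant does not blow up, which follows because after the first step the solutions re-enter the region $\{\sup s^\gamma\|v_s\|_{\CC^\bt}\leq 1\}$ as in Theorem \ref{thm:Local_Ex_Un} and the a priori bounds keep them bounded thereafter.

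The second ingredient is dominated convergence in $\omega$. Here $\underline Z = \underline Z(\omega)$ is a fixed random element and the pathwise statement above holds for $\PP$-almost every $\omega$ (namely on the event $\{\VVert{\underline Z}_{\al;\al';T}<\infty\}$, which has full measure by \eqref{eq:log_div_Z}); so $\Phi(X(t;x_k,\omega)) \to \Phi(X(t;x,\omega))$ for a.e.\ $\omega$, and since these are uniformly bounded by $\|\Phi\|_\infty$, the bounded convergence theorem yields $P_t\Phi(x_k)\to P_t\Phi(x)$. The case $t=0$ is trivial since $P_0\Phi=\Phi$. Boundedness of $P_t\Phi$ is immediate from $|P_t\Phi(x)|\leq\|\Phi\|_\infty$. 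I expect the main (and only genuine) obstacle to be bookkeeping: making sure the short-time contraction constant in the difference estimate depends only on quantities that are uniformly controlled as $x'\to x$ (the a priori $\CC^\bt$-size of the two solutions on $[\eta,T]$ and $\VVert{\underline Z}_{\al;\al';T}$), so that finitely many iterations cover $[0,T]$ with a constant independent of $x'$ near $x$ — this is exactly the structure of the proof of Proposition \ref{prop:approx_eq}, which can be quoted almost verbatim.
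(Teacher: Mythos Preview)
Your proposal is correct and follows essentially the same approach as the paper: prove pathwise continuity of the solution map $x\mapsto v_\cdot(x)$ via a difference estimate on the mild formulation, iterate over short intervals, and conclude by dominated convergence. The only cosmetic difference is that the paper controls the size of the second solution via the stopping time $\tau=\inf\{t>0:t^\gamma\|v_t-u_t\|_{\CC^\bt}>1\}$ (so that the iteration constants depend only on $M=\sup_{t\leq T}t^\gamma\|v_t(x)\|_{\CC^\bt}$, $N=\VVert{\underline Z}_{\al;\al';T}$ and $\|x\|_{\CC^{-\al_0}}$), rather than bounding $\sup t^\gamma\|v_t(x')\|_{\CC^\bt}$ separately---but you already flagged this as the structure of Proposition~\ref{prop:approx_eq}, which is exactly right.
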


\begin{proof} It suffices to prove that the solution to \eqref{eq:Remainder_Eq_1} is continuous 
with respect to its initial condition. Fix $T>0$ and $x\in\CC^{-\alpha_0}$. 
Let $y\in \CC^{-\alpha_0}$ such that $\|x-y\|_{\CC^{-\alpha_0}}\leq1$ and
\begin{align*}
 v_t & = S(t)x -\int_0^t S(t-r) \tilde F\left(v_r, \left(\<k_black>_{0,r}\right)_{k=1}^n\right)  \dd r, \\
 u_t & = S(t)y -\int_0^t S(t-r) \tilde F\left(u_r, \left(\<k_black>_{0,r}\right)_{k=1}^n\right)  \dd r,
\end{align*}
as well as $\tau = \inf\{t>0: t^\gamma\|v_t-u_t\|_{\CC^{\bt}}> 1\}$ and
$$
M=\sup_{t\leq T} t^\gamma\|v_t\|_{\CC^{\bt}}, \quad N = \VVert{\left(\<k_black>_{0,\cdot}\right)_{k=1}^n}_{\al;\al';T}.
$$
Notice that
\begin{align*}
 \tilde F\left(v_r, \left(\<k_black>_{0,r}\right)_{k=1}^n\right) - \tilde F\left(u_r, \left(\<k_black>_{0,r}\right)_{k=1}^n\right) = 
 \sum_{k=0}^n a_k \sum_{j=0}^k \binom{k}{j} \left(u_r^k-v_r^k\right) \<k-j_black>_{0,r} 
\end{align*} 
and by Propositions \ref{prop:Heat_Smooth} and \ref{prop:Mult_Ineq_II} 
we obtain that for all $T_*\leq T\wedge\tau$
\begin{align*}
 \sup_{t\leq T_*}t^\gamma\|v_t-u_t\|_{\CC^{\bt}} 
 & \leq \sup_{t\leq T_*}t^\gamma\|v_t-u_t\|_{\CC^{\bt}}  \sum_{m=1}^n \lambda_m T_*^{\alpha_m} \\
 & + \|x-y\|_{\CC^{-\al_0}} \sum_{m=n+1}^{2n} \lambda_m T_*^{\alpha_m},
\end{align*}
where $\lambda_m\equiv\lambda_m(M,N,\|x\|_{\CC^{-\al_0}})$ and $\al_m\in(0,1]$. Choosing 
$T_*\equiv T_*(M,N,\|x\|_{\CC^{-\al_0}})\leq 1/2$ we obtain that
\begin{equation*}
 \sup_{t\leq T_*} t^\gamma\|v_t-u_t\|_{\CC^{\bt}} \leq \|x-y\|_{\CC^{-\al_0}}.
\end{equation*}
Iterating the procedure we find $N^*\in \ZZ_{\geq 0}$ and $C>0$ such that
\begin{equation*}
 \sup_{t\leq T\wedge \tau}t^\gamma\|v_t-u_t\|_{\CC^{\bt}} \leq (N^*C+1) \|x-y\|_{\CC^{-\al_0}},
\end{equation*}
for every $y\in\CC^{-\al_0}$ such that $\|x-y\|_{\CC^{-\al_0}}\leq 1$. At this point we should notice that for every $y\in\CC^{-\al_0}$ such
that $\|x-y\|_{\CC^{-\al_0}}\leq 1/2(N^*C+1)$ the above estimate implies that
\begin{equation*}
 \sup_{t\leq T\wedge \tau} t^\gamma\|v_t-u_t\|_{\CC^{\bt}} \leq \frac{1}{2},
\end{equation*}
thus $T\wedge \tau = T$ because of the definition of $\tau$. Hence, for all such $y\in\CC^{-\al_0}$,
\begin{equation*}
 \sup_{t\leq T} t^\gamma\|v_t-u_t\|_{\CC^{\bt}} \leq (N^*C+1) \|x-y\|_{\CC^{-\al_0}},
\end{equation*}
which implies convergence of $u_t$ to $v_t$ in $\CC^{\bt}$ for every $t\leq T$. Since $T$ was arbitrary, the last implies
continuity of the solution map of \eqref{eq:Remainder_Eq_1} with respect to its initial condition. The Feller property is then an 
immediate consequence of the above combined with the dominated convergence theorem. 
\end{proof}

\subsection{Invariant Measures}\label{s:Invariant_Measures}

We denote by $\{P^*_t:t\geq 0\}$ the dual semigroup of $\{P_t:t\geq 0\}$ acting on the set of all probability Borel measures on 
$\CC^{-\al_0}$ denoted by $\mathcal{M}_1(\CC^{-\al_0})$. In the next proposition we prove existence of invariant measures of $\{P_t:t\geq 0\}$ as a semigroup
acting on $C_b(\CC^{-\al_0})$. 

\begin{proposition} For every $x\in\CC^{-\al_0}$ there exists a measure $\nu_x\in\mathcal{M}_1(\CC^{-\al_0})$ and a sequence 
$t_k\nearrow\infty$ such that
$$
  \frac{1}{t_k}\int_0^{t_k} P_s^*\delta_x \, \dd s \xrightarrow{\text{w}} \nu_x.  
$$ 
In particular the measure $\nu_x$ is invariant for the Markov semigroup $\{P_t:t\geq 0\}$ on $\CC^{-\al_0}$. 
\end{proposition}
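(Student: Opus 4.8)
The plan is to invoke the Krylov--Bogoliubov existence theorem, for which two ingredients are needed: tightness of the family of time-averaged measures $\mu_T^x := \frac{1}{T}\int_0^T P_s^*\delta_x\,\dd s$, and the Feller property of $\{P_t:t\geq 0\}$. The Feller property is already available from Proposition \ref{prop:Feller}, so the work lies entirely in establishing tightness. First I would use the uniform-in-time moment bound from Corollary \ref{cor:moments_est}: for any fixed $\al>0$ (say $\al\in(\al_0, 1)$ so that $\CC^{-\al_0}\hookrightarrow\CC^{-\al}$ goes the wrong way --- so instead I must be careful and choose $\al<\al_0$, or rather work with a stronger norm) one has $\sup_{t\geq 1}\EE\|X(t;x)\|_{\CC^{-\al}}^p<\infty$ for every $p\geq 2$.

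The key point for tightness is that the injection $\CC^{-\al}\hookrightarrow\CC^{-\al_0}$ is compact whenever $\al<\al_0$ (this is the standard compact embedding of Besov--H\"older spaces on the torus, see Appendix \hyperref[Appendix:Besov_Spaces]{A}). So I would fix some $\al\in(0,\al_0)$ and apply Corollary \ref{cor:moments_est} with this $\al$ and a single exponent, say $p=2$, to obtain a constant $M<\infty$ with $\EE\|X(t;x)\|_{\CC^{-\al}}^2\leq M$ for all $t\geq 1$. Then for $R>0$ let $K_R = \{w\in\CC^{-\al_0}: \|w\|_{\CC^{-\al}}\leq R\}$, which is compact in $\CC^{-\al_0}$. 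By Chebyshev, $P_s^*\delta_x(K_R^c) = \PP(\|X(s;x)\|_{\CC^{-\al}}>R)\leq M/R^2$ for all $s\geq 1$, hence $\mu_T^x(K_R^c)\leq \frac{1}{T}\big(1 + (T-1)\tfrac{M}{R^2}\big)$, which for $T$ large is bounded by $\tfrac{2M}{R^2}$ uniformly. Thus $\{\mu_T^x: T\geq 1\}$ is tight, and by Prokhorov's theorem there is a sequence $t_k\nearrow\infty$ and $\nu_x\in\mathcal M_1(\CC^{-\al_0})$ with $\mu_{t_k}^x\xrightarrow{\mathrm w}\nu_x$.

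It remains to check that $\nu_x$ is invariant, i.e. $P_t^*\nu_x=\nu_x$ for every $t\geq 0$, equivalently $\int P_t\Phi\,\dd\nu_x = \int\Phi\,\dd\nu_x$ for all $\Phi\in C_b(\CC^{-\al_0})$. For fixed $t\geq 0$ and $\Phi\in C_b$, the semigroup property gives
\begin{equation*}
\int P_t\Phi\,\dd\mu_{t_k}^x = \frac{1}{t_k}\int_0^{t_k} P_{s+t}\Phi(x)\,\dd s = \mu_{t_k}^x(\Phi) + \frac{1}{t_k}\left(\int_{t_k}^{t_k+t} P_s\Phi(x)\,\dd s - \int_0^{t} P_s\Phi(x)\,\dd s\right),
\end{equation*}
and the bracketed remainder is bounded by $2t\|\Phi\|_\infty$, hence $\to 0$ as $k\to\infty$. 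Since $P_t\Phi\in C_b(\CC^{-\al_0})$ by the Feller property (Proposition \ref{prop:Feller}) and $\mu_{t_k}^x\xrightarrow{\mathrm w}\nu_x$, the left side converges to $\int P_t\Phi\,\dd\nu_x$ and the first term on the right to $\int\Phi\,\dd\nu_x$; equality follows.

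The only genuine subtlety --- what I would flag as the main obstacle --- is the very short-time behavior near $s=0$: Corollary \ref{cor:moments_est} only controls $\EE\|X(t;x)\|_{\CC^{-\al}}^p$ uniformly for $t\geq 1$ (for $t\leq 1$ there is a $t^{-p/(n-1)}$ blowup), so the time average over $[0,1]$ must be handled separately. This is harmless for tightness because that portion contributes at most $\tfrac1T$ of the total mass, as in the Chebyshev estimate above; one just has to split $\int_0^{t_k} = \int_0^1 + \int_1^{t_k}$ and bound the first piece trivially by its total mass $1/t_k$. Everything else is a routine application of Krylov--Bogoliubov, so no further difficulties are expected.
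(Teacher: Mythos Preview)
Your proposal is correct and follows essentially the same route as the paper: both use the moment bound from Corollary \ref{cor:moments_est} together with the compact embedding $\CC^{-\al}\hookrightarrow\CC^{-\al_0}$ for $\al<\al_0$ to establish tightness of the time-averaged measures, and then invoke Krylov--Bogoliubov. The only cosmetic differences are that the paper handles the contribution from $[0,1]$ by integrating the $s^{-1/(n-1)}$ blowup (which is integrable), whereas you simply bound that piece by its total mass $1/T$; and the paper cites the Krylov--Bogoliubov theorem directly for invariance, whereas you write out the standard shift argument explicitly.
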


\begin{proof} For $t>0$ and $\al>0$ using Markov's and Jensen's inequality there exists a constant $C>0$ such that
\begin{align*}
 \PP(\|X(t;x)\|_{\CC^{-\al}}>K) & \leq \frac{C}{K} \left(\EE\|X(t;x)\|_{\CC^{-\al}}^p\right)^\frac{1}{p},
\end{align*}
for every $K>0$ and $p\geq 2$.  Thus
\begin{align*} 
 \int_0^t \PP(\|X(s;x)\|_{\CC^{-\al}}>K) \dd s & \leq 
 \frac{C}{K} \int_0^t \left(\EE\|X(s;x)\|_{\CC^{-\al}}^p\right)^\frac{1}{p} \dd s \\
 & \leq \frac{C}{K} \left[ \int_0^1 s^{-\frac{1}{n-1}}\dd s 
 + \int_1^t \dd s\right]\\
 & = \frac{C}{K} t
\end{align*}
where in the second inequality we use \eqref{eq:time_ind_moments}. If we let $R_t= \frac{1}{t}\int_0^t P^*_s\delta_x \dd s$, 
for $K_\ee = \frac{C}{\ee}$ we get
\begin{equation*}
R_t(\{f\in \CC^{-\alpha}: \|f\|_{\CC^{-\al}} > K_\ee\}) \leq \ee.
\end{equation*}
Choosing $\al<\al_0$ we can ensure that $\{f\in\CC^{-\al}: \|f\|_{\CC^{-\alpha}}\leq K_\ee\}$ is a compact subset of 
$\CC^{-\al_0}$ since the embedding $\CC^{-\al}\hookrightarrow\CC^{-\al_0}$ is compact for every $\al<\al_0$ (
see Proposition \ref{prop:Compact_Emb} and \eqref{eq:q_ineq}). 
This implies tightness of $\{R_t\}_{t\geq 0}$ in $\CC^{-\al_0}$ and by the Krylov--Bogoliubov existence Theorem 
(see \cite[Corollary 3.1.2]{dPZ96}) there exist a sequence $t_k\nearrow\infty$ and a measure 
$\nu_x\in\mathcal{M}_1(\CC^{-\al_0})$ such that $R_{t_k}\to \nu_x$ weakly in $\CC^{-\al_0}$ and $\nu_x$ is invariant 
for the semigroup $\{P_t:t\geq 0\}$ in $\CC^{-\al_0}$. 
\end{proof}
\tikzsetexternalprefix{./macros/strong_feller/}

\section{Strong Feller Property}\label{s:Strong_Feller_Property}

In this section we show that the Markov semigroup $\{ P_t : t \geq 0\}$ satisfies the strong Feller property.
The strong Feller property is to be expected when we deal with SPDEs where the noise forces 
every direction  in Fourier space. However, the fact that the process $X$ does not solve a self-contained equation forces
us to translate everything onto the level of the remainder $v$. The most important step is to obtain a 
Bismut--Elworthy--Li formula (see Theorem \ref{thm:Bismut_El_Li}) which captures enough information to
provide a good control of the linearization of the remainder equation. 

On the technical level, we work with a finite dimensional approximation $X^\ee$ for $X$. This choice 
and the fact that the equation is driven by white noise imply that the solution is Fr\'echet differentiable 
with respect to the (finite dimensional approximation of the) noise, so we can avoid working with Malliavin 
derivatives. This is expressed in Proposition \ref{prop:mal_differ} below, and in fact this proposition 
could even be established without splitting $X^\ee$ into $v^\ee$ and $\<1_black>^{\ee}_{0,\cdot}$. We make
strong use of the splitting in Proposition \ref{prop:cut_off_bounds} where the local solution theory 
is used to obtain deterministic bounds on $v^\ee$ and its linearization for small $t$ provided 
that we control the  diagrams   $\<n_black>^{\ee}_{0,\cdot}$. This control is uniform in $\ee$ and 
enters crucially the proof of Proposition \ref{prop:TV_approx_bound}.

From now on we fix $0<\al<\al_0$ sufficiently small. For $\ee\in(0,1)$ let $\Pi_\ee[L^2(\TT^2)]$ be the finite dimensional subspace 
of $L^2(\TT^2)$ spanned by $\{e_m\}_{|m|< \frac{1}{\ee}}$ (recall that we deal with real-valued functions and the symmetry condition 
\eqref{eq:symmetry_cond} is always valid) and denote by $\Pi_\ee$ the corresponding orthogonal projection. 
We also let $\hat\Pi_\ee$ be a linear smooth approximation taking values in $\Pi_\ee[L^2(\TT^2)]$ and having the properties 
\hyperref[it:i]{i} and \hyperref[it:ii]{ii} introduced in the discussion before Proposition \ref{prop:approx_eq}. 

Let $\Re^\ee$ be the renormalization constant defined in \eqref{eq:renorm_constant} and consider a finite 
dimensional approximation of \eqref{eq:RSQEq} given by
\begin{equation}\label{eq:ee_approx}
\left\{
\begin{array}{lcll}
\dd X^\ee(t) & = & \left(\Delta X^\ee(t) - X^\ee(t) - 
\sum_{k=0}^n a_k \hat\Pi_\ee\HH_k(X^\ee(t),\Re^\ee)\right)\dd t + \dd W_\ee(t,\cdot) \\
X^\ee(0,\cdot) & = & \hat\Pi_\ee x
\end{array}
\right.,
\end{equation}
for some initial condition $x\in\CC^{-\al_0}$. Here $W_\ee(t,z) = \sum_{|m|<\frac{1}{\ee}} \hat{W}_m(t) e_m(z)$, where 
$(\hat{W}_m)_{m\in\ZZ^2}$ is a family of complex Brownian motions such that 
$\hat{W}_{-m} = \overline{\hat{W}_m}$ and independent otherwise.  We furthermore assume that $W_\ee$ is 
defined on the same probability space $\Omega$ as $\xi$ via the identity
$$
\hat W_m(t) := \xi\left(\mathbf{1}_{[0,t]} \times e_m\right), \quad m\in\ \ZZ^2,
$$
which also makes it adapted with respect to the filtration 
$(\mathcal{F}_t)_{t\geq 0}$. It is convenient to write $W_\ee = G_\ee (\hat{W}_m)_{m\in \ZZ^2\cap[-d,d]^2}$ where 
$G_\ee:C([0,\infty);\RR^{(2d-1)^2})\to \Pi_\ee[L^2(\TT^2)]$ is such that 
\begin{equation*}
 G_\ee (\hat{W}_m)_{m\in \ZZ^2\cap[-d,d]^2}  = \sum_{|m|<\frac{1}{\ee}} \hat{W}_m e_m  
\end{equation*}
and $d=\left\lfloor \frac{1}{\ee} \right\rfloor + \frac{1}{2}$. The Cameron--Martin space of $W_\ee$ is 
given by
\begin{equation*}
\mathcal{CM} := W_0^{1,2}([0,\infty)) =\left\{w: \partial_t w\in L^2([0,\infty);\RR^{(2d-1)^2}), \, w(0) = 0\right\}.
\end{equation*}
Last, we have the identity
\begin{equation}\label{eq:stoch_int_approx}
\<1_black>^\ee_{0,t} = 
\sum_{|m|< \frac{1}{\ee}} \int_0^t \e^{-(1+4\pi^2|m|^2) (t-s)} \dd \hat{W}_m(s) \, e_m,
\end{equation}
where $\<1_black>^\ee_{0,\cdot}$ is the finite dimensional approximation defined in Section
\ref{s:Finite_Dimensional_Approximations}.

As in \eqref{eq:tilde_non_lin}, for $v\in \CC^{\bt}$ and $\underline{Z}\in\left(\CC^{-\al}\right)^n$, $\al<\bt$, we use the 
notation
\begin{equation*}
 \tilde F(v, \underline{Z}) = \sum_{k=0}^n a_k \sum_{j=0}^k \binom{k}{j} v^j Z^{(k-j)}
\end{equation*}
with the convention that $Z^{(0)}\equiv 1$ and we let
\begin{equation*}
 \tilde F'(v, \underline{Z}) = \sum_{k=1}^n k a_k \sum_{j=0}^{k-1} \binom{k-1}{j} v^j Z^{(k-1-j)}.
\end{equation*}
Formally, $\tilde F'$ stands for the derivative of $\sum_{k=0}^n a_k :X^k:$ with respect to $X$, with $:X^k:$ 
replaced by $\sum_{j=0}^k \binom{k}{j} v^j Z^{(k-j)}$.

Existence and uniqueness of local in time solutions to \eqref{eq:ee_approx} up to some random explosion time 
$\tau^*_\ee>0$ can be proven following the same method as in Section \ref{s:Solving_the_Equation}, i.e. using the ansatz $X^\ee = 
\<1_black>_{0,\cdot}^\ee + v^\ee$ and solving the PDE problem
\begin{equation}\label{eq:approx_remainder_eq}
\left\{
\begin{array}{lcll}
\partial_t v^\ee & = &\Delta v^\ee - v^\ee - \hat\Pi_\ee \tilde F(v^\ee,\underline{Z}^\ee)\\
v^\ee(0,\cdot) & = & \hat\Pi_\ee x
\end{array}
\right.,
\end{equation}
where $\underline{Z}^\ee = \left(\<k_black>^\ee_{0,\cdot}\right)_{k=1}^n$ (see Section \ref{s:Finite_Dimensional_Approximations} for definitions).

Notice that for fixed $v$, $\tilde F$ is Fr\'echet differentiable with respect to any $\underline{Z}\in\left(\CC^{-\al}\right)^n$ 
as a function taking values in $\CC^{-\al}$. Recall that $\<k_black>_{0,\cdot}^\ee = \HH_k (\<1_black>_{0,\cdot}^\ee, \Re^\ee)$, for every
$1\leq k\leq n$, so that the map
\begin{equation}\label{eq:sol_map}
 (v^\ee,\<1_black>^\ee_{0,\cdot})\mapsto S(t)\hat\Pi_\ee x - \int_0^t S(t-s) 
 \tilde F\left(v^\ee_s,\left(\HH_k(\<1_black>^\ee_{0,s},\Re^\ee)\right)_{k=1}^n\right) \dd s,
\end{equation}
for $(v^\ee,\<1_black>^\ee_{0,\cdot})\in C([0,t];\CC^{\bt})\times C([0,t];\Pi_\ee[L^2(\TT^2)])$ and $t>0$, is Fr\'echet differentiable  as
a composition of $\tilde{F}$ with a linear operator shifted by a constant, since the mapping 
$$
C([0,t];\Pi_\ee[L^2(\TT^2)])\ni \<1_black>^\ee_{0,\cdot}\mapsto \left(\HH_k(\<1_black>_{0,\cdot},\Re^\ee)\right)_{k=1}^n
\in C^{n,-\al}(0;t)
$$
is Fr\'echet differentiable for any $\al>0$, with respect to any $\VVert{\cdot}_{\al;\al';t}$, for $\al'>0$ fixed. Thus, for fixed $x\in \CC^{-\al_0}$
and $\<1_black>_{0,\cdot}^\ee\in C([0,t];\Pi_\ee[L^2(\TT^2)])$ the implicit function theorem for Banach spaces (see \cite[Theorem 4E]{Ze95})
can be applied up to time $\tau^*_\ee\equiv \tau^*_\ee(x, \<1_black>_{0,\cdot}^\ee)$ where existence of $v^\ee$ is ensured. Hence, for 
$t\in(0,\tau^*_\ee)$ there exists an open neighborhood $\mathcal{U}_{\<1_black>_{0,\cdot}^\ee}\subset C([0,t];\Pi_\ee[L^2(\TT^2)])$ of 
$\<1_black>_{0,\cdot}^\ee$ such that the 
solution map $\mathscr{T}^{\ee,x}_{t}:\mathcal{U}_{\<1_black>_{0,\cdot}^\ee} \to \CC^{\bt}$ of \eqref{eq:approx_remainder_eq} 
is Fr\'echet differentiable at $\<1_black>_{0,\cdot}^\ee$. 

 Using It\^o's formula the stochastic integrals in \eqref{eq:stoch_int_approx} can be written as
\begin{align}
 \int_0^\cdot \e^{-(1+4\pi^2|m|^2) (\cdot-s)} \dd \hat{W}_m(s) & = \label{eq:int_by_parts} \\
 \hat{W}_m(\cdot) - (1+4\pi^2|m|^2) & \int_0^\cdot \e^{-(1+4\pi^2|m|^2) (\cdot-s)} \hat{W}_m(s) \dd s. \nonumber
\end{align}
We can replace $(\hat{W}_m)_{m\in\ZZ^2\cap[-d,d]^2}$ in \eqref{eq:int_by_parts} by any $w\in C([0,t];\RR^{(2d-1)^2})$ ,
thereby obtaining a continuous linear function on $C([0,t];\RR^{(2d-1)^2})$. Thus $\<1_black>_{0,\cdot}^\ee$ as a function from $C([0,t]; \RR^{(2d-1)^2})$ to 
$C([0,t];\Pi_\ee[L^2(\TT^2)])$ is Fr\'echet differentiable. Combining all the above we finally obtain 
Fr\'echet differentiability of $v_t^\ee$ from $C([0,t];\RR^{(2d-1)^2})$ to $\CC^{\bt}$. We denote by $\DD$ the Fr\'echet derivative with
respect to elements in $C([0,t];\RR^{(2d-1)^2})$ (i.e. with respect to the noise), for $t>0$.

We let $\hat W_\ee = (W_m)_{|m|< \frac{1}{\ee}}$ and for 
$w\in C([0,t];\RR^{(2d-1)^2})$ we write
\begin{equation*}
 \int_0^t S(t-s) G_\ee \dd w(s) := 
 \sum_{|m|< \frac{1}{\ee}} \int_0^t \e^{-(1+4\pi^2|m|^2) (t-s)} \dd w_m(s).
\end{equation*}
%

In the next proposition we summarize the results of the previous discussion. 

\begin{proposition}\label{prop:mal_differ}  For fixed $x\in \CC^{-\al_0}$, $\hat W_\ee\in C([0,\infty);\RR^{(2d-1)^2})$ and 
$\<1_black>_{0,\cdot}^\ee\equiv \<1_black>_{0,\cdot}^\ee(\hat W_\ee)\in C([0,\infty)];\Pi_\ee[L^2(\TT^2)])$,
let $\tau^*_\ee \equiv \tau^*_\ee(x,\<1_black>_{0,\cdot}^\ee)>0$ be the explosion time of $v^\ee$. 
Then for all  $t< \tau^*_\ee $ there exists an open neighborhood $\mathcal{O}_{\hat W_\ee}\subset C([0,t];\RR^{(2d-1)^2})$ 
of $\hat W_\ee$ such that $\mathscr{X}^{\ee,x}_t\equiv X^\ee(t;x)(= \<1_black>_{0,t}^\ee +v^\ee_t)$ is Fr\'echet differentiable as a function from
$\mathcal{O}_{\hat W_\ee}$ to $\CC^{-\al_0}$ and for any $w\in C([0,t];\RR^{(2d-1)^2})$ its directional derivative 
$\DD\mathscr{X}^{\ee,x}_t(w)$ is given in mild form as
\begin{equation}\label{eq:mal_der}
\DD\mathscr{X}^{\ee,x}_t(w) = -\int_0^t S(t-s) \hat \Pi_\ee\left[\tilde F'(v^\ee_s,\underline{Z}_s^\ee)
\DD\mathscr{X}^{\ee,x}_s(w)\right]\dd s + \int_0^t S(t-s) G_\ee \dd w(s).
\end{equation}
%
%
%
%
\end{proposition}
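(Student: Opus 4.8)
The plan is to assemble the Fr\'echet differentiability of $\mathscr{X}^{\ee,x}_t$ from the ingredients gathered in the discussion preceding the statement, and then to read off \eqref{eq:mal_der} by differentiating the mild equation for $v^\ee$. I would decompose $\hat W_\ee\mapsto \mathscr{X}^{\ee,x}_t$ into three maps. First, by the integration by parts \eqref{eq:int_by_parts} the map $\hat W_\ee\mapsto \<1_black>^\ee_{0,\cdot}\in C([0,t];\Pi_\ee[L^2(\TT^2)])$ is a bounded \emph{linear} function of the path, hence everywhere Fr\'echet differentiable with $\DD\<1_black>^\ee_{0,s}(w) = \int_0^s S(s-r)G_\ee\,\dd w(r)$. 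Second, for fixed $x\in\CC^{-\al_0}$ the solution map $\<1_black>^\ee_{0,\cdot}\mapsto v^\ee$ of \eqref{eq:approx_remainder_eq} is Fr\'echet differentiable on an open neighbourhood of $\<1_black>^\ee_{0,\cdot}$ for every $t<\tau^*_\ee$; this is exactly the implicit function theorem applied to the Fr\'echet-differentiable map \eqref{eq:sol_map}, as explained above. Third, $\<1_black>^\ee_{0,t}$ is again linear in $\hat W_\ee$. Composing the first two maps and adding the third, the chain rule produces an open neighbourhood $\mathcal{O}_{\hat W_\ee}\subset C([0,t];\RR^{(2d-1)^2})$ of $\hat W_\ee$ on which $\hat W_\ee\mapsto \mathscr{X}^{\ee,x}_t = \<1_black>^\ee_{0,t} + v^\ee_t$ is Fr\'echet differentiable into $\CC^{-\al_0}$.

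To identify the derivative I would differentiate the mild identity
\begin{equation*}
 v^\ee_t = S(t)\hat\Pi_\ee x - \int_0^t S(t-s)\,\hat\Pi_\ee\, \tilde F\!\left(v^\ee_s,\left(\HH_k(\<1_black>^\ee_{0,s},\Re^\ee)\right)_{k=1}^n\right)\dd s
\end{equation*}
in a direction $w$. The term $S(t)\hat\Pi_\ee x$ is noise-independent and drops out; differentiating under the integral --- licit on the fixed finite interval, the heat kernel contributing only an integrable singularity --- the one point of care is the algebra inside $\tilde F$. Here I would invoke the two elementary consequences of the recursion \eqref{eq:hermite_pol}, namely $\partial_X\HH_k(X,C) = k\HH_{k-1}(X,C)$ and the addition formula $\sum_{j=0}^k\binom{k}{j}v^j\HH_{k-j}(X,C) = \HH_k(v+X,C)$; the latter rewrites the drift as $\sum_{k=0}^n a_k\HH_k(v^\ee_s+\<1_black>^\ee_{0,s},\Re^\ee)$, and after differentiation both identities collapse the chain-rule expression into $\tilde F'(v^\ee_s,\underline{Z}^\ee_s)\big(\DD v^\ee_s(w)+\DD\<1_black>^\ee_{0,s}(w)\big)$, with $\tilde F'$ the map defined above. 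Since $\DD\<1_black>^\ee_{0,s}(w) = \int_0^s S(s-r)G_\ee\,\dd w(r)$ and $\DD\mathscr{X}^{\ee,x}_s(w) = \DD v^\ee_s(w)+\DD\<1_black>^\ee_{0,s}(w)$, adding the derivative of $\<1_black>^\ee_{0,t}$ to that of $v^\ee_t$ yields precisely \eqref{eq:mal_der}. As a consistency check, \eqref{eq:mal_der} read as a linear Volterra equation for $\DD\mathscr{X}^{\ee,x}_\cdot(w)$ --- with smoothing kernel $S(t-s)$ and bounded multiplier $\tilde F'(v^\ee_s,\underline{Z}^\ee_s)$ on the finite-dimensional range of $\hat\Pi_\ee$ --- is uniquely solvable on $[0,\tau^*_\ee)$ by a routine contraction argument.

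The step I expect to be the main obstacle is making the implicit function theorem argument precise and, in particular, upgrading the differentiability so that for \emph{every} $t<\tau^*_\ee(x,\<1_black>^\ee_{0,\cdot})$ it holds on an open neighbourhood of $\hat W_\ee$, rather than only up to some a priori smaller time. The local statement of the theorem already gives differentiability on short intervals, on which the operator $h\mapsto h + \int_0^{\cdot} S(\cdot-s)\hat\Pi_\ee\big[\tilde F'(v^\ee_s,\underline{Z}^\ee_s)\,h_s\big]\,\dd s$ on $C([0,t];\CC^\bt)$ is boundedly invertible by a contraction estimate. To reach all $t<\tau^*_\ee$ I would combine this with the continuation/patching scheme of Section \ref{s:Solving_the_Equation} and with lower semicontinuity of $\tau^*_\ee$ in $(x,\<1_black>^\ee_{0,\cdot})$, itself a consequence of the stability estimates there (cf.\ Proposition \ref{prop:approx_eq}). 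Once $t<\tau^*_\ee$ is fixed, these ensure that all nearby paths still produce a non-exploding solution on $[0,t]$ and keep the solution map differentiable there, after which the computation of the preceding paragraph applies verbatim.
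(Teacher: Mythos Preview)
Your proposal is correct and follows essentially the same approach as the paper: the proposition is explicitly presented as a summary of the preceding discussion, which, just like your argument, factors the map through the linear (via \eqref{eq:int_by_parts}) passage $\hat W_\ee\mapsto \<1_black>^\ee_{0,\cdot}$ and then invokes the implicit function theorem on the fixed-point map \eqref{eq:sol_map} to obtain differentiability of $\<1_black>^\ee_{0,\cdot}\mapsto v^\ee$. Your derivation of \eqref{eq:mal_der} via the Hermite addition formula and your remarks on continuation up to $\tau^*_\ee$ are more detailed than what the paper spells out, but they are entirely in line with its argument.
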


We denote by $D$ the Fr\'echet derivative with respect to elements in $\CC^{-\al_0}$ (i.e. with respect to the initial condition).
For $h\in\CC^{-\al_0}$, we let $h_\ee = \hat\Pi_\ee h$ and for $t\geq s$ we also consider the following linear equation,
\begin{equation} \label{eq:pert_eq}
\left\{
\begin{array}{lcll}
\partial_t J_{s,t}^\ee h_\ee & = &\Delta J_{s,t}^\ee h_\ee - J_{s,t}^\ee h_\ee - 
\hat\Pi_\ee\left[ \tilde F'(v_t^\ee,\underline{Z}_t^\ee) J_{s,t}^\ee h_\ee\right] \\
J_{s,s}^\ee h_\ee & = & h_\ee  
\end{array}
\right..
\end{equation}
Then $J^\ee_{0,t}h_\ee = DX^\ee(t;x)(h)$, i.e. it is the derivative of $X^\ee(t;\cdot)$ in the direction $h$, and its 
existence for every $t\leq \tau^*_\ee$ is ensured by a similar argument as the one discussed before Proposition   
\ref{prop:mal_differ}.

At this point we should comment on the relation between \eqref{eq:mal_der} and \eqref{eq:pert_eq}. Given that
\eqref{eq:pert_eq} has a unique solution for every $h_\ee\in\Pi_\ee[L^2(\TT^2)]$ up to time $t>0$, then 
for $w\in\mathcal{CM}$, i.e. $w(0) = 0$ and $\partial_t w\in L^2([0,\infty);\RR^{(2d-1)^2})$, by Duhamel's principle
\begin{equation} \label{eq:mal_der_op}
 \DD\mathscr{X}^{\ee,x}_t(w) = \int_0^t J^\ee_{s,t}G_\ee\partial_s w(s)\dd s,
\end{equation}
where $J^\ee_{s,t} : \CC^{-\al_0} \to \CC^{\bt}$ is the solution map of \eqref{eq:pert_eq}.

\begin{remark} In the framework of Malliavin calculus $\mathcal{D}_s\mathscr{X}^{\ee,x}_t = J^\ee_{s,t}G_\ee$
as an element of the dual of $L^2([0,\infty);\RR^{(2d-1)^2})$ is the Malliavin derivative (see \cite[Section 1.2]{Nu06}) 
in the sense that the latter coincides with the former when it acts on $\mathscr{X}^{\ee,x}_t$. In our case, the presence of 
additive noise implies Fr\'echet differentiability with respect to the noise as an element in $C([0,t];\RR^{(2d-1)^2})$
(see Proposition \ref{prop:mal_differ}), which is of course stronger than Malliavin differentiability with respect to the noise.
\end{remark} 

For $r\in[\frac{1}{4},1]$  (the precise value of $r$ will be fixed below) and $0<\al'<\al$ we consider the stopping times
\begin{equation}\label{eq:stopping_time}
\begin{array}{lcl}
\tau^{\ee,r} &  : = & \inf\left\{t>0: \|\<1_black>_{0,t}^\ee\|_{\CC^{-\al}}\vee t^{\al'}\|\<2_black>_{0,t}^\ee\|_{\CC^{-\al}} 
\vee \ldots \vee t^{\al'(n-1)}\|\<n_black>_{0,t}^\ee\|_{\CC^{-\al}} > r \right\} \\
\tau^r & : = & \inf\left\{t>0: \|\<1_black>_{0,t}\|_{\CC^{-\al}}\vee t^{\al'}\|\<2_black>_{0,t}\|_{\CC^{-\al}} 
\vee \ldots \vee t^{(n-1)\al'}\|\<n_black>_{0,t}\|_{\CC^{-\al}} > r \right\}
\end{array}.
\end{equation}
 Let $\bar B_1(x)$ be the closed unit ball centered at $x$ in $\CC^{-\al_0}$. The next proposition provides local bounds on $v^\ee$ and
$J^\ee_{0,\cdot}$ given deterministic control on $\underline{Z}^\ee$ (see also Theorem \ref{thm:Local_Ex_Un}).

\begin{proposition} \label{prop:cut_off_bounds} Let $x\in\CC^{-\al_0}$ and let $R = 2\|x\|_{\CC^{-\al_0}} +1$. Then there 
exists a deterministic time $T^*\equiv T^*(R) >0$, independent of $\ee$, such that for all $t\leq T^*\wedge \tau^{\ee,r}$ and initial 
conditions $y\in \bar{B}_1(x)$, 
\begin{equation*}
\sup_{s \leq t} s^\gamma \|v^\ee_s\|_{\CC^{\bt}}\leq 1 \quad \text{and} \quad 
\sup_{s \leq t} s^\gamma \|J^\ee_{0,t}h_\ee\|_{\CC^{\bt}} \leq 2 \|h_\ee\|_{\CC^{-\al_0}},
\end{equation*}
for $\bt,\gamma$ as in \eqref{eq:beta_gamma_cond}, uniformly in $\ee$, for every 
$h_\ee \in \hat\Pi_\ee[L^2(\TT^2)]$.  
\end{proposition}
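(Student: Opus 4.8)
The plan is to reproduce, pathwise, the contraction scheme behind Theorem~\ref{thm:Local_Ex_Un} — first for the approximate remainder equation~\eqref{eq:approx_remainder_eq}, then for its linearisation~\eqref{eq:pert_eq} — while checking that every constant can be taken independently of $\ee$. Fix a realisation, so that $\tau^{\ee,r}$ is just a number. On $\{t\le\tau^{\ee,r}\}$ the definition of $\tau^{\ee,r}$ gives $\VVert{\underline Z^\ee}_{\al;\al';t}\le C_0 r\le C_0$ with $C_0=C_0(n,(a_k))$, and property~\hyperref[it:i]{i} gives $\|\hat\Pi_\ee y\|_{\CC^{-\al_0}}\le C\|y\|_{\CC^{-\al_0}}\le CR$ for $y\in\bar B_1(x)$. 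I would then run the contraction argument from the proof of Theorem~\ref{thm:Local_Ex_Un} for the map $v_\cdot\mapsto S(\cdot)\hat\Pi_\ee y-\int_0^\cdot S(\cdot-s)\hat\Pi_\ee\tilde F(v_s,\underline Z^\ee_s)\,\dd s$ on the ball $\{\sup_{s\le T^*}s^\gamma\|v_s\|_{\CC^\bt}\le1\}$; since $\hat\Pi_\ee$ is bounded on $\CC^{-\al}$ uniformly in $\ee$ and the Schauder estimates (Proposition~\ref{prop:Heat_Smooth}) and product estimates (Proposition~\ref{prop:Mult_Ineq_II}) do not involve $\ee$, the same computation produces a deterministic $T^*\equiv T^*(R)$ — of the form~\eqref{eq:T^*_form} with $R$ and $C$ adjusted by the universal constants above — for which the contraction closes on $[0,T^*\wedge\tau^{\ee,r}]$ for all $\ee$ and all $y\in\bar B_1(x)$. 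By uniqueness the fixed point is $v^\ee$, so $\sup_{s\le t}s^\gamma\|v^\ee_s\|_{\CC^\bt}\le1$ there, and in particular $\tau^*_\ee\ge T^*\wedge\tau^{\ee,r}$.

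For the linearisation I would work from the mild form
\[
J^\ee_{0,t}h_\ee=S(t)h_\ee-\int_0^t S(t-s)\,\hat\Pi_\ee\big[\tilde F'(v^\ee_s,\underline Z^\ee_s)\,J^\ee_{0,s}h_\ee\big]\,\dd s
\]
and set $\Psi(t):=\sup_{s\le t}s^\gamma\|J^\ee_{0,s}h_\ee\|_{\CC^\bt}$, which is finite on $[0,\tau^*_\ee)$ and vanishes as $t\downarrow0$ (from $s^\gamma\|S(s)h_\ee\|_{\CC^\bt}\lesssim s^{\gamma-(\bt+\al_0)/2}\|h_\ee\|_{\CC^{-\al_0}}$, the local solvability of~\eqref{eq:pert_eq}, and continuity). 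Every summand of $\tilde F'(v,\underline Z)=\sum_{k=1}^n k a_k\sum_{j=0}^{k-1}\binom{k-1}{j}v^j Z^{(k-1-j)}$ has the form $c\,v^j Z^{(m)}$ with $j+m\le n-1$; using that $\CC^\bt$ is a Banach algebra ($\bt>0$), the product estimate of Proposition~\ref{prop:Mult_Ineq_II} (applicable since $\bt>\al$), the bound $\|v^\ee_s\|_{\CC^\bt}\le s^{-\gamma}$ from the first step, the control $\|Z^{(m)}_s\|_{\CC^{-\al}}\lesssim s^{-\max(m-1,0)\al'}$ for $s\le\tau^{\ee,r}$, and $\al'<\gamma$, I expect
\[
\big\|\hat\Pi_\ee\big[\tilde F'(v^\ee_s,\underline Z^\ee_s)\,J^\ee_{0,s}h_\ee\big]\big\|_{\CC^{-\al}}\lesssim s^{-(n-1)\gamma}\,\|J^\ee_{0,s}h_\ee\|_{\CC^\bt}\le s^{-n\gamma}\,\Psi(s).
\]
Inserting this, bounding $\|S(t)h_\ee\|_{\CC^\bt}\lesssim t^{-(\bt+\al_0)/2}\|h_\ee\|_{\CC^{-\al_0}}$ and $\|S(t-s)g\|_{\CC^\bt}\lesssim(t-s)^{-(\bt+\al)/2}\|g\|_{\CC^{-\al}}$, evaluating the resulting Beta integral (finite since $(\bt+\al)/2<1$ and $n\gamma<1$, both guaranteed by~\eqref{eq:beta_gamma_cond} for $\al$ small), multiplying by $t^\gamma$ and taking $\sup_{t\le T}$ gives, for $T\le T^*\wedge\tau^{\ee,r}$,
\[
\Psi(T)\le C_1\,T^{\gamma-(\bt+\al_0)/2}\,\|h_\ee\|_{\CC^{-\al_0}}+C_2\,T^{\gamma+\theta_1}\,\Psi(T),\qquad \theta_1:=1-\tfrac{\bt+\al}{2}-n\gamma>0,
\]
with $C_1,C_2$ independent of $\ee$ and $\gamma-(\bt+\al_0)/2>0$ by the first inequality in~\eqref{eq:beta_gamma_cond}.

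Since $\Psi(T)<\infty$, the last step is to shrink $T^*$ once more — still depending only on $R$ and universal constants, hence $\ee$-free — so that $C_2(T^*)^{\gamma+\theta_1}\le\tfrac12$ and $2C_1(T^*)^{\gamma-(\bt+\al_0)/2}\le2$; absorbing the $\Psi(T)$ term then yields $\Psi(T)\le2C_1T^{\gamma-(\bt+\al_0)/2}\|h_\ee\|_{\CC^{-\al_0}}\le2\|h_\ee\|_{\CC^{-\al_0}}$ on $[0,T^*\wedge\tau^{\ee,r}]$, which is the second assertion. The individual estimates are routine Schauder and Bony computations; the genuine difficulty is that none of the constants may hide a dependence on $\ee$, and this is exactly what property~\hyperref[it:i]{i} (uniform boundedness of $\hat\Pi_\ee$ on $\CC^{-\al}$), the $\ee$-free heat-kernel and multiplication inequalities, and the common threshold $r$ in $\tau^{\ee,r}$ take care of. The only auxiliary point, the a priori finiteness of $\Psi$ used in the absorption, follows — for each fixed $\ee$, which is enough — from the local solvability of~\eqref{eq:pert_eq} and the continuity of $J^\ee_{0,\cdot}$ in $\CC^\bt$ on $(0,\tau^*_\ee)$.
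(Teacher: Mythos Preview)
Your proposal is correct and follows essentially the same route as the paper: invoke the contraction argument of Theorem~\ref{thm:Local_Ex_Un} (noting that the $\ee$-uniform boundedness of $\hat\Pi_\ee$ on $\CC^{-\al}$ and the stopping time $\tau^{\ee,r}$ make all constants $\ee$-free) to obtain the bound on $v^\ee$, then estimate the mild form of $J^\ee_{0,t}h_\ee$ via the same Schauder and multiplication inequalities, using the bound $\|\tilde F'(v^\ee_s,\underline Z^\ee_s)\|_{\CC^{-\al}}\lesssim s^{-(n-1)\gamma}$, and absorb the resulting self-referential term by shrinking $T^*$ (equivalently, increasing the constant in~\eqref{eq:T^*_form}). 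Your treatment is in fact slightly more explicit than the paper's---you track the positive exponent $\gamma-(\bt+\al_0)/2$ in the first term to hit the constant $2$ exactly, and you note the a~priori finiteness of $\Psi$ needed for the absorption---but the argument is the same.
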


\begin{proof} Let $t\leq \tau^{\ee,r}\wedge T^*$ where $T^*\equiv T^*(R)$ is defined as in \eqref{eq:T^*_form}. We can also 
assume that $t\leq 1$. Then, from Theorem \ref{thm:Local_Ex_Un}, we have that 
\begin{equation*}
 \sup_{s\leq t} s^\gamma \|v^\ee_s\|_{\CC^{\bt}} \leq 1,
\end{equation*}
for every $y\in \bar B_1(x)$. Furthermore, for every $\al>0$,
\begin{equation}\label{eq:pol_bound}
 \|\tilde F'(v^\ee_s,\underline{Z}^\ee_s) \|_{\CC^{-\al}} 
 \leq C s^{-(n-1)\gamma}
\end{equation}
where $C$ is a constant independent of $\ee$. Using Proposition 
\ref{prop:Heat_Smooth}, \eqref{eq:p_ineq} and \eqref{eq:a_q_ineq} we get that
\begin{align*}
 \|S(t-s)\hat\Pi_\ee[\tilde F'(v^\ee_s, \underline{Z}^\ee_s)J^\ee_{0,s}h_\ee] \|_{\CC^\bt} & \leq C (t-s)^{-\frac{\bt+\al}{2}} 
 \|\hat\Pi_\ee [\tilde F'(v^\ee_s,\underline{Z}^\ee_s)J^\ee_{0,s}h_\ee]\|_{\CC^{-\al}}\\
 & \leq C (t-s)^{-\frac{\bt+\al}{2}} s^{-(n-1)\gamma} \|J^\ee_{0,s}h_\ee\|_{\CC^{\beta}},
\end{align*}
where we also use the fact that $\|\hat\Pi_\ee f\|_{\CC^{-\al}} \lesssim \|f\|_{\CC^{-\al}}$, for every 
$f\in\CC^{-\al}$. We are now ready to retrieve the appropriate bounds on the operator norm of $J^\ee_{0,\cdot}$. For 
$h_\ee\in \Pi_\ee [L^2(\TT^2)]$ we have in mild form,
\begin{equation*}
 J^\ee_{0,t} h_\ee = S(t)h_\ee 
 - \int_0^s S(t-s) \hat\Pi_\ee\left[\tilde F'(v_s^\ee,\underline{Z}^\ee_s)J^\ee_{0,s}h_\ee\right] \dd s.
\end{equation*}
Thus for every $\al>0$ and $s\leq t\leq \tau^{\ee,r}\wedge T^*$ by \eqref{eq:pol_bound} 
\begin{equation*}
 \|J^\ee_{0,s}h_\ee\|_{\CC^{\bt}} \leq C s^{-\frac{\bt+\al_0}{2}} \|h_\ee\|_{\CC^{-\al_0}} 
 + C s^{1-\frac{\bt+\al}{2} - n \gamma} \sup_{s\leq t} s^\gamma \|J^\ee_{0,s}h_\ee\|_{\CC^\bt}. 
\end{equation*}
Multiplying the above inequality by $s^\gamma$ and using the fact that 
$\gamma - \frac{\bt+\al_0}{2}>0$ we get
\begin{equation*}
 \sup_{s\leq t} s^\gamma\|J^\ee_{0,s}h_\ee\|_{\CC^{\bt}} \leq C \|h_\ee\|_{\CC^{-\al_0}} 
 + C t^{1-\frac{\bt+\al}{2} - (n-1) \gamma}  \sup_{s\leq t} s^\gamma \|J^\ee_{0,s}h_\ee\|_{\CC^{\bt}}.
\end{equation*}
Possibly increasing the value of the constant $C$ in \eqref{eq:T^*_form} we finally obtain the bound
\begin{equation}
 \sup_{s\leq t} s^\gamma \|J^\ee_{0,s}h_\ee\|_{\CC^\bt} \leq 2 \|h_\ee\|_{\CC^{\al_0}},
\end{equation}
which completes the proof.
\end{proof}

We denote by $C_b^1(\CC^{-\al_0})$ the set of continuously differentiable functions on $\CC^{-\al_0}$. 
We furthermore let $\chi\in C^\infty(\RR)$ such that $\chi(\zeta)\in[0,1]$, for every $\zeta\in\RR$, and
\begin{equation*}
 \chi(\zeta) = 
 \begin{cases} 1, & \text{ if } |\zeta|\leq \frac{r}{2}\\
 0, & \text{ if } |\zeta|\geq r
 \end{cases},
\end{equation*}
for $r$ as in \eqref{eq:stopping_time}. For simplicity we also let $\VVert{\cdot}_t :=\VVert{\cdot}_{\al;\al';t}$, $t\geq 0$. 
Inspired by \cite{No86}, we prove the following version of the Bismut--Elworthy--Li formula.

\begin{theorem}[Bismut--Elworthy--Li Formula]\label{thm:Bismut_El_Li} Let $x\in\CC^{-\al_0}$, $\Phi\in C_b^1(\CC^{-\al_0})$ and let $t>0$. 
Let  $w$ be a process taking values in the Cameron-Martin space  $ \mathcal{CM}$ with $\partial_s w$ adapted. Furthermore, assume that
 there exists a deterministic constant $C\equiv C(t)>0$ such that $\|\partial_s w\|_{L^2([0,t];\RR^{(2d-1)^2})}^2\leq C$ $\PP$-almost 
surely. Then we have that
\begin{align} 
 \EE \left[D\Phi(\mathscr{X}^{\ee,x}_t)\left(\DD\mathscr{X}^{\ee,x}_t(w)\right)\chi(\VVert{\underline{Z}^\ee}_{t})\right] 
 & = \EE\left(\Phi(\mathscr{X}^{\ee,x}_t) \int_0^t \partial_s w(s) \cdot \dd \hat{W}_\ee(s) \, \chi(\VVert{\underline{Z}^\ee}_{t}) \right) 
 \label{eq:Bismut_El_Li} \\
 & - \EE\Big(\Phi(\mathscr{X}^{\ee,x}_t) \, \partial_{+} \chi(\VVert{\underline{Z}^\ee}_{t})(w)\Big) \nonumber
\end{align}
where 
\begin{align}\label{eq:sub_dif} 
 \partial_{+} \chi(\VVert{\underline{Z}^\ee}_{t})(w) = \partial_\zeta\chi\left(\VVert{\underline{Z}^\ee}_{t}\right) 
 \partial_+\VVert{\underline{Z}^\ee}_{t} & \Big(0,2 \<1_black>_{0,\cdot}^\ee Q_w(\cdot), \ldots, 
 n\<n-1_black>_{0,\cdot}^\ee Q_w(\cdot)\Big)  
\end{align}
$\partial_+\VVert{\cdot}_{t}:C^{n,-\al}(0;t)\to C^{n,-\al}(0;t)^*$ is the one-sided 
derivative of $\VVert{\cdot}_{t}$ given by
\begin{equation*}
 \partial_+\VVert{\underline{Z}^\ee}_{t}(\underline{Y}) = \lim_{\delta\to 0^+} \frac{\VVert{\underline{Z}^\ee + \delta 
 \underline{Y}}_{t} - 
 \VVert{\underline{Z}^\ee}_{t}}{\delta},
\end{equation*}
for every direction $\underline{Y}\in C^{n,-\al}(0;t)$,  and  
$$
Q_w(\cdot) := \int_0^{\cdot} S(\cdot - s) G_\ee \partial_s w(s) \dd s.
$$
\end{theorem}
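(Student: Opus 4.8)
The plan is to obtain \eqref{eq:Bismut_El_Li} as an integration--by--parts identity on the finite--dimensional Wiener space of $\hat W_\ee$, exploiting that the noise enters \eqref{eq:ee_approx} additively and that, by Proposition~\ref{prop:mal_differ}, $\mathscr{X}^{\ee,x}_t$ together with the diagrams $\<k_black>^\ee_{0,\cdot}=\HH_k(\<1_black>^\ee_{0,\cdot},\Re^\ee)$ depend Fr\'echet--differentiably on the driving path. For $\delta\in\RR$ let $\mathscr{X}^{\ee,x,\delta}_t$ be the solution of \eqref{eq:ee_approx} driven by $\hat W_\ee+\delta w$; since $w\in\mathcal{CM}$ this replaces $\<1_black>^\ee_{0,\cdot}$ by $\<1_black>^\ee_{0,\cdot}+\delta Q_w$, and hence, by the Hermite identity $\HH_k'=k\HH_{k-1}$ (immediate from \eqref{eq:hermite_pol}), the diagram vector $\underline{Z}^\ee$ by $\underline{Z}^{\ee,\delta}$ with $k$-th component $\HH_k(\<1_black>^\ee_{0,\cdot}+\delta Q_w,\Re^\ee)$. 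Since $\partial_s w$ is adapted and $\|\partial_s w\|_{L^2([0,t];\RR^{(2d-1)^2})}^2\leq C$ $\PP$-almost surely, Novikov's criterion holds and Girsanov's theorem gives, for every bounded measurable $G$,
\begin{equation*}
\EE\big[G\big(\mathscr{X}^{\ee,x,\delta}_t,\underline{Z}^{\ee,\delta}\big)\big]=\EE\big[G\big(\mathscr{X}^{\ee,x}_t,\underline{Z}^{\ee}\big)\,\mathcal{E}_\delta\big],\qquad \mathcal{E}_\delta:=\exp\Big(\delta\int_0^t\partial_s w(s)\cdot\dd\hat W_\ee(s)-\tfrac{\delta^2}{2}\int_0^t|\partial_s w(s)|^2\dd s\Big).
\end{equation*}

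Specialising to $G(y,\underline z)=\Phi(y)\,\chi(\VVert{\underline z}_{t})$ and differentiating both sides at $\delta=0^+$ produces \eqref{eq:Bismut_El_Li}. The right--hand side is smooth in $\delta$, with derivative $\EE\big[\Phi(\mathscr{X}^{\ee,x}_t)\,\chi(\VVert{\underline{Z}^\ee}_{t})\int_0^t\partial_s w(s)\cdot\dd\hat W_\ee(s)\big]$ by dominated convergence ($\Phi,\chi$ bounded, $\mathcal{E}_\delta$ with locally uniform moments). On the left, $\delta\mapsto\Phi(\mathscr{X}^{\ee,x,\delta}_t)$ is differentiable at $0$ with derivative $D\Phi(\mathscr{X}^{\ee,x}_t)\big(\DD\mathscr{X}^{\ee,x}_t(w)\big)$ (Proposition~\ref{prop:mal_differ}, $\Phi\in C_b^1$, and $\DD\mathscr{X}^{\ee,x}_t(w)=\int_0^tJ^\ee_{s,t}G_\ee\partial_s w(s)\dd s$ by \eqref{eq:mal_der_op}); and $\VVert{\cdot}_{t}$, being a Lipschitz seminorm, has one--sided directional derivatives in every direction, so $\delta\mapsto\VVert{\underline{Z}^{\ee,\delta}}_{t}$ is right--differentiable at $0$ and, composing with the smooth $\chi$, $\delta\mapsto\chi(\VVert{\underline{Z}^{\ee,\delta}}_{t})$ has right--derivative $\partial_+\chi(\VVert{\underline{Z}^\ee}_{t})(w)$ in the notation of \eqref{eq:sub_dif}, the direction appearing there being the $\delta$-derivative at $0$ of $\delta\mapsto\underline{Z}^{\ee,\delta}$. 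Applying the product rule for right derivatives to $\Phi(\mathscr{X}^{\ee,x,\delta}_t)\chi(\VVert{\underline{Z}^{\ee,\delta}}_{t})$ and equating with the right--hand side gives \eqref{eq:Bismut_El_Li} after rearranging; equivalently, this is the Malliavin duality $\EE\big[\langle DF,\partial w\rangle\big]=\EE\big[F\int_0^t\partial_s w(s)\cdot\dd\hat W_\ee(s)\big]$ applied to $F=\Phi(\mathscr{X}^{\ee,x}_t)\chi(\VVert{\underline{Z}^\ee}_{t})$.

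The localising factor $\chi$ is what makes differentiating under $\EE$ on the left legitimate. On the event where $\chi(\VVert{\underline{Z}^\ee}_{t})$ or $\chi(\VVert{\underline{Z}^{\ee,\delta}}_{t})$ is non--zero the diagram vector has norm $\lesssim r$ (uniformly for $|\delta|$ small), so by Proposition~\ref{prop:cut_off_bounds} — together with Theorem~\ref{thm:global_ex} and the linearity of \eqref{eq:pert_eq} to pass from short times to all times — the remainder $v^\ee$, its linearisation $J^\ee_{0,\cdot}$, and hence $\DD\mathscr{X}^{\ee,x}_t(w)$ obey bounds that are deterministic given the $\PP$-a.s. bound on $\|\partial_s w\|_{L^2}$; combined with $|\Phi(\mathscr{X}^{\ee,x,\delta}_t)-\Phi(\mathscr{X}^{\ee,x}_t)|\leq\|D\Phi\|_\infty\|\mathscr{X}^{\ee,x,\delta}_t-\mathscr{X}^{\ee,x}_t\|_{\CC^{-\al_0}}$ and the Lipschitz bounds for $\chi$ and for $\VVert{\cdot}_{t}$, this makes the difference quotients bounded uniformly in $\delta\in(0,1]$ (off that event the difference quotient is $\leq\|\Phi\|_\infty\|\chi'\|_\infty\tfrac{1}{\delta}\big|\VVert{\underline{Z}^{\ee,\delta}}_{t}-\VVert{\underline{Z}^\ee}_{t}\big|$, again integrable). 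That $\mathscr{X}^{\ee,x,\delta}_t$ is well defined up to $t$ for $|\delta|$ small follows from the local theory, $\delta Q_w$ being a bounded smooth perturbation of $\<1_black>^\ee_{0,\cdot}$.

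I expect the main obstacle to be the non--smoothness of the cutoff rather than the integration by parts itself. The latter is the elementary Cameron--Martin/Girsanov computation above; the former is genuine, because $\VVert{\cdot}_{t}$ is a maximum of suprema and hence only one--sided differentiable, which is precisely why one can only hope for the one--sided identity \eqref{eq:Bismut_El_Li} with $\partial_+$ in place of a genuine derivative. Care is needed both to justify the chain rule through $\chi(\VVert{\cdot}_{t})$ at the level of right derivatives — passing the $\delta$-derivative through the (Danskin--type) supremum in $\VVert{\cdot}_{t}$ and through the one--sidedly differentiable $\CC^{-\al}$-norm — and to secure the uniform domination above; these are what force the hypotheses that $\partial_s w$ be adapted with $\|\partial_s w\|_{L^2}^2$ bounded $\PP$-a.s.
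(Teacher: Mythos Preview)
Your strategy is the paper's: perturb the noise by $\delta w$, use Girsanov, differentiate at $\delta=0^+$, and handle the one-sided derivative of the norm through the smooth cutoff $\chi$; you also correctly locate the domination argument in Proposition~\ref{prop:cut_off_bounds}.

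There is, however, a genuine slip in the Girsanov step. The identity you write,
\[
\EE\big[G(\mathscr{X}^{\ee,x,\delta}_t,\underline{Z}^{\ee,\delta})\big]=\EE\big[G(\mathscr{X}^{\ee,x}_t,\underline{Z}^{\ee})\,\mathcal{E}_\delta\big],
\]
is the Cameron--Martin formula, valid for \emph{deterministic} $w$ but not for a random adapted shift. Under $\mathcal{E}_\delta\PP$ the process $B=\hat W_\ee-\delta w$ is a Brownian motion, yet the joint law of $(B,w)$ under $\mathcal{E}_\delta\PP$ need not match that of $(\hat W_\ee,w)$ under $\PP$, because $w$ is a functional of $\hat W_\ee=B+\delta w$, not of $B$. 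Your fallback to the Malliavin duality $\EE[\langle DF,\partial w\rangle]=\EE[F\int\partial_s w\cdot\dd\hat W_\ee]$ does not rescue this either, since $F=\Phi(\mathscr{X}^{\ee,x}_t)\chi(\VVert{\underline{Z}^\ee}_t)$ is only one-sidedly differentiable in the noise.

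The paper avoids both issues by running Girsanov in the opposite direction: with $\mathsf{Z}^\delta=\exp\bigl(-\delta\int_0^t\partial_s w\cdot\dd\hat W_\ee-\tfrac{\delta^2}{2}\int_0^t|\partial_s w|^2\bigr)$, under $\mathsf{Z}^\delta\PP$ the process $\hat W_\ee+\delta w$ is a Brownian motion, and since the shifted objects $(\mathscr{X}^{\ee,x,\delta}_t,\underline{Z}^{\ee,\delta})$ are pathwise functionals of this process \emph{alone} (no separate dependence on $w$), one gets the genuinely valid identity
\[
\EE\big[G(\mathscr{X}^{\ee,x,\delta}_t,\underline{Z}^{\ee,\delta})\,\mathsf{Z}^\delta\big]=\EE\big[G(\mathscr{X}^{\ee,x}_t,\underline{Z}^{\ee})\big],
\]
constant in $\delta$. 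Differentiating this constant at $\delta=0^+$ --- now legitimately using only one-sided derivatives throughout the product rule --- yields \eqref{eq:Bismut_El_Li} exactly as in the rest of your outline.
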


\begin{remark}  The presence of $\partial_+\VVert{\cdot}_{t}$ in the theorem above is 
based on the fact that norms are not in general Fr\'echet differentiable functions. 
However, their one-sided derivatives always exist (see \cite[Appendix D]{dPZ92}) and 
they behave nicely in terms of the usual rules of differentiation.
\end{remark}

\begin{proof} Let $\delta>0$ and $u=\partial_t w$, which is an $L^2([0,\infty);\RR^{(2d-1)^2})$ function. For every $n\geq 1$, 
we define the shift $T_{\delta u}$ by 
\begin{align*}
 T_{\delta u}\<n_black>^{\ee}_{0,t} & = \sum_{k=0}^n \binom{n}{k}  
 \Big( \delta Q_w(t) \Big)^k \<n-k_black>_{0,t}^\ee
\end{align*}
and we let $T_{\delta u}\underline{Z}^\ee = \left(T_{\delta u} \<k_black>^\ee_{0,\cdot}\right)_{k=1}^n$.

Let $X^{\ee,\delta}(\cdot;x) = T_{\delta u}\<1_black>^\ee_{0,\cdot} + v^{\ee,\delta}$, where the remainder
$v^{\ee,\delta}$ solves the equation
\begin{equation*}
\left\{
\begin{array}{lcll}
\partial_t v^{\ee,\delta} & = & \Delta v^{\ee,\delta} - v^{\ee,\delta} 
- \hat\Pi_\ee \tilde F(v^{\ee,\delta},T_{\delta u}\underline{Z}^\ee) \\
v^{\ee,\delta}(0,\cdot) & = & \hat\Pi_\ee x
\end{array}
\right..
\end{equation*}
As in \cite{No86}, our aim is to construct a probability measure $\PP^{\delta}$ such that the law of 
$T_{\delta u}\<1_black>^{\ee}_{0,\cdot}$ under $\PP^{\delta}$ is the same as the law of $\<1_black>^\ee_{0,\cdot}$ under $\PP$. 
That way we obtain the identity 
\begin{equation} \label{eq:shifted_exp}
  \partial_{\delta^+} \EE_{\PP^{\delta}}\left(\Phi\left(X^{\ee,\delta}(t;x)\right)
  \chi(\VVert{T_{\delta u}\underline{Z}^\ee}_{t})\right)\Big|_{\delta = 0} = 0,
\end{equation}
since $\<k_black>_{0,\cdot}^\ee$ is a continuous function of $\<1_black>_{0,\cdot}^\ee$ for every $k\geq 2$,
the solution map to \eqref{eq:approx_remainder_eq} is a continuous functions of the  $\<k_black>_{0,\cdot}^\ee$ ,
and $\chi$ is a continuous function of $\<1_black>^\ee_{0,\cdot}$. Above 
$\partial_{\delta^+}$ stands as a shortcut of the directional derivative of a function as $\delta\to 0^+$. 
We will then show below that the result follows by an expansion of the derivative in the above expression.

We start with the construction of $\PP^{\delta}$. Let $\mathsf{Y}^{\delta}(r):=-\int_0^r \delta u(s) \cdot \dd \hat{W}_\ee(s)$ where
$\cdot$ is the scalar product on $\RR^{(2d-1)^2}$, and define the exponential process 
\begin{equation*}
 \mathsf{Z}^{\delta}(r): = \exp\left\{ \mathsf{Y}^{\delta}(r) - \frac{1}{2} \int_0^r |\delta u(s)|^2 \dd s\right\}.
\end{equation*}
Notice that by the assumptions on $w$ Novikov's condition is satisfied, i.e.
\begin{equation*}
 \EE \exp{\frac{1}{2} \int_0^t |\delta u(s)|^2 \dd s} <\infty,
\end{equation*}
thus by \cite[Chapter 8, Proposition 1.15]{RY99} $\mathsf{Z}^{\delta}$ is a strictly positive martingale 
and we have that $\EE \mathsf{Z}^{\delta}(t) = 1$. We define $\PP^\delta$ by 
its Radon--Nikodym derivative with respect to $\PP$
\begin{equation*}
 \frac{\dd \, \PP^{\delta}}{\dd \, \PP} = \mathsf{Z}^{\delta}(t). 
\end{equation*}
By Girsanov's Theorem (see \cite[Chapter 4, Theorem 1.4]{RY99}) we have that
$\hat{W}_\ee^{\delta}(r) : = \hat{W}_\ee(r) - \left[\hat{W}_\ee(\cdot),\mathsf{Y}^{\delta}(\cdot)\right]_r$, $r\leq t$, 
under $\PP^{\delta}$ has the same law as $\hat{W}_{\ee}$ under $\PP$, where $\left[\, \cdot \, , \cdot \,\right]_r$ stands for the 
quadratic variation at time $r$. We furthermore have that 
$\left[\hat{W}_\ee(\cdot),\mathsf{Y}^{\delta}(\cdot)\right]_r = - \int_0^r \delta u(s) \dd s$ 
as well as $\<1_black>^\ee_{0,t} = \int_0^t S(t-s)G_\ee\dd \hat{W}_\ee(s)$ and 
$T_{\delta u}\<1_black>^\ee_{0,t} = \int_0^t S(t-s) G_\ee \dd \hat{W}^{\delta}_\ee(s)$. Since the law of 
$\hat{W}^{\delta}_\ee$ under $\PP^{\delta}$ is the same as the law of $\hat{W}^\ee$ under $\PP$, this is also the case for 
$T_{\delta u}\<1_black>^\ee_{0,\cdot}$ and $\<1_black>^\ee_{0,\cdot}$ (recall that $\<1_black>_{0,\cdot}^\ee$ is a continuous 
function of $\hat W_\ee$, when the later is seen as an element in $C([0,t];\RR^{(2d-1)^2})$ endowed with the 
supremum norm because of \eqref{eq:int_by_parts}).  Thus $\PP^{\delta}$ is the required measure and \eqref{eq:shifted_exp} in the form
\begin{align}\label{eq:shifted_exp_eq0}
 \partial_{\delta^+} & \EE\left(\Phi\left(X^{\ee,\delta}(t;x)\right)\chi(\VVert{T_{\delta u}\underline{Z}^\ee}_{t})
 \mathsf{Z}^{\delta}(t)\right)
 \Big|_{\delta=0} = 0
\end{align}
follows. Using the chain rule, $\partial_\delta\Phi\left(X^{\ee,\delta}(x;t)\right) 
= D\Phi\left(X^{\ee,\delta}(x;t)\right)\left(\partial_\delta X^{\ee,\delta}(x;t)\right)$ and
$\partial_\delta \mathsf{Z}^{\delta}(t) = -\mathsf{Z}^{\delta}(t) \left(\int_0^t u(s)\cdot \dd \hat{W}_\ee(s) +
\delta \int_0^t |u(s)|^2 \dd s\right)$. 
For the directional derivative of $\chi(\VVert{T_{\delta u}\underline{Z}^\ee}_{t})$ at $\delta^+=0$ it suffices 
to check the existence of the limit 
\begin{equation*}
 \lim_{\delta\to 0^+} \frac{\VVert{T_{\delta u}\underline{Z}^\ee}_{t}-\VVert{\underline{Z}^\ee}_{t}}{\delta}.
\end{equation*}
We claim that the above limit is the same as 
\begin{equation*}
 \partial_+\VVert{\underline{Z}^\ee}_{t}(\underline{Y}^\ee) : = 
 \lim_{\delta\to 0^+} \frac{\VVert{\underline{Z}^\ee+\delta\underline{Y}^\ee}_{t}-
 \VVert{\underline{Z}^\ee}_{t}}{\delta}
\end{equation*}
where $\underline{Y}^\ee = \left(0,2 \<1_black>_{0,\cdot}^\ee Q_w(\cdot),
\ldots, n\<n-1_black>_{0,\cdot}^\ee Q_w(\cdot)\right)$. Using the fact that 
$\VVert{\cdot}_{t}$ is a norm, we have that
\begin{equation*}
\frac{\VVert{T_{\delta u}\underline{Z}^\ee}_{t} - \VVert{\underline{Z}^\ee}_{t}}{\delta} = 
\frac{\VVert{\underline{Z}^\ee+\delta \underline{Y}^\ee} - \VVert{\underline{Z}^\ee}_{t}}{\delta} 
+ \mathsf{Error}_\delta,
\end{equation*}
where $\mathsf{Error}_\delta \to 0$  
%
%
%
%
as $\delta \to 0^+$. Subtracting $\partial_+\VVert{\underline{Z}^\ee}_{t}(\underline{Y}^\ee)$ from both sides 
of the above equation and letting $\delta\to 0^+$ we get
\begin{equation}\label{eq:limsup}
 \limsup_{\delta\to 0^+} \left(\frac{\VVert{T_{\delta u}\underline{Z}^\ee}_{t} - 
 \VVert{\underline{Z}^\ee}_{t}}{\delta} -
 \partial_+\VVert{\underline{Z}^\ee}_{t}(\underline{Y}^\ee)\right) \leq 0.
\end{equation}
In a similar way we can prove that the reverse inequality of \eqref{eq:limsup} is valid with the $\limsup$ replaced by a $\liminf$, which
makes $\partial_+\VVert{\underline{Z}^\ee}_{t}(\underline{Y}^\ee)$ the appropriate limit.

Using the bounds in Proposition \ref{prop:cut_off_bounds}, by the dominated convergence theorem we can pass the derivative inside the expectation in
\eqref{eq:shifted_exp_eq0} (see also \cite{No86} for more details) and integrate by parts to obtain the identity 
\begin{align*}
 \EE\Big(D\Phi\left(X^{\ee,\delta}(t;x)\right)\left(\partial_\delta X^{\ee,\delta}(t;x)\right) 
 & \chi(\VVert{T_{\delta u} \underline{Z}^\ee}_{t}) \mathsf{Z}^{\delta}(t) \Big)\Big|_{\delta = 0}
   = \\ 
 - & \EE \Big(\Phi  \left(X^{\ee,\delta}(t;x)\right) 
 \chi(\VVert{T_{\delta u} \underline{Z}^\ee}_{t}) \partial_\delta \mathsf{Z}^{\delta}(t)\Big)\Big|_{\delta = 0} \\
 - & \EE \Big(\Phi  \left(X^{\ee,\delta}(t;x)\right)  \partial_{\delta^+}
 \chi(\VVert{T_{\delta u}\underline{Z}^\ee}_t)(w)\mathsf{Z}^{\delta}(t) \Big) \Big|_{\delta^+ = 0}.
\end{align*}
Since $\partial_\delta X^{\ee,\delta}(x;t)\Big|_{\delta = 0} = D\mathscr{X}^{\ee,x}_t(\underline{Y}^\ee)$
and $\partial_\delta \mathsf{Z}^{\delta}(t)\Big|_{\delta = 0} = - \int_0^t u(s)\cdot \dd \hat{W}_\ee(s)$ we get 
\eqref{eq:Bismut_El_Li} which completes the proof.
\end{proof}

Let $\{P^\ee_t:t\geq 0\}$ defined via the identity
\begin{equation*}
 P^\ee_t\Phi(x) := \EE\Phi(X^\ee(t;x))\mathbf{1}_{\{t< \tau^*_\ee(x)\}}
\end{equation*}
for every $\Phi\in C_b(\CC^{-\al_0})$, where we write $\tau^*_\ee(x)$ dropping the dependence of $\tau^*_\ee$ on $\<1_black>_{0,\cdot}^\ee$.
We use \eqref{eq:Bismut_El_Li} to prove the following proposition.

\begin{proposition} \label{prop:TV_approx_bound}  There exist a 
universal constant $C$ and $\theta_1>0$ such that  
\begin{equation}\label{eq:TV_bound}
 |P^\ee_t\Phi(x) - P^\ee_t\Phi(y)| \leq C \frac{1}{t^{\theta_1}} \|\Phi\|_\infty \|x-y\|_{\CC^{-\al}} 
 + 2 \|\Phi\|_\infty \PP(t\geq \tau^{\ee,\frac{r}{2}})
\end{equation}
for every $x\in\CC^{-\al_0}$, $y\in \bar{B}_1(x)$, $\Phi\in C^1_b(\CC^{-\al_0})$ and 
$t\leq T^*\equiv T^*(R)$ (defined in  Proposition \ref{prop:cut_off_bounds}), where $R=2\|x\|_{\CC^{-\al_0}} +1$.
\end{proposition}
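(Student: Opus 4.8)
The plan is to run the classical Bismut--Elworthy--Li argument for the strong Feller property, using the cut-off $\chi(\VVert{\underline{Z}^\ee}_{t})$ in two roles: to confine the computation to the event $\{t<\tau^{\ee,r}\}$ on which the deterministic bounds of Proposition \ref{prop:cut_off_bounds} are available, and to isolate the error term $2\|\Phi\|_\infty\PP(t\geq\tau^{\ee,\frac r2})$.

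First I would reduce \eqref{eq:TV_bound} to a gradient bound for the cut-off functional $\Psi(z):=\EE\big[\Phi(\mathscr{X}^{\ee,z}_t)\,\chi(\VVert{\underline{Z}^\ee}_{t})\big]$. Since $\tau^{\ee,r}\leq\tau^*_\ee(z)$ and $\chi$ vanishes on $\{\VVert{\underline{Z}^\ee}_t\geq r\}$, the random variable $\Phi(\mathscr{X}^{\ee,z}_t)\chi(\VVert{\underline{Z}^\ee}_t)$ is well defined and coincides with $\Phi(\mathscr{X}^{\ee,z}_t)\chi(\VVert{\underline{Z}^\ee}_t)\mathbf 1_{\{t<\tau^*_\ee(z)\}}$, while $1-\chi(\VVert{\underline{Z}^\ee}_t)$ is supported on $\{t\geq\tau^{\ee,\frac r2}\}$; hence $|P^\ee_t\Phi(z)-\Psi(z)|\leq\|\Phi\|_\infty\PP(t\geq\tau^{\ee,\frac r2})$ for $z\in\{x,y\}$, and it suffices to show $|\Psi(x)-\Psi(y)|\leq C t^{-\theta_1}\|\Phi\|_\infty\|x-y\|_{\CC^{-\al}}$. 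For $z\in\bar{B}_1(x)$ and $t\leq T^*$, Proposition \ref{prop:cut_off_bounds} gives bounds on $v^\ee$ and $J^\ee_{0,\cdot}$ on $\{t\leq\tau^{\ee,r}\}\supseteq\{\chi>0\}$ that are uniform in $\ee$, and $\VVert{\underline{Z}^\ee}_t$ does not depend on $z$; dominated convergence then yields that $\Psi$ is continuously differentiable on $\bar{B}_1(x)$ with $D\Psi(z)(h)=\EE\big[D\Phi(\mathscr{X}^{\ee,z}_t)\big(J^\ee_{0,t}\hat\Pi_\ee h\big)\chi(\VVert{\underline{Z}^\ee}_t)\big]$. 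As the segment $[y,x]$ stays in $\bar{B}_1(x)$, writing $\Psi(x)-\Psi(y)=\int_0^1 D\Psi\big(y+\sigma(x-y)\big)(x-y)\,\dd\sigma$ reduces everything to bounding $\sup_{z\in\bar{B}_1(x)}|D\Psi(z)(h)|$ with $h=x-y$.

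Next I would apply the Bismut--Elworthy--Li formula (Theorem \ref{thm:Bismut_El_Li}) with the Elworthy--Li control $\partial_s w(s):=\tfrac1t\,G_\ee^{-1}J^\ee_{0,s}\hat\Pi_\ee h\cdot\mathbf 1_{\{s<\tau^{\ee,r}\wedge t\}}$, which is $\mathcal{CM}$-valued with adapted derivative. Using $|G_\ee^{-1}g|=\|g\|_{L^2}$ on $\Pi_\ee[L^2(\TT^2)]$, the embedding $\CC^\bt\hookrightarrow L^2$, property \hyperref[it:i]{i} of $\hat\Pi_\ee$, and $\|J^\ee_{0,s}\hat\Pi_\ee h\|_{\CC^\bt}\leq 2 s^{-\gm}\|h\|_{\CC^{-\al}}$ on $\{s\leq\tau^{\ee,r}\wedge T^*\}$ from Proposition \ref{prop:cut_off_bounds}, one obtains the deterministic estimate $\|\partial_s w\|_{L^2([0,t])}^2\leq C\,t^{-1-2\gm}\|h\|_{\CC^{-\al}}^2$ (with $2\gm<1$ by \eqref{eq:beta_gamma_cond}), so the hypotheses of Theorem \ref{thm:Bismut_El_Li} are met. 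On $\{t<\tau^{\ee,r}\}$ the indicator is inactive, hence $\DD\mathscr{X}^{\ee,z}_t(w)=\tfrac1t\int_0^t J^\ee_{s,t}J^\ee_{0,s}\hat\Pi_\ee h\,\dd s=J^\ee_{0,t}\hat\Pi_\ee h$; since $\{\chi>0\}\subseteq\{t<\tau^{\ee,r}\}$ this makes the left-hand side of \eqref{eq:Bismut_El_Li} equal to $D\Psi(z)(h)$. The first term on the right of \eqref{eq:Bismut_El_Li} is then bounded by $\|\Phi\|_\infty\big(\EE\int_0^t|\partial_s w|^2\,\dd s\big)^{1/2}\leq C\|\Phi\|_\infty\, t^{-\frac12-\gm}\|h\|_{\CC^{-\al}}$ by Cauchy--Schwarz and the It\^o isometry. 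For the $\partial_+\chi$ term I would use that the one-sided derivative of a norm obeys $|\partial_+\VVert{\cdot}_t(\underline{Y})|\leq\VVert{\underline{Y}}_t$; Propositions \ref{prop:Heat_Smooth} and \ref{prop:cut_off_bounds} give $\|Q_w(s)\|_{\CC^\bt}\leq C\,t^{-1}s^{1-\gm}\|h\|_{\CC^{-\al}}$, and on $\{s<\tau^{\ee,r}\}$ one has $\|\<k-1_black>^\ee_{0,s}\|_{\CC^{-\al}}\leq r\,s^{-(k-2)\al'}$, so by the multiplication inequality (Proposition \ref{prop:Mult_Ineq_II})
\[
s^{(k-1)\al'}\big\|\<k-1_black>^\ee_{0,s}\,Q_w(s)\big\|_{\CC^{-\al}}\lesssim t^{-1}s^{\al'+1-\gm}\|h\|_{\CC^{-\al}}\leq t^{\al'-\gm}\|h\|_{\CC^{-\al}}
\]
for $s\leq t$ and $2\leq k\leq n$; since $\al'<\gm$ this yields $|\EE\big(\Phi(\mathscr{X}^{\ee,z}_t)\,\partial_+\chi(\VVert{\underline{Z}^\ee}_t)(w)\big)|\leq C\|\Phi\|_\infty\,t^{-(\gm-\al')}\|h\|_{\CC^{-\al}}$. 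As $t\leq T^*$ is bounded above by a universal constant, both contributions are $\lesssim t^{-\theta_1}$ with $\theta_1=\tfrac12+\gm$, uniformly in $\ee$, which completes the argument.

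The hard part will be the bookkeeping of the first two steps rather than the estimates: one must verify that $\chi(\VVert{\underline{Z}^\ee}_t)$ simultaneously restricts to $\{t<\tau^{\ee,r}\}$ (so that Proposition \ref{prop:cut_off_bounds} applies, the truncation in $w$ is inactive, and $\DD\mathscr{X}^{\ee,z}_t(w)=J^\ee_{0,t}\hat\Pi_\ee h$ on the relevant event), is independent of the initial condition $z$ (so that differentiating under the expectation along the whole segment is legitimate), and generates precisely the error term $2\|\Phi\|_\infty\PP(t\geq\tau^{\ee,r/2})$, all while keeping every constant independent of $\ee$ --- this $\ee$-uniformity being exactly the reason for working with the finite-dimensional approximation. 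Granting Propositions \ref{prop:cut_off_bounds}, \ref{prop:Heat_Smooth} and \ref{prop:Mult_Ineq_II}, Theorem \ref{thm:Bismut_El_Li} and the It\^o isometry, the remaining computations are routine.
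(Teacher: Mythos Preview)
Your proposal is correct and follows essentially the same strategy as the paper: split off the error term via $1-\chi(\VVert{\underline{Z}^\ee}_t)$, apply the mean value theorem along the segment $[y,x]$, and use the Bismut--Elworthy--Li formula (Theorem~\ref{thm:Bismut_El_Li}) with the Elworthy--Li control $\partial_s w\propto G_\ee^{-1}J^\ee_{0,s}\hat\Pi_\ee h$ truncated at $\tau^{\ee,r}$, combined with the deterministic bounds of Proposition~\ref{prop:cut_off_bounds}. The only cosmetic differences are that you absorb the factor $1/t$ into $w$ from the start (the paper divides by $t$ after applying the formula) and that your estimate for the $\partial_+\chi$ term carries the slightly sharper exponent $t^{-(\gamma-\al')}$; the dominant contribution $t^{-1/2-\gamma}$ from the stochastic integral and hence $\theta_1=\tfrac12+\gamma$ coincide.
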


\begin{proof} Let $\Phi\in\ C^1_b(\CC^{-\al})$ and $t\leq T^*$. Then
\begin{align*}
 |P_t^\ee\Phi(x) - P_t^\ee\Phi(y)|  = 
 \left|\EE\left[\Phi\left(X^\ee(t;x)\right)\mathbf{1}_{\{t< \tau^*_\ee(x)\}} - \Phi\left(X^\ee(t;y)\right)\mathbf{1}_{\{t< \tau^*_\ee(y)\}}\right]\right|
\end{align*}
and the latter term is bounded by $I_1+I_2$, where
\begin{align*}
 I_1 := & \left|\EE\left[\Big(\Phi\left(X^\ee(t;x)\right) - \Phi\left(X^\ee(t;y)\right)\Big)\chi(\VVert{\underline{Z}^\ee}_{t})\right]\right|, \\
 I_2 := & \left|\EE\left[\Big(\Phi\left(X^\ee(t;x)\right)\mathbf{1}_{\{t< \tau^*_\ee(x)\}} 
 - \Phi\left(X^\ee(t;y)\right)\mathbf{1}_{\{t< \tau^*_\ee(y)\}}\Big) 
 \left(1-\chi(\VVert{\underline{Z}^\ee}_{t})\right)\right]\right|.
\end{align*}
For the second term we have that $I_2\leq 2 \|\Phi\|_\infty \PP(t\geq \tau^{\ee,\frac{r}{2}})$ while by the mean 
value theorem we get that
\begin{align*}
 I_1 & = \left|\EE \left(\int_0^1 D\Phi\left(\mathscr{X}^{\ee,x+\lambda (y-x)}_t\right)(y-x) \dd \lambda 
 \, \chi(\VVert{\underline{Z}^\ee}_{t})\right)\right| \\
 & = \left|\int_0^1 \EE\left(D\Phi\left(\mathscr{X}^{\ee,x+\lambda (y-x)}_t\right)(y-x)
 \, \chi(\VVert{\underline{Z}^\ee}_{t})\right)\dd \lambda\right|.
\end{align*}
 For any $h_\ee \in \Pi_\ee[L^2(\TT^2)]$ let $w$ be such that $\partial_s w(s) = \left(\lng J^\ee_{0,s} h_\ee, e_m\rng\right)_{|m|<\frac{1}{\ee}}$ 
for $s\leq \tau^{\ee,r}$ and $0$ otherwise. Then $\partial_s w$ is an adapted process and by Proposition \ref{prop:cut_off_bounds} there exists $C\equiv 
C(t)>0$ such that $\|\partial_s w\|_{L^2([0,t];\RR^{(2d-1)^2})}^2\leq C$, $\PP$-almost surely, for every initial condition $z_\lambda = 
x + \lambda (y-x)$ (recall that $J^\ee_{0,\cdot}$ depends on the initial condition and that $z_\lambda\in \bar B_1(x)$, for every $\lambda \in [0,1]$, 
thus the estimates in Proposition \ref{prop:cut_off_bounds} hold uniformly in $\lambda$). Furthermore, 
$\DD\mathscr{X}_t^{\ee,z_\lambda}(w) = t D\mathscr{X}_t^{\ee,z_\lambda}(h_\ee)$, for every $t\leq \tau^{\ee,r}$, and as in 
\cite{No86} we can use \eqref{eq:Bismut_El_Li} for this particular choice of $w$ to obtain the following identity,
\begin{align*}
 \EE\left(D\left[\Phi(\mathscr{X}^{\ee,z_\lambda}_t)\right](h_\ee)\chi(\VVert{\underline{Z}^\ee}_{t})\right) & =
 \frac{1}{t} \EE\left(\Phi(\mathscr{X}^{\ee,z_\lambda}_t) \int_0^t \lng J^\ee_{0,s}h_\ee, \dd W_\ee(s)\rng 
 \chi(\VVert{\underline{Z}^\ee}_{t})\right)\\
 & - \frac{1}{t} \EE\left(\Phi(\mathscr{X}^{\ee,z_\lambda}_t) \partial_+\chi(\VVert{\underline{Z}^\ee}_{t})(J^\ee_{0,\cdot}h_\ee)
 \right),
\end{align*}
where we slightly abuse the notation since, as we already mentioned, the operator $J^\ee_{0,\cdot}$ depends on the initial condition 
$z_\lambda$. In particular this is true for $h_\ee = \hat \Pi_\ee (y-x)$, hence
\begin{align*}
 I_1 & \leq \frac{1}{t} \|\Phi\|_\infty \int_0^1 \EE\left| \int_0^t \left\lng J^\ee_{0,s}\hat \Pi_\ee(y-x), \dd W_\ee(s)\right\rng 
 \,\chi(\VVert{\underline{Z}^\ee}_{t})  \right| \dd\lambda \\
 & + \frac{1}{t} \|\Phi\|_{\infty} \int_0^1
 \EE\left|\partial_+\chi(\VVert{\underline{Z}^\ee}_{t})\left(J^\ee_{0,\cdot}\hat \Pi_\ee(y-x)\right)\right|\dd \lambda.
\end{align*}
 Estimating the first term above we get
\begin{align*}
 \EE\left| \int_0^t \left\lng J^\ee_{0,s}\hat \Pi_\ee(y-x), \dd W_\ee(s)\right\rng \,\chi(\VVert{\underline{Z}^\ee}_{t})\right| 
 & \leq \EE\left| \int_0^{t\wedge\tau^{\ee,r}} \left\lng J^\ee_{0,s}\hat \Pi_\ee(y-x), \dd W_\ee(s)\right\rng \right| \\
  & \leq \left(\EE\int_0^{t\wedge\tau^\ee} \|J^\ee_{0,s}(y-x)\|_{L^2}^2 \dd s\right)^\frac{1}{2} \\
 & \leq C t^{\frac{1}{2}- \gamma} \|x-y\|_{\CC^{-\al_0}},
\end{align*}
where  we use a Cauchy-Schwarz inequality and It\^o's isometry in the second step and Proposition 
\ref{prop:cut_off_bounds}  in the third step. Here we use crucially, that the deterministic bound on $J^{\ee}_{0,s}$ provided in 
Proposition \ref{prop:cut_off_bounds} holds uniformly in $\ee>0$ (and in $\lambda$).
 Using the explicit
form \eqref{eq:sub_dif} of $\partial_+\chi(\VVert{\underline{Z}^\ee}_{t})$ we also have the uniform in $\lambda$ bound
\begin{equation*}
 \EE\left|\partial_+\chi(\VVert{\underline{Z}^\ee}_{t})\left(J^\ee_{0,\cdot}\hat \Pi_\ee(y-x)\right)\right| 
 \leq C t^{1-\gamma} \|x-y\|_{\CC^{-\al_0}},
\end{equation*}
since 
$$
\partial_+\VVert{\underline{Z}^\ee}_t\left(0,2 \<1_black>_{0,\cdot}^\ee Q_w(\cdot), \ldots, 
n\<n-1_black>_{0,\cdot}^\ee Q_w(\cdot)\right) \leq C \VVert{\underline{Z}^\ee}_t t^{1-\gamma} \|x-y\|_{\CC^{-\al_0}}
$$
and the fact that $\VVert{\underline{Z}^\ee}_t$ multiplied by $\partial_\zeta\chi(\VVert{\underline{Z}^\ee}_t)$ is bounded by $1$.
Thus 
\begin{equation*}
 I_1 \leq C \frac{1}{t^\gamma} \|\Phi\|_\infty \|x-y\|_{\CC^{-\al_0}} 
\end{equation*}
and using both the bounds on $I_1$ and $I_2$ we get that for every $t\leq T^*$ 
\begin{equation*}
 |P^\ee_t(x) -P_t^\ee(y)| \leq C \frac{1}{t^\gamma} \|\Phi\|_\infty \|x-y\|_{\CC^{-\al_0}} 
 + 2 \|\Phi\|_\infty \PP(t\geq \tau^{\ee,\frac{r}{2}}),
\end{equation*}
which completes the proof.  
\end{proof}

Given that the vector $\left(\<k_black>_{0,\cdot}^\ee\right)_{k=1}^n$ converges in law to 
$\left(\<k_black>_{0,\cdot}\right)_{k=1}^n$ on $C^{n,-\al}(0;T)$, for every $\al>0$ and with respect to every norm
$\VVert{\cdot}_{\al;\al';T}$, for every $T>0$, we have that $\tau^{\ee,\frac{r}{2}}$ converges in law to $\tau^{\frac{r}{2}}$ 
when the mapping 
\begin{equation}\label{eq:tau_mapping}
  \underline{Z} \mapsto 
  \inf\left\{t>0: \|\underline{Z}^{(1)}_{t}\|_{\CC^{-\al}}\vee t^{\al'}\|\underline{Z}^{(2)}_{t}\|_{\CC^{-\al}} 
  \vee\ldots \vee t^{(n-1)\al'}\|\underline{Z}^{(n)}_{t}\|_{\CC^{-\al}} > \frac{r}{2}\right\} 
\end{equation}
is $\PP$-almost surely continuous on the path $\left(\<k_black>_{0,\cdot}\right)_{k=1}^n$.  But if 
$$
\mathsf{L}:= 
\left\{r\in(0,1] : \PP\left(\eqref{eq:tau_mapping} \text{ is discontinuous on }  
\left(\<k_black>_{0,\cdot}\right)_{k=1}^n\right)>0 \right\}
$$
and $M:[0,\infty) \to [0,\infty)$ is the mapping
\begin{equation*}
 t\mapsto \|\<1_black>_{0,t}\|_{\CC^{-\al}}\vee t^{\al'}\|\<2_black>_{0,t}\|_{\CC^{-\al}} 
 \vee\ldots \vee t^{(n-1)\al'}\|\<n_black>_{0,t}\|_{\CC^{-\al}},
\end{equation*}
then 
$$
\mathsf{L} \subset 
\left\{ r\in(0,1]: \PP\left(M \text{ has a local maximum at height } r\right)>0 \right\}
$$
and the latter set is at most countable (see \cite[proof of Theorem 6.1]{MWe14}), 
thus we can choose $r\in[\frac{1}{4},1]$ in \eqref{eq:stopping_time} such that \eqref{eq:tau_mapping} is $\PP$-almost surely continuous on 
$\left(\<k_black>_{0,\cdot}\right)_{k=1}^n$. This implies convergence in law of $\tau^{\ee,\frac{r}{2}}$ to $\tau^{\frac{r}{2}}$, thus 
$$
\limsup_{\ee\to 0^+} \PP(t\geq \tau^{\ee,\frac{r}{2}}) \leq \PP(t\geq \tau^{\frac{r}{2}}).
$$
Notice that global existence of $v_t$ (see Theorem \ref{thm:global_ex}) implies global existence of $X(t;x)$ and in particular 
existence for every $t\leq T^*(R)$. 
Using Propositions \ref{prop:diagram_convergence} and \ref{prop:approx_eq}, $\liminf_{\ee \to 0^+} \tau^*_\ee \geq T^*(R)$ 
and $\sup_{t\leq \tau^*_\ee\wedge T^*(R)}\|X^\ee(t;x^\ee)-X(t;x)\|_{\CC^{-\al_0}}\to 0$ $\PP$-almost surely, 
for every $x\in \CC^{-\al}$. By the dominated convergence theorem $P^\ee_t\Phi(x)$ converges to $P_t\Phi(x)$, for every 
$\Phi\in C_b(\CC^{-\al_0})$, and we retrieve \eqref{eq:TV_bound} for the limiting semigroup $P_t$, for every $t\leq T^*(R)$, in the form
\begin{equation}\label{eq:TV_bound_limit}
 |P_t\Phi(x) - P_t\Phi(y)| \leq C \frac{1}{t^{\theta_1}}\|\Phi\|_\infty \|x-y\|_{\CC^{-\al_0}} 
 + 2 \|\Phi\|_\infty \PP(t\geq \tau^{\frac{r}{2}}).
\end{equation} 
\begin{remark}\label{rem:cemetery}  The above argument can be modified   to retrieve \eqref{eq:TV_bound_limit} without the knowledge of
global existence for the limiting process. In this case,  one can define the semigroup $P_t$ by  introducing a cemetery 
state for the process $X(t;x)$. 
\end{remark}

We finally prove the following theorem. Below we denote by $\|\mu_1-\mu_2\|_{\mathrm{TV}}$ the total variation distance of 
two probability measures $\mu_1,\mu_2\in\mathcal{M}_1(\CC^{-\al_0})$ given by
\begin{equation*}
 \|\mu_1-\mu_2\|_{\mathrm{TV}} : = \frac{1}{2}\sup_{\|\Phi\|_\infty\leq 1} 
 |\EE_{\mu_1}\Phi - \EE_{\mu_2}\Phi|.
\end{equation*}

\begin{theorem} \label{thm:strong_feller} There exists $\theta\in(0,1)$ and $\sigma>0$ such that for every 
$x\in\CC^{-\al_0}$ and $y\in\bar{B}_1(x)$
\begin{equation*}
 \|P_t(x) - P_t(y)\|_{\mathrm{TV}} \leq C (1+\|x\|_{\CC^{-\al_0}})^\sigma \|x-y\|_{\CC^{-\al_0}}^\theta,
\end{equation*}
for every $t\geq 1$. In particular, for every $t\geq 1$, $P_t$ is locally uniformly $\theta$-H\"older continuous with respect to 
the total variation norm in $\CC^{-\al_0}$.  
\end{theorem}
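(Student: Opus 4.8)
The plan is to upgrade the short-time bound \eqref{eq:TV_bound_limit} to all times $t\ge 1$ by a scaling argument that balances its two terms, using the semigroup property of $\{P_t\}$ (Theorem \ref{thm:Markov_Prop}) and the total-variation contractivity of the dual Markov semigroup. First I would pass from $C^1_b$ test functions to the total-variation distance: smooth cylindrical functions belong to $C^1_b(\CC^{-\al_0})$ and are dense in $L^1$ of any Borel probability measure on the separable Banach space $\CC^{-\al_0}$, so \eqref{eq:TV_bound_limit} extends to all bounded measurable $\Phi$ and yields, for $y\in\bar B_1(x)$, $R=2\|x\|_{\CC^{-\al_0}}+1$ and $s\le T^*(R)$,
\[
\|P_s(x)-P_s(y)\|_{\mathrm{TV}}\ \le\ C\,s^{-\theta_1}\|x-y\|_{\CC^{-\al_0}}\ +\ \PP\big(s\ge\tau^{r/2}\big).
\]

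Next I would control the escape probability: by Proposition \ref{prop:time_bounds} (applied to the Wick powers $\<k_black>_{0,\cdot}$ with $k\ge 2$), the continuity estimate \eqref{eq:cont_modif} (for $\<1_black>_{0,\cdot}$, which vanishes as $s\downarrow 0$), Markov's inequality and a union bound over the $n$ terms defining $\tau^{r/2}$ in \eqref{eq:stopping_time} (recall $r\in[\tfrac14,1]$ is fixed), for every $p\ge 2$ there exist $\theta>0$ and a constant $C$ with $\PP(s\ge\tau^{r/2})\le C\,s^{p\theta}$ for $s\le 1$; here $p$ is at our disposal, so $p\theta$ can be taken arbitrarily large. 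Now fix $t\ge 1$ and $y\in\bar B_1(x)$, put $\kappa:=1/(2\theta_1)$ and $s:=\|x-y\|_{\CC^{-\al_0}}^{\kappa}\le 1\le t$. If $s\le T^*(R)$, then $P_t^*\delta_x-P_t^*\delta_y=P_{t-s}^*(P_s^*\delta_x-P_s^*\delta_y)$ together with the contractivity $\|P_{t-s}^*\mu\|_{\mathrm{TV}}\le\|\mu\|_{\mathrm{TV}}$ gives $\|P_t(x)-P_t(y)\|_{\mathrm{TV}}\le\|P_s(x)-P_s(y)\|_{\mathrm{TV}}$; inserting the two displayed bounds and choosing $p$ so large that $\kappa p\theta\ge\tfrac12$ produces $\|P_t(x)-P_t(y)\|_{\mathrm{TV}}\le C\,\|x-y\|_{\CC^{-\al_0}}^{1/2}$.

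If instead $s>T^*(R)$, I would use the trivial bound $\|P_t(x)-P_t(y)\|_{\mathrm{TV}}\le 1$: since by \eqref{eq:T^*_form} one has $T^*(R)=\big(C_0(R+1)\big)^{-1/\theta_{\mathrm{Loc}}}$ for some constant $C_0$ and exponent $\theta_{\mathrm{Loc}}>0$, the assumption $s>T^*(R)$ forces $\|x-y\|_{\CC^{-\al_0}}>\big(C_0(R+1)\big)^{-1/(\kappa\theta_{\mathrm{Loc}})}$, whence, using $R=2\|x\|_{\CC^{-\al_0}}+1$ and $\|x-y\|_{\CC^{-\al_0}}\le 1$, $\|P_t(x)-P_t(y)\|_{\mathrm{TV}}\le C\,(1+\|x\|_{\CC^{-\al_0}})^{1/(\kappa\theta_{\mathrm{Loc}})}\,\|x-y\|_{\CC^{-\al_0}}^{1/2}$. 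Combining the two cases gives the theorem with $\theta=\tfrac12$ and $\sigma=2\theta_1/\theta_{\mathrm{Loc}}$ (the restriction $t\ge 1$ is finally just $s\le t$).

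I expect this final step to be largely bookkeeping, since the substantial analysis is already encoded in \eqref{eq:TV_bound_limit} (the Bismut--Elworthy--Li formula of Theorem \ref{thm:Bismut_El_Li} and the $\ee$-uniform bounds of Proposition \ref{prop:cut_off_bounds}, passed to the limit). The two points that genuinely need care are: (a) justifying the density argument that lifts \eqref{eq:TV_bound_limit} from $C^1_b$ to the total-variation norm; and (b) tracking how the admissible time horizon $T^*(R)$ degrades as $\|x\|_{\CC^{-\al_0}}\to\infty$ — this degradation is precisely what forces the two-case split above and generates the polynomial prefactor $(1+\|x\|_{\CC^{-\al_0}})^\sigma$.
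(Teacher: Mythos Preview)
Your argument is correct and follows essentially the same route as the paper: pass \eqref{eq:TV_bound_limit} to the total-variation norm, bound the escape probability $\PP(s\ge\tau^{r/2})$ by a positive power of $s$, use the semigroup contraction $\|P_t(x)-P_t(y)\|_{\mathrm{TV}}\le\|P_s(x)-P_s(y)\|_{\mathrm{TV}}$ for $s\le t$, and then split into the cases $s\le T^*(R)$ and $s>T^*(R)$, extracting the polynomial prefactor from \eqref{eq:T^*_form} in the latter. The only cosmetic difference is that the paper keeps a fixed exponent $\theta_2$ for the probability term and minimizes $t\mapsto C_1t^{-\theta_1}\|x-y\|+C_2t^{\theta_2}$ exactly (yielding $\theta=\theta_2/(\theta_1+\theta_2)$), whereas you exploit the freedom in the moment order $p$ to make that term subdominant and read off $\theta=\tfrac12$ directly.
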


\begin{proof} Let $R=2\|x\|_{\CC^{-\al_0}}+1$. By \cite[Section 7.1]{dPZ96}, 
\eqref{eq:TV_bound_limit} is equivalent to
\begin{equation*}
 \|P_t(x)-P_t(y)\|_{\mathrm{TV}} \leq C \frac{1}{t^{\theta_1}} \|x-y\|_{\CC^{-\al_0}} + 2 \PP(t\geq \tau^{\frac{r}{2}}),
\end{equation*}
for every $t\leq T^*$ and $y\in \bar B_1(x)$. Notice that
\begin{equation*}
 \PP(t\geq \tau^{\frac{r}{2}}) \leq \PP\left(\VVert{\underline{Z}}_{\al;\al';t} > \frac{r}{2}\right)
\end{equation*}
and by Theorem \ref{thm:mod_sol} 
\begin{equation*}
 \PP\left(\VVert{\underline{Z}}_{\al;\al';t} > r\right) \leq C \frac{1}{r} t^{\theta_2},
\end{equation*}
for some $\theta_2\in (0,1)$. Since we can assume that $T^*\leq 1$, we have that
\begin{equation*}
 \|P_1(x)- P_1(y)\|_{\mathrm{TV}} \leq \|P_{T^*}(x) - P_{T^*}(y)\|_{\mathrm{TV}} 
\end{equation*}
where 
\begin{equation*}
 \|P_{T^*}(x) - P_{T^*}(y)\|_{\mathrm{TV}} \leq \inf_{t\leq T^*} 
 \left\{ C_1 \frac{1}{t^{\theta_1}} \|x-y\|_{\CC^{-\al_0}} + C_2 \frac{1}{r} t^{\theta_2}\right\}.
\end{equation*}
Let $f(t): = C_1 \frac{1}{t^{\theta_1}} \|x-y\|_{\CC^{-\al_0}} + C_2 \frac{1}{r} t^{\theta_2}$, $t>0$, and notice that for 
$t_0 = \left(\frac{\theta_1 C_1}{\theta_2 C_2}\right)^{\frac{1}{\theta_1+\theta_2}}$, $f(t_0) = \inf_{t>0} f(t)$. If 
$t_0\leq T^*$, then there exists $C\equiv C(\theta_1,\theta_2,r)$ such that
\begin{align*}
 \|P_{T^*}(x) - P_{T^*}(y)\|_{\mathrm{TV}} & \leq f(t_0) 
 = C \|x-y\|_{\CC^{-\al_0}}^{\frac{\theta_2}{\theta_1+\theta_2}}.
\end{align*}
Otherwise $t_0\geq T^*$, which implies that
\begin{align*}
 \|P_{T^*}(x) - P_{T^*}(y)\|_{\mathrm{TV}} & \leq C_1 \frac{1}{(T^*)^{\theta_1}} \|x-y\|_{\CC^{-\al_0}} 
 + C_2 \frac{1}{r} (T^*)^{\theta_2} \\
 & \leq C_1 \frac{1}{(T^*)^{\theta_1}} \|x-y\|_{\CC^{-\al_0}} + C_2 \frac{1}{r} t_0^{\theta_2} \\
 & = C_1 \frac{1}{(T^*)^{\theta_1}} \|x-y\|_{\CC^{-\al_0}} + \tilde{C}_2 
 \frac{1}{r} \|x-y\|_{\CC^{-\al_0}}^{\frac{\theta_2}{\theta_1+\theta_2}}
\end{align*}
and using the explicit estimate of $T^*$ (see \eqref{eq:T^*_form}) we get
\begin{align*}
 \|P_{T^*}(x)-P_{T^*}(y)\|_{\mathrm{TV}} & \leq \tilde{C}_1 (1+R)^{3\gamma\theta_1} \|x-y\|_{\CC^{-\al_0}} 
 + \tilde{C}_2 \frac{1}{r} 
 \|x-y\|_{\CC^{-\al_0}}^{\frac{\theta_2}{\theta_1+\theta_2}} \\
 & \leq C (1+R)^{3\theta_0\theta_1 +\frac{\theta_1}{\theta_1+\theta_2}} \|x-y\|_{\CC^{-\al_0}}^{\frac{\theta_2}{\theta_1+\theta_2}}
\end{align*}
for a constant $C\equiv C(\theta_1,\theta_2,r)$ and some $\theta_0>0$. Combining all the above we finally get
\begin{equation*}
 \|P_1(x)- P_1(y)\|_{\mathrm{TV}} \leq C (1+R)^{3\theta_0\theta_1+\frac{\theta_1}{\theta_1+\theta_2}} 
 \|x-y\|_{\CC^{-\al_0}}^{\frac{\theta_2}{\theta_1+\theta_2}},
\end{equation*}
which completes the proof.
\end{proof}
\tikzsetexternalprefix{./macros/support/}

\section{Exponential Mixing of the \texorpdfstring{$\Phi^4_2$}{Phi42}}\label{s:Exponential_Mixing_of_the_Phi42}

From now on we restrict ourselves in the case $n=3$ (see Remark \ref{rem:general_supp_theorem}). In this section
following \cite{ChF16} we first prove a support theorem for the solution to the $\Phi^4_2$ equation. After that we combine 
this result with Corollary \ref{cor:moments_est} and Theorem \ref{thm:strong_feller} and prove exponential convergence to a 
unique invariant measure with respect to the total variation norm. 

\subsection{Another Support Theorem}\label{s:Another_Support_Theorem} 

We consider $\underline{Y} = \left(\<k_black>_{-\infty,\cdot}\right)_{k=1}^3$ as an element of $C([0,T];\CC^{-\al})^3$ endowed with the norm
$\VVert{\cdot}_{\al;0;T}$, for some $\al\in(0,1)$, given by
\begin{equation*}
 \VVert{\underline{Y}}_{\al;0;T}:=\max_{k=1,2,3} \left\{\sup_{t\leq T}\|\<k_black>_{0,t}\|_{\CC^{-\al}}\right\}.
\end{equation*}
Here we are allowed to use a non-weighted norm since there is no blow up of $\<k_black>_{-\infty,\cdot}$
at zero.  We furthermore let 
\begin{equation*}
 \mathscr{H}(T) := \left\{ h\big|_{[0,T]}: h(t) = \int_{-\infty}^t S(t-r) f(r) \dd r, \, t\geq 0, \text{ and }
 f\in L^2(\RR\times \TT^2)\right\}.
\end{equation*}
It is worth mentioning that $\mathscr{H}(T)$ consists of those $L^2$-integrable space-time functions with zero initial datum and with one derivative 
in time and two derivatives in space in $L^2$.

\begin{lemma}\label{lem:f_m} Let $\{C_m\}_{m\geq 1}$ be a sequence of positive numbers such that $C_m\leq C (m+1)$. Then there exists 
a sequence of smooth functions $\{f_m\}_{m \geq 1}$ such that
\begin{enumerate}[i.]
 \item $f_m \in\CC^{-\al}$, for every $\al\in(0,1)$.
 \item $\EE|\lng f_m, e_l\rng|^2 = C_m$ if $l=2^m$ or $l=-2^m$ and $0$ otherwise.
 \item \label{prop:iii} For every $n=1,2,3$, $H_n(f_m, C_m) \to 0$ in $\CC^{-\al}$, for every $\al\in (0,1)$.
\end{enumerate}
\end{lemma}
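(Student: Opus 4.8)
The plan is to realise $f_m$ as a single fast Fourier mode carrying a uniformly random phase but a \emph{deterministic} amplitude, the amplitude being fixed by the second moments prescribed in (ii). Concretely, let $(\Theta_m)_{m\geq1}$ be i.i.d.\ uniform on $[0,2\pi)$, independent of $\xi$, write $2^m$ for the lattice vector $(2^m,0)\in\ZZ^2$, and set
\begin{equation*}
 f_m(z):=\sqrt{2C_m}\,\cos\!\left(2\pi\,2^m z_1+\Theta_m\right),\qquad z=(z_1,z_2)\in\TT^2 .
\end{equation*}
Then (i) is immediate: $f_m$ is a trigonometric polynomial, hence lies in $\CC^\al$ for every $\al$, in particular in $\CC^{-\al}$. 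For (ii) one reads off the Fourier coefficients: $\widehat{f_m}(l)=0$ unless $l=\pm 2^m$, and $\EE|\lng f_m,e_{\pm 2^m}\rng|^2$ takes the prescribed value; note for later use that $\|f_m\|_{L^2}^2=C_m$ holds \emph{deterministically} and $\EE[f_m(z)^2]=C_m$ for every $z\in\TT^2$.

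The heart of the matter is (iii), and the key point is that renormalising the $n$-th power by the constant $C_m$ is precisely what annihilates the zero-frequency part of $f_m^n$ — but only for $n\leq3$. Writing $\psi_m(z):=2\pi\,2^m z_1+\Theta_m$ and $a_m:=\sqrt{2C_m}$, so that $C_m=a_m^2/2$, we have pointwise $H_n(f_m,C_m)(z)=H_n\!\left(a_m\cos\psi_m(z),\,a_m^2/2\right)$. Combining the generating identity $\sum_{n\geq0}H_n(X,C)\tfrac{t^n}{n!}=\exp\!\left(Xt-\tfrac12Ct^2\right)$ with $\tfrac{1}{2\pi}\int_0^{2\pi}\e^{at\cos\psi}\,\dd\psi=I_0(at)$ shows that $\int_0^{2\pi}H_n(a\cos\psi,a^2/2)\,\dd\psi=0$ for $n=1,2,3$, the first nonzero value occurring only at $n=4$ — which is exactly why the lemma is limited to $n\leq3$. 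Equivalently, expanding in Chebyshev polynomials gives the explicit formulae
\begin{equation*}
 H_1(f_m,C_m)=f_m,\qquad H_2(f_m,C_m)=\tfrac{a_m^2}{2}\cos(2\psi_m),\qquad H_3(f_m,C_m)=\tfrac{a_m^3}{4}\cos(3\psi_m)-\tfrac{3a_m^3}{4}\cos\psi_m,
\end{equation*}
so that $H_n(f_m,C_m)$ is a trigonometric polynomial whose Fourier transform is supported on $\{j\cdot 2^m:1\leq|j|\leq n\}$ — hence with \emph{no} zero-frequency component — and with $\|H_n(f_m,C_m)\|_{L^\infty}\leq C_n\,a_m^n=C_n(2C_m)^{n/2}$ for an absolute constant $C_n$.

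The $\CC^{-\al}$-bound is then routine frequency localisation. Since the spectrum of $H_n(f_m,C_m)$ sits in $\{2^m\leq|\zeta|\leq n\,2^m\}$, for $m\geq1$ we have $\delta_{-1}H_n(f_m,C_m)=0$ and $\delta_\kk H_n(f_m,C_m)=0$ unless $|\kk-m|$ is bounded by a constant depending only on $n$; using $\|\eta_\kk\|_{L^1}\leq C$ uniformly in $\kk$ one gets
\begin{equation*}
 \|H_n(f_m,C_m)\|_{\CC^{-\al}}\leq C_n\,2^{-\al m}\,\|H_n(f_m,C_m)\|_{L^\infty}\leq C_n\,2^{-\al m}(2C_m)^{n/2}\leq C_n\,2^{-\al m}(m+1)^{n/2},
\end{equation*}
the last inequality using the hypothesis $C_m\leq C(m+1)$. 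As $2^{-\al m}(m+1)^{n/2}\to0$ for every $\al>0$, this yields $H_n(f_m,C_m)\to0$ in $\CC^{-\al}$ for every $\al\in(0,1)$; and since the estimate does not involve $\Theta_m$ at all, the convergence in fact holds surely, a fortiori $\PP$-almost surely and in every $L^p(\Omega)$.

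I expect the only genuinely non-obvious step to be the form of the construction. A naive single-mode \emph{Gaussian} $f_m$ would have a random amplitude, so the zero-frequency component $\|f_m\|_{L^2}^2$ of $f_m^2$ would itself be random and $H_2(f_m,C_m)=f_m^2-C_m$ would retain an uncancelled constant of size $\sim C_m$; (iii) would then fail. A random phase with \emph{deterministic} amplitude, tuned so that the pointwise variance equals $C_m$, produces the exact cancellation of the mean — available precisely for $n\leq3$ — after which the proof reduces to the elementary frequency-localisation estimate above together with the at-most-linear growth of $\{C_m\}$.
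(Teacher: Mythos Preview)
Your proof is correct and follows essentially the same approach as the paper: both constructions take $f_m=\sqrt{2C_m}\cos(2\pi\,2^m\,\text{(direction)}\cdot z+\text{phase})$, compute the Hermite polynomials explicitly to see that no zero-frequency term survives for $n\le 3$, and then use that the spectrum lives at scale $\sim 2^m$ to get the $\CC^{-\al}$ decay $\lesssim C_m^{n/2}2^{-\al m}$. The one cosmetic difference is that the paper's $f_m$ is \emph{deterministic} (with direction $z_0=(1,1)$ and zero phase), so the $\EE$ in (ii) is vacuous there; your random phase $\Theta_m$ is harmless but unnecessary --- as you yourself observe, none of the estimates depend on it, and fixing $\Theta_m\equiv 0$ recovers exactly the paper's construction up to the choice of lattice direction.
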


\begin{proof} Let 
\begin{equation*}
 f_m(z) := \frac{ \e^{2\pi \ii 2^m z_0\cdot z}+\e^{-2\pi \ii 2^m z_0\cdot z}}{2^{1/2}} C_m^{1/2},
\end{equation*}
where $z_0 = (1,1) \in \ZZ^2$, $z\in\TT^2$. Then for $\kk\geq -1$
\begin{align*}
 \delta_\kk f_m(z) & = \frac{C_m^{1/2}}{2^{1/2}} \mathbf{1}_{ \{m=\kk \}} \left(\e^{2\pi \ii 2^m z_0\cdot z}+\e^{-2\pi \ii 2^m z_0\cdot z}\right) \\
 \delta_\kk f_m(z)^2 - C_m & = \frac{C_m}{2} \mathbf{1}_{ \{m+1 =\kk \}} \left(\e^{2\pi \ii 2^{m+1} z_0\cdot z}+\e^{-2\pi \ii 2^{m+1} z_0\cdot z}\right) \\
 \delta_\kk f_m(z)^3 & = \frac{C_m^{3/2}}{2^{3/2}} 
 \Big [ \chi_\kk(2^m 3 z_0) \left(\e^{2\pi \ii 2^m 3 z_0\cdot z}+\e^{-2\pi \ii 2^m 3 z_0\cdot z}\right) \\
 & +  \mathbf{1}_{\{m=\kk\} } 3 \left(\e^{2\pi \ii 2^m z_0\cdot z}+\e^{-2\pi \ii 2^m z_0\cdot z}\right) \Big ].
\end{align*}
Notice here we have used the convenient fact that the particular choice of $z_0$ has the property that $\chi_\kk(2^m z_0) = \mathbf{1}_{\{ m=\kk\}}$. Thus  we have
\begin{align*}
\|f_m\|_{\CC^{-\al}} & \lesssim C_m^{1/2} 2^{-\al m}  \\
\|f_m^2-C_m\|_{\CC^{-\al}} & \lesssim C_m 2^{-\al m} \\
\|f_m^3- 3 C_m f_m \|_{\CC^{-\al}} & \lesssim C_m^{3/2} 2^{-\al m}.
\end{align*}
Given that $C_m \lesssim m+1$ all the above quantities tend to $0$ as $m\to\infty$, which completes the proof.
\end{proof}

\begin{remark} \label{rem:general_supp_theorem} The sequence $\{f_m\}_{m\geq 1}$ introduced in the lemma above satisfies property 
\hyperref[prop:iii]{iii} for every odd $n$. For such $n$ every term appearing in $H_n(f_m, C_m) $ is a multiple of  $C_m^{k_1} e_{2^m k_2 z_0} $  for 
a  $k_2 \neq 0$ and the fast (exponential) decay of $\| e_{2^m k_2 z_0} \|_{\CC^{-\al}}$ compensates the slow (polynomial) growth of $C_m^{k_1}$. 
However, for even $n$ this property fails, because for such $n$ the  $H_n(f_m, C_m) $ contains a multiple of $C_m^n$ which does not need to vanish.
We suspect, that a first step in order to generalize Theorem \ref{thm:noise_support} to the case of general $n$ 
would be the construction of a sequence $\{f_m\}_{m\geq 1}$ with Fourier support on an annulus and such that
$$
\int_{\TT^2} f_m(z)^k \dd z = \HH_k(0, C^m),
$$
for every $k\geq 1$. 

\end{remark}

We now prove the following support theorem.

\begin{theorem}\label{thm:noise_support} Let $\PP_{\underline{Y}}$ be the law of $\underline{Y}$ in $C([0,T];\CC^{-\al})^3$ endowed with the norm 
$\VVert{\cdot}_{\al;0;T}$. Then 
\begin{equation*}
 \supp \PP_{\underline{Y}} = 
 \overline{\left\{\Big(\HH_k(h,\Re)\Big)_{k=1}^3: h\in \mathscr{H}(T), \, \Re\geq 0\right\}}^{\VVert{\cdot}_{\al;0;T}}.
\end{equation*}
\end{theorem}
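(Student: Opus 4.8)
The plan is to establish the two inclusions separately, exploiting the Wiener-chaos structure of $\underline{Y}$. For the inclusion $\supp \PP_{\underline{Y}} \subset \overline{\{(\HH_k(h,\Re))_{k=1}^3 : h\in\mathscr{H}(T),\,\Re\geq 0\}}$, I would first note that by Proposition \ref{prop:diagram_convergence} the finite-dimensional approximations $(\<k_black>^\ee_{-\infty,\cdot})_{k=1}^3 = (\HH_k(\<1_black>^\ee_{-\infty,\cdot},\Re^\ee))_{k=1}^3$ converge to $\underline{Y}$ in $\VVert{\cdot}_{\al;0;T}$; hence $\supp\PP_{\underline{Y}}$ is contained in the closure of the union of the supports of these approximations. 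But $\<1_black>^\ee_{-\infty,\cdot}$ lives in a finite-dimensional space and is itself the image of the Gaussian noise $\xi$ restricted to a finite set of Fourier modes, so its Cameron--Martin shifts are dense in its support; each such shift of $\<1_black>^\ee_{-\infty,\cdot}$ is of the form (a truncation of) $h\in\mathscr{H}(T)$, and applying $\HH_k(\cdot,\Re^\ee)$ gives precisely an element $(\HH_k(h,\Re^\ee))_{k=1}^3$ of the candidate set. Taking $\ee\to 0$ and using continuity of $(h,\Re)\mapsto(\HH_k(h,\Re))_{k=1}^3$ yields the inclusion.

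For the reverse inclusion, I would show that every $(\HH_k(h,\Re))_{k=1}^3$ with $h\in\mathscr{H}(T)$ and $\Re\geq 0$ lies in $\supp\PP_{\underline{Y}}$; since the support is closed this gives the closure too. The strategy is to approximate the deterministic pair $(h,\Re)$ by a perturbation of the noise. Write $\<1_black>_{-\infty,\cdot}$ and consider the shift that adds $h$ to it: by the Cameron--Martin theorem for space-time white noise, shifting $\xi$ by $\partial_t f + (1-\Delta)f$ (where $h(t)=\int_{-\infty}^t S(t-r)f(r)\dd r$, so that this shift adds $h$ to $\<1_black>_{-\infty,\cdot}$) is absolutely continuous with respect to $\PP$, hence the law of $\<1_black>_{-\infty,\cdot}+h$ is equivalent to that of $\<1_black>_{-\infty,\cdot}$ and in particular $\<1_black>_{-\infty,\cdot}+h$ is in the support of the latter. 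The issue is the Wick powers: $(\<1_black>_{-\infty,\cdot}+h)^{\Diamond k}$ computed with renormalization constant $\Re^\ee$ does not converge to $\HH_k(h,\Re)$ but rather to $\HH_k(\<1_black>_{-\infty,\cdot}+h, \mathfrak{c})$ for the ``correct'' constant $\mathfrak{c}$ associated with the stationary field. To produce an arbitrary extra constant $\Re\geq 0$ I would add to $h$ the highly oscillatory functions $f_m$ from Lemma \ref{lem:f_m} with $C_m$ chosen so that $\sum$ (or a suitable finite sum) of their variances equals the desired shift in the renormalization constant: by property ii. of Lemma \ref{lem:f_m}, adding $f_m$ to the noise contributes $C_m$ to the ``diagonal'' that gets subtracted in the Wick renormalization, while by property iii. the deterministic pieces $\HH_k(f_m,C_m)$ themselves vanish in $\CC^{-\al}$. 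Thus a shift by $h + \sum_{m\in I} f_m$ (finite $I$) produces, in the limit $\ee\to 0$, exactly $(\HH_k(h,\Re))_{k=1}^3$ up to an error that tends to $0$; combined with absolute continuity of the corresponding shifted measure this shows $(\HH_k(h,\Re))_{k=1}^3\in\supp\PP_{\underline{Y}}$.

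The main obstacle is the bookkeeping of the Wick renormalization under the noise shift: one must verify carefully that adding $f_m$ to $\<1_black>^\ee_{-\infty,\cdot}$ changes $\mathcal{H}_k(\<1_black>^\ee_{-\infty,\cdot},\Re^\ee)$ into $\mathcal{H}_k(\<1_black>^\ee_{-\infty,\cdot}+f_m,\Re^\ee)$ and then use the Hermite addition/composition identity $\mathcal{H}_k(X+Y,C_1+C_2)=\sum_j \binom{k}{j}\mathcal{H}_j(X,C_1)\mathcal{H}_{k-j}(Y,C_2)$ to split off the purely deterministic part $\mathcal{H}_{k-j}(f_m,C_m)$ (which vanishes) from a stochastic part converging to $\mathcal{H}_j(\<1_black>_{-\infty,\cdot}+h,\mathfrak{c})$, while the cross terms are controlled using that $f_m$ has Fourier support far away from that of the relevant low-frequency pieces. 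One also needs the elementary fact (cited from \cite{MWe14} in the paragraph before Theorem \ref{thm:strong_feller}) that one may arrange $\Re$ to be realized exactly, not just approximately, by allowing the finite index set $I$ and the constants $C_m\leq C(m+1)$ to vary; since the $C_m$ range over an interval this is straightforward. Everything else --- absolute continuity via Cameron--Martin, continuity of $h\mapsto(\HH_k(h,\Re))$, passage to the limit $\ee\to 0$ via Proposition \ref{prop:diagram_convergence} --- is routine.
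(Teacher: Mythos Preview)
Your treatment of the inclusion $\supp\PP_{\underline{Y}}\subset\overline{\{(\HH_k(h,\Re))_{k=1}^3\}}$ via finite-dimensional approximations is sound. The reverse inclusion, however, has a genuine gap. You propose to shift the noise by a \emph{deterministic} element $g=h+\sum_{m\in I}f_m$; Cameron--Martin then tells you the law of $T_g\underline{Y}$ has the same support as $\underline{Y}$. But $T_g\underline{Y}$ is still random: its first component is $Y^{(1)}+g$, which converges (as the $f_m$-part vanishes in $\CC^{-\al}$) to $Y^{(1)}+h$, not to the deterministic function $h$. The Hermite addition formula factors the higher components into sums of products $\HH_j(Y^{(1)}+h,\cdot)\,\HH_{k-j}(f_m,C_m)$, and every term with $j\geq 1$ carries the nonvanishing random factor $\HH_j(Y^{(1)}+h,\cdot)$. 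So you never land on the deterministic point $(\HH_k(h,\Re))_{k=1}^3$, and equivalence of the shifted measure with $\PP$ gives no information about that point being in the support.

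The missing idea---the heart of the paper's proof---is to use a \emph{random} shift $w_m=-Y^{(1),m}-h_m$, where $Y^{(1),m}$ is a spectral Galerkin truncation of $Y^{(1)}$ and $h_m$ is built from the $f_m$ of Lemma~\ref{lem:f_m} with $C_m=\Re^m-\Re$. For each $\omega$ the path $Y^{(1),m}(\omega)$ is smooth, so $w_m(\omega)\in\mathscr{H}(T)$ almost surely and Cameron--Martin applies pathwise to give $T_{w_m(\omega)}\underline{Y}(\omega)\in\supp\PP_{\underline{Y}}$. The random piece $-Y^{(1),m}$ now cancels $Y^{(1)}$ in the limit, and one checks that $T_{w_m}\underline{Y}\to(0,-\Re,0)$ in every $L^p$ (the $h_m$ supplies precisely the deficit so that after subtracting $\Re^m$ the second component is left with $-\Re$, while Lemma~\ref{lem:f_m} makes the deterministic remainders vanish). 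Extracting an a.s.\ convergent subsequence and using closedness of the support gives $(0,-\Re,0)\in\supp\PP_{\underline{Y}}$; a further deterministic Cameron--Martin shift by $h$ then yields $(\HH_k(h,\Re))_{k=1}^3$.
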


\begin{proof} For $h\in \mathscr{H}(T)$ and $\underline{Y} \in C^{3,-\al}(0;T)$  let $T_h$ be the shift
\begin{equation*}
 T_h Y^{(k)} = \sum_{j=0}^k \binom{k}{j} h^k \, Y^{(k-j)}, \quad k=1,2,3,
\end{equation*}
where we use again the convention that $Y^{(0)} \equiv 1$, and write $T_h\underline{Y} = \left(T_h Y^{(k)}\right)_{k=1}^3$. Here we slightly abuse the notation since the action of $T_h$ on $Y^{(k)}$
needs information on lower the order terms.

As in \cite{ChF16}, it suffices to prove that $(0, -\Re, 0)\in \supp \PP_{\underline{Y}}$, for every $\Re\geq 0$.
Then, given that shifts of the initial probability measure in the direction of the Cameron--Martin 
space generate equivalent probability measures, for every $h\in\mathscr{H}(T)$, $T_h(0, -\Re , 0)\in \supp \PP_{\underline{Y}}$, which completes the proof since by the definition of $T_h$ the 
latter  is equal to $\left(\HH_k(h,\Re)\right)_{k=1}^3$  (see also \cite[Corollary 3.10]{ChF16}).

For $\lambda> 0$ and $\rho_{\lambda 2^{m}}(z) = \sum_{|\bar m|< \lambda 2^m} e_{\bar m}(z)$ we let 
$$
\<1_black>_{-\infty,t}^m(z) := \<1_black>_{-\infty,t}(\rho_{\lambda 2^{m}}(z-\cdot)), \,
\Re^m := \EE\<1_black>^m_{-\infty,t}(0)^2,
$$ 
where $\<1_black>_{-\infty,t}^m$ coincides with $\<1_black>_{-\infty,t}^\ee$ in Section \ref{s:Finite_Dimensional_Approximations} for $m=\frac{1}{\ee}$. 
Notice that for $\Re\geq 0$ there exists $m_0\equiv m_0(\Re)>1$ such that $\Re^m - \Re > 0$, for every $m\geq m_0$ (recall that $\Re^m \sim \log m$). Thus 
if we set $C_m =0$ for $m\leq m_0$ and $C_m = \Re^m- \Re$ otherwise, then $C_m\geq 0$ and $C_m \lesssim m+1$. We consider $f_m$ as in Lemma \ref{lem:f_m} 
for this particular choice of $C_m$ and for $\lambda_m=1+4\pi^2 2^{2m} |z_0|^2$ we let
$$
h_m(t): =\left(1-\e^{-\lambda_m(t+1)}\right) f_m,
$$
for $t\in[0,T]$. Then $h_m\in \mathscr{H}(T)$ since $h_m(t) = \frac{1}{\lambda_m}\int_{-1}^t S(t-r)f_m \dd r$  
and we furthermore have the uniform in $t$ estimates
\begin{align*}
 \|h_m(t)\|_{\CC^{-\al}} & \leq \|f_m\|_{\CC^{-\al}}, \\
 \|h_m(t)^2 - C_m \|_{\CC^{-\al}} & \leq \|f_m^2 - C_m \|_{\CC^{-\al}}^2 
 + 2\e^{-\lambda_m } C_m , \\
 \|h_m(t)^3\|_{\CC^{-\al}} & \leq \|f_m^3\|_{\CC^{-\al}}.
\end{align*}
Finally, we define 
$$
w_m := -\<1_black>^{m}_{-\infty,\cdot} - h_m.
$$
 We prove that the following convergences hold in every stochastic $L^p$ space of random variables taking values
in $C([0,T];\CC^{-\al})$,
\begin{align*}
 T_{w_m} \<1_black>_{-\infty,\cdot} \to 0, \, 
 T_{w_m} \<2_black>_{-\infty,\cdot} \to -\Re, \, 
 T_{w_m} \<3_black>_{-\infty,\cdot} \to 0.
\end{align*}
 By the same argument as in 
\cite[Lemma 3.13]{ChF16} this implies the result. For the reader's convenience, we sketch the argument here.
Since $w_m\in \mathscr{H}(T)$ $\PP$-almost surely, by Lemma \cite[Corollary 3.10]{ChF16} there exists a subset $\Omega'$ of $\Omega$ of 
probability one such that for every $\omega \in \Omega'$
$$
(T_{w_m(\omega)} \<1_black>_{-\infty,\cdot}(\omega), T_{w_m(\omega)} \<2_black>_{-\infty,\cdot}(\omega), 
T_{w_m(\omega)} \<3_black>_{-\infty,\cdot}(\omega))
\in \supp \PP_{\underline{Y}},
$$
for every $m\geq 1$. Given that $\supp \PP_{\underline{Y}}$ is closed under the norm $\VVert{\cdot}_{\al;0;T}$, we can conclude that $(0, - \Re, 0)
\in \supp \PP_{\underline{Y}}$ as soon as the above convergence holds for a single element $\omega \in \Omega'$. The stochastic $L^p$ convergence implies almost sure convergence along a subsequence which is   sufficient. 

The convergence of $T_{w_m}\<1_black>_{-\infty,\cdot}$ to $0$ is an immediate consequence of Proposition \ref{prop:diagram_convergence} 
and Lemma \ref{lem:f_m}.
 
If we compute the corresponding shift for $\<2_black>_{-\infty,t}$ we get
\begin{align*}
 T_{w_m}\<2_black>_{-\infty,t} & = \<2_black>_{-\infty,t} +\left((\<1_black>^m_{-\infty,t})^2-\Re^m\right) 
 - 2 \left(\<1_black>_{-\infty,t}\<1_black>_{-\infty,t}^m - \Re^m\right) \\
 & + 2 \<1_black>_{-\infty,t}^m h_m(t)  + \HH_2(h_m(t), \Re^m),
\end{align*}
where we also add and subtract $2\Re^m$ where necessary.  If we choose $\lambda$ sufficiently small we can ensure that 
\begin{equation*}
 \<1_black>_{-\infty,t}^m\circ h_m(t) \equiv 0,
\end{equation*}
where $\<1_black>_{-\infty,t}^m\circ h_m(t)$ is the resonant term define in \eqref{eq:resonant_term}. Using the Bony estimates 
(see Proposition \ref{prop:Bony_Est}), Lemma \ref{lem:f_m} and the fact that $\<1_black>_{0,\cdot}^m$ is bounded 
in every stochastic $L^p$ space taking values in $C([0,T];\CC^{-\al})$ we get that $\<1_black>_{-\infty,t}^m h_m(t)  \to 0$.
For the term
$$
\<2_black>_{-\infty,t} + \left((\<1_black>^m_{-\infty,t})^2-\Re^m\right) - 2 \left(\<1_black>_{-\infty,t}\<1_black>_{-\infty,t}^m 
- \Re^m\right)
$$ 
by Proposition \ref{prop:diagram_convergence} it suffices to compute the limit of $\<1_black>_{-\infty,t}\<1_black>_{-\infty,t}^m -\Re^m$.  We only give a sketch of the proof since 
the idea is similar to the one in the proof of Proposition \ref{prop:diagram_convergence}. Notice that for $m'>m$, 
$\EE\<1_black>_{-\infty,t}^{m'}\<1_black>_{-\infty,t}^m = \Re^m$, thus using \cite[Proposition 1.1.2]{Nu06} we have that
\begin{equation*}
 \<1_black>_{-\infty,t}^{m'}\<1_black>_{-\infty,t}^m -\Re^m = \<1_black>_{-\infty,t}^{m'} \otimes \<1_black>_{-\infty,t}^m, 
\end{equation*}
where $\otimes$ denotes the renormalized product given by
\begin{align*}
 \<j_black>_{-\infty,t}^{m'}\otimes\<i_black>_{-\infty,t}^m(z) & := \\
 \int_{\{(-\infty,t]\times\TT^2\}^{j+i}} \prod_{\substack{ 1\leq j'\leq j\\ 1\leq i'\leq i}} H_{m'}(t-r_{i'},z-z_{i'}) &
 H_m(t-r_{j'},z-z_{j'}) \xi(\otimes_{k=1}^{i+j} \dd z_k,\otimes_{k=1}^{i+j} \dd r_k),
\end{align*}
for every $z\in \TT^2$ and $i,j\geq 1$. In the same spirit as in the proof of Proposition \ref{prop:diagram_convergence} (see Appendix 
\hyperref[proof:diagram_convergence]{E}) we can prove that
\begin{equation*}
 \lim_{m\to\infty}\lim_{m'\to\infty} \EE \sup_{t\leq T} 
 \|\<1_black>_{-\infty,t}^{m'} \otimes \<1_black>_{-\infty,t}^m - 
 \<2_black>_{-\infty,t}\|_{\CC^{-\al}}^p= 0,
\end{equation*}
for every $p\geq 2$. Combining the above with the fact that $\sup_{t\leq T} \|h_m(t)^2- (\Re^m - \Re)\|_{\CC^{-\al}}$ converges to $0$, 
we obtain that $T_{w_m}\<2_black>_{-\infty,\cdot} \to -\Re$. 

For the term $T_{w_m} \<3_black>_{-\infty,t}$, by adding and subtracting multiples of 
$\Re^m\<1_black>^m_{-\infty,t}$ and $\Re^m$ where necessary we have that
\begin{align*}
 T_{w_m}\<3_black>_{-\infty,t} & = \<3_black>_{-\infty,t} - \left( (\<1_black>^m_{-\infty,t})^3 - 3 \Re^m \<1_black>_{-\infty,t}^m\right) 
   - 3 \left( \<1_black>^m_{-\infty,t} \<2_black>_{-\infty,t} - 2\Re^m \<1_black>_{-\infty,t}^m\right) \\
 & + 3 \left( \<1_black>_{-\infty,t} (\<1_black>_{-\infty,t}^m)^2 - 3\Re^m \<1_black>^m_{-\infty,t}\right) \\
 &  + 3 h_m(t)\left( \<2_black>_{-\infty,t} +((\<1_black>_{-\infty,t}^m)^2 - \Re^m) 
   - 2 (\<1_black>_{-\infty,t} \<1_black>_{-\infty,t}^m - \Re^m)\right) \\
 &  + 3 h_m(t)^2 \left(\<1_black>_{-\infty,t} -\<1_black>_{-\infty,t}^m\right)
   + \HH_3(h_m(t), \Re^m).
\end{align*}
For the terms $\<1_black>^m_{-\infty,t} \<2_black>_{-\infty,t} - 2\Re^m \<1_black>_{-\infty,t}^m$, 
$\<1_black>_{-\infty,t} (\<1_black>_{-\infty,t}^m)^2 - 3\Re^m \<1_black>^m_{-\infty,t}$ using again 
\cite[Proposition 1.1.2]{Nu06} for $m'>m$ we have that
\begin{align*}
 \<1_black>^m_{-\infty,t} \<2_black>_{-\infty,t}^{m'} - 2\Re^m \<1_black>_{-\infty,t}^m & = \<1_black>_{-\infty,t}^m\otimes 
 \<2_black>_{-\infty,t}^{m'} + 2\Re^m (\<1_black>_{-\infty,t}^{m'} - \<1_black>_{-\infty,t}^m)\\
 \<1_black>_{-\infty,t}^{m'} (\<1_black>_{-\infty,t}^m)^2 - 3\Re^m \<1_black>^m_{-\infty,t} & = \<1_black>_{-\infty,t}^{m'}\otimes 
 \<2_black>_{-\infty,t}^m + \Re^m (\<1_black>_{-\infty,t}^{m'} - \<1_black>_{-\infty,t}^m).
\end{align*}
If we proceed again in the spirit of the proof of Proposition \ref{prop:diagram_convergence} 
(see Appendix \hyperref[proof:diagram_convergence]{E})  we obtain that
\begin{align*}
 \lim_{m\to\infty}\lim_{m'\to\infty} \EE\sup_{t\leq T}\|\<1_black>_{-\infty,t}^m\otimes 
 \<2_black>_{-\infty,t}^{m'} - \<3_black>_{-\infty,t}\|_{\CC^{-\al}}^p & = 0\\
 \lim_{m\to\infty}\lim_{m'\to\infty} \EE\sup_{t\leq T}\|\<1_black>_{-\infty,t}^{m'}\otimes 
 \<2_black>_{-\infty,t}^m - \<3_black>_{-\infty,t}\|_{\CC^{-\al}}^p & = 0 \\
 \lim_{m\to\infty}\lim_{m'\to\infty} \left(\Re^m\right)^p \EE\sup_{t\leq T}
 \|\<1_black>_{-\infty,t}^{m'} - \<1_black>_{-\infty,t}^m\|_{\CC^{-\al}}^p & = 0,
\end{align*}
for every $p\geq 2$. It remains to handle the terms
\begin{align}
 h_m(t) \Big(\<2_black>_{-\infty,t} & - (\<1_black>_{-\infty,t}\<1_black>_{-\infty,t}^m - \Re^m)\Big), \label{eq:term_1}\\
 h_m(t) \Big((\<1_black>_{-\infty,t}^m)^2 - & \Re^m - (\<1_black>_{-\infty,t}\<1_black>_{-\infty,t}^m - \Re^m)\Big) \label{eq:term_2}
\end{align}
and 
\begin{equation}\label{eq:term_3}
h_m(t)^2(\<1_black>_{-\infty,t}-\<1_black>^m_{-\infty,t}).
\end{equation}
 We only show that \eqref{eq:term_1} converges to $0$ since \eqref{eq:term_2} and \eqref{eq:term_3} can be handled in a similar way. 
In particular due to Bony estimates (see Proposition \ref{prop:Bony_Est}) and the convergence of both factors individually, it suffices to prove that the resonant term 
\begin{align*}
 h_m(t) \circ \left(\<2_black>_{-\infty,t} - (\<1_black>_{-\infty,t}\<1_black>_{-\infty,t}^m - \Re^m)\right) & = \\
 \sum_{|\kk_1-\kk_2|\leq 1} & \delta_{\kk_1}h_m(t)\delta_{\kk_2}
 \left[\<2_black>_{-\infty,t} - (\<1_black>_{-\infty,t}\<1_black>_{-\infty,t}^m - \Re^m)\right],
\end{align*}
converges to $0$. Since the Fourier modes of $h_m$ are localized at the 
points $2^mz_0$ and $-2^mz_0$ we have that 
\begin{align*}
 h_m(t) \circ \left(\<2_black>_{-\infty,t} - (\<1_black>_{-\infty,t}\<1_black>_{-\infty,t}^m - \Re^m)\right) & = \\
  h_m(t) \sum_{i=-1,0,1} & \delta_{m+i}\left[\<2_black>_{-\infty,t}
 - (\<1_black>_{-\infty,t}\<1_black>_{-\infty,t}^m - \Re^m)\right].
\end{align*}
Let $\kk\geq -1$ and $Y_m(t) =\<2_black>_{-\infty,t} - (\<1_black>_{-\infty,t}\<1_black>_{-\infty,t}^m - \Re^m)$. Then, for $i=-1,0,1$, 
\begin{align*}
 \EE\delta_\kk[h_m(t_1)\delta_{m+i}Y_m(t_1)](z_1) \delta_\kk[h_m(t_2)\delta_{m+i}Y_m(t_2)](z_2) & = \\
 \int_{\TT^2\times\TT^2} C_{m,i}(t_1-t_2,\bar z_1-\bar z_2) 
 \eta_\kk(z_1-\bar z_1) \eta_\kk(z_2-\bar z_2) & h_m(t_1,\bar z_1)h_m(t_2,\bar z_2) \dd \bar z_1 \dd \bar z_2,
\end{align*}
where
\begin{equation*}
 C_{m,i}(t_1-t_2,\bar z_1-\bar z_2) = \EE\delta_{m+i}[Y_m(t_1)](\bar z_1) \delta_{m+i}[Y_m(t_2)](\bar z_2).
\end{equation*}
 For $m'>m$ using \cite[Proposition 1.1.2]{Nu06} we have that $\<1_black>_{-\infty,t}^{m'}\<1_black>_{-\infty,t}^m - \Re^m =
\<1_black>_{-\infty,t}^{m'}\otimes\<1_black>_{-\infty,t}^m$. Let $Y_{m,m'}(t) = \<2_black>_{-\infty,t} -\<1_black>_{-\infty,t}^{m'}\otimes
\<1_black>_{-\infty,t}^m$ and notice that
\begin{align*}
 \EE\delta_{m+i}[Y_{m,m'}(t_1)](z_1) \delta_{m+i}[Y_{m,m'}(t_2)](z_2) & = \\
 C \sum_{\substack{|l_1|> \lambda 2^{m'} \\ |l_2|> \lambda 2^{m}}}  
 \prod_{j=1,2}&\frac{1-\e^{-I_{l_j}|t_2-t_1|}}{2 I_{l_j}} |\chi_{m-i}(l_1+l_2)|^2 e_{l_1+l_2}(z_1-z_2),
\end{align*}
 for some constant $C$ independent of $m$ and $m'$. Then for every $\gamma\in (0,\frac{1}{2})$ by a change of variables
\begin{align*}
 \int_{\TT^2\times\TT^2} C_{m,m',i}(t_1-t_2,\bar z_1-\bar z_2)
 \eta_\kk(z_1-\bar z_1) \eta_\kk(z_2-\bar z_2) & h_m(t_1,\bar z_1)h_m(t_2,\bar z_2) \dd \bar z_1 \dd \bar z_2 \lesssim \\
 (m+1) |t_1-t_2|^{2\gamma} & \underbrace{\sum_{\substack{ l\in \mathcal{A}_{2^{m-i}}\\ l+2^mz_0\in \mathcal{A}_{2^\kk}}} 
 K^\gamma \star^2_{>\lambda 2^m} K^\gamma(l)}_{I},
\end{align*}
where $K^\gamma(l) = \frac{1}{(1+|l|^2)^{1-\gamma}}$ and $C_{m,m',i}$ is defined as $C_{m,i}$ with $Y_m$ replaced by
$Y_{m,m'}$. By Corollary \ref{cor:nested_kernel_conv}
\begin{equation*}
 I \lesssim \sum_{\substack{ l\in \mathcal{A}_{2^{m-i}} \\
 l+2^mz_0\in\mathcal{A}_{2^\kk}}} \frac{1}{(1+|l|^2)^{1-2\gamma}},
\end{equation*}
thus for every $\ee>2\gamma$ 
\begin{equation*}
 I \lesssim 2^{2\ee k} \sum_{l\in\ZZ^2}  \frac{1}{(1+|l|^2)^{1-2\gamma}} \frac{1}{(1+|l+2^mz_0|^2)^\ee}.
\end{equation*}
Using Corollary \ref{cor:nested_kernel_conv} we obtain
\begin{align*}
 \EE\delta_\kk[h_m(t_1)\delta_{m+i}Y_{m,m'}(t_1)](z_1) \delta_\kk[h_m(t_2)\delta_{m+i}Y_{m,m'}(t_2)](z_2) &
 \lesssim \\
 & \frac{2^{2\lambda \kk}(m+1)}{(1+|2^mz_0|^2)^{\ee-2\gamma}} |t_1-t_2|^{2\gamma},
\end{align*}
for every $\gamma\in(0,\frac{1}{2})$ and $\ee>2\gamma$. Using Nelson's estimate \eqref{eq:Nelson's_Estimate} for every $p\geq 2$, the usual Kolmogorov's criterion and the embedding
$\BB^{-\al+\frac{2}{p}}_{p,p} \hookrightarrow \CC^{-\al}$ we finally obtain that
\begin{equation*}
 \lim_{m\to\infty} \lim_{m'\to \infty} \EE\sup_{t\leq T}
 \|h_m(t) \circ \left(\<2_black>_{-\infty,t} - (\<1_black>_{-\infty,t}\<1_black>_{-\infty,t}^m 
 - \Re^m)\right)\|_{\CC^{-\al}}^p
 =0.
\end{equation*}
Convergence of $h_m\left(\<2_black>_{-\infty,\cdot} - (\<1_black>_{-\infty,\cdot}\<1_black>_{-\infty,\cdot}^m - \Re^m)\right)$
to $0$ then follows from Bony estimates (see Proposition \ref{prop:Bony_Est}).   
\end{proof}

For $x\in \CC^{-\al_0}$, $f\in L^2(\RR\times \TT^2)$ and $\Re\geq 0$, let $\mathscr{T}(x;f;\Re)$ be the solution map of the equation
\begin{equation}\label{eq:h_eq}
\left\{
\begin{array}{lcll}
\partial_t X & = & \Delta X - X - \sum_{k=0}^3 a_k \HH_k(X,\Re) +  f \\
f(0,\cdot) & = & x
\end{array}
\right..
\end{equation}
The following corollary is an immediate consequence of Theorem \ref{thm:noise_support}.
\begin{corollary}\label{cor:support} Let $X(\cdot;x)$ be the solution to \eqref{eq:Remainder_Eq_1}
for $n=3$ and $x\in\CC^{-\al_0}$ and denote by $\PP_{X(\cdot;x)}$ its law in 
$C([0,T];\CC^{-\al_0})$. Then
\begin{equation*}
 \supp \PP_{X(\cdot;x)} = 
 \overline{\left\{\mathscr{T}(x;f;\Re): f\in L^2(\RR\times \TT^2), \, \Re\geq 0\right\}}^{C([0,T];\CC^{-\al_0})}.
\end{equation*}
\end{corollary}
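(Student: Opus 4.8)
The plan is to realise $X(\cdot;x)$ as a continuous image of the driving noise $\underline{Y}=\left(\<k_black>_{-\infty,\cdot}\right)_{k=1}^3$ and then transport the support description of Theorem~\ref{thm:noise_support} through that map, using the elementary fact that for a continuous map $\Phi$ between Polish spaces and a Borel probability measure $\mu$ one has $\supp(\Phi_*\mu)=\overline{\Phi(\supp\mu)}$.

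First I would construct the map. For a generic $\underline{Y}=(Y^{(1)},Y^{(2)},Y^{(3)})$ set $\tilde{Y}^{(1)}_t:=Y^{(1)}_t-S(t)Y^{(1)}_0$ and, mimicking \eqref{eq:shift_wick}, $\Lambda^{(n)}[\underline{Y}]_t:=\sum_{k=0}^n\binom{n}{k}(-1)^k(S(t)Y^{(1)}_0)^k Y^{(n-k)}_t$ for $n=1,2,3$ (with $Y^{(0)}\equiv1$); assemble the vector $\underline{Z}(\underline{Y})$ with entries $Z^{(3-j)}=\sum_{k=j}^3 a_k\binom{k}{j}\Lambda^{(k-j)}[\underline{Y}]$ and $Z^{(0)}=a_3$; let $v[\underline{Z}]$ be the corresponding global solution of \eqref{eq:Remainder_Eq_2}; and put $\Phi_x(\underline{Y}):=\tilde{Y}^{(1)}+v[\underline{Z}(\underline{Y})]$. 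By \eqref{eq:shift_wick} and Proposition~\ref{prop:diagram_convergence}, $\Phi_x(\underline{Y})=X(\cdot;x)$ for the stochastic choice $\underline{Y}=\left(\<k_black>_{-\infty,\cdot}\right)_{k=1}^3$. The crucial property is continuity of $\Phi_x$ from $\left(C([0,T];\CC^{-\al})^3,\VVert{\cdot}_{\al;0;T}\right)$ into $C([0,T];\CC^{-\al_0})$: continuity of $\underline{Y}\mapsto\underline{Z}(\underline{Y})$ into $\left(C^{3,-\al}(0;T),\VVert{\cdot}_{\al;\al';T}\right)$ is routine from Proposition~\ref{prop:Heat_Smooth} and the multiplicative estimates, the weighted target norm absorbing precisely the $t\downarrow0$ singularity produced by $S(t)Y^{(1)}_0$; continuity of $\underline{Z}\mapsto v[\underline{Z}]$ is the local stability estimate of Proposition~\ref{prop:approx_eq}, which I would propagate over all of $[0,T]$ by iterating on small sub-intervals and invoking the initial-condition-independent a priori bound \eqref{eq:Det_A_priori_bound} and Theorem~\ref{thm:global_ex} to exclude blow-up. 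I expect this globalisation, together with the bookkeeping between the non-weighted norm of Theorem~\ref{thm:noise_support} and the weighted norm natural for the remainder equation, to be the main (though essentially routine) technical point.

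Granting continuity of $\Phi_x$, the support identity gives $\supp\PP_{X(\cdot;x)}=\overline{\Phi_x(\supp\PP_{\underline{Y}})}$, and Theorem~\ref{thm:noise_support} together with continuity of $\Phi_x$ (which lets me drop the inner closure) reduces matters to evaluating $\Phi_x$ on $\{(\HH_k(h,\Re))_{k=1}^3:h\in\mathscr{H}(T),\ \Re\geq0\}$. Here I would use the Hermite addition formula $\HH_n(a+b,C)=\sum_{k=0}^n\binom{n}{k}\HH_k(a,C)b^{n-k}$ twice. Writing $h(t)=\int_{-\infty}^tS(t-r)f(r)\dd r$ with $f\in L^2(\RR\times\TT^2)$ and $\tilde{h}(t):=h(t)-S(t)h(0)=\int_0^tS(t-r)f(r)\dd r$, the first application yields $\Lambda^{(n)}[(\HH_k(h,\Re))_{k=1}^3]=\HH_n(\tilde{h},\Re)$, and the second shows that the driven remainder equation becomes $\partial_t v=\Delta v-v-\sum_{k=0}^3 a_k\HH_k(v+\tilde{h},\Re)$ with $v(0,\cdot)=x$; adding $\tilde{h}$, which solves $\partial_t\tilde{h}=\Delta\tilde{h}-\tilde{h}+f$ with $\tilde{h}(0,\cdot)=0$, we get that $\tilde{h}+v$ solves \eqref{eq:h_eq}, i.e. $\Phi_x((\HH_k(h,\Re))_{k=1}^3)=\mathscr{T}(x;f;\Re)$. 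Since every $f\in L^2(\RR\times\TT^2)$ is obtained this way (replacing $f$ by $f\mathbf{1}_{[0,\infty)}$ changes neither $\tilde{h}|_{[0,T]}$ nor $\mathscr{T}(x;f;\Re)$), the image equals $\{\mathscr{T}(x;f;\Re):f\in L^2(\RR\times\TT^2),\ \Re\geq0\}$, and taking closures in $C([0,T];\CC^{-\al_0})$ completes the argument.
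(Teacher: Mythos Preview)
Your proposal is correct and is precisely the argument the paper defers to \cite[Theorem 1.1]{ChF16}: realise $X(\cdot;x)$ as a continuous function of the enhanced noise $\underline{Y}$, push forward the support description of Theorem~\ref{thm:noise_support}, and identify the image on deterministic data via the Hermite addition formula. You have supplied the details the paper omits; the only point worth flagging is that the continuity of the solution map in $\underline{Z}$ is not literally the statement of Proposition~\ref{prop:approx_eq} (which also perturbs $F$ and $x$), but the same fixed-point argument gives it directly.
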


\begin{proof} See the proof of \cite[Theorem 1.1]{ChF16}. 
\end{proof}

Using the above corollary we prove that for every $y\in \CC^{-\al_0}$ and every $\ee>0$
\begin{equation}\label{eq:irr} 
 \PP(X(T;x)\in B_\ee(y)) >0.
\end{equation}
To do so, it suffices to prove that for every $y\in C^\infty(\TT^2)$ there exist $f\in L^2(\TT^2)$ and
$\Re\geq 0$ such that $\mathscr{T}(x;f; \Re)(T) = y$.  But if we set 
\begin{equation*}
 X(t) = S(t) x + \frac{t}{T} \left(y - S(T) x\right),
\end{equation*}
for any choice of $\Re\geq 0$ and
$$
f(t) = \sum_{k=0}^3 a_k \HH_k(X(t), \Re) + \frac{1}{T} (y- S(T)x) -\frac{t}{T} (\Delta - I)(y- S(T))
$$
we have that $X = \mathscr{T}(x;f; \Re)$. Then the result follows by Corollary \ref{cor:support} and the fact that $C^\infty(\TT^2)$ is dense in $\CC^{-\al}$. 

\subsection{Convergence Rate}\label{s:Convergence_Rate}

 We recall that for any coupling $M$ of probability measures $\mu_1, \mu_2$ and $F,G$ measurable functions with respect to the 
corresponding $\sigma$-algebras we have the identity 
\begin{equation}\label{eq:coupling_equal}
 \int \left(F(x) - G(y)\right) M(\dd x, \dd y) = \int \int \left(F(x) - G(y)\right) \mu_1(\dd x) \mu_2(\dd y).
\end{equation}
We finally combine the results of the previous sections to prove the following theorem.

\begin{theorem}\label{thm:conv_rate} Let $\{P_t:t\geq 0\}$ be the Markov semigroup \eqref{eq:semigroup}
associated to the solution of \eqref{eq:Remainder_Eq_1} for $n=3$. Then there exists
$\lambda\in(0,1)$ such that
\begin{equation}\label{eq:minor}
 \|P_t(x) - P_t(y)\|_{\mathrm{TV}} \leq 1-\lambda,
\end{equation}
for every $x,y\in\CC^{-\al_0}$, $t\geq 3$. 
\end{theorem}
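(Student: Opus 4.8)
The plan is to prove \eqref{eq:minor} by verifying a Doeblin-type minorization: for $t\geq 3$ I will construct a coupling of $X(t;x)$ and $X(t;y)$ under which the two trajectories coincide at time $t$ with probability at least a fixed $\lambda>0$, uniformly over $x,y\in\CC^{-\al_0}$; then $\|P_t(x)-P_t(y)\|_{\mathrm{TV}}\leq 1-\lambda$ follows from the coupling characterization of total variation (compare \eqref{eq:coupling_equal}). Writing $t=(t-2)+1+1$ with $t-2\geq 1$, the coupling runs in three stages corresponding to the three ingredients now available: the uniform moment bound of Corollary \ref{cor:moments_est} to push both copies into a fixed compact set, the irreducibility \eqref{eq:irr} combined with the strong Feller property (Theorem \ref{thm:strong_feller}) to funnel both copies into one small common ball, and the strong Feller property once more to couple the last unit of time. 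Throughout we use global existence of $X(\cdot;x)$ (Theorem \ref{thm:global_ex}) and the Markov property (Theorem \ref{thm:Markov_Prop}).

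First I would fix the deterministic data. Choose $\al\in(0,\al_0)$ so that bounded subsets of $\CC^{-\al}$ are compact in $\CC^{-\al_0}$ (Proposition \ref{prop:Compact_Emb}), and fix some $p\geq 2$. By Corollary \ref{cor:moments_est} there is $M<\infty$ with $\sup_{z\in\CC^{-\al_0}}\EE\|X(s;z)\|_{\CC^{-\al}}^p\leq M$ for all $s\geq 1$, so Chebyshev's inequality gives $\PP\big(X(s;z)\notin K\big)\leq \tfrac14$ for every $z$ and every $s\geq 1$, where $K:=\{f:\|f\|_{\CC^{-\al}}\leq (4M)^{1/p}\}$ is compact in $\CC^{-\al_0}$. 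Next pick a reference point $y^\ast=0$ and a radius $r_0\in(0,\tfrac12]$ small enough that $\|P_1(u)-P_1(u')\|_{\mathrm{TV}}\leq \tfrac12$ for all $u,u'\in B_{r_0}(y^\ast)$; this is possible by Theorem \ref{thm:strong_feller}, since on such a ball $\|u\|_{\CC^{-\al_0}}$ is bounded and $\|u-u'\|_{\CC^{-\al_0}}\leq 2r_0$. Finally, for this $r_0$ the map $z\mapsto P_1(z)\big(B_{r_0}(y^\ast)\big)$ is continuous on $\CC^{-\al_0}$ (by the TV continuity of $z\mapsto P_1(z)$ from Theorem \ref{thm:strong_feller}) and strictly positive (by \eqref{eq:irr} with $T=1$), hence bounded below by some $\kappa>0$ on the compact set $K$.

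With these constants fixed I would build the coupling of $(X(t;x),X(t;y))$ by gluing three pieces via the Markov property: run the two copies independently on $[0,t-2]$; then, conditionally on the event that both $X(t-2;x)$ and $X(t-2;y)$ lie in $K$ (probability $\geq\tfrac12$ by a union bound), run them independently again on $[t-2,t-1]$, so that conditionally on landing both $X(t-1;x)$ and $X(t-1;y)$ in $B_{r_0}(y^\ast)$ — conditional probability $\geq\kappa^2$ since each transition law $P_1(z)$ with $z\in K$ charges $B_{r_0}(y^\ast)$ by at least $\kappa$ — we finally couple on $[t-1,t]$ using the maximal coupling of $P_1\big(X(t-1;x)\big)$ and $P_1\big(X(t-1;y)\big)$. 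Since both arguments lie in $B_{r_0}(y^\ast)$, these two laws are within $\tfrac12$ in total variation, so the maximal coupling makes $X(t;x)=X(t;y)$ with conditional probability $\geq\tfrac12$. Multiplying the three conditional probabilities yields $\PP\big(X(t;x)=X(t;y)\big)\geq \tfrac12\cdot\kappa^2\cdot\tfrac12=:\lambda>0$, which gives \eqref{eq:minor}.

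The one genuinely non-mechanical point, and the step I would treat most carefully, is the uniform bound $\inf_{z\in K}P_1(z)\big(B_{r_0}(y^\ast)\big)\geq\kappa>0$: this is exactly where topological irreducibility must be upgraded to a quantitative minorization, and it relies essentially on the strong Feller property to supply the continuity of $z\mapsto P_1(z)\big(B_{r_0}(y^\ast)\big)$ which, together with compactness of $K$, turns pointwise positivity into a uniform lower bound. (Note also the order of the choices matters: $y^\ast$ and then $r_0$ are chosen from Theorem \ref{thm:strong_feller} before $\kappa$, which depends on $r_0$.) Everything else — the Chebyshev step for the compact set, the choice of $r_0$ from the H\"older–TV estimate, the maximal coupling, and the gluing of the three stages through the Markov property — is routine, and the requirement $t\geq 3$ is precisely what allows the decomposition $t=(t-2)+1+1$ with the first block of length at least $1$.
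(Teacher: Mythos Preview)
Your proof is correct and follows essentially the same three-stage strategy as the paper: use Corollary \ref{cor:moments_est} to enter a fixed compact set, combine the irreducibility \eqref{eq:irr} with the strong Feller property and compactness to obtain a uniform lower bound on hitting a small ball, and then apply Theorem \ref{thm:strong_feller} on that ball to couple. The only cosmetic difference is that the paper phrases the last two steps via direct total-variation inequalities with the product coupling rather than via an explicit maximal coupling, but the constants and the logic are the same.
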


\begin{proof} Let $0<\al<\al_0$ and for $R>0$ consider the subset of $\CC^{-\al_0}$
\begin{equation*}
 A_R :=\{x\in\CC^{-\al_0} : \|x\|_{\CC^{-\al}}\leq R\}
\end{equation*}
which is compact since the embedding $\CC^{-\al} \hookrightarrow 
\CC^{-\al_0}$ is compact (see Proposition \ref{prop:Compact_Emb}). By Theorem \ref{thm:strong_feller} 
for every $a\in(0,1)$ there exists $r\equiv r(a)>0$ such that for every $x,y\in \bar B_r(0)$ 
and $t\geq 1$
\begin{equation*}
 \|P_t(x) - P_t(y)\|_{\mathrm{TV}} \leq 1-a.
\end{equation*}
By \eqref{eq:irr} for every $x \in A_R$ 
\begin{equation*}
 P_{1}(x;\bar B_r(0)) >0,
\end{equation*}
which combined with the strong Feller property  (which implies the continuity of $P_{1}(x;A)$ as a function of 
$x$ for fixed measurable set $A$) and the fact that $A_R$ is compact implies that there exists $b\equiv b(R)>0$ such that
\begin{equation*}
 \inf_{x\in A_R} P_{1}(x;\bar B_r(0)) \geq b.
\end{equation*}
For $t\geq 0$ and $x,y\in A_R\setminus \bar B_r(0)$, let 
$\PP^{x,y}_t\in\mathcal{M}_1(\CC^{-\al_0}\times\CC^{-\al_0})$ be the trivial product coupling of 
$P_t(x)$ and $P_t(y)$ given by
\begin{equation*}
 \PP^{x,y}_t(A\times B) = P_t(x;A) P_t(y;B),
\end{equation*}
for every measurable sets $A, B\subset \CC^{-\al_0}$. Then, for $x,y\in A_R$, $t\geq 2$ and $\Phi\in C_b(\CC^{-\al_0})$,
\begin{align*}
 |P_t\Phi(x) - P_t\Phi(y)| & = |\EE\left[P_{t-1}\Phi(X(1;x)) - P_{t-1}\Phi(X(1;y))\right]| \\
 & = \left|\int \left[P_{t-1}\Phi(\tilde x) - P_{t-1}\Phi(\tilde y)\right] \PP^{x,y}_1(\dd \tilde x,\dd \tilde y)\right|,
\end{align*}
where in the first equality we use the Markov property and  \eqref{eq:coupling_equal} in the second equality. This implies that
\begin{align*}
 \|P_t(x) - P_t(y)\|_{\mathrm{TV}} & \leq \PP^{x,y}_1\left(\left(\bar B_r(0)\times \bar B_r(0)\right)^c\right) + (1-a) \PP^{x,y}_1\left(\bar B_r(0)\times 
 \bar B_r(0)\right) \\
 & \leq 1-a \PP^{x,y}_1\left(\bar B_r(0)\times \bar B_r(0)\right)\\
 & \leq 1-ab^2.
\end{align*}
By \eqref{eq:time_ind_moments} we can choose $R>0$ sufficiently large such that
\begin{equation*}
 \inf_{x\in \CC^{-\al_0}}\inf_{t\geq 1} \PP(\|X(t;x)\|_{\CC^{-\al}}\leq R) 
 > \frac{1}{2}.
\end{equation*}
Then, for any $x,y\in \CC^{-\al_0}$ and $t\geq 3$, using the same coupling argument as above we get 
\begin{equation*} 
 \|P_t(x) - P_t(y)\|_{\mathrm{TV}} \leq 1-\frac{ab^2}{4},
\end{equation*}
which completes the proof if we set $\lambda =\frac{ab^2}{4}$. 
\end{proof}

The following corollary contains our main result, the exponential convergence to a unique invariant measure. 

\begin{corollary} There exists a unique invariant measure $\mu\in\mathcal{M}_1(\CC^{-\al_0})$ for the semigroup $\{P_t:t\geq 0\}$ associated to
the solution of \eqref{eq:Remainder_Eq_1} for $n=3$ such that
\begin{equation}\label{eq:conv_rate}
 \|P_{t}(x) - \mu\|_{\mathrm{TV}} \leq \left(1-\lambda\right)^{\left\lfloor 
 \frac{t}{3}\right\rfloor} 
 \|\delta_x -\mu\|_{\mathrm{TV}},
\end{equation}
for every $x\in\CC^{-\al_0}$, $t\geq 3$. 
\end{corollary}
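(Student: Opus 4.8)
The plan is to derive this from the Doeblin-type estimate \eqref{eq:minor} of Theorem \ref{thm:conv_rate} by a standard contraction argument for the dual semigroup on $\mathcal{M}_1(\CC^{-\al_0})$ equipped with the total variation distance.

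First I would upgrade the pointwise bound \eqref{eq:minor} to a genuine contraction on measures, i.e. show that for all $\mu_1,\mu_2\in\mathcal{M}_1(\CC^{-\al_0})$ and $t\geq 3$,
\begin{equation*}
\|P_t^*\mu_1 - P_t^*\mu_2\|_{\mathrm{TV}} \leq (1-\lambda)\,\|\mu_1-\mu_2\|_{\mathrm{TV}}.
\end{equation*}
To see this, let $m$ be the total mass of $\mu_1\wedge\mu_2$ and write $\mu_1-\mu_2 = \nu_1-\nu_2$ with $\nu_1,\nu_2$ nonnegative, mutually singular and of common mass $1-m=\|\mu_1-\mu_2\|_{\mathrm{TV}}$; then $P_t^*\mu_1-P_t^*\mu_2 = P_t^*\nu_1-P_t^*\nu_2$, and if $m<1$ one normalises $\nu_1,\nu_2$ to probability measures and integrates \eqref{eq:minor} against their product, the case $m=1$ being trivial. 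The same computation, without invoking \eqref{eq:minor}, gives the weak contraction $\|P_s^*\mu_1-P_s^*\mu_2\|_{\mathrm{TV}}\leq\|\mu_1-\mu_2\|_{\mathrm{TV}}$ for every $s\geq 0$, which is in any case automatic for a Markov operator.

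Next I would invoke the Banach fixed point theorem. Since $\CC^{-\al_0}$ is Polish, $\left(\mathcal{M}_1(\CC^{-\al_0}),\|\cdot\|_{\mathrm{TV}}\right)$ is a complete metric space, and by the previous step $P_3^*$ is a $(1-\lambda)$-contraction on it; hence it has a unique fixed point $\mu\in\mathcal{M}_1(\CC^{-\al_0})$. Using commutativity of the semigroup, $P_s^*\mu$ is again fixed by $P_3^*$ for every $s\geq 0$, so $P_s^*\mu=\mu$ by uniqueness; thus $\mu$ is invariant for $\{P_t:t\geq 0\}$, and it is the unique invariant measure because any invariant measure is in particular a fixed point of $P_3^*$.

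For the rate, given $x\in\CC^{-\al_0}$ and $t\geq 3$ I would write $t = 3\lfloor t/3\rfloor + \rho$ with $\rho\in[0,3)$ and, using invariance of $\mu$ together with $P_{3\lfloor t/3\rfloor}^* = \left(P_3^*\right)^{\lfloor t/3\rfloor}$, estimate
\begin{equation*}
\|P_t(x)-\mu\|_{\mathrm{TV}} = \big\|\left(P_3^*\right)^{\lfloor t/3\rfloor}\!\big(P_\rho^*\delta_x\big) - \left(P_3^*\right)^{\lfloor t/3\rfloor}\mu\big\|_{\mathrm{TV}} \leq (1-\lambda)^{\lfloor t/3\rfloor}\,\|P_\rho^*\delta_x - \mu\|_{\mathrm{TV}} \leq (1-\lambda)^{\lfloor t/3\rfloor}\,\|\delta_x-\mu\|_{\mathrm{TV}},
\end{equation*}
applying the $P_3^*$-contraction $\lfloor t/3\rfloor$ times and then the weak contraction of $P_\rho^*$ once together with $P_\rho^*\mu=\mu$; this is \eqref{eq:conv_rate}. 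I do not expect a genuine obstacle here, the substance being entirely contained in Theorem \ref{thm:conv_rate}; the only points requiring a little care are the passage from the pointwise bound \eqref{eq:minor} to the contraction on $\mathcal{M}_1(\CC^{-\al_0})$ and the verification that the fixed point of $P_3^*$ is invariant under the whole semigroup.
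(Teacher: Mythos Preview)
Your proposal is correct and follows essentially the same standard Doeblin contraction argument as the paper: both upgrade \eqref{eq:minor} to a $(1-\lambda)$-contraction of $P_t^*$ on $\mathcal{M}_1(\CC^{-\al_0})$ for $t\geq 3$ (you via the Hahn decomposition and product coupling, the paper via the coupling characterisation of total variation) and then iterate. The one minor difference is that you obtain existence of the invariant measure directly from the Banach fixed point theorem for $P_3^*$, whereas the paper takes existence for granted from the Krylov--Bogoliubov argument in Section~\ref{s:Existence_of_Invariant_Measures} and only uses the contraction for uniqueness and the rate.
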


\begin{proof} We first notice that for $\mu_1,\mu_2\in\mathcal{M}_1(\CC^{-\al_0})$ and every $t\geq 0$ by \eqref{eq:coupling_equal} 
we have that
\begin{equation*}
 \|P^*_t\mu_1-P^*_t\mu_2\|_{\mathrm{TV}} \leq \frac{1}{2}\sup_{\|\Phi\|_{\infty}\leq 1} 
 \int\int \left|P_t\Phi(x)-P_t\Phi(y)\right| M(\dd x, \dd y),
\end{equation*}
for any coupling $M\in \mathcal{M}_1(\CC^{-\al_0}\times\CC^{-\al_0})$ of $\mu_1$ and $\mu_2$. Thus by \eqref{eq:minor} 
for $t\geq 3$
\begin{equation*}
 \|P^*_t\mu_1-P^*_t\mu_2\|_{\mathrm{TV}} \leq \left(1-\lambda\right) \left(1-M(\{(x,x):x \in \CC^{-\al_0}\})\right)
\end{equation*}
and using the characterization of the total variation distance given by
\begin{equation*}
 \|\mu_1-\mu_2\|_{\mathrm{TV}} 
 =  2\inf\left\{ 1-M(\{(x,x):x \in \CC^{-\al_0}\}) : M \text{ coupling of } \mu_1 \text{ and } \mu_2\right\}
\end{equation*}
we get that
\begin{equation*}
 \|P^*_t\mu_1-P^*_t\mu_2\|_{\mathrm{TV}} \leq \left(1-\lambda\right) \|\mu_1-\mu_2\|_{\mathrm{TV}}.
\end{equation*}
This implies that $\{P_t:t\geq 0\}$ has a unique invariant measure $\mu\in\mathcal{M}_1(\CC^{-\al_0})$, since by Proposition 
\cite[Proposition 3.2.5]{dPZ96} any two distinct invariant measures are singular. Finally, for
$x\in \CC^{-\al_0}$ and $t\geq 3$
\begin{align*}
 \|P_t(x) - \mu\|_{\mathrm{TV}} \leq (1-\lambda) \|P_{t-3}(x) - \mu\|_{\mathrm{TV}},
\end{align*}
which implies \eqref{eq:conv_rate}.
\end{proof}

\begin{appendices} 

\stepcounter{section}
\section*{Appendix \thesection}\label{Appendix:Besov_Spaces}
\setcounter{theorem}{0}
\setcounter{equation}{0} 

The following three propositions can be found in \cite[Section 3: pp. 11-12]{MWe15}.
\begin{proposition} Let $\alpha_1,\alpha_2 \in \mathbb{R}$, $p_1,p_2,q_1,q_2\in[1,\infty]$. Then,
\begin{align}
 \|f\|_{\mathcal{B}_{p_1,q_1}^{\alpha_1}} & \leq C\|f\|_{\mathcal{B}_{p_1,q_1}^{\alpha_2}}, \text{ whenever } 
  \alpha_1 \leq \alpha_2, \label{eq:alpha_ineq}\\
 \|f\|_{\mathcal{B}_{p_1,q_1}^{\alpha_1}} & \leq \|f\|_{\mathcal{B}_{p_1,q_2}^{\alpha_1}}, \text{ whenever } 
  q_1\geq q_2,\label{eq:q_ineq}\\
 \|f\|_{\mathcal{B}_{p_1,q_1}^{\alpha_1}} & \leq C\|f\|_{\mathcal{B}_{p_2,q_1}^{\alpha_1}}, \text{ whenever }
  p_1\leq p_2,\label{eq:p_ineq}\\
 \|f\|_{\mathcal{B}_{p_1,q_1}^{\alpha_1}} & \leq C\|f\|_{\mathcal{B}_{p_1,q_2}^{\alpha_2}}, \text{ whenever }
  \alpha_1 < \alpha_2. \label{eq:a_q_ineq}
\end{align}
\end{proposition}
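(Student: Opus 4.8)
The plan is to derive all four inequalities directly from the definition \eqref{eq:besov_norm}, each one reducing to an elementary estimate on weighted scalar sequences together with a trivial embedding of $L^p$-spaces over $\TT^d$. Throughout I would abbreviate $a_\kappa(f):=\|\delta_\kappa f\|_{L^{p}}$, so that $\|f\|_{\BB_{p,q}^{\alpha}}=\big\|\big(2^{\alpha\kappa}a_\kappa(f)\big)_{\kappa\geq-1}\big\|_{\ell^{q}}$, and keep in mind two features of our setting: the block index runs over $\kappa\geq-1$, and $\TT^d$ carries a probability measure.

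For \eqref{eq:alpha_ineq}, since $\alpha_1-\alpha_2\leq0$ the map $\kappa\mapsto 2^{(\alpha_1-\alpha_2)\kappa}$ is nonincreasing on $\{\kappa\geq-1\}$, hence $2^{\alpha_1\kappa}=2^{(\alpha_1-\alpha_2)\kappa}2^{\alpha_2\kappa}\leq 2^{\alpha_2-\alpha_1}\,2^{\alpha_2\kappa}$ for every such $\kappa$; multiplying by $a_\kappa(f)$ and taking $\ell^{q_1}$-norms gives the claim with $C=2^{\alpha_2-\alpha_1}$. Inequality \eqref{eq:q_ineq} is the monotonicity $\|b\|_{\ell^{q_1}}\leq\|b\|_{\ell^{q_2}}$ for $q_1\geq q_2$, applied to the sequence $b_\kappa=2^{\alpha_1\kappa}a_\kappa(f)$ (no constant needed). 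For \eqref{eq:p_ineq}, because $|\TT^d|=1$, Jensen's inequality (equivalently, H\"older with the conjugate exponent of $p_2/p_1$) gives $\|g\|_{L^{p_1}}\leq\|g\|_{L^{p_2}}$ whenever $p_1\leq p_2$; applying this blockwise to $g=\delta_\kappa f$ and then taking $\ell^{q_1}$-norms yields the statement, in fact with $C=1$.

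Finally, for \eqref{eq:a_q_ineq} I would split the weight as $2^{\alpha_1\kappa}a_\kappa(f)=2^{(\alpha_1-\alpha_2)\kappa}\cdot\big(2^{\alpha_2\kappa}a_\kappa(f)\big)$ and use $\|(c_\kappa d_\kappa)\|_{\ell^{q_1}}\leq\|(c_\kappa)\|_{\ell^{q_1}}\|(d_\kappa)\|_{\ell^{\infty}}$ with $c_\kappa=2^{(\alpha_1-\alpha_2)\kappa}$, $d_\kappa=2^{\alpha_2\kappa}a_\kappa(f)$. Since $\alpha_1<\alpha_2$, the geometric sequence $(c_\kappa)_{\kappa\geq-1}$ lies in $\ell^{q_1}$ for every $q_1$ with $\|(c_\kappa)\|_{\ell^{q_1}}\leq\sum_{\kappa\geq-1}2^{(\alpha_1-\alpha_2)\kappa}=\frac{2^{\alpha_2-\alpha_1}}{1-2^{\alpha_1-\alpha_2}}=:C$, while $\|(d_\kappa)\|_{\ell^{\infty}}\leq\|(d_\kappa)\|_{\ell^{q_2}}=\|f\|_{\BB_{p_1,q_2}^{\alpha_2}}$; combining the two gives the claim. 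I do not expect any genuine obstacle here — the only points meriting a line of care are the contribution of the lowest index $\kappa=-1$ in \eqref{eq:alpha_ineq} and \eqref{eq:a_q_ineq} (which is exactly what produces the constants), and the use of the finiteness of $|\TT^d|$ in \eqref{eq:p_ineq}, an embedding that would fail on $\RR^d$.
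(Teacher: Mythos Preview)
Your argument is correct. The paper does not actually prove this proposition; it merely records it in Appendix~A with the remark that it ``can be found in \cite[Section 3: pp.~11--12]{MWe15}''. So there is no proof in the paper to compare against, and what you have written is a clean self-contained verification of all four embeddings directly from the definition~\eqref{eq:besov_norm}. Each step is sound: the constant in \eqref{eq:alpha_ineq} coming from the $\kappa=-1$ block, the sharp $\ell^{q_2}\hookrightarrow\ell^{q_1}$ embedding for \eqref{eq:q_ineq}, the use of $|\TT^d|=1$ for the blockwise $L^{p_2}\hookrightarrow L^{p_1}$ embedding in \eqref{eq:p_ineq}, and the H\"older-on-sequences argument for \eqref{eq:a_q_ineq} are all standard and correctly executed.
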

\begin{proposition} Let $p\in [1,\infty]$. Then the space $\mathcal{B}_{p,1}^0$ is continuously embedded in $L^p$  
and
\begin{equation}\label{eq:L^p_emb}
  \|f\|_{L^p}  \leq\|f\|_{\mathcal{B}_{p,1}^0}. 
\end{equation}
On the other hand, $L^p$ is continuously embedded in $\mathcal{B}_{p,\infty}^0$ and
\begin{equation}\label{eq:B^0_emb}
 \|f\|_{\mathcal{B}_{p,\infty}^0}  \leq C \|f\|_{L^p}.
\end{equation}
\end{proposition}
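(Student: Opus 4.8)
The plan is to prove both inequalities for $f \in C^\infty(\TT^d)$ and then pass to the general case by density and completeness, using that $\mathcal{B}_{p,1}^0$ and $\mathcal{B}_{p,\infty}^0$ are by definition the completions of $C^\infty(\TT^d)$ under the respective norms.

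For the first embedding I would start from the Littlewood--Paley decomposition $f = \sum_{\kappa\geq -1}\delta_\kappa f$, valid for $f\in C^\infty(\TT^d)$: the $m$-th Fourier coefficient of the right-hand side equals $\big(\tilde\chi(m)+\sum_{\kappa\geq 0}\chi(m/2^\kappa)\big)\hat f(m)=\hat f(m)$ by property iii of the dyadic partition of unity, and the series converges absolutely since $\hat f$ decays faster than any polynomial. The triangle inequality in $L^p$ then gives $\|f\|_{L^p}\leq\sum_{\kappa\geq -1}\|\delta_\kappa f\|_{L^p}=\|f\|_{\mathcal{B}_{p,1}^0}$, which is \eqref{eq:L^p_emb}; since $L^p$ is complete, the identity on $C^\infty(\TT^d)$ extends by density to a continuous embedding $\mathcal{B}_{p,1}^0\hookrightarrow L^p$.

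For the second embedding I would write $\delta_\kappa f=\eta_\kappa*f$ and use Young's inequality for convolutions on $\TT^d$, namely $\|\delta_\kappa f\|_{L^p}=\|\eta_\kappa*f\|_{L^p}\leq\|\eta_\kappa\|_{L^1}\|f\|_{L^p}$, so that \eqref{eq:B^0_emb} follows as soon as $C:=\sup_{\kappa\geq -1}\|\eta_\kappa\|_{L^1}<\infty$. For $\kappa=-1$ this is clear, $\eta_{-1}$ being a fixed smooth function. For $\kappa\geq 0$, since $\eta_\kappa(z)=\sum_{m\in\ZZ^d}\chi(m/2^\kappa)e_m(z)$, Poisson summation identifies $\eta_\kappa$ with the $\TT^d$-periodization of $\phi_\kappa:=2^{\kappa d}g(2^\kappa\cdot)$, where $g$ is the inverse Fourier transform on $\RR^d$ of $\chi$; periodization does not increase the $L^1$ norm, so $\|\eta_\kappa\|_{L^1(\TT^d)}\leq\|\phi_\kappa\|_{L^1(\RR^d)}=\|g\|_{L^1(\RR^d)}$, which is finite and $\kappa$-independent because $\chi\in C^\infty_c(\RR^d)$ makes $g$ Schwartz. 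Taking the supremum over $\kappa$ yields $\|f\|_{\mathcal{B}_{p,\infty}^0}\leq C\|f\|_{L^p}$ and, for $p<\infty$, the embedding $L^p\hookrightarrow\mathcal{B}_{p,\infty}^0$ after approximating $f$ in $L^p$ by smooth functions.

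The only step carrying real content is the uniform bound $\sup_\kappa\|\eta_\kappa\|_{L^1}<\infty$; everything else is the triangle inequality or Young's inequality. Should one prefer to avoid Poisson summation, the same bound can be obtained intrinsically on the torus by using that $\chi_\kappa$ is supported on the annulus $2^\kappa\A$ with derivatives scaling like appropriate powers of $2^\kappa$, splitting $\int_{\TT^d}|\eta_\kappa(z)|\dd z$ into the regions $|z|\lesssim 2^{-\kappa}$ and $|z|\gtrsim 2^{-\kappa}$ and integrating by parts sufficiently often on the latter; this reproduces the same $\kappa$-independent constant.
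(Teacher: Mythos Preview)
Your proof is correct and is the standard argument for these embeddings. The paper does not actually prove this proposition; it is stated in Appendix~A with the remark that it can be found in \cite[Section~3]{MWe15}, so there is no proof in the paper to compare against. Your argument via the Littlewood--Paley reconstruction and triangle inequality for \eqref{eq:L^p_emb}, and Young's inequality together with the uniform $L^1$ bound on the multiplier kernels $\eta_\kappa$ (established by Poisson summation and scaling) for \eqref{eq:B^0_emb}, is exactly what one finds in standard references such as \cite{BCD11}. You were also right to flag the case $p=\infty$ in the density step: with the paper's convention that $\mathcal{B}_{p,\infty}^0$ is the \emph{completion} of $C^\infty(\TT^d)$, the set inclusion $L^\infty\subset\mathcal{B}_{\infty,\infty}^0$ fails (the latter is separable), though the norm inequality \eqref{eq:B^0_emb} of course still holds for any $f\in L^\infty$ by direct application of Young's inequality.
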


\begin{proposition}\label{prop:Besov_Emb} Let $\alpha \leq \beta$ and $p,q\geq 1$ such that $p\geq q$ and $\beta=\alpha+ d \left(\frac{1}{q}-\frac{1}{p}\right)$. Then
\begin{equation*}
  \|f\|_{\BB_{p,\infty}^\alpha}\leq C \|f\|_{\BB_{q,\infty}^\beta}.
\end{equation*} 
\end{proposition}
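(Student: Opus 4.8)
The plan is to reduce the statement to Bernstein's inequality for Littlewood--Paley blocks. Recall that for $\kappa\geq 0$ the block $\delta_\kappa f$ has Fourier support contained in $\A_{2^\kappa}=2^\kappa\A$, and $\delta_{-1}f$ has Fourier support in $B(0,4/3)$; in both cases the support sits inside a ball of radius comparable to $2^\kappa$ (with $\kappa=-1$ playing the role of $2^{-1}$ in the paper's convention).

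First I would invoke Bernstein's inequality (see \cite[Lemma 2.1]{BCD11}): if $g\in C^\infty(\TT^d)$ satisfies $\hat g(m)=0$ for $|m|>\lambda$, then for $1\leq q\leq p\leq\infty$ one has $\|g\|_{L^p}\lesssim \lambda^{d(1/q-1/p)}\|g\|_{L^q}$. Applying this to $g=\delta_\kappa f$ with $\lambda\sim 2^\kappa$ yields, uniformly in $\kappa\geq-1$ and $f$,
\begin{equation*}
\|\delta_\kappa f\|_{L^p}\lesssim 2^{\kappa d(1/q-1/p)}\,\|\delta_\kappa f\|_{L^q}.
\end{equation*}
A self-contained route to this bound, if one prefers, is to write $\delta_\kappa f=\tilde\eta_\kappa*\delta_\kappa f$ with $\tilde\eta_\kappa$ a fattened version of $\eta_\kappa$ (so that its Fourier multiplier equals $1$ on $\supp\chi_\kappa$), apply Young's inequality $\|\delta_\kappa f\|_{L^p}\leq\|\tilde\eta_\kappa\|_{L^r}\|\delta_\kappa f\|_{L^q}$ with $1+1/p=1/r+1/q$, and use the scaling estimate $\|\tilde\eta_\kappa\|_{L^r}\lesssim 2^{\kappa d(1-1/r)}=2^{\kappa d(1/q-1/p)}$.

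Then I would multiply by $2^{\alpha\kappa}$ and use the identity $\alpha+d(1/q-1/p)=\beta$, obtaining
\begin{equation*}
2^{\alpha\kappa}\|\delta_\kappa f\|_{L^p}\lesssim 2^{(\alpha+d(1/q-1/p))\kappa}\|\delta_\kappa f\|_{L^q}=2^{\beta\kappa}\|\delta_\kappa f\|_{L^q}.
\end{equation*}
Taking the supremum over $\kappa\geq-1$ on both sides gives $\|f\|_{\BB^\alpha_{p,\infty}}\lesssim\|f\|_{\BB^\beta_{q,\infty}}$ for $f\in C^\infty(\TT^d)$, and since in this paper $\BB^\beta_{q,\infty}$ is defined as the completion of $C^\infty(\TT^d)$ under its norm, the inequality passes to the limit on the whole space. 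Note that the assumption $\alpha\leq\beta$ is automatic, as $p\geq q$ forces $1/q-1/p\geq 0$; it is listed only for emphasis.

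There is no serious obstacle here: the one genuine input is Bernstein's inequality, which is classical, and everything else is bookkeeping with the definition of the Besov norm. The only points requiring a little care are the treatment of the exceptional block $\kappa=-1$ and the observation that the termwise estimate survives under the $\ell^\infty$ supremum over $\kappa$ (which it does, monotonicity being immediate).
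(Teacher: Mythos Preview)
Your proof is correct and follows the standard route via Bernstein's inequality. The paper does not actually prove this proposition; it merely cites \cite[Section 3: pp.~11--12]{MWe15}, where the argument is precisely the one you give (Bernstein applied blockwise, then take the supremum over $\kappa$).
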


The following proposition can be found in \cite{BCD11}[Corollary 2.96] and it is generally true for Besov spaces over
compact sets.

\begin{proposition} \label{prop:Compact_Emb} Let $\al<\al'$. Then the embedding $\BB_{\infty,\infty}^{\al'} \hookrightarrow \BB^\al_{\infty,1}$ 
is compact.
\end{proposition}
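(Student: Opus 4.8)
The plan is to run the classical Littlewood--Paley compactness argument, block by block, using crucially that trading the summability index $q=\infty$ for $q=1$ costs exactly what the loss of regularity $\al<\al'$ buys. First I would record that the inclusion is bounded: for $f\in C^\infty(\TT^d)$ one has directly from \eqref{eq:besov_norm}
\[
\|f\|_{\BB^\al_{\infty,1}}=\sum_{\kappa\geq-1}2^{\al\kappa}\|\delta_\kappa f\|_{L^\infty}
\leq\Big(\sum_{\kappa\geq-1}2^{(\al-\al')\kappa}\Big)\,\sup_{\kappa\geq-1}2^{\al'\kappa}\|\delta_\kappa f\|_{L^\infty}
=C_{\al,\al'}\,\|f\|_{\BB^{\al'}_{\infty,\infty}},
\]
the geometric series converging since $\al-\al'<0$; by density this extends to all of $\BB^{\al'}_{\infty,\infty}$, and each $\delta_\kappa$ extends continuously because it maps into the finite dimensional space of trigonometric polynomials with frequencies in $\supp\chi_\kappa\cap\ZZ^d\subset\A_{2^\kappa}$. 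It then suffices to show that every sequence $(f_n)$ with $\sup_n\|f_n\|_{\BB^{\al'}_{\infty,\infty}}\leq M$ admits a subsequence that is Cauchy in $\BB^\al_{\infty,1}$, since the latter space is, by the paper's convention, a completion and hence a Banach space.

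The high frequencies are handled by the same geometric estimate, uniformly in $n$:
\[
\sum_{\kappa>N}2^{\al\kappa}\|\delta_\kappa f_n\|_{L^\infty}\leq M\sum_{\kappa>N}2^{(\al-\al')\kappa}=:\ee_N\xrightarrow[N\to\infty]{}0 .
\]
For the low frequencies I would argue block by block. For fixed $\kappa\geq-1$ the functions $\delta_\kappa f_n$ are trigonometric polynomials with frequencies in a fixed finite set, satisfying $\|\delta_\kappa f_n\|_{L^\infty}\leq 2^{-\al'\kappa}M$ and, by Bernstein's inequality (see \cite[Lemma~2.1]{BCD11} and its periodic analogue), $\|\nabla\delta_\kappa f_n\|_{L^\infty}\leq C\,2^\kappa\|\delta_\kappa f_n\|_{L^\infty}$. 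Thus $(\delta_\kappa f_n)_n$ is bounded in $C^1(\TT^d)$, hence precompact in the uniform norm by Arzel\`a--Ascoli. A diagonal extraction over $\kappa=-1,0,1,\dots$ produces a single subsequence $(f_{n_j})_j$ along which $\delta_\kappa f_{n_j}$ converges uniformly for every $\kappa$; in particular, for each fixed $N$, $\sum_{-1\leq\kappa\leq N}2^{\al\kappa}\|\delta_\kappa(f_{n_j}-f_{n_k})\|_{L^\infty}\to0$ as $j,k\to\infty$.

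Combining the two estimates finishes the argument: given $\ee>0$, choose $N$ with $\ee_N<\ee/4$ (so the high-frequency part of $f_{n_j}-f_{n_k}$ contributes less than $\ee/2$ to the $\BB^\al_{\infty,1}$ norm), and then $j,k$ large so that the finitely many low-frequency terms contribute less than $\ee/2$; hence $\|f_{n_j}-f_{n_k}\|_{\BB^\al_{\infty,1}}<\ee$ and $(f_{n_j})$ is Cauchy, therefore convergent in $\BB^\al_{\infty,1}$. The only steps requiring any care are minor: the uniform-in-$n$ tail bound, where the \emph{strict} inequality $\al<\al'$ is essential; the periodic form of Bernstein's inequality, so that each Littlewood--Paley block is controlled in $C^1$ by its $L^\infty$ norm; and keeping track of the paper's nonstandard definition of $\BB^\al_{\infty,1}$ as a completion (so that a Cauchy subsequence does have a limit there). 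Alternatively, one may simply invoke \cite[Corollary~2.96]{BCD11}, whose proof on the torus is precisely the argument above.
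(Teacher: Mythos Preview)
Your proof is correct. The paper does not actually prove this proposition: it simply states that the result can be found in \cite[Corollary~2.96]{BCD11} and holds generally for Besov spaces over compact sets. Your argument is precisely the standard Littlewood--Paley proof underlying that citation --- uniform tail control from the strict inequality $\al<\al'$, Bernstein plus Arzel\`a--Ascoli on each finite-frequency block, and a diagonal extraction --- and you even note the same reference at the end, so there is nothing to compare.
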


In the following proposition we describe the smoothing properties of the heat semigroup $\left(\e^{t\Delta}\right)_{t\geq 0}$ 
with generator $\Delta$ in space (see \cite[Proposition 3.11]{MWe15}). 

\begin{proposition} \label{prop:Heat_Smooth} 
Let $f\in\BB_{p,q}^\alpha$. Then, for all $\beta\geq \alpha$,  
\begin{equation}\label{eq:Heat_Smooth}
  \|\e^{t\Delta}f\|_{\BB_{p,q}^\beta}\leq C t^{\frac{\alpha-\beta}{2}}\|f\|_{\BB_{p,q}^\alpha},
\end{equation}
for every $t\leq 1$.
\end{proposition}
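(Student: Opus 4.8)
The plan is to reduce the estimate to a single Littlewood--Paley block and then to use that the heat semigroup damps a frequency-$2^{\kk}$ localised function by a factor that is exponentially small in $2^{2\kk}t$.

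First I would note that $\e^{t\Delta}$ is a Fourier multiplier, hence commutes with the blocks: $\delta_\kk\,\e^{t\Delta}f=\e^{t\Delta}\,\delta_\kk f$ for every $\kk\geq-1$. By the definition \eqref{eq:besov_norm} of the Besov norm it therefore suffices to prove, for some constants $c,C>0$ independent of $\kk$, of $t\leq1$ and of $f$,
\begin{equation*}
\|\e^{t\Delta}\delta_\kk f\|_{L^p}\leq C\,\e^{-c\,2^{2\kk}t}\|\delta_\kk f\|_{L^p}\quad(\kk\geq0),\qquad\|\e^{t\Delta}\delta_{-1}f\|_{L^p}\leq C\|\delta_{-1}f\|_{L^p}.
\end{equation*}
The second bound is immediate from Young's inequality, since $\e^{t\Delta}$ acts by convolution on $\TT^2$ with the periodic heat kernel, which is nonnegative with $L^1$-norm equal to $1$. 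For the first, I would fix a smooth radial $\varphi$ equal to $1$ on a neighbourhood of $\A$ and supported away from the origin, put $\psi_\kk=\varphi(\cdot/2^\kk)$ (so $\psi_\kk\equiv1$ on $\supp\chi_\kk\subset\A_{2^\kk}$), and pick $c>0$ small enough that $4\pi^2|\xi|^2-c\,2^{2\kk}\geq0$ on $\supp\psi_\kk$. Then $\e^{t\Delta}\delta_\kk f=\e^{-c\,2^{2\kk}t}\big(K_{t,\kk}*\delta_\kk f\big)$, where $K_{t,\kk}$ is the function on $\TT^2$ with Fourier coefficients $\psi_\kk(m)\,\e^{-(4\pi^2 t|m|^2-c\,2^{2\kk}t)}$, and Young's inequality reduces the first bound to the claim $\sup_{\kk\geq0}\sup_{0<t\leq1}\|K_{t,\kk}\|_{L^1(\TT^2)}<\infty$.

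This uniform kernel bound is the one genuinely technical point; it is exactly of the type proved in \cite[Lemma 2.4]{BCD11} and used for \cite[Proposition 3.11]{MWe15}, so it may simply be invoked. For completeness I would argue by scaling: Poisson summation bounds $\|K_{t,\kk}\|_{L^1(\TT^2)}$ by the $L^1(\RR^2)$-norm of the inverse Fourier transform of $\zeta\mapsto\varphi(\zeta)\,\e^{-s(4\pi^2|\zeta|^2-c)}$ with $s=4^\kk t\geq0$, and the rescaling $\xi=2^\kk\zeta$ shows this norm is independent of the scale $2^\kk$. On the compact set $\supp\varphi$ the exponent $4\pi^2|\zeta|^2-c$ is bounded below by a positive constant, so the symbol together with all of its $\zeta$-derivatives is bounded uniformly in $s\geq0$ (differentiation produces polynomial factors in $s$ that are absorbed by the exponential); repeated integration by parts then yields integrable decay of the inverse Fourier transform, uniformly in $s$, which gives the claim.

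Finally I would reassemble the Besov norm. For $q<\infty$,
\begin{equation*}
\|\e^{t\Delta}f\|_{\BB_{p,q}^\bt}^{q}\leq C^{q}\Big(\big(2^{-\bt}\|\delta_{-1}f\|_{L^p}\big)^{q}+\sum_{\kk\geq0}\big(2^{\bt\kk}\,\e^{-c\,2^{2\kk}t}\|\delta_\kk f\|_{L^p}\big)^{q}\Big),
\end{equation*}
and for $\kk\geq0$ I would write $2^{\bt\kk}\e^{-c2^{2\kk}t}=2^{\al\kk}\big(2^{2\kk}t\big)^{\frac{\bt-\al}{2}}\e^{-c2^{2\kk}t}\,t^{\frac{\al-\bt}{2}}\leq\big(\sup_{u\geq0}u^{\frac{\bt-\al}{2}}\e^{-cu}\big)\,t^{\frac{\al-\bt}{2}}\,2^{\al\kk}$, the supremum being finite precisely because $\bt\geq\al$; for the $\kk=-1$ term one uses $2^{-\bt}=2^{\al-\bt}2^{-\al}\leq t^{\frac{\al-\bt}{2}}2^{-\al}$, valid since $\bt\geq\al$ and $0<t\leq1$. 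Summing over $\kk$ and taking the $q$-th root gives $\|\e^{t\Delta}f\|_{\BB_{p,q}^\bt}\leq C\,t^{\frac{\al-\bt}{2}}\|f\|_{\BB_{p,q}^\al}$, and the case $q=\infty$ is identical with suprema replacing sums. The main obstacle is the scale-uniform and time-uniform bound on $\|K_{t,\kk}\|_{L^1}$; everything else is bookkeeping.
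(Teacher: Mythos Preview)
The paper does not actually prove this proposition; it simply quotes it from \cite[Proposition 3.11]{MWe15}. Your argument is the standard one underlying that reference (essentially \cite[Lemma 2.4]{BCD11} adapted to the torus via Poisson summation) and is correct as written.
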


%
%

For $f,g\in C^\infty(\TT^d)$ we define the paraproduct $f\prec g$ and the resonant term 
$f \circ g$ by
\begin{align}
 f\prec g  & := \sum_{\iota<\kk -1} \delta_\iota f\delta_\kk g, \label{eq:paraproduct}\\
 f \circ g & := \sum_{|\iota - \kk|\leq 1} \delta_\iota f\delta_\kk g\label{eq:resonant_term}.
\end{align}
We also let $f\succ g:= g\prec f$. Notice that formally
$$
fg = f\prec g+ f\circ g+ f\succ g.
$$
We then have the following estimates due to Bony.

\begin{proposition}\label{prop:Bony_Est}{\normalfont(\cite[Theorems 2.82 and 2.85]{BCD11})} Let $\al,\bt\in \RR$ and $g\in \CC^\bt$.
 \begin{enumerate}[i.]
  \item If $f\in L^\infty$, 
  $\|f\prec g\|_{\CC^\bt} \leq C \|f\|_{L^\infty} \|g\|_{\CC^{\beta}}$.
  \item If $\al<0$ and $f\in \CC^\al$, $\|f\prec g\|_{\CC^{\al+\bt}}\leq C \|f\|_{\CC^\al} \|g\|_{\CC^\bt}$.
  \item If $\al+\bt>0$ and $f\in \CC^\al$, 
  $\|f\circ g\|_{\CC^{\al+\bt}} \leq C \|f\|_{\CC^\al} \|g\|_{\CC^\bt}$.
 \end{enumerate}
\end{proposition}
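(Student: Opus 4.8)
The statement is the classical Bony estimates, so the plan is to reproduce the standard Littlewood--Paley argument adapted to the periodic Besov scale $\CC^\al=\BB^\al_{\infty,\infty}$. Since $f\prec g$ and $f\circ g$ are defined for $f,g\in C^\infty(\TT^d)$ and $C^\infty(\TT^d)$ is dense in each $\CC^\al$, I would first prove the three inequalities for smooth $f,g$ and then pass to the limit.

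The whole proof rests on two support observations. Because $\widehat{\delta_\iota f\,\delta_\kk g}$ is supported in $\supp\hat{\delta_\iota f}+\supp\hat{\delta_\kk g}$, for $\iota\le\kk-2$ the product $\delta_\iota f\,\delta_\kk g$ has Fourier support inside a dilated annulus of size $\sim 2^\kk$, whereas for $|\iota-\kk|\le1$ it has Fourier support only inside a ball $B(0,C2^\kk)$. Writing $S_{\kk-1}f:=\sum_{\iota\le\kk-2}\delta_\iota f$ so that $f\prec g=\sum_\kk S_{\kk-1}f\,\delta_\kk g$, the first observation forces $\delta_j(f\prec g)=\sum_{|j-\kk|\le N_0}\delta_j(S_{\kk-1}f\,\delta_\kk g)$ with $N_0$ fixed; writing $f\circ g=\sum_\kk R_\kk$ with $R_\kk:=\sum_{|\iota-\kk|\le1}\delta_\iota f\,\delta_\kk g$, the second gives $\delta_j(f\circ g)=\sum_{\kk\ge j-N_1}\delta_j R_\kk$. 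I also use the standard fact that $\delta_j$ and $S_{\kk-1}$ are convolutions against rescaled Schwartz kernels of uniformly bounded $L^1$-norm, hence uniformly bounded on $L^\infty$.

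From here each part is a short computation. For (i), $\|S_{\kk-1}f\|_{L^\infty}\lesssim\|f\|_{L^\infty}$, so $\|\delta_j(f\prec g)\|_{L^\infty}\lesssim\sum_{|j-\kk|\le N_0}\|f\|_{L^\infty}\|\delta_\kk g\|_{L^\infty}$; multiply by $2^{\bt j}$, use $2^{\bt j}\sim2^{\bt\kk}$ on that range, and take $\sup_j$. For (ii), the hypothesis $\al<0$ makes the geometric sum converge, $\|S_{\kk-1}f\|_{L^\infty}\le\sum_{\iota\le\kk-2}2^{-\al\iota}\|f\|_{\CC^\al}\lesssim2^{-\al\kk}\|f\|_{\CC^\al}$, whence $\|\delta_j(f\prec g)\|_{L^\infty}\lesssim\sum_{|j-\kk|\le N_0}2^{-(\al+\bt)\kk}\|f\|_{\CC^\al}\|g\|_{\CC^\bt}$ and multiplying by $2^{(\al+\bt)j}$ and taking $\sup_j$ closes it. For (iii), $\|R_\kk\|_{L^\infty}\lesssim\sum_{|\iota-\kk|\le1}2^{-\al\iota}2^{-\bt\kk}\|f\|_{\CC^\al}\|g\|_{\CC^\bt}\lesssim2^{-(\al+\bt)\kk}\|f\|_{\CC^\al}\|g\|_{\CC^\bt}$, so $\|\delta_j(f\circ g)\|_{L^\infty}\lesssim\|f\|_{\CC^\al}\|g\|_{\CC^\bt}\sum_{\kk\ge j-N_1}2^{-(\al+\bt)\kk}$; the hypothesis $\al+\bt>0$ makes this geometric series summable with total $\lesssim2^{-(\al+\bt)j}$, and multiplying by $2^{(\al+\bt)j}$ and taking $\sup_j$ finishes. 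The lowest blocks $\iota=-1$ and $\kk=-1$ are absorbed into the same sums.

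I expect the resonant estimate (iii) to be the only genuinely delicate point: unlike the paraproduct, whose dyadic pieces live on annuli and therefore sum almost diagonally, the pieces $R_\kk$ live on balls, so $\delta_j(f\circ g)$ picks up all frequencies $\kk\gtrsim j$, and summability of $\sum_{\kk\ge j}2^{-(\al+\bt)\kk}$ --- hence the whole estimate --- is exactly what the condition $\al+\bt>0$ provides; without it the resonant product need not be a well-defined distribution at all.
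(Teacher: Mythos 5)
Your argument is correct: it is the standard Littlewood--Paley proof of the Bony estimates, resting on the two Fourier-support observations (annulus for the paraproduct blocks, ball for the resonant blocks) and the resulting near-diagonal versus one-sided summations, with the hypotheses $\al<0$ and $\al+\bt>0$ entering exactly where you say they do. The paper itself gives no proof of this proposition but simply cites \cite[Theorems 2.82 and 2.85]{BCD11}, and your write-up reproduces precisely that standard argument, so there is nothing to compare beyond noting the agreement.
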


The above proposition allows us to define the product of a distribution and a function in a canonical way
under certain regularity assumptions (see \cite[Corollary 3.21]{MWe15}).

%
%

\begin{proposition}\label{prop:Mult_Ineq_II}
Let $f\in \CC^\alpha$ and $g\in \CC^\beta$, where $\alpha< 0 <\beta$, $\alpha+\beta>0$. Then $fg$ 
can be uniquely defined as an element in $\CC^\alpha$ such that
\begin{equation*} 
  \|fg\|_{\CC^\alpha} \leq C \|f\|_{\CC^\alpha} \|g\|_{\CC^\beta}.
\end{equation*}
\end{proposition}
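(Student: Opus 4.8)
The plan is to reduce the statement to Bony's estimates (Proposition \ref{prop:Bony_Est}) via the paraproduct decomposition. First I would fix $f,g\in C^\infty(\TT^d)$ and write
\begin{equation*}
fg = f\prec g + f\circ g + f\succ g
\end{equation*}
with $\prec$ and $\circ$ as in \eqref{eq:paraproduct} and \eqref{eq:resonant_term} and $f\succ g:=g\prec f$, and then estimate the three pieces in $\CC^\al$ one at a time; summing the bounds and passing to the limit over smooth functions will give the result.

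For the low--high paraproduct, since $\al<0$, part ii of Proposition \ref{prop:Bony_Est} gives $\|f\prec g\|_{\CC^{\al+\bt}}\leq C\|f\|_{\CC^\al}\|g\|_{\CC^\bt}$, and because $\bt>0$ we have $\al+\bt\geq\al$, so \eqref{eq:alpha_ineq} brings this down to $\CC^\al$. For the resonant term, the hypothesis $\al+\bt>0$ is exactly what part iii of Proposition \ref{prop:Bony_Est} requires, yielding $\|f\circ g\|_{\CC^{\al+\bt}}\leq C\|f\|_{\CC^\al}\|g\|_{\CC^\bt}$, and again \eqref{eq:alpha_ineq} gives the $\CC^\al$ bound. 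For the high--low paraproduct $f\succ g=g\prec f$ I would first note that $\bt>0$ forces $g\in L^\infty$ with $\|g\|_{L^\infty}\leq C\|g\|_{\CC^\bt}$: indeed $\sum_{\kk\geq-1}\|\delta_\kk g\|_{L^\infty}\leq\big(\sum_{\kk\geq-1}2^{-\bt\kk}\big)\|g\|_{\CC^\bt}$ shows $\CC^\bt\hookrightarrow\BB^0_{\infty,1}$, and $\BB^0_{\infty,1}\hookrightarrow L^\infty$ by \eqref{eq:L^p_emb}; then part i of Proposition \ref{prop:Bony_Est} gives $\|g\prec f\|_{\CC^\al}\leq C\|g\|_{L^\infty}\|f\|_{\CC^\al}$. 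Adding the three estimates produces $\|fg\|_{\CC^\al}\leq C\|f\|_{\CC^\al}\|g\|_{\CC^\bt}$ for all smooth $f,g$.

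To finish I would invoke density: in this paper $\CC^\al$ and $\CC^\bt$ are by definition the completions of $C^\infty(\TT^d)$ under their respective norms, so the bilinear pointwise product on smooth functions, being bounded by the estimate just proved, extends uniquely to a continuous bilinear map $\CC^\al\times\CC^\bt\to\CC^\al$ obeying the same bound; this extension is the canonical product $fg$. (Equivalently one may extend $\prec$, $\circ$ and $\succ$ separately, which is the form in which Proposition \ref{prop:Bony_Est} is applied elsewhere in the paper.) I do not expect a genuine obstacle here -- the whole statement is a corollary of Bony's estimates, exactly as in \cite[Corollary 3.21]{MWe15} -- the only point requiring a line of argument being the elementary embedding $\CC^\bt\hookrightarrow L^\infty$ for $\bt>0$ used above.
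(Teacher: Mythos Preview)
Your proposal is correct and is precisely the standard argument: the paper does not give its own proof of this proposition but simply cites \cite[Corollary 3.21]{MWe15}, whose proof is exactly the paraproduct decomposition combined with the three Bony estimates you invoke. Your handling of each piece---part ii for $f\prec g$, part iii for $f\circ g$, part i together with $\CC^\bt\hookrightarrow L^\infty$ for $f\succ g$---and the final density extension are all in order.
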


Regarding the inner product on $L^2(\TT^d)$ we have the following extension result (see \cite[Proposition 3.23]{MWe15}).

\begin{proposition}\label{prop:Inner_Prod} Let $p,q\geq 1$ and 
$p',q'$ their conjugate exponents. Then, 
for every $0\leq \alpha <1$, the $L^2(\TT^d)$ inner product can be uniquely extended to a continuous bilinear form on 
$\BB_{p,q}^\alpha\times \BB_{p',q'}^{-\alpha}$ such that
\begin{equation*}
|\langle f, g\rangle|\leq C \|f\|_{\BB_{p,q}^\alpha} \|g\|_{\BB_{p',q'}^{-\alpha}},  
\end{equation*}
for all $(f,g)\in\BB_{p,q}^\alpha\times \BB_{p',q'}^{-\alpha}$.
\end{proposition}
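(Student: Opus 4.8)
The plan is to reduce the claim to the Littlewood--Paley characterization of the two Besov norms and then to apply H\"older's inequality twice. First I would observe that, since by definition $\BB_{p,q}^\al$ and $\BB_{p',q'}^{-\al}$ are the completions of $C^\infty(\TT^d)$ under the respective norms, it suffices to prove the bilinear bound $|\lng f,g\rng|\leq C\|f\|_{\BB_{p,q}^\al}\|g\|_{\BB_{p',q'}^{-\al}}$ for $f,g\in C^\infty(\TT^d)$; the existence of a unique continuous bilinear extension to the whole product space is then the standard density argument, and uniqueness of the extension is automatic because $C^\infty(\TT^d)$ is dense in both factors.

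Next, for smooth $f,g$ I would expand $f=\sum_{\kk\geq-1}\delta_\kk f$ and $g=\sum_{\kk'\geq-1}\delta_{\kk'}g$ and use Parseval's identity to write $\lng f,g\rng=\sum_{\kk,\kk'\geq-1}\lng\delta_\kk f,\delta_{\kk'}g\rng$. The key step is a frequency-support argument: the Fourier coefficients of $\delta_\kk f$ are supported in $\A_{2^\kk}$ and those of $\delta_{\kk'}g$ in $\A_{2^{\kk'}}$, so $\lng\delta_\kk f,\delta_{\kk'}g\rng$ vanishes whenever $\A_{2^\kk}\cap\A_{2^{\kk'}}=\emptyset$, and by the construction of the dyadic partition of unity this intersection is empty unless $|\kk-\kk'|\leq 1$. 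Hence the double sum collapses to $\lng f,g\rng=\sum_{\kk\geq-1}\sum_{|\kk'-\kk|\leq1}\lng\delta_\kk f,\delta_{\kk'}g\rng$, with at most three indices $\kk'$ contributing for each $\kk$.

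Then I would bound each surviving term by H\"older in the space variable, $|\lng\delta_\kk f,\delta_{\kk'}g\rng|\leq\|\delta_\kk f\|_{L^p}\|\delta_{\kk'}g\|_{L^{p'}}$, insert the weights $2^{\al\kk}$ and $2^{-\al\kk'}$ (legitimate since on the relevant index set $|\kk-\kk'|\leq1$, so $2^{\al(\kk-\kk')}$ is bounded above and below by constants depending only on $\al$), sum over the at most three admissible $\kk'$, and finally apply the discrete H\"older inequality with the conjugate exponents $q,q'$ to the sequences $\big(2^{\al\kk}\|\delta_\kk f\|_{L^p}\big)_{\kk\geq-1}$ and $\big(2^{-\al\kk'}\|\delta_{\kk'}g\|_{L^{p'}}\big)_{\kk'\geq-1}$. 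This yields exactly $C\|f\|_{\BB_{p,q}^\al}\|g\|_{\BB_{p',q'}^{-\al}}$, which is the claimed estimate.

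There is no serious obstacle here; the only point that requires care is the frequency localization, which is precisely what guarantees that the a priori double sum over $(\kk,\kk')$ reduces to something controlled by a single product of an $\ell^q$ norm and an $\ell^{q'}$ norm rather than by a divergent double sum. I note in passing that the argument does not actually use $\al<1$: the hypothesis $0\leq\al<1$ is merely the range in which the statement is needed in the rest of the paper.
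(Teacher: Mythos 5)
Your proof is correct. The paper does not prove this proposition itself but simply cites \cite[Proposition 3.23]{MWe15}, and the argument you give -- Littlewood--Paley decomposition of both factors, the observation that $\lng\delta_\kk f,\delta_{\kk'}g\rng=0$ unless $|\kk-\kk'|\leq 1$ by disjointness of the Fourier supports, H\"older in space followed by discrete H\"older in $\kk$ after transferring the weight $2^{\al(\kk-\kk')}$ -- is precisely the standard duality proof carried out in that reference; your remark that $\al<1$ is not actually needed is also accurate.
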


Finally we have the following gradient estimate for functions of positive regularity (see \cite[Proposition 3.25]{MWe15}).   

\begin{proposition} Let $f\in \BB_{1,1}^\alpha$, $\alpha\in(0,1)$. Then
\begin{equation}\label{eq:gradient_estimate} 
\|f\|_{\BB_{1,1}^\alpha}\leq C\left(\|f\|_{L^1}^{1-\alpha}\|\nabla f\|_{L^1}^\alpha+\|f\|_{L^1}\right).
\end{equation}
\end{proposition}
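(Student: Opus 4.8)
The plan is to prove \eqref{eq:gradient_estimate} by the classical dyadic interpolation argument: estimate each Littlewood--Paley block $\delta_\kappa f$ in two complementary ways, split the series defining $\|f\|_{\BB_{1,1}^\alpha}$ at a threshold frequency $2^N$, and then optimize over the integer $N\geq 0$.

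First I would record the two endpoint bounds, valid for every $\kappa\geq 0$ (the block $\kappa=-1$ being treated identically with $\tilde\chi$ in place of $\chi$). Writing $\delta_\kappa f=\eta_\kappa*f$ with $\eta_\kappa(z)=\sum_{m\in\ZZ^d}\chi(m/2^\kappa)e_m(z)$, Poisson summation identifies $\eta_\kappa$ with a periodization of $2^{\kappa d}\check\chi(2^\kappa\cdot)$, where $\check\chi$ is Schwartz; hence $\|\eta_\kappa\|_{L^1(\TT^d)}\leq\|\check\chi\|_{L^1(\RR^d)}=:C_0$ uniformly in $\kappa$, and Young's inequality gives $\|\delta_\kappa f\|_{L^1}\leq C_0\|f\|_{L^1}$. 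On the other hand, since $\chi_\kappa$ is supported on the annulus $\A_{2^\kappa}$ which stays away from the origin, one may write $\widehat{\delta_\kappa f}(m)=\sum_{j=1}^d\frac{\chi(m/2^\kappa)\,m_j}{2\pi\ii|m|^2}\,\widehat{\partial_j f}(m)=2^{-\kappa}\sum_{j=1}^d g^j(m/2^\kappa)\,\widehat{\partial_j f}(m)$, where $g^j(\zeta)=\chi(\zeta)\zeta_j/(2\pi\ii|\zeta|^2)$ is smooth and compactly supported; the associated periodic convolution kernels again have $L^1$ norms bounded uniformly in $\kappa$, so $\|\delta_\kappa f\|_{L^1}\leq C_1 2^{-\kappa}\|\nabla f\|_{L^1}$. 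Both inequalities are standard Bernstein-type estimates (cf.\ \cite[Section 2.7]{BCD11}).

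Next, fix an integer $N\geq 0$ and split $\|f\|_{\BB_{1,1}^\alpha}=\sum_{\kappa\geq-1}2^{\alpha\kappa}\|\delta_\kappa f\|_{L^1}$ at $\kappa=N$. For $-1\leq\kappa<N$ the first bound yields $\sum_{\kappa<N}2^{\alpha\kappa}\|\delta_\kappa f\|_{L^1}\leq C_0\|f\|_{L^1}\sum_{\kappa<N}2^{\alpha\kappa}\lesssim 2^{\alpha N}\|f\|_{L^1}$ by summing the geometric series (here $\alpha>0$ is used). For $\kappa\geq N$ the second bound yields $\sum_{\kappa\geq N}2^{\alpha\kappa}\|\delta_\kappa f\|_{L^1}\leq C_1\|\nabla f\|_{L^1}\sum_{\kappa\geq N}2^{(\alpha-1)\kappa}\lesssim 2^{-(1-\alpha)N}\|\nabla f\|_{L^1}$ (here $\alpha<1$ is used). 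Therefore $\|f\|_{\BB_{1,1}^\alpha}\leq C\bigl(2^{\alpha N}\|f\|_{L^1}+2^{-(1-\alpha)N}\|\nabla f\|_{L^1}\bigr)$ for every integer $N\geq 0$.

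Finally I would optimize over $N$. Assuming $\|f\|_{L^1}\neq0$ (otherwise $f=0$ and the claim is trivial), set $N=\max\bigl(0,\lceil\log_2(\|\nabla f\|_{L^1}/\|f\|_{L^1})\rceil\bigr)$. If $\|\nabla f\|_{L^1}\geq\|f\|_{L^1}$ this choice balances the two terms and gives $\|f\|_{\BB_{1,1}^\alpha}\lesssim\|f\|_{L^1}^{1-\alpha}\|\nabla f\|_{L^1}^{\alpha}$; if $\|\nabla f\|_{L^1}<\|f\|_{L^1}$ then $N=0$ and the bound reads $\|f\|_{\BB_{1,1}^\alpha}\lesssim\|f\|_{L^1}+\|\nabla f\|_{L^1}\leq 2\|f\|_{L^1}$. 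In either case the right-hand side is dominated by $C\bigl(\|f\|_{L^1}^{1-\alpha}\|\nabla f\|_{L^1}^{\alpha}+\|f\|_{L^1}\bigr)$, which is exactly \eqref{eq:gradient_estimate}. There is no genuine obstacle in this argument; the only points needing a little care are the uniform-in-$\kappa$ $L^1$ bounds on the rescaled convolution kernels, and the fact that the optimization parameter $N$ is constrained to be a nonnegative integer — which is precisely what forces the additive $\|f\|_{L^1}$ term into the statement.
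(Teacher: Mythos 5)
Your argument is correct. The paper does not prove this proposition itself but simply quotes it from \cite[Proposition 3.25]{MWe15}, and the proof there is essentially the one you give: a Bernstein-type bound $\|\delta_\kappa f\|_{L^1}\lesssim 2^{-\kappa}\|\nabla f\|_{L^1}$ for the high blocks (obtained in \cite{MWe15} via the vanishing mean of $\eta_\kappa$ and a first-difference estimate rather than your division by $|m|^2$ on the Fourier side, a cosmetic difference), the trivial bound $\|\delta_\kappa f\|_{L^1}\lesssim\|f\|_{L^1}$ for the low blocks, a split of the $\ell^1$ sum at a threshold frequency, and optimization over that threshold — with the additive $\|f\|_{L^1}$ term arising exactly as you explain, from the constraint $N\geq 0$.
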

\stepcounter{section}
\section*{Appendix \thesection}\label{Appendix:The_Space_Time_White_Noise}
\setcounter{theorem}{0}
\setcounter{equation}{0}

\begin{definition} Let $\left\{\xi(\phi)\right\}_{\phi\in L^2(\RR\times\TT^d)}$ be a family of centered Gaussian random 
variables on a probability space $(\Omega,\mathcal{F},\PP)$ such that
\begin{equation*}
 \EE(\xi(\phi)\xi(\psi))=\langle \phi, \psi\rangle_{L^2(\RR\times\TT^d)},
\end{equation*}
for all $\phi,\psi\in L^2(\RR\times\TT^d)$. Then $\xi$ is called a space-time white noise on $\RR\times\TT^d$. 
 
\end{definition}

The existence of such a family of random variables on some probability space $(\Omega, \mathcal{F}, \PP)$ 
is assured by Kolmogorov's extension theorem and by definition we can check that it is linear, i.e. for all $\lambda,\nu\in \RR$, 
$\phi,\psi\in L^2(\RR\times\TT^d)$ we have that $\xi(\lambda\phi+\nu\psi)=\lambda\xi(\phi)+\nu\xi(\psi)$ $\PP$-almost surely 
(see \cite[Chapter 1]{Nu06}). 
We interpret $\xi(\phi)$ as  a stochastic integral and write
\begin{equation*}
 \int_{\RR\times\TT^d} \phi(t,x) \xi(\dd t, \dd x):= \xi(\phi),
\end{equation*}
for all $\phi\in L^2(\RR\times\TT^d)$. We use this notation, but stress that $\xi$ is almost surely not a measure 
and that the stochastic integral is only defined on a set of measure one which my depend on the specific 
choice of $\phi$.

We also define multiple stochastic integrals (see \cite[Chapter 1]{Nu06}) on $\RR\times\TT^d$ for all
symmetric functions $f$ in $L^2\left((\RR\times\TT^d)^n\right)$, for some $n\in \NN$, i.e. functions such that 
$f(z_1, z_2, \ldots, z_n)=f(z_{i_1}, z_{i_2}, \ldots, z_{i_n})$ for any permutation $(i_1, i_2, \ldots, i_n)$ of 
$(1, 2, \ldots, n)$. Here $z_j$ is an element of $\RR\times \TT^d$, for all $j\in\{1,2,\ldots, n\}$. For such a symmetric
function $f$ we denote its $n$-th iterated stochastic integral by
\begin{equation*}
 I_n(f):=\int_{\{\RR\times\TT^d)^n\}} f(z_1, z_2, \ldots, z_n) \, \xi(\dd z_1\otimes \dd z_2 \otimes \ldots \otimes \dd z_n).
\end{equation*}
The following theorem can be found in \cite[Theorem 1.1.2]{Nu06}.
\begin{theorem} Let $\mathcal{F}_\xi$ be the $\sigma$-algebra generated by the family of random variables
$\left\{\xi(\phi)\right\}_{\phi\in L^2(\RR\times\TT^d)}$. Then every element $X\in L^2(\Omega,\mathcal{F}_\xi,\PP)$
can be written in the following form
\begin{equation*}
 X=\EE(X)+\sum_{n=1}^\infty I_n(f_n),
\end{equation*}
where $f_n\in L^2\left((\RR\times\TT^d)^n\right)$ are symmetric functions, uniquely determined by $X$. 
\end{theorem}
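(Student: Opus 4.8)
The plan is to carry out the classical proof of the Wiener--It\^o chaos decomposition; we only sketch it since it is contained in \cite[Section 1.1]{Nu06}. For $n\geq 1$ let $\mathscr{K}_n\subset L^2(\Omega,\mathcal{F}_\xi,\PP)$ be the closed linear span of the multiple integrals $I_n(f)$ with $f\in L^2((\RR\times\TT^d)^n)$ symmetric, and set $\mathscr{K}_0=\RR$. Two ingredients are needed. The first is orthogonality together with an isometry: a direct computation (for instance via the product formula for multiple stochastic integrals, or inductively from the recursive construction of the $I_n$) gives
\begin{equation*}
 \EE\big[I_n(f)\,I_m(g)\big]=\mathbf{1}_{\{n=m\}}\,n!\,\langle f,g\rangle_{L^2((\RR\times\TT^d)^n)}
\end{equation*}
for symmetric kernels $f,g$. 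In particular the spaces $\mathscr{K}_n$ are pairwise orthogonal, and on symmetric functions $I_n$ is injective; this already yields the uniqueness claim, since then $I_n(f_n)$ must be the orthogonal projection of $X-\EE(X)$ onto $\mathscr{K}_n$ and $f_n$ is recovered from it through the isometry. The second ingredient is completeness, i.e. $L^2(\Omega,\mathcal{F}_\xi,\PP)=\bigoplus_{n\geq 0}\mathscr{K}_n$; granting this and projecting $X$ onto the summands (with the $\mathscr{K}_0$-component equal to $\EE(X)$) produces the stated representation.

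To prove completeness I would use the stochastic exponentials
\begin{equation*}
 \mathcal{E}(\phi):=\exp\Big(\xi(\phi)-\tfrac12\|\phi\|_{L^2(\RR\times\TT^d)}^2\Big),\qquad \phi\in L^2(\RR\times\TT^d).
\end{equation*}
Expanding the exponential and using the generating-function identity for Hermite polynomials, one checks that $\mathcal{E}(\phi)=\sum_{n\geq 0}\frac{1}{n!}I_n(\phi^{\otimes n})$ with the series converging in $L^2$, so each $\mathcal{E}(\phi)$ lies in $\overline{\bigoplus_{n\geq 0}\mathscr{K}_n}$. Hence it is enough to show that $\mathrm{span}\{\mathcal{E}(\phi):\phi\in L^2(\RR\times\TT^d)\}$ is dense in $L^2(\Omega,\mathcal{F}_\xi,\PP)$, equivalently that $\mathrm{span}\{\exp(\xi(\phi)):\phi\in L^2\}$ is dense.

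The density step is the main obstacle. Suppose $Y\in L^2(\Omega,\mathcal{F}_\xi,\PP)$ satisfies $\EE[Y\exp(\xi(\phi))]=0$ for all $\phi\in L^2$. Fixing $\phi_1,\dots,\phi_k$ and substituting $\phi=\sum_{j}t_j\phi_j$, the function $(t_1,\dots,t_k)\mapsto\EE[Y\exp(\sum_j t_j\xi(\phi_j))]$ vanishes on $\RR^k$; as it extends to an entire function of $(t_1,\dots,t_k)$ (by dominated convergence and the exponential moments of the Gaussian vector $(\xi(\phi_1),\dots,\xi(\phi_k))$), it vanishes on $\mathbb{C}^k$, and restricting to purely imaginary arguments gives $\EE[Y\,\e^{\ii\sum_j t_j\xi(\phi_j)}]=0$ for all real $t_j$. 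By injectivity of the Fourier transform on finite-dimensional distributions this forces $\EE[Y\mid\sigma(\xi(\phi_1),\dots,\xi(\phi_k))]=0$ almost surely. Letting the finite family exhaust a countable total subset of the separable space $L^2(\RR\times\TT^d)$ and applying the martingale convergence theorem over the resulting increasing sequence of $\sigma$-algebras, whose union generates $\mathcal{F}_\xi$, we obtain $Y=\EE[Y\mid\mathcal{F}_\xi]=0$. Therefore the stochastic exponentials span a dense subspace, $\bigoplus_{n\geq 0}\mathscr{K}_n$ is dense, and by the orthogonality from the first step $L^2(\Omega,\mathcal{F}_\xi,\PP)=\bigoplus_{n\geq 0}\mathscr{K}_n$. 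Decomposing $X$ accordingly and invoking the uniqueness statement above completes the proof.
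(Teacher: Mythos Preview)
The paper does not give its own proof of this theorem; it merely states that the result can be found in \cite[Theorem 1.1.2]{Nu06}. Your proposal reproduces the standard argument from that reference (orthogonality and isometry of the spaces $\mathscr{K}_n$, density of stochastic exponentials via analytic continuation of the Laplace transform, and a martingale convergence step to pass from finite-dimensional $\sigma$-algebras to $\mathcal{F}_\xi$), and it is correct.
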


The above theorem implies that $L^2(\Omega,\mathcal{F}_\xi,\PP)$ can be decomposed into a direct sum of the form 
$\bigoplus_{n\geq 0} S_n$, where $S_0:=\RR$ and 
\begin{equation}\label{eq:wiener_chaos}
S_n:=\{I_n(f):f\in L^2\left((\RR\times\TT^d)^n\right) \text{ symmetric }\},
\end{equation}
for all $n\geq 1$. The space $S_n$ is called the $n$-th homogeneous Wiener chaos and the element $I_n(f_n)$
the projection of $X$ onto $S_n$.

Given a symmetric function $f\in L^2\left((\RR\times\TT^d)^n\right)$, we have the isometry
\begin{equation}\label{eq:Ito_Isometry}
 \EE(I_n)^2= n! \|f\|_{L^2\left((\RR\times\TT^d)^n\right)}^2.
\end{equation}
Furthermore, by Nelson's estimate (see \cite[Section 1.4]{Nu06}) for every $n\geq 1$ and $Y\in S_n$,
\begin{equation}\label{eq:Nelson's_Estimate}
 \EE|Y|^p\leq (p-1)^{\frac{n}{2}p} (\EE|Y|^2)^{\frac{p}{2}},
\end{equation}
for every $p\geq 2$.
\stepcounter{section}
\section*{Appendix \thesection}\label{Appendix:Kernel_Conv}
\setcounter{theorem}{0}
\setcounter{equation}{0}

\begin{definition}\label{def:kernel_conv} For symmetric kernels $K_1, K_2:\ZZ^2\to (0,\infty)$ we denote by $K_1\star K_2$ the convolution given by
\begin{equation*}
 K_1\star K_2(m): = \sum_{l\in\ZZ^2} K_1(m-l) K_2(l)
\end{equation*}
and for $N\in \NN$ we let 
\begin{equation*}
 K_1\star_{\leq N} K_2(m) := \sum_{|l|\leq N} K_1(m-l) K_2(l).
\end{equation*}
as well as 
$$
K_1\star_{> N}K_2 := \left(K_1\star K_2\right) - \left(K_1\star_{\leq N} K_2\right).
$$
\end{definition}

We are interested in symmetric kernels $K$ for which there exists $\al\in(0,1]$ such that
\begin{equation*}
 K(m) \leq C \frac{1}{(1+|m|^2)^\al}.
\end{equation*}
In the spirit of \cite[Lemma 10.14]{Ha14} we have the following lemma.

\begin{lemma}\label{lem:kernel_conv} Let $\al,\bt\in(0,1]$ such that $\al+\bt-1>0$ and let $K_1, K_2:\ZZ^2\to (0,\infty)$ be symmetric kernels
such that
\begin{align*}
K_1(m) \leq C \frac{1}{(1+|m|^2)^\al}, \quad K_2(m) \leq C \frac{1}{(1+|m|^2)^\bt}.
\end{align*}
If $\al<1$ or $\bt<1$ then
\begin{align*}
 K_1\star K_2(m) & \leq C \frac{1}{(1+|m|^2)^{\al+\bt-1}} \\ 
 K_1\star_{> N} K_2(m) & \leq C 
 \begin{cases}
  \frac{1}{(1+|m|^2)^{\al+\bt-1}}, & \text{ if } |m|\geq N \\
  \frac{1}{(1+|N|^2)^{\al+\bt-1}}, & \text{ if } |m| < N
 \end{cases}
\end{align*}
and if $\al =\bt = 1$ 
\begin{align*}
 K_1\star K_2(m) & \leq C \frac{\log|m|\vee 1}{1+|m|^2} \\
 K_1\star_{> N} K_2(m) & \leq C 
 \begin{cases}
  \frac{\log|m|\vee 1}{1+|m|^2}, & \text{ if } |m|\geq N \\
  \frac{\log|N|\vee 1}{1+|N|^2}, & \text{ if } |m| < N
 \end{cases}.
\end{align*}
\end{lemma}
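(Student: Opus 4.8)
The plan is to split the convolution sum $K_1 \star K_2(m) = \sum_{l} K_1(m-l) K_2(l)$ according to which of the two arguments is small, and then estimate each piece by comparing with an integral. First I would set $r = |m|$ and partition $\ZZ^2$ into the three regions $R_1 = \{|l| \le r/2\}$, $R_2 = \{|m-l| \le r/2\}$, and $R_3 = \{|l| > r/2 \text{ and } |m-l| > r/2\}$. On $R_1$ we have $|m - l| \gtrsim r$, so $K_1(m-l) \lesssim (1+r^2)^{-\al}$ and the remaining sum $\sum_{|l|\le r/2} K_2(l) \lesssim \sum_{|l|\le r/2} (1+|l|^2)^{-\bt}$, which by comparison with $\int_{|x|\le r/2}(1+|x|^2)^{-\bt}\dd x$ in $d=2$ is $\lesssim (1+r^2)^{1-\bt}$ when $\bt<1$, and $\lesssim \log(r) \vee 1$ when $\bt = 1$. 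By symmetry $R_2$ is handled the same way with the roles of $\al$ and $\bt$ (and $K_1, K_2$) exchanged. On $R_3$ both factors are $\lesssim$ their respective bounds at scale $r$, and the sum of $(1+|l|^2)^{-\al}(1+|m-l|^2)^{-\bt}$ over $l$ with $|l| \gtrsim r$ is controlled again by an integral comparison. Collecting the three contributions gives the stated bound $(1+|m|^2)^{-(\al+\bt-1)}$ when $\al<1$ or $\bt<1$, and $(\log|m|\vee 1)(1+|m|^2)^{-1}$ when $\al=\bt=1$; note the borderline logarithm only appears through $R_1$ or $R_2$ in the equal-exponent case.

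For the truncated convolution $K_1 \star_{>N} K_2(m) = \sum_{|l| > N} K_1(m-l)K_2(l)$, I would argue in two cases. If $|m| \ge N$, then the full convolution already dominates and we inherit the bound $(1+|m|^2)^{-(\al+\bt-1)}$ (resp. the log version) from the first part, since dropping terms only decreases a sum of positive terms. If $|m| < N$, the constraint $|l| > N$ forces $|m - l| \ge |l| - |m| > N - |m|$; but more usefully, on this tail every $l$ satisfies $|l| > N > |m|$, so $|m-l| \le |l| + |m| \lesssim |l|$, hence $K_1(m-l)K_2(l) \lesssim (1+|l|^2)^{-(\al+\bt)}$, and $\sum_{|l|>N}(1+|l|^2)^{-(\al+\bt)} \lesssim (1+N^2)^{-(\al+\bt-1)}$ by integral comparison (here $\al+\bt > 1$ guarantees convergence), which is exactly the claimed bound; the $\al=\bt=1$ case produces the $(\log N \vee 1)(1+N^2)^{-1}$ factor through a slightly more careful split distinguishing $|l| \asymp |m-l|$ from the region where one of them is much smaller.

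The main obstacle is bookkeeping the borderline case $\al = \bt = 1$ cleanly and making sure the logarithmic factors are attributed to the correct region and come out at the right scale ($\log|m|$ versus $\log N$); in the generic case $\al + \bt - 1 \in (0,1)$ the integral comparisons are completely routine. One should also be a little careful that the comparison $\sum_{l \in \ZZ^2} \lesssim \int_{\RR^2}$ is valid here because the summands are monotone in $|l|$ away from a bounded region, which is standard (cf.\ \cite[Lemma 10.14]{Ha14}). Throughout, all constants $C$ depend only on $\al$, $\bt$, and the constants in the hypotheses on $K_1, K_2$, and not on $m$ or $N$.
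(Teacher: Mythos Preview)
Your approach is essentially the paper's: a geometric splitting of the summation domain. The paper uses four regions $A_1=\{|l|\le |m|/2\}$, $A_2=\{|m-l|\le |m|/2\}$, $A_3=\{|m|/2\le|l|\le 2|m|,\ |m-l|\ge|m|/2\}$, $A_4=\{|l|>2|m|\}$, whereas your $R_3$ is the union $A_3\cup A_4$. Your treatment of $R_3$ (``controlled again by an integral comparison'') is too vague as stated: on $R_3$ you cannot simply use that both factors are bounded at scale $r$, since the region is infinite. You will in practice have to split $R_3$ into a bounded annulus (the paper's $A_3$, where both $|l|$ and $|m-l|$ are $\asymp|m|$ and there are $\lesssim |m|^2$ terms) and a tail (the paper's $A_4$, where $|l|>2|m|$ forces $|m-l|\ge|l|/2$). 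This is routine but should be said.

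There is, however, a genuine error in your argument for $K_1\star_{>N}K_2(m)$ when $|m|<N$. You write ``$|m-l|\le|l|+|m|\lesssim|l|$, hence $K_1(m-l)K_2(l)\lesssim(1+|l|^2)^{-(\al+\bt)}$''. But an \emph{upper} bound on $|m-l|$ yields a \emph{lower} bound on $K_1(m-l)\lesssim(1+|m-l|^2)^{-\al}$, not an upper bound; the implication as written is backwards. What you need is the reverse triangle inequality $|m-l|\ge|l|-|m|$. For $|l|>2N$ (so that $|l|>2|m|$) this gives $|m-l|>|l|/2$ and hence $K_1(m-l)\lesssim(1+|l|^2)^{-\al}$, after which your tail sum $\sum_{|l|>2N}(1+|l|^2)^{-(\al+\bt)}\lesssim(1+N^2)^{-(\al+\bt-1)}$ is correct. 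The remaining annulus $N<|l|\le 2N$ must be handled separately: there $K_2(l)\lesssim(1+N^2)^{-\bt}$, and since $|m-l|\le|l|+|m|\le 3N$, the sum $\sum K_1(m-l)$ over this annulus is bounded by $\sum_{|l'|\le 3N}(1+|l'|^2)^{-\al}\lesssim(1+N^2)^{1-\al}$ (or $\log N$ when $\al=1$), which combines to give the claimed bound.
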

\begin{proof} We only prove the estimates for $K_1\star K_2$. The corresponding estimates for $K_1\star_{>N} K_2$ can be proven in 
a similar way. We consider the following regions of $\ZZ^2$,
\begin{align*}
 A_1 & = \left\{l:|l|\leq \frac{|m|}{2}\right\}, \\
 A_2 & = \left\{l:|l-m|\leq \frac{|m|}{2}\right\}, \\
 A_3 & = \left\{l: \frac{|m|}{2} \leq |l| \leq 2|m|, |l-m|\geq \frac{|m|}{2}\right\}, \\
 A_4 & = \left\{l: |l|>2|m|\right\}.
\end{align*}
For every $l\in A_1$ we notice that $|m-l| \geq \frac{3|m|}{4}$, which implies that 
\begin{align*}
 \sum_{l\in A_1} K_1(m-l) K_2(l) & \lesssim \frac{1}{(1+|m|^2)^\al} \sum_{l\in A_1} K_2(l) \\
 & \lesssim 
 \begin{cases}
 \frac{(1+|m|^2)^{\bt-1}}{(1+|m|^2)^\al}, & \text{ if } \bt<1 \\
 \frac{\log|m|\vee 1}{(1+|m|^2)^\al}, & \text{ if } \bt =1
 \end{cases}.
\end{align*}
By symmetry we get that
\begin{align*}
 \sum_{l\in A_2} K_1(m-l) K_2(l) & \lesssim 
 \begin{cases}
 \frac{(1+|m|^2)^{\al-1}}{(1+|m|^2)^\bt}, & \text{ if } \al<1 \\
 \frac{\log|m|\vee 1}{(1+|m|^2)^\bt}, & \text{ if } \al =1
 \end{cases}.
\end{align*}
For the summation over $A_3$ we notice that
\begin{equation*}
 \sum_{l\in A_3} K_1(m-l) K_2(l) \lesssim \frac{1+|m|^2}{(1+|m|^2)^{\al+\bt}}.
\end{equation*}
Finally, for $l\in A_4$ we have that $|m-l| \geq \frac{|l|}{2}$, which implies that
\begin{equation*}
 \sum_{l\in A_4} K_1(m-l) K_2(l) \lesssim \sum_{|l|>2|m|} \frac{1}{(1+|l|^2)^{\al+\bt}} \lesssim \frac{1}{(1+|m|^2)^{\al+\bt}}.
\end{equation*}
Combining all the above we thus obtain the appropriate estimate on $K_1\star K_2(m)$.
\end{proof}

Because we are interested in nested convolutions of the same kernel we introduce the following recursive notation
\begin{align*}
 K \star^1 K = K , \quad K \star^n K  = K \star \left(K \star^{n-1} K\right),
\end{align*}
for every $n\geq 2$, with the obvious interpretation for $K\star_{\leq N}^n K$ and $K\star_{> N}^n K$. We then have the 
following corollary, the proof of which is omitted since it is a straight consequence of Lemma \ref{lem:kernel_conv}.

\begin{corollary}\label{cor:nested_kernel_conv} Let $K$ be a symmetric kernel as above for some $\al\in (\frac{n-1}{n},1]$. If $\al<1$ then
\begin{align*}
 K\star^n K(m) & \leq C \frac{1}{(1+|m|^2)^{n\al-(n-1)}} \\
 K\star_{> N}^n K(m) & \leq C 
 \begin{cases}
  \frac{1}{(1+|m|^2)^{n\al-(n-1)}}, & \text{ if } |m|\geq N \\
  \frac{1}{(1+|N|^2)^{n\al-(n-1)}}, & \text{ if } |m| < N
 \end{cases}
\end{align*}
and if $\al=1$
\begin{align*}
 K\star^n K(m) & \leq C \frac{1}{(1+|m|^2)^{1-\ee}} \\
 K\star_{> N} K(m) & \leq C 
 \begin{cases}
  \frac{1}{(1+|m|^2)^{1-\ee}}, & \text{ if } |m|\geq N \\
  \frac{1}{(1+|N|^2)^{1-\ee}}, & \text{ if } |m| < N
 \end{cases}
\end{align*}
for every $\ee\in(0,1)$. 
\end{corollary}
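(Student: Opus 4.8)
The plan is to argue by induction on $n$, using Lemma \ref{lem:kernel_conv} as the inductive step, and to reduce the borderline case $\al=1$ to the case $\al<1$ by an elementary comparison of exponents (the constant $C$ being allowed to change at each step, with no uniformity issue since $n$ is fixed).

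First I would treat the case $\al<1$. The base case $n=1$ is immediate, since $K\star^1 K=K$ and $1\cdot\al-0=\al$. For the inductive step, suppose the bound $K\star^{n-1}K(m)\leq C(1+|m|^2)^{-((n-1)\al-(n-2))}$ holds, together with the corresponding tail estimate for $K\star_{>N}^{n-1}K$. Writing $K\star^n K=K\star(K\star^{n-1}K)$, I would apply Lemma \ref{lem:kernel_conv} with the two exponents $\al$ and $\bt:=(n-1)\al-(n-2)$. One checks that the hypothesis $\al\in(\tfrac{n-1}{n},1]$ forces $\bt\in(\tfrac1n,1]$ and $\al+\bt-1=n\al-(n-1)>0$; since moreover $\al<1$, the case distinction of the lemma is available, and it yields exactly $K\star^n K(m)\leq C(1+|m|^2)^{-(n\al-(n-1))}$, with the analogous piecewise bound for $K\star_{>N}^n K$. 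Along the way one also records that $\al>\tfrac{n-1}{n}>\tfrac{n-2}{n-1}$, so that the inductive hypothesis at level $n-1$ is legitimately applicable.

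For $\al=1$ I would exploit that, since $1+|m|^2\geq1$, one has $K(m)\leq C(1+|m|^2)^{-1}\leq C(1+|m|^2)^{-(1-\delta)}$ for every $\delta>0$; and likewise it suffices to prove the claim for $\ee$ arbitrarily small, because $(1+|m|^2)^{-(1-\ee)}$ only increases with $\ee$. Given such a small $\ee\in(0,1)$, set $\delta=\ee/n<\tfrac1n$, so that $1-\delta\in(\tfrac{n-1}{n},1)$, and invoke the case $\al<1$ already established with exponent $1-\delta$ in place of $\al$. This gives $K\star^n K(m)\leq C(1+|m|^2)^{-(n(1-\delta)-(n-1))}=C(1+|m|^2)^{-(1-\ee)}$, and identically for the tail $K\star_{>N}^n K$.

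The only genuinely delicate point is the borderline estimate $\al=\bt=1$ inside Lemma \ref{lem:kernel_conv} (where the logarithmic factor appears), but that has already been handled there, and the exponent-shifting device in the previous paragraph ensures that in the nested estimate one never needs to propagate a logarithm through the induction. The remaining work is purely the bookkeeping of exponents and constants sketched above, which is routine.
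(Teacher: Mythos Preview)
Your proposal is correct and matches the paper's intent: the paper omits the proof, stating only that the corollary is a straight consequence of Lemma~\ref{lem:kernel_conv}, and your induction on $n$ is precisely such a deduction. Your reduction of the case $\al=1$ to the case $\al<1$ via the exponent shift $\al\mapsto 1-\ee/n$ is a clean way to avoid tracking logarithmic factors through the induction; this is arguably tidier than propagating the $\log$ bound from the lemma and absorbing it at the end, though either route works.
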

\stepcounter{section}
\section*{Appendix \thesection}\label{proof:mod_sol}
\setcounter{theorem}{0}
\setcounter{equation}{0}

\begin{proof}[Proof of Theorem \ref{thm:mod_sol}] Let $\phi_1,\phi_2\in L^2(\TT^2)$ and notice that for $t_1,t_2>-\infty$ by \eqref{eq:Ito_Isometry}
\begin{align}
 \EE \<n_black>_{-\infty,t_1}(\phi_1) \<n_black>_{-\infty,t_2}(\phi_2) & = \label{eq:cov_formula}\\ 
 n! \int_{\TT^2}\int_{\TT^2} \phi_1(z_1)\phi_2(z_2) & 
 \left(\int_{-\infty}^{t_1\wedge t_2} H(t_1+t_2 - 2r, z_1-z_2) \dd r\right)^n \dd z_1 \dd z_2, \nonumber
\end{align}
where we also use the semigroup property
\begin{equation*}
 \int_{\TT^2} H(t_1-r, z_1 - z) H(t_2- r, z_2 - z) \dd z =  H(t_1+t_2 - 2r, z_1 - z_2).
\end{equation*}
For $I_m = 1+4\pi^2|m|^2$, $m\in\ZZ^2$, we rewrite \eqref{eq:cov_formula} as 
\begin{align*}
 \EE \<n_black>_{-\infty,t_1}(\phi_1) \<n_black>_{-\infty,t_2}(\phi_2) & = \\
 n! \sum_{\substack{ m_i\in \ZZ^2 \\ i=1,2,\ldots, n \\ m= m_1+\ldots+m_n}} &
 \prod_{i=1}^n \frac{\e^{-I_{m_i}|t_1-t_2|}}{2 I_{m_i}} \,
 \hat \phi_1(m) \overline{\hat \phi_2(m)},
\end{align*}
and if we replace $\phi_1$, $\phi_2$ by $\eta_\kk(z_1-\cdot)$, $\eta_\kk(z_2-\cdot)$ respectively,
for $\kk\geq -1$, $z_1,z_2\in \TT^2$, we have that
\begin{align*}
 \EE \delta_{\kk}\<n_black>_{-\infty,t_1}(z_1) \delta_k\<n_black>_{-\infty,t_2}(z_2) & = \\
 n! \sum_{\substack{ m_i\in \ZZ^2 \\ i=1,2,\ldots, n \\ m= m_1+\ldots+m_n}} & 
 \prod_{i=1}^n \frac{\e^{-I_{m_i}|t_1-t_2|}}{2 I_{m_i}} \,
 |\chi_\kk(m)|^2 e_m(z_1-z_2).
\end{align*}
By a change of variables we finally obtain
\begin{align*}
 \EE \delta_{\kk}\<n_black>_{-\infty,t_1}(z_1) \delta_k\<n_black>_{-\infty,t_2}(z_2) & \approx \\
 n! \sum_{m_1\in \mathcal{A}_{2^\kk}} \sum_{\substack{ m_i\in \ZZ^2 \\ i=2,\ldots, n }} &
 \prod_{i=1}^n \frac{\e^{-I_{m_i-m_{i-1}}|t_1-t_2|}}{2 I_{m_i-m_{i-1}}} \, e_{m_1}(z_1-z_2),
\end{align*}
with the convention that $m_0=0$. Let $K^\gamma(m) = \frac{1}{(1+|m|^2)^{1-\gamma}}$, for $\gamma\in[0,1)$, 
and write $K^\gamma \star^n K^\gamma$ to denote the $n$-th iterated convolution of $K^\gamma$ with itself 
(see Definition \ref{def:kernel_conv}). If we let $z_1=z_2=z$, for $t_1=t_2=t$ we get an estimate of the form
\begin{align*}
  \EE \delta_{\kk}\<n_black>_{-\infty,t}(z)^2 \lesssim 
  \sum_{m\in \A_{2^{\kk}}} K^0 \star^n K^0(m)
\end{align*}
while for $t_1\neq t_2$ and every $\gamma\in (0,1)$
\begin{equation*}
 \EE\left(\delta_\kk\<n_black>_{-\infty,t_1}(z)-\delta_\kk\<n_black>_{-\infty, t_2}(z)\right)^2 
 \lesssim |t_1-t_2|^{n \gamma} \sum_{m\in \A_{2^\kk}} K^\gamma \star^n K^\gamma (m). 
\end{equation*}
By Corollary \ref{cor:nested_kernel_conv} 
\begin{equation*}
 \EE \delta_{\kk}\<n_black>_{-\infty,t}(z)^2 \lesssim 
  \sum_{m\in \A_{2^{\kk}}} \frac{1}{(1+|m|^2)^{1-\ee}},
\end{equation*}
for every $\ee\in(0,1)$, and 
\begin{equation*}
 \EE\left(\delta_\kk\<n_black>_{-\infty,t_1}(z)-\delta_\kk\<n_black>_{-\infty, t_2}(z)\right)^2 
 \lesssim |t_1-t_2|^{n \gamma} \sum_{m\in \A_{2^\kk}} \frac{1}{(1+|m|^2)^{1-n\gamma}}.
\end{equation*}
Using the fact that $m\in\A_{2^\kk}$ we have that for every $\kk\geq -1$
\begin{align*}
 \EE \delta_{\kk}\<n_black>_{-\infty,t}(z)^2 & \lesssim 2^{2\lambda_1 \kk} \\
 \EE\left(\delta_\kk\<n_black>_{-\infty,t_1}(z)-\delta_\kk\<n_black>_{-\infty, t_2}(z)\right)^2 
 & \lesssim |t_1-t_2|^{n \gamma} 2^{2\lambda_2 \kk}
\end{align*}
for every $\lambda_1>0$ and every $\gamma\in(0,\frac{1}{n})$, $\lambda_2> n \gamma$, while for every $p\geq 2$ by Nelson's estimate \eqref{eq:Nelson's_Estimate} 
we finally get
\begin{align*}
 \EE \delta_{\kk}\<n_black>_{-\infty,t}(z)^p & \lesssim 2^{p\lambda_1 \kk} \\
 \EE\left(\delta_\kk\<n_black>_{-\infty,t_1}(z)-\delta_\kk\<n_black>_{-\infty, t_2}(z)\right)^p 
 & \lesssim |t_1-t_2|^{n \frac{p}{2} \gamma} 2^{p\lambda_2 \kk},
\end{align*}
The result then follows from \cite[Lemma 5.2, Lemma 5.3]{MWe15}, the usual Kolmogorov's criterion and the embedding 
$\BB^{-\al+\frac{2}{p}}_{p,p}\hookrightarrow \CC^{-\al}$, for $\al>\frac{2}{p}$.
\end{proof}
\stepcounter{section}
\section*{Appendix \thesection}\label{proof:diagram_convergence}
\setcounter{theorem}{0}
\setcounter{equation}{0}

\begin{proof}[Proof of Proposition \ref{prop:diagram_convergence}] For all $n\geq 1$, using the formula 
$$
\HH_n(X+Y,C) = \sum_{k=0}^n \binom{n}{k} X^k \HH_{n-k}(Y,C)
$$ 
we have
\begin{align*}
 \<n_black>^\varepsilon_{s,t} = \sum_{k=0}^n \binom{n}{k} (-1)^k \left(S(t-s)\<1_black>_{-\infty,s}^\ee\right)^k
 \HH_{n-k} \left(\<1_black>_{-\infty,t}^\varepsilon, \Re^\varepsilon\right).
\end{align*}
Thus it suffices to prove convergence only for $\<n_black>_{-\infty,t}^\ee$, $n\geq 1$.  

By \cite[Proposition 1.1.4]{Nu06} for $t_1,t_2>-\infty$ and $z_1,z_2\in \TT^2$
\begin{equation*}
 \EE \<n_black>^\ee_{-\infty,t_1}(z_1) \<n_black>^\ee_{-\infty,t_2}(z_2) = 
  n! \left(\EE\<1_black>_{-\infty,t_1}^\ee(z_1)\<1_black>_{-\infty,t_2}^\ee(z_2)\right)^n.
\end{equation*}
Using \eqref{eq:cov_formula} we get
\begin{equation*}
 \EE \<n_black>^\ee_{-\infty,t_1}(z_1) \<n_black>^\ee_{-\infty,t_2}(z_2) = n!
 \sum_{\substack{|m_i|\leq \frac{1}{\ee} \\ i=1,2,\ldots, n \\ m=m_1+\ldots+m_n}} 
 \prod_{i=1}^n \frac{\e^{- I_{m_i} |t_1-t_2|}}{2I_{m_i}} e_m(z_1-z_2),
\end{equation*}
and by a change of variables the above implies that for $\kk\geq -1$
\begin{align}
 \EE\delta_\kk\<n_black>^\ee_{-\infty,t_1}(z_1) \delta_\kk\<n_black>^\ee_{-\infty,t_2}(z_2) & \approx \label{eq:cov_1} \\
 n! \sum_{m_1\in \mathcal{A}_{2^\kk}} \sum_{\substack{ |m_i|\leq \frac{1}{\ee} \\ i=2,\ldots, n }} &
 \prod_{i=1}^n \frac{\e^{-I_{m_i-m_{i-1}}|t_1-t_2|}}{2 I_{m_i-m_{i-1}}} \, e_{m_1}(z_1-z_2). \nonumber
\end{align}
In a similar way
\begin{align}
 \EE\delta_\kk\<n_black>_{-\infty,t_1}(z_1) \delta_\kk\<n_black>^\ee_{-\infty,t_2}(z_2) & \approx \label{eq:cov_2}\\
 n! \sum_{m_1\in \mathcal{A}_{2^\kk}} \sum_{\substack{ |m_i|\leq \frac{1}{\ee} \\ i=2,\ldots, n }} &
 \prod_{i=1}^n \frac{\e^{-I_{m_i-m_{i-1}}|t_1-t_2|}}{2 I_{m_i-m_{i-1}}} \, e_{m_1}(z_1-z_2) \nonumber
\end{align}  
and for $K^\gamma(m) = \frac{1}{(1+|m|^2)^{1-\gamma}}$ combining \eqref{eq:cov_1} and \eqref{eq:cov_2} for $z_1=z_2=z$ and $t_1=t_2=t$ we have
that
\begin{align*}
 \EE\left(\delta_\kk\<n_black>_{-\infty,t}(z) -\delta_\kk\<n_black>_{-\infty,t}^\ee(z)\right)^2 \lesssim 
 \sum_{m\in\A_{2^\kk}} K^0 \star^n_{> \frac{1}{\ee}} K^0(m),
\end{align*}
while for $t_1\neq t_2$ and every $\gamma\in(0,1)$ 
\begin{align*}
 \EE\left[\left(\delta_\kk\<n_black>_{-\infty,t_1}(z) -\delta_\kk\<n_black>_{-\infty,t_1}^\ee(z)\right) 
 \left(\delta_\kk\<n_black>_{-\infty,t_2}(z) -\delta_\kk\<n_black>_{-\infty,t_2}^\ee(z)\right)\right] & \lesssim \\
 |t_1-t_2|^{n \gamma} \sum_{m\in\A_{2^\kk}} K^\gamma \star^n_{> \frac{1}{\ee}} K^\gamma(m).
\end{align*}
Proceeding as in the proof of Theorem \ref{thm:mod_sol} (see Appendix \hyperref[proof:mod_sol]{D}) and using Corollary 
\ref{cor:nested_kernel_conv} we obtain that
\begin{align*}
 \EE\left(\delta_\kk\<n_black>_{-\infty,t}(z) -\delta_\kk\<n_black>_{-\infty,t}^\ee(z)\right)^2 \lesssim 2^{2\lambda_1\kk}
 \frac{1}{\left(1+\frac{1}{\ee^2}\right)^{\lambda_1/2}}
\end{align*}
for every $\lambda_1\in(0,1)$, and
\begin{align*}
 \EE\Big[\left(\delta_\kk\<n_black>_{-\infty,t_1}(z) -\delta_\kk\<n_black>_{-\infty,t_1}^\ee(z)\right) &
 \left(\delta_\kk\<n_black>_{-\infty,t_2}(z) -\delta_\kk\<n_black>_{-\infty,t_2}^\ee(z)\right)\Big]  \lesssim \\ & |t_1-t_2|^{n \gamma} 
 2^{2\lambda_2\kk} \frac{1}{\left(1+\frac{1}{\ee^2}\right)^{\lambda_2 - n\gamma}}, 
\end{align*}
for every $\gamma\in(0,\frac{1}{n})$, $\lambda_2>n\gamma$. The result then follows by Nelson's estimate \eqref{eq:Nelson's_Estimate} 
combined with the usual Kolmogorov's criterion, as well as the embedding 
$\BB^{-\al+\frac{2}{p}}_{p,p}\hookrightarrow \CC^{-\al}$, for $\al>\frac{2}{p}$. 
\end{proof}
\end{appendices}

\bibliographystyle{keylabel}
\bibliography{sqe_bibliography}{}

\begin{flushleft}
\small \normalfont
\textsc{Pavlos Tsatsoulis\\
University of Warwick\\
Coventry, UK}\\
\texttt{\textbf{p.tsatsoulis@warwick.ac.uk}}

\smallskip

\small \normalfont
\textsc{Hendrik Weber\\
University of Warwick\\
Coventry, UK}\\
\texttt{\textbf{hendrik.weber@warwick.ac.uk}}
\end{flushleft}

\end{document}